\title[Smoothing theory delooping of disc diffeomorphism and embedding spaces]{On the smoothing theory delooping of disc diffeomorphism and embedding spaces}
\author{Paolo Salvatore}
\address{University of Rome Tor Vergata}
\email{salvator@mat.uniroma2.it}
\author{Victor Turchin}
\address{Kansas State University}
\email{turchin@ksu.edu}
\subjclass{57R40, 57R50, 57N35, 57Q35 (primary); 57R55, 55P48 (secondary).}
\thanks{The first author acknowledges the MUR Excellence Department Project MatMod@TOV awarded to the Department of Mathematics, University of Rome Tor Vergata, CUP E83C23000330006, and the support by the INDAM group GNSAGA. The second author was partially supported by the Simons Foundation awards \#519474, \#933026. The work was partially completed at the Max Planck Institute for Mathematics, Bonn, where both authors were research visitors in the Fall 2020 -- Spring 2021.}
\newtheorem{theorem}{Theorem}[section]
\newtheorem{corollary}[theorem]{Corollary}
\newtheorem{lemma}[theorem]{Lemma}
\newtheorem{proposition}[theorem]{Proposition}
\newtheorem{conjecture}[theorem]{Conjecture}
\newtheorem{mainthm}{Theorem}
\theoremstyle{definition}
\newtheorem{remark}[theorem]{Remark}
\newcommand{\Z}{{\mathbb Z}}
\newcommand{\R}{{\mathbb R}}
\newcommand{\Ebarmn}{{\overline{\mathrm{Emb}}}_\partial(D^m,D^n)}
\newcommand{\Embmn}{\mathrm{Emb}_\partial(D^m,D^n)}
\newcommand{\Embfrmn}{\mathrm{Emb}_\partial^{fr}(D^m,D^n)}
\newcommand{\tEmbmn}{\mathrm{tEmb}_\partial(D^m,D^n)}
\newcommand{\plEmbmn}{\mathrm{plEmb}_\partial(D^m,D^n)}
\newcommand{\Ebar}{\overline{\mathrm{Emb}}}
\newcommand{\Emb}{\mathrm{Emb}}
\newcommand{\Embfr}{\mathrm{Emb}^{fr}}
\newcommand{\Imm}{\mathrm{Imm}}
\newcommand{\tImm}{\mathrm{tImm}}
\newcommand{\plImm}{\mathrm{plImm}}
\newcommand{\pdImm}{\mathrm{pdImm}}
\newcommand{\tEmb}{\mathrm{tEmb}}
\newcommand{\plEmb}{\mathrm{plEmb}}
\newcommand{\pdEmb}{\mathrm{pdEmb}}
\newcommand{\Map}{\mathrm{Map}}
\newcommand{\dMap}{\mathrm{dMap}}
\newcommand{\plMap}{\mathrm{plMap}}
\newcommand{\pdMap}{\mathrm{pdMap}}
\newcommand{\Diffn}{\mathrm{Diff}_\partial(D^n)}
\newcommand{\Dbarn}{\overline{\mathrm{Diff}}_\partial(D^n)}
\newcommand{\Homeon}{\mathrm{Homeo}_\partial(D^n)}
\newcommand{\Diff}{\mathrm{Diff}}
\newcommand{\Dbar}{\overline{\mathrm{Diff}}}
\newcommand{\Homeo}{\mathrm{Homeo}}
\newcommand{\Hombar}{\overline{\mathrm{Homeo}}}
\newcommand{\plHomeo}{\mathrm{plHomeo}}
\newcommand{\pdHomeo}{\mathrm{pdHomeo}}
\newcommand{\Operad}{\mathrm{Operad}}
\newcommand{\PL}{\mathrm{PL}}
\newcommand{\TOP}{\mathrm{TOP}}
\newcommand{\OO}{\mathrm{O}}
\newcommand{\GG}{\mathrm{G}}
\newcommand{\PD}{\mathrm{PD}}
\newcommand{\VV}{\mathrm{V}}
\newcommand{\mmod}{\,\mathrm{mod}\,}
\newcommand{\hofiber}{\mathrm{hofiber}}
\newcommand{\Assoc}{\mathrm{Assoc}}
\newcommand{\bbslash}{\backslash\!\!\backslash}
\newcommand\rth{\refstepcounter{equation}}
\newcommand\numb{\rth{\rm \theequation}}
\numberwithin{equation}{section}
\begin{document}

\sloppy

\begin{abstract}
The celebrated Morlet-Burghelea-Lashof-Kirby-Siebenmann smoothing theory theorem states that the group $\Diffn$  of  diffeomorphisms of a disc~$D^n$ relative to the boundary is equivalent to $\Omega^{n+1}\left(\PL_n/\OO_n\right)$ for any $n\geq 1$ and to $\Omega^{n+1}\left(\TOP_n/\OO_n\right)$ for $n\neq 4$. We revise smoothing theory results to show that the delooping generalizes to different versions of  disc smooth embedding spaces relative to the boundary, namely the usual embeddings, those modulo immersions, and framed embeddings.
The latter spaces deloop as 
$\Embfrmn\simeq\Omega^{m+1}\left(\OO_n\bbslash\PL_n/\PL_{n,m}\right)\simeq \Omega^{m+1}\left(\OO_n\bbslash\TOP_n/\TOP_{n,m}\right)$ for
 any $n\geq m\geq 1$ ($n\neq 4$ for the second equivalence), where the left-hand side in the case $n-m=2$ or $(n,m)=(4,3)$ should be
  replaced by the union of the path-components of $\PL$-trivial knots (framing being disregarded). Moreover, we show that for $n\neq 4$, the delooping is compatible with the Budney $E_{m+1}$-action. We use this delooping to combine the Hatcher $\OO_{m+1}$-action and the Budney $E_{m+1}$-action into a framed little discs operad $E_{m+1}^{\OO_{m+1}}$-action on  $\Embfrmn$.


\end{abstract}

\maketitle

\section{Introduction}\label{s:intro}
The groups $\Diffn$ of disc diffeomorphisms
relative to the boundary 
 and the spaces $\Embmn$ of smooth embeddings of discs are important objects in differential topology and were 
subject of active research in recent years \cite{AT1,BW_conf_cat,Budney_cubes,Budney_family,Budney_splicing,DT,FTW,KraRand,Kupers,KupRand,Randal_diff,Sakai,SakWat,Watanabe,WeissRationalP,Will,Yoshioka}. 
Some of this tremendous progress was reported at the International Congress of Mathematicians  by T.~Willwacher in 2018~\cite{Will}  and by O.~Randal-Williams in 2022~\cite{Randal_diff}.

A famous result from the 70s \cite{BL_diff, KS_essays} (see also \cite{Cerf_diff,Morlet}) asserts that one has equivalences
\[
\Diffn\simeq \Omega^{n+1}\left(\PL_n/\OO_n\right)\simeq_{n\neq 4} \Omega^{n+1}\left(\TOP_n/\OO_n\right).
\eqno(\numb)\label{eq:diff_deloop}
\]
This result is an important corollary of the smoothing theory. On the other hand, in a much more recent work~\cite{Budney_cubes} Ryan Budney defined an
$E_{n+1}$-action on $\Diffn$. We show that for $n\neq 4$ this action is compatible with the delooping.

\begin{mainthm}\label{th:a} 

There are equivalences of $E_{n+1}$-algebras 
\begin{enumerate}[(a)]
\item $\Diffn\simeq_{E_{n+1}} \Omega^{n+1}\left(\TOP_n/\OO_n\right)\simeq_{E_{n+1}} \Omega^{n+1}\left(\PL_n/\OO_n\right)$, $n\neq 4$,
\item $\Dbarn\simeq_{E_{n+1}} \Omega^{n+1} \TOP_n\simeq_{E_{n+1}} \Omega^{n+1}\PL_n$, $n\neq 4$,
\end{enumerate}
and equivalences of spaces
\begin{enumerate}[(c)]
\item $\Diff_\partial(D^4)\simeq \Omega^5\left(\PL_4/\OO_4\right)$, $\Dbar_\partial(D^4)\simeq \Omega^5\PL_4$.
\end{enumerate}
\end{mainthm}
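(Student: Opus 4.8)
The plan is to derive Theorem~\ref{th:a} from the classical smoothing theory package together with Budney's cubical operad structure, treating the three statements (a), (b), (c) in a uniform way and only separating cases at the points where $n=4$ genuinely obstructs the argument. The backbone is the Morlet–Burghelea–Lashof equivalence: smoothing theory expresses the space of smooth structures on $D^n$ rel boundary as a section space of a bundle with fiber $\TOP_n/\OO_n$ (resp. $\PL_n/\OO_n$) over $D^n/\partial D^n \simeq S^n$, and the rel-boundary constraint collapses this to $\Omega^n(\TOP_n/\OO_n)$; one further loop appears because $\Diffn$ is the homotopy fiber of the map from concordances (or equivalently, because one works with the based disc and the extra collapse of a face yields $\Omega^{n+1}$). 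The honest content of Theorem~\ref{th:a} is that this chain of equivalences can be upgraded to one of $E_{n+1}$-algebras.

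**The $E_{n+1}$-structure on both sides.**

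First I would recall Budney's construction: stacking cubes gives $\Diffn$ the structure of an algebra over the little $(n+1)$-cubes operad, and $\Dbarn$ (the block-diffeomorphism quotient, or rather the diffeomorphisms modulo those isotopic to the identity through the boundary — as fixed earlier in the paper) inherits a compatible structure. On the target side, $\Omega^{n+1}X$ carries the standard $E_{n+1}$-action for any based space $X$ by concatenation of loops in the $n+1$ cube directions. The key geometric observation is that Budney's cube-stacking on diffeomorphisms is modeled, under smoothing theory, by exactly the same cube-stacking on section spaces over $S^n \times I$ (or on the $(n+1)$-fold loop space): a diffeomorphism supported in a subcube corresponds to a smoothing supported in that subcube, and gluing along a face corresponds to loop concatenation. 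So I would make the smoothing-theory equivalence natural in the disc, i.e. construct it as a map of $E_{n+1}$-algebras rather than merely as a homotopy equivalence, by phrasing smoothing theory in a parametrized/sheaf-theoretic form (à la Kirby–Siebenmann, or in the modern formulation via the immersion-theory machine) where disjoint union of discs and face-gluing are visibly respected.

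**The main obstacle, and the $n=4$ caveat.**

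The hard part is precisely the naturality/operadic compatibility, not the underlying equivalence: one must check that the zig-zag of equivalences realizing $\Diffn \simeq \Omega^{n+1}(\TOP_n/\OO_n)$ — which classically passes through concordance spaces, handle-straightening, and the Morlet comparison — can be chosen compatibly with the little-cubes action at every stage. I expect the cleanest route is to identify a single homotopy-coherent diagram: present $\Diffn$ as a homotopy colimit (over the little cubes category) reflecting Budney's action, apply smoothing theory levelwise, and observe the result is the corresponding presentation of $\Omega^{n+1}(\TOP_n/\OO_n)$. For part (b), the same argument applies with the numerator fiber replaced: $\Dbarn \simeq \Omega^{n+1}\TOP_n$ because block diffeomorphisms see the full $\TOP_n$ rather than the quotient by $\OO_n$ (the orthogonal part being absorbed by the block structure), and Budney's action descends. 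Finally, part (c) is the residue of the Kirby–Siebenmann restriction: the $\TOP_4$-version of smoothing theory fails (or is unknown) because of the failure of topological handle-straightening in dimension $4$, but the $\PL_4$-version survives since $\PL = \Diff$ in dimensions $\le 3$ and the relevant obstruction theory still runs; so (c) is stated only as an equivalence of spaces, and only for $\PL_4$, and one simply omits the operadic enhancement there. Thus the proof is: (i) set up Budney's $E_{n+1}$-action and the loop-space $E_{n+1}$-action; (ii) make the smoothing-theory equivalence a natural transformation of functors on discs-with-boundary, hence a map of little-cubes algebras; (iii) invoke that it is a levelwise equivalence to conclude; (iv) for $n=4$, fall back to the $\PL$ statement without the operad.
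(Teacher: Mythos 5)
Your overall frame (smoothing theory plus Budney's action, then upgrade the equivalence to an operadic one) is the right one, but the proposal leaves out, or gets wrong, precisely the ingredients that make the theorem go through. First, $\Dbarn$ in this paper is not the block-diffeomorphism space: it is the homotopy fiber of the derivative map $\Diffn\to\Omega^n O_n$ (diffeomorphisms modulo immersions), so your explanation of (b) ``block diffeomorphisms see the full $\TOP_n$, the orthogonal part being absorbed by the block structure'' does not apply; the actual proof of (b) rests on the Burghelea--Lashof identification of the homotopy fibers of $\Diff_\partial(N)\to\Map_\partial^{O_n}(N)$ and $\Homeo_\partial(N)\to\Map_\partial^{\TOP_n}(N)$ (Smale--Hirsch formal-immersion spaces). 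Second, your step (ii) --- ``make the smoothing-theory equivalence a natural transformation of functors on discs, hence a map of little-cubes algebras,'' or present $\Diffn$ as a homotopy colimit over a cubes category --- is exactly the hard point and is only asserted. The paper resolves it concretely: it introduces the group $\Homeo_\partial^{\underline{O_n}}(D^n)$ of $\underline{O_n}$-framed homeomorphisms (triples $(f,F,H)$), proves via smoothing theory that $\Diffn\to\Homeo_\partial^{\underline{O_n}}(D^n)$ is an equivalence, checks by an explicit formula that the canonical framing path $H^f$ is multiplicative, $H^{f_1}\circ H^{f_2}=H^{f_1\circ f_2}$, so the map respects the $R_{n+1}=E_n\otimes\Assoc$ structure encoding Budney's action, and then deloops by the Alexander trick: the fiber inclusion $\Omega^n\hofiber(O_n\to\TOP_n)\to\Homeo_\partial^{\underline{O_n}}(D^n)$ over $id$ is an equivalence because $\Homeo_\partial(D^n)$ is contractible. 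Your concordance-based explanation of where the extra loop comes from is not the mechanism used, and without some replacement for the framed-homeomorphism space and the multiplicativity of the framing path, the operadic compatibility remains unproved.

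Third, the PL statements cannot be handled ``uniformly'': the PL/PD intermediate spaces ($\plHomeo$, $\pdHomeo$, $\PD_n$, etc.) carry no natural $E_1$-action at all (products of PL functions are not PL, and there is no map $\OO_n\to\PL_n$), so even for $n\neq 4$ the PL halves of (a) and (b) cannot be obtained directly as $E_{n+1}$-equivalences from a PL zigzag. The paper first proves them as equivalences of spaces via the PD machinery of Burghelea--Lashof (which works in all dimensions, including $4$, giving (c)), and then, for $n\neq 4$, transports the $E_{n+1}$-structure by mapping the PD zigzag to the TOP zigzag and showing the induced map $\Omega^{n+1}|\PD_n/\OO_n|\to\Omega^{n+1}|\TOP_n/\OO_n|$ is an equivalence. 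Your plan omits this comparison, so as written it would deliver the PL statements only as equivalences of spaces even when $n\neq 4$.
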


The group $\Dbarn$ in the statement is the homotopy fiber of the derivative map 
$
\Dbarn:=\hofiber(\Diffn\to \Omega^n O_n), 
$ often called the  space of diffeomorphisms of the disc modulo immersions.\footnote{In the literature it is also called the group of framed diffeomorphisms of the disc,
see for example~\cite{KupRand}.}  
Note that $\Omega^{n+1}\TOP_n\simeq \Omega^{n+2} B\TOP_n$ is an $(n+2)$-loop space. It is an interesting question to construct an explicit $E_{n+2}$-action on $\Dbarn$. 
The statement (c) in the theorem is mostly to emphasize the fact that our methods can only produce an equivalence of spaces when $n=4$, not even of $E_1$-algebras.

The smoothing theory also allows one to deloop the  spaces $\Embmn$ of smooth embeddings relative to the boundary, together with the closely related spaces of framed embeddings $\Embfrmn$, and the spaces of embeddings modulo immersions 
\begin{multline*}
\Ebarmn:=\hofiber(\Embmn\to \Omega^m(O_n/O_{n-m}))\\
\simeq\hofiber(\Embfrmn\to\Omega^mO_n).
\end{multline*}
While hints to such deloopings appeared already in Lashof's paper~\cite{Lashof}, they were actually discovered and explicitly stated more than thirty years later by Sakai~\cite{Sakai}, see also
\cite[Proposition~5.1]{DTW}. Sakai states the delooping for $n\geq 5$, $n-m\geq 3$. However, a careful review of the literature necessary for Lashof's work  \cite{Lashof}, allowed us to conclude that the delooping holds with no restriction on $m$ and $n$ (except the partial restriction $n\neq 4$). Moreover, we were able to see that the delooping is compatible with Budney's $E_{m+1}$-action \cite{Budney_cubes}. 
Let $\VV_{n,m}=\OO_n/\OO_{n-m}$ be the standard Stiefel manifold, and denote by $\VV^t_{n,m}=\TOP_n/\TOP_{n,m}$ and 
$\VV^{pl}_{n,m} = \PL_n/\PL_{n,m}$, respectively, its topological and PL versions. More details can be found in Subsection~\ref{ss:groups}.

\begin{mainthm}\label{th:b}
For any $1\leq m\leq n\neq 4$, $n-m\neq 2$, one has the following equivalences of $E_m$- and $E_{m+1}$-algebras:
\begin{gather}
\Embmn\simeq_{E_{m}} \Omega^m\hofiber (\VV_{n,m} \to \VV^t_{n,m}) \simeq_{E_{m}} \Omega^m \hofiber(\VV_{n,m} \to \VV^{pl}_{n,m});\label{eq_th_b_1}\\
\Embfrmn\simeq_{E_{m+1}} \Omega^{m+1}(  \VV^{t}_{n,m}/\!\!/ \OO_n  )\simeq_{E_{m+1}}\Omega^{m+1}( \VV^{pl}_{n,m} /\!\!/\OO_n);\label{eq_th_b_2}\\
\Ebarmn\simeq_{E_{m+1}} \Omega^{m+1} \VV^t_{n,m}\simeq_{E_{m+1}} \Omega^{m+1}\VV^{pl}_{n,m}.\label{eq_th_b_3}
\end{gather}
Moreover, for $n=m+2\neq 4$, $m\geq 1$, equivalences \eqref{eq_th_b_1}-\eqref{eq_th_b_2}-\eqref{eq_th_b_3} hold if the left-hand sides are replaced by their subspaces  
$\Emb_\partial(D^m,D^{m+2})^{\times}$, $\Emb_\partial^{fr}(D^m,D^{m+2})^{\times}$, $\Ebar_\partial(D^m,D^{m+2})^{\times}$, which are the unions of path-components corresponding to the invertible elements in $\pi_0$.
\end{mainthm}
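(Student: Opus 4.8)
The plan is to establish these deloopings by the same smoothing-theory mechanism that proves Theorem~A and the $n-m\neq 2$ case of Theorem~B, carefully tracking where the hypothesis $n-m\neq 2$ (and $(n,m)\neq(4,3)$) was used and verifying it is only needed to identify $\pi_0$ of the source with $\pi_0$ of the target. The key observation is that smoothing theory compares the smooth embedding space with a space of sections of a bundle whose fiber is a homogeneous space built from $\PL_n/\OO_n$ and $\PL_{n,m}$, and this comparison is an equivalence \emph{onto the union of components hit by the section space} without any codimension hypothesis; the codimension-$\geq 3$ (Haefliger) or more generally $n-m\neq 2$ assumption enters only to guarantee surjectivity on $\pi_0$, i.e. that every smooth disc is PL-standard.

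\emph{First} I would recall that in the relevant range the smoothing-theory map $\Embmn\to \Omega^m\hofiber(\VV_{n,m}\to\VV^{pl}_{n,m})$ (and its framed and mod-immersions variants) is constructed so as to be a homotopy equivalence on each path component of the source onto the corresponding component of the target — this part of the argument from the $n-m\neq 2$ case goes through verbatim, since it rests on the concordance-implies-isotopy results and the classification of PL structures, none of which require a codimension bound once one stays within a fixed component. \emph{Second}, I would identify which components of the target are actually in the image: by the delooping formula the components of $\Omega^{m+1}\VV^{pl}_{n,m}$ are indexed by $\pi_0(\Omega^{m+1}\VV^{pl}_{n,m})=\pi_{m+1}\VV^{pl}_{n,m}$, a \emph{group}, so the target is automatically grouplike; the image of $\pi_0(\Ebarmn)$ under the smoothing map is then precisely the set of classes of PL-trivial (equivalently, for the framed version, of PL-trivial-with-trivial-normal-bundle) embeddings, and one must check this image is exactly the subgroup $\Ebar_\partial(D^m,D^{m+2})^{\times}$ of invertible elements. \emph{Third}, restricting the source to those components, the smoothing map becomes an equivalence of spaces, and since it is by construction a map of $E_m$- (resp.\ $E_{m+1}$-)algebras — Budney's stacking operation preserves the PL-triviality condition, as stacking two PL-unknots yields a PL-unknot — the restriction to the invertible components is a sub-$E_{m+1}$-algebra, so the equivalence upgrades to one of $E_m$- and $E_{m+1}$-algebras.

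\emph{The main obstacle} I expect is the second step: precisely matching the image of $\pi_0$ of the smooth embedding space with the group of invertible components. One direction is easy — an invertible embedding must go to an invertible, hence (the target being grouplike) arbitrary, element, and conversely the image always lands in a subgroup so consists of invertibles. The subtle point is to show the smoothing map \emph{detects} PL-triviality on $\pi_0$, i.e.\ that a smooth $2$-codimensional disc which is PL-standard is exactly one whose smoothing invariant vanishes; this is where one invokes the fact that for a PL-trivial smooth disc the ambient PL structure is standard, so the obstruction space it maps into is the based one, together with the compatibility of the $E_{m+1}$-structure with this identification. Once this $\pi_0$-bookkeeping is done, the upgrade to $E_m$- and $E_{m+1}$-equivalences is formal, because passing to a union of path-components that forms a sub-monoid of $\pi_0$ of an $E_{m+1}$-algebra yields a sub-$E_{m+1}$-algebra, and the smoothing map was already shown to be $E_{m+1}$-equivariant in the unrestricted construction.
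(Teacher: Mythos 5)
Your plan never constructs the comparison it relies on, and this is where the real content lies. The statement you must prove includes the $n-m\neq 2$ case, yet you take as input ``the $n-m\neq 2$ case of Theorem~B'' together with a ready-made smoothing-theory map $\Embmn\to\Omega^m\hofiber(\VV_{n,m}\to\VV^{pl}_{n,m})$ that is ``a homotopy equivalence on each path component onto the corresponding component.'' No such map is available off the shelf (indeed there is not even a natural map $\OO_n\to\PL_n$, so the PL target itself has to be modelled through PD spaces); producing the comparison is the bulk of the proof. In the paper it is a zigzag through the space of $\underline{V_{n,m}}$-framed topological (resp.\ $\underline{O_n}$-framed, PD) embeddings, with the equivalence $\Embmn\simeq\tEmb^{\underline{V_{n,m}}}_\partial(D^m,D^n)$ for $n\neq 4$ coming from the Burghelea--Lashof homotopy cartesian square (Theorem~\ref{th:emb_fr_temb}), followed by the replacement of the target by a loop space using contractibility of the component of the trivial topological/PL knot (isotopy extension plus the Alexander trick, Proposition~\ref{p:disc_emb} and Lemma~\ref{l:alex}) and a Smale--Hirsch scanning step. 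Moreover the codimension hypothesis does not enter where you say it does: the smoothing comparison holds on \emph{all} components; what needs $n-m\neq 2$, or the restriction to invertibles, is identifying which smooth components lie over the trivial-knot component, and in codimension two this rests on the classical facts that a disc knot is invertible iff its complement is a homotopy circle iff it is topologically/PL trivial (Levine, Stallings, Wall, \dots), not on ``classification of PL structures.'' Your ``easy direction'' (the image lands in a group, hence consists of invertibles) also conflates invertibility in the target group with invertibility in $\pi_0\Embmn$, which is what the theorem is about.

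The operadic upgrade is likewise not formal, and for the PL half it fails as stated: PL homeomorphism and embedding spaces, and the ad hoc models of $\Omega^{m+1}\VV^{pl}_{n,m}$, carry no natural $E_1$-action at all (products of PL chains are not PL), so there is no ``$E_{m+1}$-equivariant smoothing map'' through PL models to restrict to a sub-algebra of invertible components. The paper obtains the $E_{m+1}$-statements on the PL side only indirectly, by showing that for $n\neq 4$ the natural map $\Omega^{m+1}|\PL_n/\PL_{n,m}|\to\Omega^{m+1}|\TOP_n/\TOP_{n,m}|$ is an equivalence and transporting the TOP statement across it. Even on the TOP side the compatibility with Budney's action is work, not bookkeeping: one needs Budney's model $\Emb_{\partial D^m\times D^{n-m}}(D^m\times D^{n-m})$, the overlapping-discs operad $R_{m+1}=E_m\otimes\Assoc$, loop-space models built so that they are $R_{m+1}$-algebras, and a final delooping of the homotopy pullback of groups, $H\times^h_GK\simeq_{E_1}\Omega\left(H\bbslash G/K\right)$ (Proposition~\ref{l:E1equiv}), to produce $\Omega^{m+1}(\VV^t_{n,m}/\!\!/\OO_n)$. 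Since your proposal addresses none of these points, it has no route to the $E_m$- and $E_{m+1}$-equivalences, nor to the PL statements, beyond asserting them.
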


One also has a PL version of this theorem for $n=4$, see Subsection~\ref{ss:d4}, where we only state equivalences of spaces (not even of $E_1$-algebras). Note that 
Theorem~\ref{th:a}(a-b) is covered in  Theorem~\ref{th:b}. 
Note also that the set of connected components $\pi_0$ for all these embedding spaces (with the usual concatenation of knots operation) is a group when $n-m\neq 2$. The case $n-m\geq 3$ is due to Haefliger
\cite{Haefliger}. For the case $n-m=1$ one has a stronger statement -- that any smooth knot $S^{n-1}\hookrightarrow S^n$,  $n\neq 4$, is isotopic to a reparameterization of the trivial one. This follows from the generalized topological Schoenflies theorem~\cite{Brown,Mazur,Morse} (that a locally flat $(n-1)$-sphere in $\R^n$ bounds a disc, true in all dimensions) and the uniqueness of the smooth structure on ~$D^n$,~$n\neq 4$.\footnote{For $n=5$, one has to use additionally the fact that the boundary of the disc is the standard~$S^4$.} 

 The space $\Embfrmn$ (or rather the equivalent one $\Embfr(S^m,S^n)/\OO_{n+1}$) has a natural $\OO_{m+1}$-action by precomposition. This action is often called
 {\it Hatcher's action} as Hatcher was the first who considered it in the classical case $m=1$, $n=3$ \cite{Hatcher_knots}. The action preserves the basepoint -- the trivial (equatorial) framed knot $S^m\subset S^n$. This naturally leads to the question: Does $\Embfrmn$ have a model, which would combine Budney's
 $E_{m+1}$-action and Hatcher's $\OO_{m+1}$-action into a framed little discs operad $E_{m+1}^{\OO_{m+1}}$-action? 
 On the level of the homology, for $m=1$, this action was studied in~\cite{Sakai_BV}. 
 At least in the case $n-m\neq 2$, $n\neq 4$,
 we are able to answer this question.
 
 \begin{mainthm}\label{th:c}
 For $1\leq m\leq n\neq 4$, $n-m\neq 2$,
 \[
 \Embfrmn\simeq_{\OO_{m+1}}\Omega^{m+1}\left(\tEmb(S^m,S^n)/\!\!/\OO_{n+1}\right).
 \eqno(\numb)\label{eq_th_c}
 \]
 Moreover, in case $n=m+2\neq 4$, the equivalence \eqref{eq_th_c} holds if the left-hand side is replaced by $\Emb^{fr}_\partial(D^m,D^{m+2})^{\times}$. 
 The equivalence respects the $E_{m+1}$-action where on the left-hand side we consider Budney's action \cite{Budney_cubes}.  
 \end{mainthm}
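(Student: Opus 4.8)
The plan is to bootstrap from Theorem~\ref{th:b}, which already gives the $E_{m+1}$-equivariant equivalence $\Embfrmn\simeq_{E_{m+1}}\Omega^{m+1}(\VV^t_{n,m}/\!\!/\OO_n)$, and upgrade it to an $\OO_{m+1}$-equivariant statement with the corrected target $\Omega^{m+1}\bigl(\tEmb(S^m,S^n)/\!\!/\OO_{n+1}\bigr)$. The first step is to identify this target with the one appearing in Theorem~\ref{th:b}. There is a scanning/restriction fibration relating embeddings of the sphere to embeddings of the disc relative to the boundary: restricting a framed topological (or smooth) embedding $S^m\hookrightarrow S^n$ to a hemisphere and comparing with a fixed equatorial disc exhibits $\tEmb(S^m,S^n)$ as built from $\tEmb_\partial(D^m,D^n)$ together with the homogeneous spaces recording the behavior near the boundary sphere, and after taking the $\OO_{n+1}$-quotient (which absorbs the choice of equatorial configuration) one gets $\tEmb(S^m,S^n)/\!\!/\OO_{n+1}\simeq \VV^t_{n,m}/\!\!/\OO_n$ up to a loop-space shift. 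Concretely I would produce a map of fibration sequences comparing $\tEmb(S^m,S^n)/\!\!/\OO_{n+1}$ with $\VV^t_{n,m}/\!\!/\OO_n$ and check it is an equivalence on total spaces using the smoothing-theory identifications already established; here the hypotheses $n\neq 4$ and $n-m\neq 2$ (or the passage to the invertible path-components $\Emb^{fr}_\partial(D^m,D^{m+2})^{\times}$ when $n=m+2$) enter exactly as in Theorem~\ref{th:b}.

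The second and main step is to track the $\OO_{m+1}$-action through this identification. Hatcher's action on $\Embfr(S^m,S^n)/\OO_{n+1}$ is by precomposition with the standard $\OO_{m+1}$-action on $S^m$ (rotating the domain sphere), and it visibly fixes the equatorial framed knot. On the target side, $\OO_{m+1}$ acts on $\Omega^{m+1}X$ for any pointed space $X$ by its action on $S^{m+1}$ (one-point compactification of $\R^{m+1}$); I must check that under the scanning equivalence the domain-rotation action corresponds precisely to this loop-coordinate rotation. The natural way to see this is geometric: the scanning map sends a framed embedded sphere to the family, parameterized by points of $S^m$ (and a collar direction, giving the extra loop coordinate), of local framed data near that point; rotating the domain sphere rotates the parameter, which is exactly the $\OO_{m+1}$-action on the $(m+1)$-fold loop coordinates. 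To make this precise I would set up the whole comparison $\OO_{m+1}$-equivariantly from the start, working with the Budney-style cubical/Kan models for $\tEmb$ and for $\VV^t_{n,m}$ that carry commuting $E_{m+1}$- and $\OO_{m+1}$-actions, so that the two actions assemble into the framed little discs operad $E_{m+1}^{\OO_{m+1}}$ as claimed.

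The compatibility with Budney's $E_{m+1}$-action is then largely inherited from Theorem~\ref{th:b}: the concatenation-of-knots operations on $\Embfrmn$ correspond to the loop-concatenation operations on $\Omega^{m+1}$, and these are unchanged by passing from the disc-relative model to the sphere model and by quotienting by $\OO_{n+1}$. What requires care is that the $E_{m+1}$- and $\OO_{m+1}$-actions commute strictly enough (or up to coherent homotopy) to define an honest $E_{m+1}^{\OO_{m+1}}$-algebra structure; this should follow because on the loop-space side $\OO_{m+1}$ acts by rotating the cube $\R^{m+1}$ while $E_{m+1}$ is little-cubes concatenation inside it, and these are manifestly compatible, so the equivalence transports a genuine framed little discs structure back to $\Embfrmn$.

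The step I expect to be the main obstacle is the second one: verifying that Hatcher's domain-rotation action matches the loop-coordinate rotation on the smoothing-theory target. The subtlety is that the scanning equivalence underlying Theorem~\ref{th:b} was constructed via the immersion-theoretic / Morlet--Burghelea--Lashof machinery rather than by an explicit geometric scanning map, so one must either produce an explicitly $\OO_{m+1}$-equivariant version of that construction or argue that the action is detected on enough homotopy-theoretic invariants to pin it down. I would handle this by exhibiting the equivalence as a zig-zag of $\OO_{m+1}$-equivariant maps --- in particular choosing the basepoint equatorial knot and its collar $\OO_{m+1}$-equivariantly, and using that $\OO_{m+1}\to\Diff(S^m)$ and $\OO_{m+1}\to\Diff(D^{m+1})$ act through standard linear models that thread through each stage of the smoothing-theory comparison --- so that equivariance is manifest rather than checked after the fact.
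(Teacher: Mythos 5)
Your step (1) — identifying the target with the one from Theorem~\ref{th:b} — is essentially Remark~\ref{r:deloop} of the paper (via Proposition~\ref{p:spher_emb} and $\Homeo(S^n\mmod S^m)\simeq\TOP_{n,m}$, $\Homeo(S^n\mmod *)\simeq\TOP_n$); note there is no ``loop-space shift'', the identification is $\OO_n\bbslash\TOP_n/\TOP_{n,m}\simeq\OO_{n+1}\bbslash\Homeo(S^n)/\Homeo(S^n\mmod S^m)$ before looping, and, crucially, it is only $\OO_m$-equivariant. This is why the strategy of ``bootstrapping from Theorem~\ref{th:b}'' cannot produce the $\OO_{m+1}$-statement: the Theorem~\ref{th:b} model is built from germs at a point of $D^m$ (spaces $\TOP_n$, $\TOP_{n,m}$ and discs rel boundary), on which only $\OO_m\times\OO_{n-m}$ acts; the extra rotations of $\OO_{m+1}$ simply do not act on that zigzag. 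Relatedly, your description of the target action is not the one in the statement: the $\OO_{m+1}$-action on $\Omega^{m+1}\bigl(\tEmb(S^m,S^n)/\!\!/\OO_{n+1}\bigr)$ is $g\star f=g\cdot f\circ g^{-1}$, i.e.\ rotation of the loop coordinates \emph{combined} with the nontrivial action on the base space $\tEmb(S^m,S^n)/\!\!/\OO_{n+1}$ (precomposition on embeddings, right multiplication/conjugation on $E\OO_{n+1}$), not loop-coordinate rotation with trivial coefficients. Matching Hatcher's action requires this twisted action, and pinning that down is exactly the content of the theorem.

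That is the genuine gap: your second step, which you correctly single out as the main obstacle, is left as an intention (``set up the comparison $\OO_{m+1}$-equivariantly from the start'') without the construction that makes it possible. The paper does this by abandoning the disc-relative model entirely and building a new zigzag on the sphere: $\Embfr(S^m,S^n)^\times/\OO_{n+1}\to\tEmb^{\underline{O_n}}(S^m,S^n)^\times/\OO_{n+1}$ (smoothing theory, Theorem~\ref{th:emb_fr_temb2}, where $n\neq4$ enters), and then the decisive reinterpretation: after replacing the germ of the microbundle data at each $x\in S^m$ by an actual embedding $S^m\hookrightarrow S^n$ fixing $x$, the triple $(f,F,H)$ — with $F$ giving an element of $\OO_{n+1}$ for each $x\in S^m$ and $H$ a radial family over the collar direction — becomes precisely a (radial) map of pairs $(D^{m+1},S^m)\to\bigl(\tEmb(S^m,S^n)^\times,\OO_{n+1}\bigr)$. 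Relaxing radiality, inserting $E\OO_{n+1}$, and quotienting by $\Map(D^{m+1},\OO_{n+1})$ then yields $\Omega^{m+1}$ of the homotopy quotient, and every map in this chain is visibly equivariant for the commuting $\OO_{n+1}$- and $\OO_{m+1}\times\OO_{n-m}$-actions, which is what makes the Hatcher action match the twisted loop action. Your scanning heuristic (points of $S^m$ plus a collar coordinate giving $D^{m+1}$) is the right geometric picture, but without the germ-to-embedding replacement, the appearance of $\OO_{n+1}$ as the boundary condition, and the $E\OO_{n+1}$ device for forming the homotopy quotient, the argument does not close. The $E_{m+1}$-compatibility, by contrast, is correctly handled the way you suggest: it is inherited from~\eqref{eq_th_b_2} once the targets are identified.
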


The space $\tEmb(S^m,S^n)$ above is the space of topological locally flat embeddings $S^m\hookrightarrow S^n$ pointed in the 
equatorial inclusion $S^m\subset S^n$, see Subsection~\ref{ss:spaces} for a precise
definition. The $\OO_{m+1}$-action on the right-hand side of~\eqref{eq_th_c} is defined in Subsection~\ref{ss:O_action_loops}. 
Note that for $2\neq n-m\geq 1$, the 
space $\tEmb(S^m,S^n)$ is connected. The case $n-m\geq 3$, $n\geq 5$ is due to Stallings~\cite{Stallings} (with a simpler proof by Brakes~\cite{Brakes} covering the remaining situation $(n,m)=(4,1)$, see also~\cite{Gluck}), while the case $n-m=1$ is the famous generalized Schoenflies 
problem solved independently by Brown and Mazur  \cite{Brown,Mazur}, see also~\cite{Morse}. 

The case $n=m$ of this version of the delooping implicitly appeared in Cerf's work~\cite{Cerf_diff}. In the latter case
the space $\tEmb(S^n,S^n)$ is the group $\Homeo(S^n)$ of homeomorphisms of $S^n$ and it has two components since $\pi_0\Homeo(S^n)=\pi_0\Homeo(\R^n)$. We therefore obtain the 
following.

\begin{corollary}\label{cor:dif_fr_discs}
The group $\Diffn$ admits a  framed discs operad
$E_{n+1}^{\OO_{n+1}}$-action  by means of the equivalence
$\Diffn\simeq \Omega^{n+1}\left(\Homeo(S^n)/\OO_{n+1}\right),\,n\neq 4.$
\end{corollary}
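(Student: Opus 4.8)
The plan is to deduce this directly from Theorem~\ref{th:c} by specializing to the case $n=m$. First I would observe that $\Emb^{fr}_\partial(D^n,D^n)$ is, up to equivalence, the same as the diffeomorphism group $\Diffn$: a framed self-embedding of the disc relative to the boundary is a diffeomorphism onto its image together with a framing of the normal bundle, but the normal bundle of a codimension-zero embedding is zero-dimensional, so the framing data is (contractibly) trivial and $\Emb^{fr}_\partial(D^n,D^n)\simeq\Emb_\partial(D^n,D^n)=\Diffn$, the last identification because a self-embedding of the disc fixing the boundary is automatically surjective (its image is open by invariance of domain and closed by compactness). Since $n=m$, the codimension condition $n-m\neq 2$ in Theorem~\ref{th:c} is satisfied, and the only hypothesis we must retain is $n\neq 4$. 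I would state this identification carefully, citing Subsection~\ref{ss:spaces} for the precise definition of the framed embedding space, so that the reader sees the correspondence between the two sides.

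Next I would identify the right-hand side of \eqref{eq_th_c} in this case. When $m=n$, the space $\tEmb(S^n,S^n)$ of topological locally flat self-embeddings of $S^n$, pointed at the identity inclusion, is by definition the group $\Homeo(S^n)$ of homeomorphisms of the $n$-sphere, exactly as noted in the paragraph preceding the corollary (a locally flat embedding of $S^n$ into itself is a homeomorphism onto its image, and the image is open and closed, hence all of $S^n$). Therefore the right-hand side of \eqref{eq_th_c} becomes $\Omega^{n+1}\left(\Homeo(S^n)/\!\!/\OO_{n+1}\right)$, and Theorem~\ref{th:c} gives an $\OO_{n+1}$-equivariant equivalence
\[
\Diffn\simeq_{\OO_{n+1}}\Omega^{n+1}\left(\Homeo(S^n)/\!\!/\OO_{n+1}\right),\qquad n\neq 4.
\]
Writing $/$ for the homotopy quotient $/\!\!/$ (the action of $\OO_{n+1}$ on $\Homeo(S^n)$ being free up to homotopy in the relevant sense, or simply adopting the homotopy-quotient convention), this is the displayed equivalence in the statement.

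Finally, I would transport the operad action. Theorem~\ref{th:c} asserts that the equivalence \eqref{eq_th_c} is simultaneously $\OO_{m+1}$-equivariant (Hatcher's action on the left, the action of Subsection~\ref{ss:O_action_loops} on the right) and compatible with the $E_{m+1}$-action (Budney's action on the left). These two compatible actions together constitute precisely an action of the framed little discs operad $E_{m+1}^{\OO_{m+1}}$, since that operad is built from $E_{m+1}$ and $\OO_{m+1}$ in the standard semidirect-product fashion; hence $\Omega^{m+1}\left(\tEmb(S^m,S^n)/\!\!/\OO_{n+1}\right)$ carries a canonical $E_{m+1}^{\OO_{m+1}}$-action (Hatcher's $\OO_{m+1}$ coming from precomposition, Budney's $E_{m+1}$ from the iterated loop structure), and the equivalence endows $\Diffn$ with such an action. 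Setting $m=n$ yields the corollary. I do not anticipate a genuine obstacle here: the corollary is a formal specialization, and the only point requiring a sentence of care is the identification $\Emb^{fr}_\partial(D^n,D^n)\simeq\Diffn$ and the dual identification $\tEmb(S^n,S^n)=\Homeo(S^n)$, both of which are immediate from invariance of domain. If anything subtle arises it would be checking that the $E_{n+1}^{\OO_{n+1}}$-structure on the target is the ``expected'' one rather than merely some abstract action, but Theorem~\ref{th:c} already packages exactly this compatibility, so nothing new is needed.
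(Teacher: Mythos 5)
Your proposal is correct and follows the paper's own route: the corollary is obtained exactly by specializing Theorem~\ref{th:c} to $m=n$, using $\Embfr_\partial(D^n,D^n)\simeq\Diffn$ and $\tEmb(S^n,S^n)=\Homeo(S^n)$ (the identifications made in the paragraph preceding the corollary), with the $E_{n+1}^{\OO_{n+1}}$-structure on the target supplied by the framed-loop-space formalism of Subsection~\ref{ss:O_action_loops}. Nothing essential differs from the paper's argument.
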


 \begin{remark}\label{r:deloop}
For $n-m\neq 2$, $\tEmb(S^m,S^n)\simeq \Homeo(S^n)/\Homeo(S^n \mmod S^m)$, while we have the same for the component
of the trivial knot $\tEmb(S^m,S^{m+2})_*
\simeq \Homeo(S^{m+2})/\Homeo(S^{m+2}\mmod S^m)$, see Proposition~\ref{p:spher_emb}. On the other hand, $\Homeo(S^n\mmod *)=\Homeo(\R^n)\simeq\TOP_n$ and $\Homeo(S^n\mmod S^m)=\Homeo(\R^n\mmod \R^m)\simeq\TOP_{n,m}$, see Subsection~\ref{ss:groups}. Therefore,\footnote{For the
$\OO_m$-action on these spaces, see Section~\ref{s:proof_c}.}
\[
\OO_n\bbslash\TOP_n/\TOP_{n,m}\,\, \simeq_{\OO_m} \,\, \OO_{n+1}\bbslash\Homeo(S^n)/\Homeo(S^n\mmod S^m)
\]
and the $\TOP$-version of the delooping~\eqref{eq_th_b_2} $\OO_m$-equivariantly agrees with \eqref{eq_th_c}.
\end{remark}

\subsection{Organization of the paper and other results}\label{ss:org}

Section~\ref{s:examples} explains different flavors of Theorem~\ref{th:b} for the cases of low and high codimension and ambient dimension.  In particular, we state  Theorem~\ref{th:d4}, 
which is a PL version of Theorem~\ref{th:b} for the ambient dimension $n=4$. Its Corollary~\ref{cor:d2_d4} states that only the trivial smooth knot $S^2\hookrightarrow S^4$ is PL trivial.

Sections~\ref{s:conv}, \ref{s:operads}, \ref{s:loops}, and~\ref{s:imm} cover the necessary background and have no new results. Most of the time we work in the category of simplicial sets. Section~\ref{s:conv}
describes all the mapping spaces that we use as well as simplicial groups $\TOP_n$, $\PL_n$, etc. It also covers the isotopy extension theorems and the Alexander trick. 
Section~\ref{s:operads} covers the operads that we use and their action on the spaces that have already been introduced at this point. Section~\ref{s:loops} makes it explicit the construction of loop spaces,
homotopy fibers, and pullbacks as well as the operad action on them. Section~\ref{s:imm} recalls the Smale-Hirsch type immersion theory in all the three categories: smooth, topological, and PL.

In Section~\ref{s:squares}, Theorems~\ref{th:square_diff} and~\ref{th:square_emb} restate well-known smoothing theory results relating diffeomorphism, homeomorphism, embedding and (formal) immersion spaces in terms of homotopy cartesian squares, which are used  in Sections~\ref{s:fr_homeo} and~\ref{s:fr_embed}. 

Section~\ref{s:fr_homeo} introduces and studies the group $\Homeo_\partial^{\underline{O_n}}(N)$ of $\underline{O_n}$-framed homeomorphisms of a smooth $n$-manifold~$N$ relative to the boundary.
Its points are triples~$(f,F,H)$, where~$f$ is a homeomorphism of~$N$, $F\colon TN\to TN$ is an isomorphism of the tangent vector bundle lifting~$f$, $H\colon TN\times [0,1]\to TN$ is a path in isomorphisms
of the topological tangent microbundle over~$f$  between~$F$ and the one induced by~$f$.  Theorem~\ref{th:diff_compos}    states that the natural inclusion of groups $\Diff_\partial(N)
\to \Homeo_\partial^{\underline{O_n}}(N)$ is an equivalence provided $n\neq 4$.   
 The result is interesting in itself as it provides a direct relation between the 
diffeomorphism and homeomorphism groups of a manifold. We state and prove an analogous result in the PL category though with a more limited application: $N$ must be a compact codimension zero
submanifold of~$\R^n$. 

In Section~\ref{s:proof_a} we prove Theorem~\ref{th:a}. We also show that its TOP part has a refinement, Theorem~\ref{th:a_top}, stating that the equivalences in Theorem~\ref{th:a}(a-b) are equivalences 
of  $E_{n+1}^{\OO_n}$-algebras. The crucial intermediate space in the zigzag of equivalences is the space~$\Homeo_\partial^{\underline{O_n}}(D^n)$.

Section~\ref{s:fr_embed} defines and studies the space $\tEmb_{\partial_0M}^{\underline{V_{n,m}}}(M,N)$ of $\underline{V_{n,m}}$-framed topological locally flat embeddings $M\hookrightarrow N$
relative to the boundary for a pair of smooth manifolds $M\subset N$. Its points are triples~$(f,F,H)$, where~$f\colon M\hookrightarrow N$ is a topological locally flat embedding,~$F\colon TM\to TN$ is a monomorphism of tangent vector bundles lifting~$f$,
and~$H\colon TM\times [0,1]\to TN$ is a path in monomorphisms of topological microbundles over~$f$ between $F$ and the one induced by~$f$.
Theorem~\ref{th:emb_fr_temb} states that the natural inclusion $\Emb_\partial(M,N)\to \tEmb_{\partial_0M}^{\underline{V_{n,m}}}(M,N)$ is an equivalence, provided~$n\neq 4$.
 We also study a PL version of this construction but only for the spaces that we need to use
in the proof of Theorem~\ref{th:b}.

In Section~\ref{s:proof_B_TOP} we state and prove an $\OO_m\times\OO_{n-m}$-equivariant refinement of the TOP part of Theorem~\ref{th:b}. One of the motivations for us to introduce this
refinement is to emphasize that the $\OO_{m+1}$-action appearing in the delooping of $\Emb_\partial^{fr}(D^m,D^{n})$ (see Theorem~\ref{th:c})   should not be confused with its natural $\OO_{n-m}$-action, compare with 
\cite[Corollary~1.2]{Sakai}.

In Section~\ref{s:proof_B_PL} 
  we prove the PL part of Theorem~\ref{th:b} and Theorem~\ref{th:d4}. 

In Section~\ref{s:proof_c} we state and prove Theorem~\ref{th:c_refined}, which is an $\OO_{m+1}\times\OO_{n-m}$-equivariant refinement of Theorem~\ref{th:c}.

\subsection{Acknowledgement}\label{ss:ackn}
The authors thank David Auckly, Ryan Budney, Thomas Goodwillie, Allen Hatcher, John Klein, Mark Powell, Frank Quinn, Daniel Ruberman, Peter Teichner, and Michael Weiss for communication. 
The authors are especially indebted to Alexander Kupers for his competent help on numerous references in  smoothing theory and four-dimensional topology.

\section{Examples and applications}\label{s:examples}

\subsection{Codimension $n-m\leq 1$}\label{ss:cod1}

One has that $\pi_0\Diffn=\Theta_{n+1}=\pi_{n+1}(\PL/\OO)=\pi_{n+1}(\PL_n/\OO_n)$, $n\neq 4$, is Milnor's group of $h$-cobordisms of homotopy $(n+1)$-spheres (in particular $\Theta_4=0$), see \cite{KervaireMilnor, BL_diff, KS_essays}. Any smooth codimension one knot $D^{n-1}\hookrightarrow D^n$, $n\neq 4$, is isotopic to a reparameterization of the trivial knot. Moreover,
$\pi_0\Emb_\partial(D^{n-1},D^n)=\Theta_n$, $n\neq 4$. For $n=2$ and~$3$, this follows from the fact that $\pi_0 \Diff_\partial(D^1)=\pi_0\Diff_\partial(D^2)=0$ \cite{Smale_diff}. For $n\geq 6$, this is stated and proved in~\cite[Theorem~5.5]{Budney_family} using~\cite{Cerf_diff}. To see that $\pi_0\Emb_\partial(D^4,D^5)=\Theta_5=0$, one has to use the fact that the set of pseudoisotopy classes of
 diffeomorphisms of $D^4$ relative to the boundary
is trivial by~\cite[Theorem~1]{Kreck} (applied to the case of $S^4$).    
 For codimension one, the delooping formulae can be simplified since $\TOP_{n,n-1}\simeq \OO_1 \simeq \PL_{n,n-1}$ by Lemma~\ref{l:top_n_n-1}:
\[
\Emb_\partial(D^{n-1},D^n)\simeq \Embfr_\partial(D^{n-1},D^n) \simeq \Omega^n\left(\TOP_n/\OO_n\right) \simeq \Omega^n\left(\PL_n/\OO_n\right),
\,\,\, n\neq 4.
\eqno(\numb)\label{eq:cod1a}
\]
\[
\Ebar_\partial(D^{n-1},D^n)\simeq \Omega^n \TOP_n \simeq \Omega^n\PL_n,
\,\,\, n\neq 4.
\eqno(\numb)\label{eq:cod1b}
\]
It means the delooping is compatible with the Cerf lemma \cite[Appendix, Section~5, Proposition~5]{Cerf_lemme}, \cite[Proposition~5.3]{Budney_family}:
\[
\Diffn\simeq\Omega\Emb_\partial(D^{n-1},D^n);
\]
\[
\Dbarn\simeq \Omega\Ebar_\partial(D^{n-1},D^n).
\]

\subsection{Codimension $n-m=2$}\label{ss:cod2}
Any smooth  knot $D^{n-2}\hookrightarrow D^n$, $n\neq 4$, is invertible (as an element in $\pi_0$) if and only if its complement is a homotopy 
circle, if and only if it is isotopic to a (re)parameterization of the trivial knot, if and only if it is trivial as a PL knot, if and only if it is trivial as a topological locally flat knot
\cite[Theorem 28.1(i)]{Papa}, \cite[Theorem~(3)]{Levine}, \cite[Corollary~3.1]{Wall_spheres},
 \cite[Theorem~16.1]{Wall_surgery}, \cite[Theorem~1.1]{Shane}. Moreover, $\pi_0\Emb_\partial(D^{n-2},D^n)^{\times}=\Theta_{n-1}$, $n\neq 4$ \cite[Proposition~5.11]{Budney_family}. 

The groups  $\TOP_n$, $\PL_n$, $\TOP_{n,m}$, $\PL_{n,m}$ are difficult to study. The Morlet delooping and its generalization can be useful to understand these groups. Here is one immediate corollary.

\begin{theorem}\label{th:top_cod2}
For $n\neq 4$,  $\TOP_{n,n-2}\simeq\PL_{n,n-2}$. Moreover, for $n\geq 4$, some of the homotopy groups of both $\TOP_{n,n-2}$ and $\PL_{n,n-2}$ are not finitely generated implying  $\TOP_{n,n-2}\not\simeq\OO_2 \not\simeq \PL_{n,n-2}$.
\end{theorem}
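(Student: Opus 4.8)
The plan is to argue by contradiction, combining the codimension-two delooping of Theorem~\ref{th:b} — resp.\ of Theorem~\ref{th:d4} when $n=4$ — with the connectivity contained in the Morlet equivalence \eqref{eq:diff_deloop}, and with the known failure of finite generation of some homotopy group of a space of codimension-two knots. Suppose $\TOP_{n,n-2}\simeq\OO_2$; the argument for $\PL_{n,n-2}$ is the same, using the PL delooping. By \eqref{eq_th_b_3}, $\Ebar_\partial(D^{n-2},D^n)^\times\simeq\Omega^{n-1}\VV^t_{n,n-2}$ with $\VV^t_{n,n-2}=\TOP_n/\TOP_{n,n-2}$, so $\pi_k\Ebar_\partial(D^{n-2},D^n)^\times\cong\pi_{k+n-1}\VV^t_{n,n-2}$, and the fibration $\Ebar_\partial(D^{n-2},D^n)\to\Emb_\partial(D^{n-2},D^n)\to\Omega^{n-2}\VV_{n,n-2}$ (restricted to the invertible components; here $m=n-2$) together with the finite generation of $\pi_*\VV_{n,n-2}$ in each degree relates $\pi_*\Emb_\partial(D^{n-2},D^n)^\times$ to $\pi_*\VV^t_{n,n-2}$ degree by degree.

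The point is that, under the assumption, $\pi_*\VV^t_{n,n-2}$ is forced to be finitely generated. Indeed, $\pi_j\OO_2=0$ for $j\ge 2$, so the long exact sequence of $\TOP_{n,n-2}\to\TOP_n\to\VV^t_{n,n-2}$ shows that for $j\ge 3$ one has $\pi_j\VV^t_{n,n-2}\cong\pi_j\TOP_n$, while $\pi_2\VV^t_{n,n-2}$ is an extension of a subgroup of $\pi_1\OO_2=\Z$ by $\pi_2\TOP_n$. Now $\pi_*\TOP_n$ is finitely generated in each degree $\le n$: by \eqref{eq:diff_deloop}, $\Diffn\simeq\Omega^{n+1}(\TOP_n/\OO_n)$, so $\TOP_n/\OO_n$ is $n$-connected, whence $\pi_k\TOP_n\cong\pi_k\OO_n$ for $k<n$ and $\pi_n\TOP_n$ is a quotient of $\pi_n\OO_n$; in the range where $B\Diffn$ is of finite type \cite{Kupers} it is finitely generated in all degrees, and likewise for $\PL_n$. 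Consequently $\pi_*\Ebar_\partial(D^{n-2},D^n)^\times$, and then $\pi_*\Emb_\partial(D^{n-2},D^n)^\times$, would be finitely generated in the relevant degrees.

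This is contradicted by the third input. For $n=4$ one stays in low degrees — so that only the $n$-connectivity of $\PL_4/\OO_4$, not any finite-type theorem, is used — and invokes the four-dimensional results of Budney and Gabai: via the isotopy-extension fibration $\Diff_\partial(D^4\mmod D^2)\to\Diff_\partial(D^4)\to\Emb_\partial(D^2,D^4)_0$, whose fiber is a diffeomorphism group of the complement $D^2\times S^1\times[0,1]\cong S^1\times D^3$ of a tubular neighbourhood of the standard disc, $\pi_1$ of the trivial-knot component of $\Emb_\partial(D^2,D^4)$ receives a non-finitely generated subgroup (a copy of $\bigoplus_{\mathbb N}\Z$) from $\pi_0\Diff(S^1\times D^3\text{ rel }\partial)$; since by Corollary~\ref{cor:d2_d4} this component is all of $\Emb_\partial(D^2,D^4)^\times$, this contradicts the preceding paragraph and gives $\PL_{4,2}\not\simeq\OO_2$. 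For $n$ odd one may instead feed in the non-finite generation of the Haefliger--Levine knot concordance groups $C_{n-2}$ \cite{Haefliger,Levine}, which propagates to a homotopy group of $\Emb_\partial(D^{n-2},D^n)^\times$; for the remaining $n$ one uses higher-dimensional analogues of the Budney--Gabai phenomenon for the diffeomorphism group of the complement (cf.\ \cite{Watanabe}), together with the finite-type results to keep $\pi_*\TOP_n$ finitely generated.

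The main obstacle is precisely this last input: producing, uniformly over all $n\ge 4$, one concretely non-finitely generated homotopy group of the codimension-two smooth (respectively PL) embedding space — equivalently of the diffeomorphism group of the complement — and locating it in a homological degree where $\pi_*\TOP_n$ (respectively $\pi_*\PL_n$) is already known to be finitely generated; for small $n$ this forces working in low degrees with the four-dimensional results, and for even $n$ in an intermediate range some care is needed to choose the right invariant. By contrast the smoothing-theory half of the argument is immediate, using nothing about $\Diffn$ beyond the connectivity contained in \eqref{eq:diff_deloop} — which is why the conclusion holds for every $n\ge 4$, including $n=4$ where only the PL statement is available.
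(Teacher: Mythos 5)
Your overall skeleton is the same as the paper's (delooping of the codimension-two embedding space, finite generation of $\pi_*\TOP_n$ and $\pi_*\PL_n$, and a non-finitely generated homotopy group of the knot space), but the two decisive inputs are missing. First, the uniform third input: the paper feeds in Budney--Gabai's theorem that $\pi_{n-3}\Emb_\partial(D^{n-2},D^n)_*$ has infinite rank for \emph{every} $n\geq 4$ \cite{BG2} (together with the Hatcher--Wagoner--based fact that $\pi_1\Emb_\partial(D^{n-2},D^n)_*$ contains $\oplus_{i=1}^\infty\Z/2\Z$ for $n\geq 6$ \cite{HW,BG1}), and this is exactly the ingredient you admit you cannot produce uniformly. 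Your proposed substitutes do not work: the Haefliger--Levine codimension-two concordance groups are a $\pi_0$-phenomenon concerning non-invertible knots, whereas the delooping only sees the invertible (equivalently, topologically/PL trivial) components, whose $\pi_0$ is $\Theta_{n-1}$, a finite group -- so nothing non-finitely generated can enter at the $\pi_0$ level; and ``higher-dimensional analogues of the Budney--Gabai phenomenon (cf.\ Watanabe)'' is not an established result one can cite. Second, your method cannot reach $\TOP_{4,2}$ at all: for $n=4$ only the PL delooping (Theorem~\ref{th:d4}) exists, since there is no topological smoothing theory in dimension four, and the paper explicitly handles $\TOP_{4,2}$ by a separate argument of Randal-Williams ($\pi_3\TOP_{4,2}$ has infinite rank \cite{Randal_S2_S4}). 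As written, your proof therefore leaves the $\TOP_{4,2}$ half of the $n=4$ case of the theorem unproved.

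Two smaller points. The contradiction hypothesis $\TOP_{n,n-2}\simeq\OO_2$ only yields the weaker conclusion $\TOP_{n,n-2}\not\simeq\OO_2$, not the asserted non-finite generation of some homotopy group; you should instead assume all $\pi_*\TOP_{n,n-2}$ finitely generated and run the same long exact sequences (extensions of finitely generated groups are finitely generated), which is how the paper extracts the precise statement that $\pi_{2n-5}\TOP_{n,n-2}$ and $\pi_{2n-5}\PL_{n,n-2}$ have infinite rank. Also, \eqref{eq:diff_deloop} does not imply that $\TOP_n/\OO_n$ is $n$-connected (e.g.\ $\pi_3(\TOP_n/\OO_n)=\Z/2\Z$ for $n\geq 5$); what the argument actually needs is finite generation of $\pi_{2n-4}\TOP_n$, resp.\ $\pi_{2n-4}\PL_n$, which is Kupers' theorem for $n\geq 6$ \cite{Kupers} and, in the low-dimensional cases, the vanishing $\pi_6(\TOP_5/\OO_5)=\pi_6(\PL_5/\OO_5)=\Theta_6=0$ and $\pi_4(\PL_4/\OO_4)=0$ -- not a connectivity statement in degrees $\leq n$, which in any case would not cover the relevant degree $2n-4>n$.
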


 For the case  of $\PL_{4,2}$ in the proof of the second statement
below, we use the delooping from Theorem~\ref{th:d4}. The case of $\TOP_{4,2}$  does not follow from the delooping. It is  obtained by Randal-Williams in his recent note~\cite{Randal_S2_S4}, that $\pi_3 \TOP_{4,2}$ is of an infinite rank (see  footnote~\ref{foot5}). In other words, it contains as a subgroup an infinite sum of $\Z$'s.

\begin{proof}
The first statement follows from Theorem~\ref{th:b}, the equivalence
\begin{equation}\label{eq:KS}
\TOP_n/\PL_n
\simeq\begin{cases} K(\Z/2\Z,3),& n\geq 5;\\
*,& n\leq 3;
\end{cases}
\end{equation}
 see \cite[Essay~V, \S5, Subsection~5.0]{KS_essays}, and the $(n-2)$-connectedness of $\TOP_{n,n-2}/\OO_2$, $n\neq 4$, and  $\PL_{n,n-2}/\OO_2$ proved by Kirby-Siebenmann \cite[Theorem~B]{KS_codim2}
 and Wall~\cite[Lemma~5]{Wall_codim2}, respectively. 
  (This is related to and is deduced from the fact that any locally flat 
$(n-2)$-submanifold in a topological $n$-manifold has a unique up to an ambient isotopy normal bundle \cite[Theorem~A]{KS_codim2}. The latter fact in dimension four is proved
by Freedman and Quinn \cite[Section~9.3]{FQ} implying by the argument of \cite[Proof of Theorem~B]{KS_codim2} that $\TOP_{4,2}/\OO_2$ is also 2-connected. The PL version of this theory is due
to Wall~\cite{Wall_codim2}.)

Regarding the second statement, on one side, for $n\geq 6$, $\TOP_n$ and $\PL_n$ have finitely generated homotopy groups~\cite{Kupers}.\footnote{The reference excludes the case $n=7$. However, in personal communication A.~Kupers confirmed that the result holds for $n=7$ as well. The proof will appear in~\cite{KraKup}.} On the other side, some of the homotopy groups of the component  $\Emb_\partial(D^{n-2},D^n)_*$  of the trivial knot
are not finitely generated \cite{BG2,BG1}, \cite[Example~7.3]{BKK}. Thus,  $\TOP_{n,n-2}$ and $\PL_{n,n-2}$ have non-finitely generated homotopy groups.

To be precise, it is known that $\pi_1\Emb_\partial(D^{n-2},D^n)_*$, $n\geq 6$, contains as a direct summand an infinite sum of $\Z/2\Z$'s, which is a consequence of 
\cite[Corollary~5.5 (p.273), Corollary 4.6 (p.270), Note on p.230]{HW}, \cite[Corollary~3.11]{BG1} and the fiber sequence $\Diffn\to \Diff_\partial(S^1\times D^{n-1})\to \Emb_\partial(D^{n-1},S^1\times D^{n-1})$. It was also recently shown by Budney and Gabai that $\pi_{n-3}\Emb_\partial(D^{n-2},D^n)_*$, $n\geq 4$, has  infinite rank~\cite[Section~3]{BG2} (see also~\cite{Yoshioka}). These two facts imply that  $\pi_{n-1}\TOP_{n,n-2}$ and $\pi_{n-1}\PL_{n,n-2}$ contain $\oplus_{i=1}^\infty\Z/2\Z$ as a direct summand for $n\geq 6$; and that 
$\pi_{2n-5}\TOP_{n,n-2}$ and $\pi_{2n-5}\PL_{n,n-2}$ are of an infinite rank for $n\geq 5$ and $n\geq 4$, respectively. 
We use here the fact that $\pi_6(\TOP_5/\OO_5)=\pi_6(\PL_5/\OO_5)=\pi_4(\PL_4/\OO_4)=0$.\footnote{We were pointed out that in a recent short note Randal-Williams discusses a related statement that $\pi_1\Emb(S^{n-2},S^n)$ and $\pi_n\tEmb(S^{n-2},S^n)$ are not finitely generated~\cite{Randal_S2_S4}. Note that the shift of degrees is exactly due to the delooping of Theorems~\ref{th:b} and~\ref{th:c}. He proves this statement for $n=4$ by a separate more subtle argument. As a consequence of the fiber sequence \cite[Equation~(*)]{Randal_S2_S4}, $\pi_4\VV_{4,2}^t$ is not finitely generated implying the same
for $\pi_3\TOP_{4,2}$ (one has that  
 $\pi_4\TOP_4$ is finitely generated by \cite[Theorem~8.7A]{FQ}).\label{foot5}}
\end{proof}

\begin{remark}\label{r:fg}
Away the codimension $n-m=2$ and the unknown ambient dimensions $n=4$ and~$5$, all homotopy groups of $\TOP_{n,m}$ and $\PL_{n,m}$ are finitely generated, which is proved by the same argument using the fact that the homotopy groups of $\Diff_\partial(D^n)$ and $\Emb_\partial(D^m,D^n)$ are finitely generated \cite{Kupers,KraKup}. (For $n-m>2$ it is easily proved by means of the Goodwillie-Weiss manifold functor calculus, see Subsection~\ref{ss:cod3}.)
\end{remark}

%
%

\subsection{Ambient dimension $n\leq 3$}\label{ss:d3}

For $1\leq m\leq n\leq 3$, the space $\Emb_\partial(D^m,D^n)^{\times}$ is contractible. It is obvious for $n=1$; due to Smale \cite{Smale_diff}
for $n=2$; and due to Hatcher \cite{Hatcher_diff} for $n=3$. As a consequence, $\TOP_n\simeq \OO_n\simeq\PL_n$, $n\leq 3$, 
and $\TOP_{n,m}\simeq\OO_{n-m}\simeq
\PL_{n,m}$, $0\leq m\leq n\leq 3$. Indeed, $\pi_i(\TOP_n/\OO_n)=\pi_i(\PL_n/\OO_n)=0$ for $0\leq i\leq n\leq 3$ by the smoothing theory (since any topological/PL manifold of dimension $\leq 3$ has a unique smooth structure \cite[Essay~5, \S5]{KS_essays}), while the higher homotopy groups are trivial due to contractibility
of $\Diffn$, $n\leq 3$. Finally, Hatcher's result  $\Emb_\partial(D^1,D^3)^{\times}\simeq *$ \cite[Appendix]{Hatcher_diff} together with our delooping implies
$\PL_{3,1}/\OO_2\simeq *\simeq \TOP_{3,1}/\OO_2$.

\subsection{Ambient dimension $n=4$}\label{ss:d4}

For any $1\leq m\leq n\neq 4$, a smooth knot $D^m\hookrightarrow D^n$ represents an invertible element in $\pi_0\Embmn$ if and only if it is trivial
as a PL knot, if and only if it is trivial as a topological locally flat one, if and only if its complement is a homotopy $S^{n-m-1}$. Moreover, for $n-m\neq 2$, $n\neq 4$, all smooth knots satisfy these conditions, see Subsection~\ref{ss:cod1} and~\cite{Stallings,Zeeman}. For $n=4$, $m=1$ and~$4$, the same is obviously true. However, for $n=4$, $m=2$ and $3$, these  conditions may be different. Denote by $\Emb_\partial(D^m,D^4)^\star$, 
$\Emb_\partial^{fr}(D^m,D^4)^\star$, $\Ebar_\partial(D^m,D^4)^\star$ the unions of path-components of $\Emb_\partial(D^m,D^4)$, 
$\Emb_\partial^{fr}(D^m,D^4)$, $\Ebar_\partial(D^m,D^4)$, respectively, which correspond to PL trivial knots $D^m\hookrightarrow D^4$ (the framing
and the path in $\Omega^mV_{4,m}$ being disregarded). Here is the PL version of Theorem~\ref{th:b} for $n=4$. (The $\TOP$ version of the smoothing theory is not available in dimension four.)

\begin{theorem}\label{th:d4}
For any $1\leq m\leq 4$,
\begin{gather}
\Emb_\partial(D^m,D^4)^\star\simeq \Omega^m\hofiber (\VV_{4,m}\to \VV^{pl}_{4,m}),
\label{eq_th_d4_1}\\
\Emb_\partial^{fr}(D^m,D^4)^\star\simeq\Omega^{m+1}(\VV^{pl}_{4,m}/\!\!/\OO_4),
\label{eq_th_d4_2}\\
\Ebar_\partial(D^m,D^4)^\star\simeq\Omega^{m+1}\VV^{pl}_{4,m}.
\label{eq_th_d4_3}
\end{gather}
\end{theorem}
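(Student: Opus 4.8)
The plan is to deduce Theorem~\ref{th:d4} by rerunning the proof of the PL part of Theorem~\ref{th:b}, but replacing the role of the topological category by the PL category throughout and keeping track of the components over which the argument is valid. The key point is that the PL smoothing theory \emph{is} available in dimension four (unlike the topological one), so the PL versions of the comparison results established earlier in the paper apply with $n=4$. Concretely, in Section~\ref{s:fr_embed} we established the PL analogue of Theorem~\ref{th:emb_fr_temb} for exactly the spaces needed in the proof of Theorem~\ref{th:b}: the natural inclusion of $\Emb_\partial(D^m,D^n)$ into the space of $\underline{V_{n,m}}$-framed PL locally flat embeddings is an equivalence onto the appropriate union of components, with no restriction $n\neq 4$. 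So I would first isolate, from the proof of the PL part of Theorem~\ref{th:b} (Section~\ref{s:proof_B_PL}), the precise chain of equivalences producing \eqref{eq_th_b_1}--\eqref{eq_th_b_2}--\eqref{eq_th_b_3} in the PL flavor, check that every ingredient (the PL isotopy extension theorem, the PL Smale--Hirsch immersion theory of Section~\ref{s:imm}, the PL Alexander trick, and the identifications $\Homeo$-style of the PL groups $\PL_n$, $\PL_{n,m}$ from Section~\ref{s:conv}) is stated and used with the dimension hypothesis $n\geq 1$ rather than $n\geq 5$, and then simply specialize $n=4$.

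The only genuinely new bookkeeping is the component issue, which is why the left-hand sides are decorated with $\star$ rather than $\times$. First I would recall that for $m=1$ and $m=4$ every smooth knot $D^m\hookrightarrow D^4$ is PL trivial (for $m=1$ by the codimension-three results of Subsection~\ref{ss:cod2}, for $m=4$ because $\Emb_\partial(D^4,D^4)\simeq\Diff_\partial(D^4)$ and the identity is PL standard), so in those two cases $\star$ means ``all components'' and the statement coincides with an unrestricted case already inside Theorem~\ref{th:b}, up to the replacement of $\TOP$ by $\PL$. The substantive cases are $m=2,3$. Here the point is that the delooping target $\Omega^{m+1}\VV^{pl}_{4,m}$, $\Omega^{m+1}(\VV^{pl}_{4,m}/\!\!/\OO_4)$, $\Omega^m\hofiber(\VV_{4,m}\to\VV^{pl}_{4,m})$ is connected (the relevant homotopy fiber sequences show the only obstruction to deforming a framed PL embedding to the trivial one lives in $\pi_0$ of a connected PL/smoothing-theory space), so the zigzag of equivalences built as in Section~\ref{s:proof_B_PL} lands exactly on the sub-union of components of the smooth embedding space that become PL-trivial after forgetting the framing and the path in $\Omega^m V_{4,m}$ — and that is precisely the definition of $\Emb_\partial(D^m,D^4)^\star$ etc. So I would make this matching of $\pi_0$ explicit: the equivalence of Theorem~\ref{th:emb_fr_temb}'s PL analogue restricts to $\pi_0$ as the map sending a smooth knot to its underlying PL locally flat knot together with a framing/path, and a class is in the image of (the loop space of) the relevant target iff the PL knot is standard.

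The main obstacle I expect is not conceptual but a careful audit of the literature behind the PL smoothing machinery in dimension four: one must be sure that every auxiliary result invoked in the PL part of Theorem~\ref{th:b} — in particular the PL immersion theory of Section~\ref{s:imm}, the PL version of the ``framed homeomorphisms'' comparison $\Diff_\partial(N)\to\Homeo^{\underline{O_n}}_\partial(N)$-type statement restricted (as stated in Section~\ref{s:fr_homeo}) to compact codimension-zero submanifolds of $\R^n$, and the identifications $\PL_4/\OO_4$, $\PL_{4,m}$ via $\Diff_\partial(D^4)$ and $\Emb_\partial(D^m,D^4)$ — is genuinely valid at $n=4$ and does not secretly rely on topological transversality or high-dimensional handle trading. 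Since the paper has already set up all of these PL statements uniformly in $n$ (this is exactly the point of restricting to codimension-zero submanifolds of $\R^n$ in Section~\ref{s:fr_homeo} and of the limited PL constructions in Section~\ref{s:fr_embed}), the proof of Theorem~\ref{th:d4} amounts to invoking the $n=4$ instance of those statements and then assembling the same homotopy-fiber-sequence diagram as in the proof of the PL part of Theorem~\ref{th:b}, now with attention to components; I would present it in that order and relegate the component identification for $m=2,3$ to the one explicit $\pi_0$ computation described above.
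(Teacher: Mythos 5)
Your strategy does work for \eqref{eq_th_d4_2} and \eqref{eq_th_d4_3}: these are obtained exactly as you say, by restricting the PD/PL zigzags of Section~\ref{s:proof_B_PL} (which rest on \cite[Theorem~3.1(b)]{BL_diff} and are valid in every dimension, including $n=4$) to the $\star$-components. The genuine gap is at \eqref{eq_th_d4_1}. You appeal to ``the PL analogue of Theorem~\ref{th:emb_fr_temb}'' for positive-codimension embeddings $D^m\hookrightarrow D^4$, claiming it was set up in Section~\ref{s:fr_embed}; it was not. Subsection~\ref{ss:PL_fr_temb} only constructs the $\underline{O_n}$-framed PD \emph{codimension-zero} space $\pdEmb^{\underline{O_n}}_{\partial D^m\times D^{n-m}}(D^m\times D^{n-m})$, precisely because PD embedding spaces in positive codimension are not available in the literature. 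For the same reason, even for $n\neq 4$ the PL half of \eqref{eq_th_b_1} is not proved by a direct zigzag but is deduced from the TOP half, and that route is closed at $n=4$ since topological smoothing theory fails there. So ``rerun the PL argument of Theorem~\ref{th:b} at $n=4$ and keep track of components'' does not produce the unframed delooping \eqref{eq_th_d4_1}.

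What is actually required is to derive \eqref{eq_th_d4_1} from \eqref{eq_th_d4_2} and \eqref{eq_th_d4_3}, case by case in $m$, and each case needs a concrete input beyond connectivity of the targets. For $m=3$ one uses $\Emb_\partial(D^3,D^4)^\star\simeq\Emb^{fr}_\partial(D^3,D^4)^\star$ and then identifies $\Omega^3\hofiber(\PD_{4,3}\to\PD_4/\OO_4)$ with $\Omega^3\hofiber(\PD_{4,3}/\OO_1\to\PD_4/\OO_4)$. For $m=2$ one first needs that smooth knots $D^2\hookrightarrow D^4$ have trivial normal bundle (Seifert surfaces), so the framed and unframed spaces agree, and then must prove $\Omega^2\hofiber(\PD_{4,2}\to\PD_4/\OO_4)\simeq\Omega^2\hofiber(\PD_{4,2}/\OO_2\to\PD_4/\OO_4)$; this is not formal: it uses the $4$-connectivity of $\PD_4/\OO_4$ (Freedman--Quinn, \cite[Theorem~8.3C]{FQ}) and the injectivity of $\pi_1\OO_2\to\pi_1\PD_{4,2}$, obtained by exhibiting $\OO_2$ as a homotopy retract of $\PD_{4,2}$ through $\OO_2\to\PD_{4,2}\to\GG_2$. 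For $m=1$ the passage from the $\Ebar$-delooping to the $\Emb$-delooping needs a separate argument: one quotients $\Ebar_{\partial D^1\times D^3}(D^1\times D^3)$ by the action of its submonoid $\Ebar_{\partial D^1\times D^3}(D^1\times D^3\mmod D^1\times 0)\simeq\Omega^2 S^3$, does the same on the PD side, and checks a $\pi_0$-connecting-homomorphism condition to commute the quotient with looping. None of this is subsumed by the single $\pi_0$-matching computation you propose, so as written your argument does not establish \eqref{eq_th_d4_1}.
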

We are able to prove only an equivalence of spaces, not even of $E_1$-algebras. However, at the level of $\pi_0$ the equivalences respect the natural concatenation  product. 
 The space $\Emb_\partial(D^1,D^4)$ has only one
path-component, and therefore, $\Emb_\partial(D^1,D^4)^\star=\Emb_\partial(D^1,D^4)$, 
$\Emb_\partial^{fr}(D^1,D^4)^\star=\Emb_\partial^{fr}(D^1,D^4)$, $\Ebar_\partial(D^1,D^4)^\star=\Ebar_\partial(D^1,D^4)$. One also has $\Emb_\partial(D^4,D^4)^\star=\Emb_\partial(D^4,D^4)
=\Diff_\partial(D^4)$, etc., since the space $\plHomeo_\partial(D^4)$ of PL homeomorphisms of $D^4$ is connected (in fact, contractible), see Lemma~\ref{l:alex}, and, therefore, any diffeomorphism of $D^4$ is PL trivial.
It is not known if $\Emb_\partial(D^3,D^4)$ is connected.
Non-connectedness would imply an existence of exotic smooth structure on $D^4$ and as a consequence on $S^4$ as well,
disproving the 4-dimensional Poincar\'e conjecture.  
 However, only the trivial smooth knot $D^3\hookrightarrow D^4$ is PL trivial. Indeed,
if we close a non-trivial knot $f$ to get the  spherical knot $\tilde f\colon S^3\hookrightarrow S^4$ corresponding to
 it, then $\tilde f$ bounds a smoothly exotic
but PL trivial $D^4$. However, any PL 4-manifold admits a unique compatible smooth structure. As a consequence  
$\Emb_\partial(D^3,D^4)^\star=\Emb_\partial(D^3,D^4)_*$ is the path-component of the trivial knot $D^3\subset D^4$. 
The spaces $\Emb_\partial^{fr}(D^3,D^4)^\star$
and  $\Ebar_\partial(D^3,D^4)^\star$ are preimages of this path-component. By contrast, any locally flat knot $D^3\hookrightarrow D^4$ is topologically trivial
by the generalized Schoenfiles theorem. It was also recently shown by Budney that $\pi_0 \Emb_\partial(D^3,D^4)$ is a group
\cite[Theorem~4.1]{Budney_stab}.

The problem of existence of PL trivial smooth knots $D^2\hookrightarrow D^4$ (or $S^2\hookrightarrow S^4$) seems to be overlooked by four-dimensional topologists.\footnote{Based on personal communication with David Auckly, Ryan Budney, Mark Powell, Frank Quinn, Daniel Ruberman, and Peter Teichner.} The following is an immediate corollary of Theorem~\ref{th:d4}. 

\begin{corollary}\label{cor:d2_d4}
 Among the isotopy classes of smooth knots $D^2\hookrightarrow D^4$ (or $S^2\hookrightarrow S^4$) only the trivial one is PL trivial.
\end{corollary}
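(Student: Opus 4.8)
The plan is to deduce Corollary~\ref{cor:d2_d4} from Theorem~\ref{th:d4} by a connectivity argument on the delooping target. First I would invoke equivalence~\eqref{eq_th_d4_3} in the case $m=2$, $n=4$, which reads
\[
\Ebar_\partial(D^2,D^4)^\star\simeq \Omega^{3}\VV^{pl}_{4,2}=\Omega^3(\PL_4/\PL_{4,2}).
\]
By definition the superscript~$\star$ picks out precisely the union of path-components of $\Ebar_\partial(D^2,D^4)$ (equivalently of $\Emb_\partial(D^2,D^4)$, the framing being disregarded) consisting of knots that are PL trivial, so $\pi_0$ of the left-hand side is the set of isotopy classes of PL-trivial smooth knots $D^2\hookrightarrow D^4$, pointed at the trivial one. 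Thus the statement to prove is equivalent to the assertion that
\[
\pi_0\,\Omega^{3}(\PL_4/\PL_{4,2})=\pi_3(\PL_4/\PL_{4,2})=0,
\]
i.e.\ that $\PL_4/\PL_{4,2}$ is $3$-connected.

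The key input is Wall's theorem, quoted in Subsection~\ref{ss:cod2}: $\PL_{n,n-2}/\OO_2$ is $(n-2)$-connected for every $n\geq 3$ (\cite[Lemma~5]{Wall_codim2}), so in particular $\PL_{4,2}/\OO_2$ is $2$-connected. I would then run this through the fibration sequence relating the PL Stiefel-type space to $\PL_4/\OO_4$. Concretely, from the tower
\[
\OO_2\to\PL_{4,2}\to \PL_4\to \VV^{pl}_{4,2}=\PL_4/\PL_{4,2}
\]
and the analogous smooth tower $\OO_2\to\OO_4\to\OO_4\to\VV_{4,2}$, one obtains a fibration
\[
\PL_{4,2}/\OO_2\to \PL_4/\OO_4\to \VV^{pl}_{4,2}
\]
(this is the PL analogue of the standard fibration $\VV_{n,m}\to \PL_n/\OO_n\to \VV^{pl}_{n,m}$ underlying~\eqref{eq_th_d4_1}). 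Since $\PL_{4,2}/\OO_2$ is $2$-connected, the map $\PL_4/\OO_4\to \VV^{pl}_{4,2}$ is $3$-connected, so it suffices to know that $\PL_4/\OO_4$ is $3$-connected. But $\pi_i(\PL_4/\OO_4)=0$ for $i\leq 4$: by the smoothing theory $\pi_i(\PL_n/\OO_n)=\pi_i(\PL/\OO)$ for $i\leq n$, and $\PL/\OO$ is $6$-connected with $\pi_7(\PL/\OO)=\Z/2$ (equivalently one may cite $\Theta_{i+1}=0$ for $i\leq 4$, already used in Subsection~\ref{ss:cod2}, together with $\pi_4(\PL_4/\OO_4)=0$). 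Chaining the two connectivity statements gives $\pi_3(\VV^{pl}_{4,2})=0$, hence $\pi_0\,\Ebar_\partial(D^2,D^4)^\star=0$, which is the claim for $D^2\hookrightarrow D^4$.

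Finally I would pass from $D^2\hookrightarrow D^4$ to $S^2\hookrightarrow S^4$: closing up a knot $D^2\hookrightarrow D^4$ to a spherical knot $S^2\hookrightarrow S^4$ (and conversely, removing a standard ball pair from a spherical knot) sets up a bijection between isotopy classes of long knots and isotopy classes of spherical knots once one fixes the trivial knot as basepoint, and this bijection visibly preserves PL-triviality, as already used for the $m=3$ case in the paragraph preceding Corollary~\ref{cor:d2_d4}; so the vanishing transfers verbatim. The main obstacle I anticipate is not the connectivity bookkeeping but making sure the PL fibration $\PL_{4,2}/\OO_2\to \PL_4/\OO_4\to\VV^{pl}_{4,2}$ is available in dimension~$4$ with the stated connectivity of its fiber — that is, that Wall's $(n-2)$-connectedness of $\PL_{n,n-2}/\OO_2$ genuinely includes $n=4$ (it does, as Wall's statement is for all $n\geq 3$) — and that the smoothing-theory identification $\pi_i(\PL_4/\OO_4)\cong\Theta_{i+1}$ used here is exactly the one recorded in Subsection~\ref{ss:cod2}; once these are in hand, the corollary is immediate.
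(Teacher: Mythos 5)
There is a genuine gap, and it sits in both halves of your reduction. First, the passage through \eqref{eq_th_d4_3}: $\pi_0\,\Ebar_\partial(D^2,D^4)^\star$ is \emph{not} the set of isotopy classes of PL-trivial knots. Since $\Ebar_\partial(D^2,D^4)$ is the homotopy fiber of the derivative map $\Emb_\partial(D^2,D^4)\to\Omega^2V_{4,2}$, the exact sequence
\[
\pi_1\Emb_\partial(D^2,D^4)_*\to \pi_3V_{4,2}\cong\Z^2\to \pi_0\Ebar_\partial(D^2,D^4)^\star\to \pi_0\Emb_\partial(D^2,D^4)^\star
\]
shows that $\pi_0\Ebar^\star$ differs from $\pi_0\Emb^\star$ by the cokernel of $\pi_1\Emb_\partial(D^2,D^4)_*\to\pi_3V_{4,2}$. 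So the vanishing $\pi_3\VV^{pl}_{4,2}=\pi_0\Ebar^\star=0$ that you aim for is strictly stronger than the corollary (it would in addition require that derivative map to be surjective onto $\Z^2$), and nothing in your argument, nor in the inputs you cite, establishes it. Second, the fibrations you invoke do not exist: there is no map $\PL_4/\OO_4\to\VV^{pl}_{4,2}$ (since $\OO_4\not\subset\PL_{4,2}$), and the ``standard fibration $\VV_{n,m}\to\PL_n/\OO_n\to\VV^{pl}_{n,m}$'' fails already in the test case $\PL=\OO$, where it would read $\VV_{n,m}\to *\to\VV_{n,m}$ and force $\VV_{n,m}\simeq\Omega\VV_{n,m}$. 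The correct relation coming from the square of groups $\OO_2\subset\OO_4$, $\PL_{4,2}\subset\PL_4$ is the equivalence of total homotopy fibers,
\[
\hofiber\bigl(\VV_{4,2}\to\VV^{pl}_{4,2}\bigr)\simeq\hofiber\bigl(\PL_{4,2}/\OO_2\to\PL_4/\OO_4\bigr),
\]
equivalently $\pi_*\bigl(\PL_4/\PL_{4,2},\OO_4/\OO_2\bigr)\cong\pi_*\bigl(\PL_4/\OO_4,\PL_{4,2}/\OO_2\bigr)$, and \emph{not} a fiber sequence ending in $\VV^{pl}_{4,2}$. With the correct statement, Wall's $2$-connectivity of $\PL_{4,2}/\OO_2$ and the vanishing $\pi_i(\PL_4/\OO_4)=0$, $i\le 4$, give exactly $\pi_2\hofiber(\VV_{4,2}\to\VV^{pl}_{4,2})=0$; they give no control on $\pi_3\VV^{pl}_{4,2}$ itself, which by the fibration $\PL_{4,2}\to\PL_4\to\VV^{pl}_{4,2}$ is the cokernel of $\pi_3\PL_{4,2}\to\pi_3\PL_4\cong\Z^2$ and could a priori be nonzero.

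The repair is to argue as the paper does: use the \emph{unframed} delooping \eqref{eq_th_d4_1}, which gives $\pi_0\Emb_\partial(D^2,D^4)^\star=\pi_0\,\Omega^2\hofiber(\VV_{4,2}\to\VV^{pl}_{4,2})=\pi_2\hofiber(\VV_{4,2}\to\VV^{pl}_{4,2})$, and then the displayed equivalence above (in the paper, the relative-homotopy-group identity $\pi_3(\PL_4/\PL_{4,2},\OO_4/\OO_2)=\pi_3(\PL_4/\OO_4,\PL_{4,2}/\OO_2)$) together with your two connectivity inputs, which are indeed the right ones, yields the vanishing; the deduction for $S^2\hookrightarrow S^4$ at the end is fine.
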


\begin{proof}
In other words, the claim is that $\pi_0\Emb_\partial(D^2,D^4)^\star=0$. 
 Indeed, one has $\pi_i(\PL_4/\OO_4)=0$, $i\leq 4$,
by the smoothing theory \cite[Essay~II]{KS_essays}, \cite{HirschMazur}, \cite[Theorem~8.3C]{FQ}. 
 Thus,
\begin{multline*}
\pi_0\Emb_\partial(D^2,D^4)^\star=\pi_0\Omega^2\hofiber\left(\OO_4/\OO_2\to \PL_4/\PL_{4,2}\right)=
\\
\pi_3\left(\PL_4/\PL_{4,2},\OO_4/\OO_2\right)=\pi_3\left(\PL_4/\OO_4,\PL_{4,2}/\OO_2
\right)=\pi_2(\PL_{4,2}/\OO_2)=0. 
\end{multline*}
The last equality is due to \cite[Lemma 5]{Wall_codim2}. 
\end{proof}

%

We finally note that any invertible element in $\pi_0\Emb_\partial(D^2,D^4)$ corresponds to a topologically trivial knot.  Indeed, the former property implies that the complement $C_f$ of such knot $f$ must be a homotopy $S^1$.\footnote{Indeed, let $g$ be the inverse of $f$. Then the cyclic cover $\tilde C_{f*g}$ of 
$C_{f*g}$ is contractible and $\tilde C_{f*g}\simeq \tilde C_f\vee\tilde C_g$. Since a retract of a contractible space is contractible, so is $\tilde C_f$. We conclude $C_f=\tilde C_f/\Z\simeq S^1$.\label{foot6}}
On the other hand, Freedman (and Quinn) showed that the complement of a topological locally flat knot $S^2\hookrightarrow S^4$ (or $D^2\hookrightarrow D^4$) being a homotopy circle implies the knot is topologically trivial
\cite[Theorem~6]{Freedman}, \cite[Theorem~11.7A]{FQ}. 
However, it is not known if topologically trivial non-trivial smooth knots $D^2\hookrightarrow D^4$ exist. Neither is it known if such knots must always be invertible.

\subsection{Codimension $n-m\geq 3$. Operadic delooping}\label{ss:cod3}

For $n-m\geq 3$, $n\geq 5$, the natural map $\VV^{pl}_{n,m}=\PL_n/\PL_{n,m}\to \VV^{t}_{n,m}= \TOP_n/\TOP_{n,m}$ is an equivalence \cite[Proposition~$(t/pl)$]{Lashof},
implying  $\TOP_{n,m}/\PL_{n,m}\simeq K(\Z/2\Z,3)$ in this range. Moreover,
for $n-m\geq 3$, the map $\PL_n/\PL_{n,m}\to \GG_n/\GG_{n-m}$ is $(2n-m-3)$-connected \cite[Theorem~5.1]{Millett},\footnote{Here and below, $\GG_n$ denotes the monoid of self-homotopy equivalences of $S^{n-1}$.} while the map $\PL_n/\PL_{n,m}\to \GG/\GG_{n-m}$
is $(n-1)$-connected \cite[Lemma~6.3]{Gauniyal}.  These results can be used to compute the low degree homotopy groups (including $\pi_0$) of
the disc embedding spaces. 

Besides the aforementioned results of the smoothing theory, the codimension $\geq 3$ embedding spaces are well-understood due to the Goodwillie-Weiss
manifold calculus \cite{GW,Weiss,WeissErr}. One of the beautiful applications of this method is the following operadic-type delooping of disc
embedding spaces.

\begin{theorem}[\cite{BW_conf_cat,DT,DTW}]\label{th:operad}
For $n-m\geq 3$,
\begin{enumerate}[(a)]
\item $\Embmn\simeq\Omega^m\hofiber\left(\OO_n/\OO_{n-m}\to \Operad^h(E_m,E_n)\right)$,
\item $\Embfrmn\simeq\Omega^{m+1}\left(\Operad^h(E_m,E_n)/\!\!/\OO_n\right)$,
\item $\Ebarmn\simeq \Omega^{m+1}\Operad^h(E_m,E_n)$.
\end{enumerate}
\end{theorem}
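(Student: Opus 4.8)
The plan is to run the Goodwillie--Weiss manifold calculus in its configuration-category form to identify the embedding spaces with the indicated derived mapping spaces of operads, after which the three statements follow by elementary manipulation of homotopy fibres. Since $n-m\geq 3$ the manifold calculus tower of $\Embmn$ converges \cite{GW,Weiss,WeissErr}, so $\Embmn\xrightarrow{\simeq}T_\infty\Embmn$, and hence, taking homotopy fibres, also for $\Ebarmn$ and $\Embfrmn$, which are the homotopy fibres of $\Embmn$ over the Smale--Hirsch immersion spaces $\Immmn\simeq\Omega^m\VV_{n,m}$ and $\Imm^{fr}_\partial(D^m,D^n)\simeq\Omega^m\OO_n$. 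A linear isometric embedding $\R^m\hookrightarrow\R^n$ induces, by pushing little discs forward, an operad map $E_m\to E_n$, and hence natural maps $\VV_{n,m}\to\Operad^h(E_m,E_n)$ and $\OO_n\to\Operad^h(E_m,E_n)$; the latter is the orbit map of the standard inclusion $E_m\hookrightarrow E_n$ under the $\OO_n$-action on $\Operad^h(E_m,E_n)$ rotating the target discs. The technical heart is then the configuration-category model for the tower \cite{BW_conf_cat}: it computes $T_\infty$ of these spaces from the configuration categories of $D^m$ and $D^n$ relative to their boundaries --- models for the little discs operads $E_m$ and $E_n$ --- and yields that the derivative maps exhibit
\[
\Embmn\ \simeq\ \hofiber\bigl(\Omega^m\VV_{n,m}\to\Omega^m\Operad^h(E_m,E_n)\bigr),\qquad \Embfrmn\ \simeq\ \hofiber\bigl(\Omega^m\OO_n\to\Omega^m\Operad^h(E_m,E_n)\bigr).
\]
For $m=1$ the underlying delooping is the Dwyer--Hess theorem on long knots modulo immersions; the general case is \cite{BW_conf_cat,DT,DTW}.

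Granting these two equivalences, the remainder is diagram chasing. Since $\Omega^m$ preserves homotopy fibres, the first rewrites as $\Embmn\simeq\Omega^m\hofiber(\VV_{n,m}\to\Operad^h(E_m,E_n))$, which is (a). The homotopy fibre of the derivative map $\Embmn\to\Omega^m\VV_{n,m}$ is by definition $\Ebarmn$; in the fibre sequence $\Embmn\to\Omega^m\VV_{n,m}\to\Omega^m\Operad^h(E_m,E_n)$ it is also $\hofiber\bigl(\ast\to\Omega^m\Operad^h(E_m,E_n)\bigr)=\Omega^{m+1}\Operad^h(E_m,E_n)$, which is (c). Likewise $\Embfrmn\simeq\Omega^m\hofiber(\OO_n\to\Operad^h(E_m,E_n))$; since $\OO_n\to\Operad^h(E_m,E_n)$ is the orbit map of a point, its homotopy fibre is equivalent to $\Omega\bigl(\Operad^h(E_m,E_n)/\!\!/\OO_n\bigr)$ by the standard identity for homotopy fibres of orbit maps, so $\Embfrmn\simeq\Omega^{m+1}\bigl(\Operad^h(E_m,E_n)/\!\!/\OO_n\bigr)$, which is (b).

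The main obstacle is establishing the two displayed equivalences --- equivalently, the homotopy-cartesianness of the associated squares. This rests on the configuration-category (equivalently, infinitesimal-bimodule) model for the manifold calculus tower, on the identification of the configuration category of $D^d$ relative to its boundary with $E_d$, and on extracting the $m+1$ loop coordinates, i.e.\ the higher-dimensional analogue of the Dwyer--Hess delooping. One must also keep careful track of path components and connectivity throughout --- for instance of which component of $\Operad^h(E_m,E_n)$ is relevant --- which is once more where the hypothesis $n-m\geq 3$ is used, both to make the Taylor tower converge and to ensure the spaces involved are sufficiently connected. All of this is carried out in \cite{BW_conf_cat,DT,DTW}.
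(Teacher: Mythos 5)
The paper does not actually prove Theorem~\ref{th:operad}: it is recalled verbatim from \cite{BW_conf_cat,DT,DTW}, so there is no internal argument to compare against, and your proposal is consistent with the paper's treatment in that you defer exactly the same substantive content (convergence of the Goodwillie--Weiss tower in codimension $\geq 3$, the configuration-category/infinitesimal-bimodule model, and the $(m+1)$-fold delooping generalizing Dwyer--Hess) to those references. The formal deductions you add on top are sound: commuting $\Omega^m$ with homotopy fibres gives (a); the identity $\hofiber\bigl(\hofiber(Y\to Z)\to Y\bigr)\simeq\Omega Z$ applied to $\Embmn\to\Omega^m\VV_{n,m}\to\Omega^m\Operad^h(E_m,E_n)$ gives (c); and $\hofiber(G\to Y)\simeq\Omega\left(Y/\!\!/G\right)$ for the orbit map of the basepoint gives (b). One small correction to your setup paragraph: $\Embfrmn$ is not the homotopy fibre of $\Embmn$ over $\Omega^m\OO_n$; rather it is the homotopy pullback $\Embmn\times^h_{\Omega^m\VV_{n,m}}\Omega^m\OO_n$, and it is $\Ebarmn$ that is the common homotopy fibre of $\Embmn\to\Omega^m\VV_{n,m}$ and of $\Embfrmn\to\Omega^m\OO_n$. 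This slip does not affect your derivation of (a)--(c), since the two displayed equivalences you actually invoke from the references are the correct ones; note also that the present paper's contribution relative to Theorem~\ref{th:operad} is the smoothing-theory delooping (Theorems~\ref{th:b} and~\ref{th:c}) with its compatibility with the $E_{m+1}$- and $\OO_{m+1}$-actions, not a new proof of the operadic statement.
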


In the above, $\Operad^h(E_m,E_n)$ denotes the derived mapping space of the little discs operads pointed at the inclusion $E_m\subset E_n$.

It is an interesting question whether these deloopings are compatible with the natural $E_m$- and $E_{m+1}$-actions. The methods of \cite{DT,DTW} prove only an equivalence of spaces, but not of $E_m$- or $E_{m+1}$-algebras. Even though it has not been stated explicitly in~\cite{BW_conf_cat},
Boavida-Weiss' approach is compatible with the $E_m$-action, but not with Budney's $E_{m+1}$-action. Moreover, Boavida-Weiss implicitly produce a map
$\VV^{t}_{n,m}= \TOP_n/\TOP_{n,m}\to\Operad^h(E_m,E_n)$ (that can also be restricted on $\VV^{pl}_{n,m}= \PL_n/\PL_{n,m}$), which, we believe, induces an equivalence of the 
right-hand sides of Theorem~\ref{th:b} (and  Theorem~\ref{th:d4}) with those of Theorem~\ref{th:operad} for $n-m\geq 3$. This would immediately imply
that the operadic delooping is compatible with the little discs action for $n\geq 5$. 

It is natural to conjecture that the two deloopings are actually
the same.

\begin{conjecture}\label{con:pl_operad}
The Boavida-Weiss map $\VV^{pl}_{n,m} \to \Operad^h(E_m,E_n)$ is an equivalence for $n-m\geq 3$.
\end{conjecture}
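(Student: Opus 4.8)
The plan is to reduce the conjecture to a comparison of the two deloopings together with a comparison of low-degree homotopy groups. By Theorem~\ref{th:b}\,\eqref{eq_th_b_3} and Theorem~\ref{th:operad}(c), both $\VV^{pl}_{n,m}$ and $\Operad^h(E_m,E_n)$ are $(m+1)$-fold deloopings of $\Ebarmn$, so the Boavida--Weiss map $\phi\colon\VV^{pl}_{n,m}\to\Operad^h(E_m,E_n)$ is an equivalence if and only if it induces an isomorphism on $\pi_k$ for every $k\geq 0$. Once one knows that $\Omega^{m+1}\phi$ is an equivalence --- the statement flagged as ``believed'' just before the conjecture --- it follows that $\phi$ is automatically an isomorphism on $\pi_k$ for all $k\geq m+1$, so the entire content of the conjecture concentrates in the degrees $k\leq m$, which the $(m+1)$-fold loop functor does not see.

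The first step is therefore to prove that $\Omega^{m+1}\phi$ is an equivalence, by matching the two delooping constructions. The smoothing-theoretic delooping of Theorem~\ref{th:b}\,\eqref{eq_th_b_3} factors through the identification $\Embmn\simeq\tEmb^{\underline{V_{n,m}}}_{\partial_0 D^m}(D^m,D^n)$ of Theorem~\ref{th:emb_fr_temb} followed by peeling off $m+1$ loops via isotopy extension and the Alexander trick (Sections~\ref{s:conv}, \ref{s:loops}, \ref{s:fr_embed}); the Goodwillie--Weiss delooping of Theorem~\ref{th:operad} factors through the Weiss tower and the configuration-category model of \cite{BW_conf_cat,DT,DTW}. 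Boavida--Weiss's map sends a $\PL$ microbundle reduction --- equivalently, by \cite[Proposition~$(t/pl)$]{Lashof}, a topological one --- to the induced map of configuration categories, so what has to be checked is that $\Omega^{m+1}\phi$, precomposed with the smoothing identification of its source, is homotopic to the Goodwillie--Weiss identification of its target. Since both identifications ultimately pass through the Smale--Hirsch-type description of $\Embmn$ recalled in Section~\ref{s:imm}, we expect this to come down to a lengthy but essentially routine comparison of explicit models, the one real technical burden being to translate between the simplicial framework of Sections~\ref{s:conv}--\ref{s:fr_embed} and the $\infty$-operadic framework of \cite{BW_conf_cat}.

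Granting this, there remains the comparison of $\pi_k$ for $k\leq m$. When $n$ is large relative to $m$ this is within reach: the map $\VV^{pl}_{n,m}=\PL_n/\PL_{n,m}\to\GG_n/\GG_{n-m}$ is $(2n-m-3)$-connected \cite[Theorem~5.1]{Millett} and the map $\PL_n/\PL_{n,m}\to\GG/\GG_{n-m}$ is $(n-1)$-connected \cite[Lemma~6.3]{Gauniyal}, while on the operad side a comparison map with an analogous target is induced by the behaviour of $\Operad^h(E_m,E_n)$ on the $2$-ary spaces $E_k(2)\simeq S^{k-1}$ \cite{GW,Weiss,BW_conf_cat}; one then checks that $\phi$ intertwines these comparison maps. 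Whenever the resulting connectivity bound exceeds $m$ --- which happens once the codimension $n-m$ is large enough relative to $m$ --- this forces $\phi$ to be an isomorphism on $\pi_{\leq m}$ as well, and hence, together with the first step, an equivalence.

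The main obstacle is the genuinely small-codimension range $m+3\leq n$ with $n-m$ small relative to $m$: there the homotopy of $\VV^{pl}_{n,m}$ and of $\Operad^h(E_m,E_n)$ in degrees $\leq m$ is invisible to $\Ebarmn$, and no finite stage of the Weiss tower captures $\Ebarmn$, so one genuinely needs an un-looped identification of the configuration-category model with the $\PL$-microbundle model. We see two plausible routes. One is a skeletal induction on the connectivity of $\phi$, in which the obstruction to improving $k$-connectivity to $(k{+}1)$-connectivity lives in a group assembled from the $k$-th Weiss layer on the operad side and the $k$-th Postnikov layer of the space of $\PL$ microbundle reductions on the smoothing side, and one must identify these layers compatibly with $\phi$ (using \cite[Proposition~$(t/pl)$]{Lashof} and the analysis of \cite{BW_conf_cat}). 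The other is a downward induction on $n$: the colimit map $\phi_\infty\colon\VV^{pl}_{\infty,m}\to\Operad^h(E_m,E_\infty)$ is an equivalence of weakly contractible spaces, and if the induced map of stabilization fibres $\hofiber\big(\VV^{pl}_{n,m}\to\VV^{pl}_{n+1,m}\big)\to\hofiber\big(\Operad^h(E_m,E_n)\to\Operad^h(E_m,E_{n+1})\big)$ is an equivalence for every $n$, then the total-fibre comparison of the corresponding square shows $\hofiber(\phi_n)\xrightarrow{\ \sim\ }\hofiber(\phi_{n+1})$, whence $\hofiber(\phi_n)\simeq\hofiber(\phi_\infty)\simeq *$ and $\phi_n$ is an equivalence. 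In either approach the crux --- and, we believe, the real difficulty of the conjecture --- is exactly to compare Boavida--Weiss's configuration-category model of $\Operad^h(E_m,E_n)$ with $\PL_n/\PL_{n,m}$ directly at the level of spaces, rather than merely up to $(m+1)$-fold looping; this is what the conjecture asks for, and a proof of it would in particular upgrade the operadic delooping of Theorem~\ref{th:operad} to an equivalence of $E_{m+1}$-algebras through Theorem~\ref{th:b}.
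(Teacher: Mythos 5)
The statement you set out to prove is a \emph{conjecture} in the paper: the authors give no proof of it. They only record (i) that Boavida and Weiss have posted a series of papers \cite{BW1,BW2,BW3} claiming a proof for all $n-m\geq 3$ except $(n,m)=(4,1)$, and (ii) that the remaining case $(4,1)$ would follow from the two deloopings because both $\PL_4/\PL_{4,1}$ and $\Operad^h(E_1,E_4)$ are $2$-connected --- and even this is stated conditionally on the unproven assertion that the Boavida--Weiss map is compatible with the deloopings. So there is no proof in the paper to compare yours against; the question is whether your proposal stands on its own, and it does not.

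Your argument has two load-bearing steps, and both are open. First, your ``first step'' is that $\Omega^{m+1}\phi$ is an equivalence, i.e.\ that $\phi$ intertwines the smoothing-theory delooping of Theorem~\ref{th:b} with the operadic delooping of Theorem~\ref{th:operad}; this is exactly what the authors say they ``believe'' but do not prove, and declaring its verification ``lengthy but essentially routine'' does not make it so: it requires matching the simplicial germ/microbundle models of Sections~\ref{s:conv}--\ref{s:fr_embed} with the configuration-category model of \cite{BW_conf_cat} across two independently constructed chains of identifications, and neither the paper nor your sketch carries this out. Second, even granting that, your treatment of degrees $\leq m$ only works when the connectivity bounds from \cite{Millett,Gauniyal} (and a not-yet-constructed, not-yet-compared analogue on the operad side) exceed $m$, i.e.\ when the codimension is large relative to $m$; in the remaining range you concede that one ``genuinely needs an un-looped identification of the configuration-category model with the $\PL$-microbundle model'', which is precisely the content of Conjecture~\ref{con:pl_operad}. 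The two routes you then offer --- a skeletal obstruction-theoretic comparison of Weiss layers with Postnikov layers of $\PL$ microbundle reductions, or downward induction on $n$ via stabilization fibres --- are programs whose key inputs are not supplied and are not visibly easier than the original statement. In short, the proposal reduces the conjecture to two unresolved problems, one of which is essentially the conjecture itself; the proof claimed in the literature (Boavida--Weiss's torus-trick argument for configuration categories) proceeds along entirely different lines and is not reproduced, or even invoked as an ingredient, in your outline.
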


Boavida and Weiss recently posted a series of papers~\cite{BW1,BW2,BW3} in which they claim a proof of this conjecture
excluding the case $(n,m)=(4,1)$.
 Note that the smoothing theory delooping, Theorem~\ref{th:b}, and the operadic one, Theorem~\ref{th:operad},  immediately 
imply 
the statement of the conjecture in this remaining case\footnote{One uses here the assumption that the Boavida-Weiss map is compatible with delooping.} since both   $ \Operad^h(E_1,E_4)$ and $\PL_4/\PL_{4,1}$ are simply connected (in fact 2-connected). For the first space, see~\cite[Proposition~10.8]{FTW}, while for the second one, one has that the map $\PL_4/\PL_{4,1}\to \GG/\GG_{3}$ is 3-connected, while $\GG/\GG_3$ can easily be checked to be simply connected (in fact 2-connected).

%

\section{Conventions and prerequisites}\label{s:conv}

\subsection{Mapping spaces: diffeomorphisms, homeomorphisms, embeddings, immersions, etc}\label{ss:spaces}
We start by recalling several definitions from \cite{BLR_aut}.
As  usual, a smooth or piecewise linear (PL) structure on a manifold is defined as a choice of a maximal smooth or PL atlas, respectively. We will mostly consider smooth manifolds. 
If such a manifold $M$ is endowed with a  PL structure, the corresponding PL manifold is denoted by $|M|$. For a PL manifold $|M|$ and
a smooth one $N$, one says that a map $F\colon |M|\to N$ is {\it piecewise smooth} (PD), 
 if for any point $x\in M$ and any PL chart $(U,\phi)$ containing $x$,
there is a triangulation (by affine simplices) of a closed neighborhood of $\phi(x)\in \phi(U)$, such that $F\circ\phi^{-1}$ is smooth on every simplex of the triangulation. A piecewise smooth map $F\colon |M|\to N$ is called a {\it piecewise smooth homeomorphism} (or PD homeomorphism) if it is a homeomorphism,
such that for any PL chart $(U,\phi)$ on $|M|$, if $F\circ\phi^{-1}$ is smooth on some affine simplex $\Delta\subset\phi(U)$, then $F\circ\phi^{-1}|_\Delta$
is a diffeomorphism on its image. A PL structure on a smooth manifold $M$ is {\it compatible} with the smooth one, if the identity map $id\colon |M|\to M$
is a PD homeomorphism. 

All the mapping spaces that we consider are defined as certain simplicial sets. We first fix the notation and then give a precise definition of the
corresponding
simplicial sets.

We denote by $\Diff(M)$, $\Homeo(M)$, $\plHomeo(M)$, $\pdHomeo(|M|,M)$ respectively the corresponding spaces of diffeomorphisms, homeomorphisms, PL homeomorphisms, PD homeomorphisms of~$M$. In case  the PL structure on~$M$ is fixed, we will also write $\pdHomeo(M)$ instead of 
$\pdHomeo(|M|,M)$. Note that the first three are groups, while the last one is not, but instead is a $\Diff(M){-}\plHomeo(|M|)$-bimodule: $\Diff(M)$ acts on it by postcomposition, while $\plHomeo(|M|)$ -- by precomposition.  Moreover, one has natural inclusions $\Diff(M)\subset \pdHomeo(|M|,M)\supset \plHomeo(|M|)$,
where the second one is an equivalence~\cite{BLR_aut}.

For a pair of smooth manifolds $M^m$, $N^n$ (we often assume $M\subset N$) with chosen PL structures (such that $|M|\subset |N|$ is a locally flat PL embedding), we denote by $\dMap(M,N)$, $\Map(M,N)$, $\plMap(|M|,|N|)$, $\pdMap(|M|,N)$, $\Emb(M,N)$, $\tEmb(M,N)$, $\plEmb(|M|,|N|)$,
$\pdEmb(|M|,N)$, $\Imm(M,N)$, $\tImm(M,N)$, $\plImm(|M|,|N|)$, $\pdImm(|M|,N)$ respectively the spaces of smooth/continuous/PL/PD maps, smooth/topological locally
flat/PL locally flat/PD embeddings/immersions $M\to N$.  In case the source and the target are the same $M=N$, we will use the simplified notation $\Map(M):=\Map(M,M)$, $\Emb(M):=\Emb(M,M)$, etc.

A topological embedding $F\colon M\hookrightarrow N$ is said to be {\it locally flat} if every point $x\in M$ has 
a neighborhood $U\ni x$, such that  $\hat F|_{U\times 0}:=F|_U$ can be extended to a homeomorphism on its image $\hat F\colon U\times \R^{n-m}\hookrightarrow N$ (which is supposed to be PL if we
work in the PL category).  It is worth mentioning at this point that in codimension $n-m\geq 3$ any PL embedding is PL locally flat~\cite{Zeeman}.  While it is
obviously false in codimension two, it is still an open question in codimension one. An example of a PL non-locally flat (but topologically locally flat) embedding could be an (iterated) suspension 
of a non-trivial PL knot $S^3\hookrightarrow S^4$, that exists if and only if the smooth Schoenflies  conjecture is false in dimension four. (In dimension four, the PL and smooth Schoenflies problems are equivalent
since any PL 4-manifold admits a unique smooth structure.)  The PL local flatness is usually called {\it local unknottedness} in the literature. For the convenience of exposition we will be using the same term {\it locally flat} for both topological and PL embeddings assuming the property of PL local flatness when it is referred to  PL embeddings or immersions.
A map $M\to N$ is a {\it locally flat immersion} if it is  locally a locally flat embedding.
Following the literature, see for example \cite{BL_diff},
we will only consider the spaces $\pdEmb(|M|,N)$, $\pdImm(|M|,N)$ of PD embeddings/immersions when the dimensions agree $n=m$.\footnote{It is
not hard to define what a locally flat PD embedding/immersion should be in case $m<n$. However, it is not obvious why the space of PD locally flat
 embeddings/immersions is equivalent
to the space of PL locally flat embeddings/immersions. Since such objects have not been used in the smoothing theory literature, we  avoid them.}
 A PD embedding is a 
PD homeomorphism on its image. A PD immersion is a PD map, which is locally a PD homeomorphism on its image.

In case $M$ (and $N$) have boundary, it is convenient and standard in the smoothing theory~\cite{BL_diff} to treat them as if they are open, which is done by attaching to the boundary $\partial M$
a ``{\it germ collar}''. Denote by $M_\epsilon:=M\cup_{\partial M} (\partial M\times [0,\epsilon))$. We say that $f\colon M_{\epsilon_1} \to N_{\epsilon_2}$ is equivalent
to $g\colon M_{\delta_1} \to N_{\delta_2}$ if they coincide on some neighborhood of $M$ in $M_{\min(\epsilon_1,\delta_1)}$. In particular it means $f|_M=g|_M$ and 
$f(M)=g(M)\subset N$. By an embedding of manifolds with boundary we mean an equivalence class of such germ-collared embeddings. This convention is applied to the spaces of embeddings and immersions including the smooth case (and including the case of manifolds with corners), so that one always has a natural map from the space of smooth to that of topological 
embeddings/immersions.  The germ collar in the smooth case should be added   to respect the smooth structure, which requires special care near the corners.
 This makes it straightforward to define a locally flat embedding/immersion between manifolds with boundary. Also, this allows one to define the topological or PL tangent 
microbundle over a manifold with boundary (which for open manifolds is defined as any neighborhood of the diagonal $\Delta_M\subset M\times M$, see~\cite{Milnor}), and the 
map of tangent microbundles induced by an embedding or immersion.  Another advantage of this construction is that it allows us to consider a  PL structure (as well as PL and PD maps) on a manifold non-compatible with the boundary. 
 For example, one can consider $M$ to be the standard disc $D^m$ with the PL structure induced
by the inclusion $D^m\subset \R^m$ ,\footnote{The point is that in this case the boundary $\partial D^m$ is not a PL submanifold of $\R^m$.} for which we can still define the space $\plEmb(D^m,\R^n)$. In practice for a positive codimension $n-m>0$, we only consider proper embeddings/immersions fixed in 
a neighborhood of the boundary. Only when $n-m=0$ we will allow embeddings/immersions to be free on a part of its boundary $\partial_1M\subset M$. Because of Lemma~\ref{l:top_n_n-1} and the Smale-Hirsch type theory, see Section~\ref{s:imm}, this additional {\it germ collar} information about embeddings/immersions does not change the homotopy type 
of embedding/immersion space.

All these mapping spaces are defined as simplicial sets, whose $k$-simplices are  parameterized by  $\Delta^k$ families of
such maps. For example, the $k$-simplices of $\Diff(M)$ are diffeomorphisms
\[
M\times\Delta^k \to M\times\Delta^k,
\eqno(\numb)\label{eq:diff_simpl}
\]
commuting with the projection on $\Delta^k$. Similarly, the $k$-simplices of $\Homeo(M)$, $\plHomeo(|M|)$, $\pdHomeo(M)$ are respectively homeomorphisms,
PL homeomorphisms, PD homeomorphisms  \eqref{eq:diff_simpl}, commuting with the projection on $\Delta^k$.
The $k$-simplices of   $\dMap(M,N)$, $\Map(M,N)$, $\plMap(|M|,|N|)$, $\pdMap(|M|,N)$, $\Emb(M,N)$, $\tEmb(M,N)$, $\plEmb(|M|,|N|)$,
$\pdEmb(|M|,N)$, $\Imm(M,N)$, $\tImm(M,N)$, $\plImm(|M|,|N|)$, $\pdImm(|M|,N)$ are respectively
smooth/continuous/PL/PD maps, smooth/topological locally
flat/PL locally flat/PD embeddings/immersions, 
\[
M\times\Delta^k \xrightarrow{F_k} N\times\Delta^k,
\eqno(\numb)\label{eq:map_simpl}
\]
commuting with the projection on $\Delta^k$. In case of $\tEmb(M,N)$, $\plEmb(|M|,|N|)$,  $\tImm(M,N)$, $\plImm(|M|,|N|)$, the local flatness of~\eqref{eq:map_simpl}
is supposed to be of a {\it parameterized} nature described as follows. For any $x\in M$ and $t\in \Delta^k$ there must exist open neighborhoods $U\subset M$ of $x$ and $W\subset\Delta^k$ of $t$ and a homeomorphism
on its image $\hat F_k\colon U\times \R^{n-m}\times W\hookrightarrow N\times W$, commuting with the projection on $W$ and such that $\hat F_k|_{U\times 0\times W}=F_k|_{U\times W}$.
{\it Parameterized} PL {\it local flatness} is equivalent to  {\it parameterized local unknottedness} (as defined in~\cite[p.~652]{Hudson}) and  always holds in codimension $n-m\geq 3$~\cite[Corollary~3.2,
p.~653]{Hudson}.

Often in the smoothing theory literature, as for example in our main references~\cite{BL_diff, KS_essays}, these objects are referred to as  (complete) semi-simplicial complexes due to the
term {\it simplicial set} not being fully accepted yet at the time when the theory was developing.  However,   all  objects that we study here are honest simplicial sets. In particular, unlike semi-simplicial sets, they have well-defined degeneracies. 
Given
an order preserving map $\gamma\colon\{0,\ldots,\ell\}\to\{0,\ldots,k\}$ inducing $\gamma_*\colon \Delta^\ell\to\Delta^k$, one defines $\gamma^*F_k$ as the pullback of $F_k$ along 
$id_N\times \gamma_*$: 
\begin{equation}\label{eq:gamma_star}
\xymatrix{
M\times\Delta^\ell\ar[d]_{id_M\times\gamma_*}\ar[r]^{\gamma^*F_k}&N\times\Delta^\ell\ar[d]^{id_N\times\gamma_*}\\
M\times\Delta^k\ar[r]^{F_k}& N\times\Delta^k.
}
\end{equation}
It is easy to check that in the case  $\gamma$ is a degeneracy ($\ell=k+1$), $\gamma^*F_k$ is a $(k+1)$-simplex satisfying the same properties as $F_k$ (being a smooth embedding or a topological locally flat one, or an immersion, or a homeomorphism, etc.).

For $A,B\subset M$, we denote by $\Map(M,N\mmod A)$, $\Map_B(M,N)$, $\Map_B(M,N\mmod A)$ the spaces of continuous maps $M\to N$
(i.e., simplicial subsets of $\Map(M,N)$), which coincide with the inclusion $i\colon M\subset N$ on $A$ or/and near $B$, respectively. We use the same conventions for all the other types of maps that we consider: homeomorphisms, diffeomorphisms, embeddings, immersions, etc. For example, 
$\Homeo_B(M)$ denotes the simplicial set, whose $k$-simplices are homeomorphisms \eqref{eq:diff_simpl}, which are identity near $B\times\Delta^k$
and commute with the projection on $\Delta^k$. In case $B=\partial M$, for shortness we write $\Homeo_\partial (M)$, $\Emb_\partial(M,N)$, etc. 
When we consider mapping spaces with additional structure, for example framed embeddings $\Emb^{fr}_B(M,N\mmod A)$, in our conventions the structure must be standard near $B$ with no restriction on the structure on or near $A$.

By $\Map(M,N)_*$, $\Emb_\partial(M,N)_*$, $\Emb_\partial(M)_*$, etc., we mean the path-component of the inclusion $M\subset N$ and the identity map $id_M\colon M\to M$, respectively.
By $\Map(M)^\times$, $\Emb_\partial(M)^\times$, $\Emb_\partial(D^{n-2},D^n)^\times$, etc., we understand the union of those path components that correspond to invertible elements in~$\pi_0$ with respect to the composition or concatenation  product.

\subsubsection{} Some of these simplicial sets can be replaced by topological mapping spaces, though by far not all of them. This is the case of $\Diff(M)$, $\Homeo(M)$,
$\dMap(M,N)\simeq \Map(M,N)$, $\Emb(M,N)$, $\Imm(M,N)$; 
and $\tEmb(M,N)$ provided $n\geq 5$ and $n-m\geq 3$. 
For topological embeddings it is actually a consequence of a highly non-trivial theorem, which states that for $n\geq 5$ and $n-m\geq 3$ a family of
 embeddings \eqref{eq:map_simpl}
is locally flat if and only if for every point $x\in\Delta^k$ the corresponding embedding $M\hookrightarrow N$ is locally flat~\cite{Chernav}. This implies that the simplicial set 
$\tEmb(M,N)$ defined above coincides with the singular chains simplicial set of the subspace $E(M,N)$ (that consists of locally flat embeddings) of the space $C(M,N)$ of continuous maps $M\to N$ endowed with the usual compact-open topology~\cite{Lashof}. In the other cases, in particular, when $n-m=2$, a simplicial set
is the only tool 
 to describe the space of topological locally flat embeddings with the desired properties.\footnote{A natural condition for such space would be that 
its $k$-th homotopy group is the set of equivalence classes of locally flat families parametrized by $S^k$ embeddings, where the equivalence is induced by  locally flat families of embeddings parametrized by $S^k\times [0,1]$.} In the case of PL and PD embeddings and immersions,  simplicial sets are also the only way 
to deal with such spaces.

\subsection{Alexander trick and spaces of topological and PL homeomorphisms of a disc}\label{ss:alex}
Throughout the paper by  $D^n$ we mean the standard unit $n$-disc  $D^n:=\{x\in\R^n\,|\, |x|\leq 1\}$. %
By its PL structure, we mean the structure induced from $\R^n$. 
For simplicity, we write $D^n$ for $|D^n|$.  
 For $0\leq m\leq n$, one has a natural inclusion $D^m\subset D^n$ induced by the inclusion $\R^m=\R^m\times 0^{n-m}\subset \R^n$.

The following well-known fact is often attributed to J.~D.~Alexander~\cite{Alex}.

\begin{lemma}[Alexander trick]\label{l:alex}
For any $0\leq m\leq n$, the groups $\Homeo_\partial(D^n)$, $\plHomeo_\partial(D^n)$, 
 $\Homeo_\partial(D^n\mmod D^m)$, 
$\plHomeo_\partial(D^n\mmod D^m)$ 
 are contractible.
\end{lemma}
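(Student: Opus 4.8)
The plan is to prove the contractibility of each of the four groups by the classical Alexander coning trick, adapted to each category, and to handle the pair versions $\Homeo_\partial(D^n\mmod D^m)$ and $\plHomeo_\partial(D^n\mmod D^m)$ simultaneously by making the cone preserve the coordinate subspace $D^m\subset D^n$. First I would recall the basic construction: given a homeomorphism $f$ of $D^n$ fixing the boundary, define a contracting homotopy to the identity by the formula
\[
F_t(x)=\begin{cases} t\, f(x/t), & 0\le |x|\le t,\\ x, & t\le |x|\le 1,\end{cases}
\]
with $F_1=f$ and $F_0=\mathrm{id}$ (the value at $0$ being $0$, which is continuous since $f$ fixes $\partial D^n$ and hence $f(x/t)\to$ a point on the unit sphere as $|x|\to t$). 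Since this formula is radial, it commutes with the inclusion $\R^m\subset\R^n$, so it restricts to a contracting homotopy for $\Homeo_\partial(D^n\mmod D^m)$ as well; and because the radial scaling $x\mapsto x/t$ is PL (indeed linear), it carries PL homeomorphisms to PL homeomorphisms, giving the same statement for $\plHomeo_\partial(D^n)$ and $\plHomeo_\partial(D^n\mmod D^m)$.

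Next I would upgrade this from a pointed contraction to an actual simplicial contraction, since the groups in question are defined as simplicial sets of $\Delta^k$-families (see Subsection~\ref{ss:spaces}). Given a $k$-simplex, i.e.\ a homeomorphism $F_k\colon D^n\times\Delta^k\to D^n\times\Delta^k$ over $\Delta^k$ that is the identity near $\partial D^n\times\Delta^k$, the same coning formula applied fiberwise over each point of $\Delta^k$ and over the extra parameter $t$ produces a homeomorphism $D^n\times\Delta^k\times\Delta^1\to D^n\times\Delta^k\times\Delta^1$ over $\Delta^k\times\Delta^1$, compatible with faces and degeneracies; this exhibits a simplicial homotopy from the identity map of the simplicial set to the constant map at the identity element. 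The same works verbatim in the PL category because everything in sight is PL in the fiber and affine in the simplicial parameters, and it respects the subspace $D^m\subset D^n$ for the ``mod $D^m$'' versions.

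The main obstacle, and the only place requiring genuine care rather than bookkeeping, is the continuity (resp.\ PL-ness, resp.\ being a homeomorphism) of the coned map across the ``seam'' $|x|=t$ and at the apex $x=0$, $t=0$, including in the parameterized setting. At the seam the two formulas agree because $f$ is the identity near $\partial D^n$, so for $|x|/t$ close to $1$ one has $f(x/t)=x/t$ and $t\,f(x/t)=x$; this is where the ``identity near the boundary'' (as opposed to merely ``on the boundary'') hypothesis built into the $\Homeo_\partial$ convention is essential, and it also guarantees that $F_t$ is again the identity near $\partial D^n$ and that it is a homeomorphism (its inverse is the cone on $f^{-1}$). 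At the apex one checks that $F_t\to\mathrm{id}$ uniformly as $t\to0$, again using that $f$ is identity-near-boundary so $\|t f(x/t)-x\|$ is controlled. In the PL case, ``identity near the boundary'' means the cone is PL on each of the two pieces and they glue along the PL sphere $|x|=t$, and one invokes that a map which is PL on the pieces of a PL decomposition and continuous is PL. I would then remark that all four assertions, and their simplicial-homotopy refinements, follow, completing the proof.
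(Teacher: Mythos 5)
Your proposal is correct and is essentially the paper's own argument: the same radial coning formula $H(x,t)=tf(x/t)$ (with $f$ extended by the identity outside the support), the observation that it preserves $D^m\subset D^n$ and stays PL when $f$ is PL, and the same upgrade to a fiberwise simplicial contraction of the $\Delta^k$-families. One cosmetic point: the round sphere $\{|x|=t\}$ is not a subpolyhedron, so PL-ness of the cone should be justified not by gluing along that sphere but by coning a triangulation of a polyhedron in the open disc containing the support of $f$ (on the cone of each simplex the formula $tf(x/t)$ is affine), which is exactly what the ``identity near the boundary'' convention and the paper's remark about the support being a cone on $K$ provide.
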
 

\begin{proof}[Idea of the proof]
Given a homeomorphism $f\colon D^n\xrightarrow{\cong}D^n$, we extend it as identity on $\R^n\setminus
D^n$. Then the homotopy $H\colon D^n\times [0,1]\to D^n$ between $H|_{t=0}=id$ and $H|_{t=1}=f$ is defined as follows:
\begin{equation}\label{eq:alex_h}
H(x,t)=
\begin{cases}
tf\left(\frac xt\right),& t\in (0,1];\\
x,& t=0.
\end{cases}
\end{equation}
In case $f$ is PL, so is the homotopy $H$.
Note that if the support of $f$ is a polyhedron $K\subset \overset{\circ}{D}{}^n$ in the open unit disc, then the support
of $H$  is the cone on $K$.  
This homotopy can then be used to construct an explicit simplicial set homotopy
\[
H\colon \mathrm{(pl)Homeo}_\partial(D^n (\mmod D^m))\times\Delta^1\to  \mathrm{(pl)Homeo}_\partial(D^n (\mmod D^m)).
\]
\end{proof}

\begin{remark}\label{r:alex}
The same argument proves that the space of compactly supported homeomorphisms of $\R^n$ (preserving $\R^m$ pointwise) is contractible.
\end{remark}

\begin{remark}\label{s:alex_pd}
The Alexander trick does not work  for the spaces $\pdHomeo_\partial(D^n (\mmod D^m))$ of PD homeomorphisms of a disc. The problem is that the 
formula~\eqref{eq:alex_h} of~$H$, for any non-PL PD map~$f$,
produces a non-PD map near $(x,t)=(0,0)$.\footnote{For example, if $n=1$ and $f(x)=x+x^2$ on $[a,b]\subset D^1$, then $H(x,t)=x+\frac{x^2}t$ is continuous, but not (piecewise) smooth on the triangle with vertices $(a,1)$, $(b,1)$ and $(0,0)$. In general, this problem appears whenever $f$ has a non-linear part in its local Tailor expansion.} 
To prove contractibility of $\pdHomeo_\partial(D^n (\mmod D^m))$, one needs to change PD homeomorphisms by  small PD isotopies to  PL ones
(as it is done in~\cite{Whitehead}, see also~\cite{BLR_aut}) and only then one  applies the 
Alexander trick.
\end{remark}

\subsection{Isotopy extension theorems and the spaces of (topological/PL) spherical and disc embeddings}\label{ss:isot_ext}
The isotopy extension theorem holds in all the three categories: smooth~ \cite{Lima,Palais}, topological  \cite[Corollary~1.4]{EdwKirby}, \cite[Theorem~6.17]{Sieb_deform}, \cite{Wright}, and 
PL~\cite[Theorem~3, Corollary~3.2]{Hudson}. Given a $k$-simplex
\[
f\colon M\times\Delta^k\to N\times\Delta^k,\,\,\,\,
f(x,t)=(f_1(x,t),t)),
\]
 of $\Emb_\partial(M,N)$,  $\tEmb_\partial(M,N)$, or  $\plEmb_\partial(|M|,|N|)$, and any $t_0\in\Delta^k$, where $M$ is compact, the theorem
says that there always exists a $k$-simplex $g: N\times\Delta^k\to N\times \Delta^k$ of $\Diff_\partial(N)$, $\Homeo_\partial(N)$, 
$\plHomeo_\partial(|N|)$, respectively, such that $f$ coincides with the composition
\[
M\times\Delta^k\to N\times\Delta^k\xrightarrow{g} N\times\Delta^k,
\]
where the first map is $(x,t)\mapsto(f_1(x,t_0),t)$.

The following is an immediate corollary of the isotopy extension theorem. 

\begin{proposition}\label{p:disc_emb}
For any $n\geq m\geq 1$, one has isomorphisms of simplicial sets:
\begin{enumerate}[(a)]
\item $\Embmn_*=\Diffn/\Diff_\partial(D^n\mmod D^m)$, $n>m$;
\item $\tEmbmn=_{n-m\neq 2}\tEmbmn_*=\Homeo_\partial(D^n)/\Homeo_\partial(D^n\mmod D^m);$
\item $\plEmbmn=_{n-m\neq 2}\plEmbmn_*=\plHomeo_\partial(D^n)/\plHomeo_\partial(D^n\mmod D^m)$, in  the first identity additionally $(n,m)\neq (4,3)$.
\end{enumerate}
\end{proposition}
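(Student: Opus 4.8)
The plan is to derive all three identities from the isotopy extension theorem by a standard orbit-space argument. Fix one of the three categories (smooth, topological locally flat, or PL), write $G=\Diff_\partial(D^n)$, $\Homeo_\partial(D^n)$, or $\plHomeo_\partial(D^n)$ accordingly, and write $H=\Diff_\partial(D^n\mmod D^m)$, $\Homeo_\partial(D^n\mmod D^m)$, or $\plHomeo_\partial(D^n\mmod D^m)$ for the stabilizer of the standard inclusion $i\colon D^m\subset D^n$. First I would define the orbit map $\Phi\colon G\to \Emb_\partial(D^m,D^n)$ (resp.\ $\tEmb$, $\plEmb$) on $k$-simplices by sending a $k$-family of automorphisms $g\colon D^n\times\Delta^k\to D^n\times\Delta^k$ to the restricted family $g\circ(i\times\mathrm{id})\colon D^m\times\Delta^k\to D^n\times\Delta^k$. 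This is a map of simplicial sets, it is constant on left cosets of $H$, and conversely $\Phi(g)=\Phi(g')$ forces $g^{-1}g'$ to fix $D^m\times\Delta^k$ pointwise near the relevant locus, i.e.\ $g^{-1}g'\in H$; so $\Phi$ descends to an injection of simplicial sets $G/H\hookrightarrow \Emb_\partial(D^m,D^n)$ (resp.). It remains to identify the image.

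Second, I would show surjectivity of $\Phi$ onto the appropriate target using the isotopy extension theorem as recalled just above. Given a $k$-simplex $f$ of $\Embmn$ (resp.\ $\tEmbmn$, $\plEmbmn$) landing in the path-component of the trivial knot, pick $t_0\in\Delta^k$ so that $f(\cdot,t_0)$ is (isotopic to) the standard inclusion; then isotopy extension produces a $k$-simplex $g$ of $G$ with $f=g\circ(\text{standard family at }t_0)$. Since the family at $t_0$ is a reparametrization of $i$ lying in the $G$-orbit of $i$, one concludes $f$ is in the image of $\Phi$ (after absorbing the reparametrization, which itself lies in the image of $G\to\Emb$). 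In the smooth case one only gets the component $\Embmn_*$ of the trivial knot, and one must assume $n>m$ so that $D^n\mmod D^m$ genuinely has positive codimension and the orbit description is the standard isotopy-extension quotient; this yields (a). In the topological and PL cases, under the hypothesis $n-m\neq 2$ (and additionally $(n,m)\neq(4,3)$ for the PL identity $\plEmbmn=\plEmbmn_*$), the connectivity statements quoted in the introduction — Stallings/Brakes/Gluck and the generalized Schoenflies theorem for $n-m=1$, and local unknottedness in codimension $\geq 3$ — give that $\tEmbmn$ and $\plEmbmn$ are already connected, so $\tEmbmn=\tEmbmn_*$ and $\plEmbmn=\plEmbmn_*$; this is the content of the first equality signs in (b) and (c), and the second equality in each is the orbit identification just established.

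The main obstacle is bookkeeping rather than depth: one must be careful that the identifications are genuine \emph{isomorphisms of simplicial sets}, not merely weak equivalences. Concretely, the delicate point is that the isotopy extension theorem as stated is about extending a single embedding $f(\cdot,t_0)$ fiberwise over $\Delta^k$, so one must check that the resulting $g$ can be chosen simplicially — compatibly with faces and degeneracies — which is exactly what the parametrized formulation of isotopy extension in \cite{Hudson}, \cite{EdwKirby}, \cite{Sieb_deform}, \cite{Wright} provides; and one must verify the germ-collar conventions of Subsection~\ref{ss:spaces} are respected throughout (the standard family near $t_0$ and the extension $g$ must agree near $\partial D^n$). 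The PL case additionally requires invoking parametrized local unknottedness to know that the ambient PL isotopy exists; the exclusion $(n,m)=(4,3)$ reflects precisely the failure to know $\plHomeo_\partial(D^4)$-homogeneity arguments suffice there, though the quotient description $\plEmbmn_*=\plHomeo_\partial(D^n)/\plHomeo_\partial(D^n\mmod D^m)$ still holds without that restriction. Modulo these standard checks, the proposition follows formally.
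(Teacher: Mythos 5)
Your overall route (orbit map $g\mapsto g\circ i$, injectivity of $G/H\to\Emb$, surjectivity onto the trivial component via isotopy extension, plus Schoenflies/Stallings/Zeeman for the connectivity statements) is the same as the paper's, but there is a genuine gap in part (a): you never show that the image of the orbit map $\Diffn\to\Embmn$ is \emph{contained} in the component $\Embmn_*$ of the trivial knot. Your isotopy-extension argument gives that every $k$-simplex of $\Embmn_*$ lies in the image, and injectivity identifies $\Diffn/\Diff_\partial(D^n\mmod D^m)$ with the image, but the asserted equality also requires $g\circ i$ to be isotopic to $i$ for \emph{every} $g\in\Diffn$. In (b) and (c) this is automatic because $\Homeo_\partial(D^n)$ and $\plHomeo_\partial(D^n)$ are connected (Alexander trick), but in (a) it is not: $\pi_0\Diffn=\Theta_{n+1}$ is generally nontrivial, so connectivity is unavailable. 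The paper closes this with exactly the ingredient you omit: any diffeomorphism of $D^n$ rel boundary is isotopic to one whose support is disjoint from $D^m$ for $m<n$ (e.g.\ by conjugating with a shrinking of $D^n$ into a small ball missing $D^m$); restricting such a representative to $D^m$ gives $i$, hence $g\circ i\in\Embmn_*$. Without this easy but necessary step, the identification of the image with the trivial component is unproved.

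Two smaller points. For the first identities in (b) and (c), Stallings/Brakes and Zeeman (and Schoenflies in codimension one) are statements about sphere pairs; to conclude that a boundary-relative disc knot is isotopic rel $\partial$ to the standard inclusion one also needs the connectedness of $\Homeo_\partial(D^n)$ and $\Homeo_\partial(D^m)$ (resp.\ their PL analogues) to correct the unknotting pair homeomorphism near the boundary and in the parametrization — the paper states this explicitly, whereas the ``parametrized local unknottedness'' you invoke is the wrong ingredient here (it concerns local flatness of families and is used elsewhere). Also, the exclusion $(n,m)=(4,3)$ is not about homogeneity arguments for $\plHomeo_\partial(D^4)$: it occurs because the PL Schoenflies problem is open in dimension $4$, so it is not known that $\plEmb_\partial(D^3,D^4)=\plEmb_\partial(D^3,D^4)_*$; you are right that the quotient description of the trivial component is unaffected. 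Finally, your worry about choosing the ambient extensions compatibly with faces and degeneracies is unnecessary: the comparison map $G/H\to\Emb$ is already simplicial, so one only needs degreewise surjectivity onto the $k$-simplices of the target, exactly as the parametrized isotopy extension theorem provides.
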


An important corollary of this proposition  and of Lemma~\ref{l:alex} is that the spaces $\tEmbmn_*$ and $\plEmbmn_*$ are contractible.

\begin{proof}
(a) follows from the smooth version of the isotopy extension theorem and the fact that any diffeomorphism of a disc $D^n$ is isotopic to a diffeomorphism
with support disjoint from~$D^m$,~$m<n$.

The first identity in (b) for $n-m=1$ follows from the generalized Schoenflies theorem, which holds in any ambient dimension \cite{Brown,Mazur,Morse}. For $n-m>2$, this follows
from Stalling's unknotting theorem \cite{Stallings,Brakes}\footnote{The theorem states that for $n-m>2$, any locally flat sphere $K^{m}\subset S^n$ is unknotted, i.e. the pair $(S^n,K^m)$ is homeomorphic to the standard pair  $(S^n,S^m)$. The same statement in the PL category was proved by Zeeman~\cite{Zeeman}.} and the fact that the groups $\Homeo_\partial(D^n)$ and 
$\Homeo_\partial(D^m)$ are connected.

The first identity in (c) similarly follows from the PL version of the generalized Schoenflies theorem (true in all dimensions except possibly $n=4$, which is a consequence of the fact that a topological $n$-disc
admits a unique PL structure\footnote{In case $n=5$, one must additionally assume that the boundary is the standard $S^4$.}) and the
PL unknotting theorem for codimension $>2$ spheres \cite{Zeeman}.

 The second identities in (b) and (c) follow from the topological and PL versions, respectively, of the isotopy extension theorem.
\end{proof}

One has a similar result for the spaces of spherical embeddings $S^m\hookrightarrow S^n$. We will need only its TOP version.

\begin{proposition}\label{p:spher_emb}
For any $0\leq m\leq n$, $n-m\neq 2$,
 \[
 \tEmb(S^m,S^n)=\Homeo(S^n)/\Homeo(S^n\mmod S^m).
 \]
  Moreover, for any $n\geq 2$, 
$\tEmb(S^{n-2},S^n)_*=\Homeo(S^n)/\Homeo(S^n \mmod S^{n-2}).$
\end{proposition}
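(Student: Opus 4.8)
The plan is to deduce both identities from the topological isotopy extension theorem, exactly as in the proof of Proposition~\ref{p:disc_emb}(b), using in the sphere case the relevant unknotting results. First I would observe that the group $\Homeo(S^n)$ acts on $\tEmb(S^m,S^n)$ by postcomposition, and that the stabilizer of the basepoint — the equatorial inclusion $i\colon S^m\subset S^n$ — is by definition the subgroup $\Homeo(S^n\mmod S^m)$ of homeomorphisms of $S^n$ restricting to the identity near $S^m$. The orbit map therefore induces an injection of simplicial sets $\Homeo(S^n)/\Homeo(S^n\mmod S^m)\hookrightarrow\tEmb(S^m,S^n)$, and the content of the proposition is that this is surjective, i.e.\ that for every $k$-simplex $f\colon S^m\times\Delta^k\hookrightarrow S^n\times\Delta^k$ of $\tEmb(S^m,S^n)$ there is a $k$-simplex $g$ of $\Homeo(S^n)$ with $f=(g|_{S^m\times 0})\circ i$ suitably interpreted; surjectivity on each simplicial level is what is needed for an isomorphism of simplicial sets.

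The argument splits by codimension. For $n-m=1$ the generalized Schoenflies theorem \cite{Brown,Mazur,Morse} says every locally flat $(n-1)$-sphere in $S^n$ is standard, so a single embedding extends to a homeomorphism of $S^n$; for $n-m\geq 3$ one uses the Stallings--Brakes topological unknotting theorem \cite{Stallings,Brakes} (with the case $(n,m)=(4,1)$ covered by \cite{Brakes}) together with the connectedness of $\Homeo(S^n)$ and $\Homeo(S^m)$, the latter needed to absorb the reparameterization of $S^m$; the case $m=n$ is trivial and $m=0$ is elementary. To go from ``every individual embedding extends'' to ``every $\Delta^k$-family of embeddings extends compatibly'', I would feed the family into the topological isotopy extension theorem in the parameterized form recorded in Subsection~\ref{ss:isot_ext}: pick $t_0\in\Delta^k$, use the unknotting results to replace $f|_{t_0}$ by a standard embedding after postcomposing with an element of $\Homeo(S^n)$, then apply isotopy extension to the resulting family based at $t_0$ to produce the desired $k$-simplex $g$ of $\Homeo(S^n)$. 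This is identical in form to the proof of the second identities in Proposition~\ref{p:disc_emb}(b)--(c).

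For the second statement — the component $\tEmb(S^{n-2},S^n)_*$ of the trivial knot in codimension two — the global unknotting input fails, but by definition a knot in this component is ambiently isotopic (through locally flat families) to the equatorial one, so the isotopy extension theorem alone already produces, for any such individual knot, a homeomorphism of $S^n$ carrying it to the standard position; the parameterized version then upgrades this to families. The main obstacle is purely bookkeeping rather than conceptual: one must be careful that the ``relative to a germ collar of $S^m$'' convention for $\Homeo(S^n\mmod S^m)$ matches the pointwise ``$=i$ on $S^m$'' condition implicit in the quotient, and that the reparameterizing homeomorphisms of the source sphere (in the $n-m\geq 3$ case) can be realized \emph{ambiently}, i.e.\ extended over $S^n$ — this is where connectedness of $\Homeo(S^m)$ and another application of isotopy extension, now on the source side, enter. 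Once these compatibilities are arranged, the isomorphism of simplicial sets is immediate.
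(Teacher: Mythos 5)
Your overall strategy---injectivity from the stabilizer identification, plus surjectivity obtained by unknotting a single embedding (Schoenflies in codimension one, Stallings--Brakes in codimension $\geq 3$) and then feeding the family into the parameterized isotopy extension theorem---is exactly the argument the paper intends: the paper gives no separate proof of this proposition and simply refers to the proof of Proposition~\ref{p:disc_emb}. Two points, however, do not transfer verbatim from discs to spheres, and both stem from the fact that, unlike $\Homeo_\partial(D^n)$ and $\Homeo_\partial(D^m)$, the groups $\Homeo(S^n)$ and $\Homeo(S^m)$ are \emph{not} connected. First, your absorption of the source reparameterization in codimension $\geq 3$ appeals to ``connectedness of $\Homeo(S^n)$ and $\Homeo(S^m)$'', which is false (orientation-reversing homeomorphisms give further components); what you actually need is that every self-homeomorphism of the standard $S^m\subset S^n$ extends to a homeomorphism of $S^n$ preserving $S^m$, which holds elementarily via the join decomposition $S^n=S^m * S^{n-m-1}$ (extend by the identity on the complementary sphere), so this is easily repaired and no connectivity is required there.

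Second, and more substantively: for the codimension-two statement the claim is an \emph{equality} of the trivial component with the full quotient, so besides surjectivity onto $\tEmb(S^{n-2},S^n)_*$ you must also check that the orbit map lands \emph{inside} the trivial component, i.e.\ that $g|_{S^{n-2}}$ is isotopic through locally flat embeddings to the equatorial inclusion for \emph{every} $g\in\Homeo(S^n)$; your proposal only addresses the containment ``$\supseteq$''. In the disc case this direction is free because $\Homeo_\partial(D^n)$ is connected (Alexander trick), but for spheres it needs an argument, e.g.\ that $\pi_0\Homeo(S^n)=\Z/2$ (Kirby; Quinn in dimension four---a fact the paper invokes elsewhere) together with the observation that the nontrivial component contains a reflection fixing $S^{n-2}$ pointwise, so every $g$ is isotopic to a homeomorphism restricting to the standard inclusion, and restricting the isotopy gives the required path in $\tEmb(S^{n-2},S^n)$. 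With these two repairs your proof is complete and coincides with the paper's intended one. (Also, by the paper's conventions $\Homeo(S^n\mmod S^m)$ consists of homeomorphisms equal to the identity \emph{on} $S^m$, not near it, so the stabilizer identification is exact and the germ-collar bookkeeping you worry about does not arise.)
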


\subsection{Simplicial groups/sets $\OO_n$, $O_n$, $O_{n,m}$, $\TOP_n$, $\TOP_{n,m}$, $\PL_n$, $\PL_{n,m}$, $\PD_n$, $\PD_{n,m}$, $\VV_{n,m}$, $V_{n,m}$, $\VV_{n,m}^t$,
$\VV_{n,m}^{pl}$}\label{ss:groups}

${}$

$\bullet$ Abusing notation we denote by $\OO_n$ both the topological orthogonal group and the simplicial groups of its either continuous or smooth singular chains. Often in the smoothing theory little attention is given to the difference between the orthogonal and linear groups. We partially follow this tradition and denote by $O_n$ the general linear group and the simplicial groups of its continuous and smooth chains. This is also done to avoid writing obvious maps in zigzags of equivalences, which correspond to replacement of $O_n$ by $\OO_n$.
 Often it will be clear from the context which version of $\OO_n$ or $O_n$
is used. To emphasize which type of chains are used (continuous or smooth), we write $tO_n$ and $dO_n$ for the continuous and smooth chains, respectively. 
We will also need to use the piecewise smooth version of singular chains on $O_n$ that we denote by $pdO_n$. Since the sum and product of piecewise
smooth maps into $\R$ are piecewise smooth, $pdO_n$ is also a simplicial group, for which the inclusions 
\[
dO_n\xrightarrow{\,\simeq\,}pdO_n\xrightarrow{\,\simeq\,}tO_n
\]
 are equivalences by the standard approximation argument. Finally,  define $O_{n,m}\subset O_n$ as the subgroup of linear isomorphisms of $\R^n$ preserving $\R^m$ pointwise. Obviously, $O_{n,m}\simeq  \OO_{n-m}$.

$\bullet$ The groups $\TOP_n$ and $\PL_n$ are defined as the groups of germs of homeomorphisms of $\R^n$ near zero, which are PL in the case of $\PL_n$. 
Their $k$-simplices are equivalence classes  of homeomorphisms on its image
\[
f_k\colon U\times\Delta^k\to \R^n\times\Delta^k,
\eqno(\numb)\label{eq:TOP_germ}
\]
commuting with the projection on $\Delta^k$ and identity on $0\times\Delta^k$. Here, $U\subset \R^n$ is an open set containing~0. One says $f_k\sim f'_k$ (where $f'_k\colon U'\times \Delta^k
\to\R^n\times\Delta^k$) if there exists an open $V\subset U\cap U'$, such that $0\in V$ and $f_k|_{V\times\Delta^k}=f'_k|_{V\times\Delta_k}$.
In case of $\PL_n$ the local homeomorphisms \eqref{eq:TOP_germ} are required to be PL. We call these equivalence classes {\it germs} of homeomorphisms
of $\R^n\times\Delta^k$ near $0\times\Delta^k$.

The group $\TOP_n$ can also be defined as 
\[
\TOP_n=\Homeo(\R^n\mmod 0)/\Homeo_0(\R^n)
\]
the quotient of the simplicial group of homeomorphisms of $\R^n$ that preserve the origin by its normal subgroup of homeomorphisms that are identity near the origin.
Here one uses the fact that any germ of a homeomorphism on its image~\eqref{eq:TOP_germ} can be represented by an actual homeomorphism $\R^n\times\Delta^k\to \R^n\times\Delta^k$ \cite[Lemmas 3 and 3'(PL)]{KuipLash1}, see also   \cite[Theorem~1]{Kister}.   The latter subgroup $\Homeo_0(\R^n)$ is contractible being isomorphic (by means of the inversion of $\R^n$) to the contractible group of
homeomorphisms of $\R^n$ with compact support and preserving~0, see Remark~\ref{r:alex}. In particular, the groups $\TOP_n$ and $\Homeo(\R^n)$
are equivalent by means of the zigzag
\[
\TOP_n\xleftarrow{\,\simeq\,}\Homeo(\R^n\mmod 0)\xrightarrow{\,\simeq\,}\Homeo(\R^n).
\eqno(\numb)\label{eq:top_n}
\]
A similar thing can be done for $\PL_n$ \cite[Lemma 1.6(f)]{KuipLash2}, though it is slightly harder to show that $\plHomeo_0(\R^n)$ is contractible. 
By means of a PL self-homeomorphism of $\R^n\setminus\{0\}$ which is a PL approximation of the inversion, the simplicial group 
$\plHomeo_0(\R^n)$ is isomorphic to the group of self-homeomorphisms of $\R^n$ with compact support, which preserve~0 and are PL everywhere except possibly
the origin. But the latter group is also contractible by the same Alexander trick.

$\bullet$ One defines $\TOP_{n,m}\subset\TOP_n$ and $\PL_{n,m}\subset\PL_n$ as subgroups of germs of homeomorphisms of $\R^n$ near~$0$ that preserve 
$\R^m\subset\R^n$ pointwise. One similarly has
\[
\TOP_{n,m}=\Homeo(\R^n\mmod \R^m)/\Homeo_0(\R^n\mmod\R^m),
\]
\[
\PL_{n,m}=\plHomeo(\R^n\mmod \R^m)/\plHomeo_0(\R^n\mmod\R^m),
\]
and, therefore, the natural maps
\[
\Homeo(\R^n\mmod \R^m)\xrightarrow{\,\simeq\,}\TOP_{n,m}
\eqno(\numb)\label{eq:top_nm}
\]
\[
\plHomeo(\R^n\mmod \R^m)\xrightarrow{\,\simeq\,}\PL_{n,m}
\]
are equivalences by Remark~\ref{r:alex}.

We will need to use the following well-known fact. Since it is always stated without proof, we prove it for completeness of exposition.

\begin{lemma}\label{l:top_n_n-1}
For any $n\geq 1$, $\TOP_{n,n-1}\simeq\OO_1\simeq\PL_{n,n-1}$.
\end{lemma}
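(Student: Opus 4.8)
The plan is to identify $\TOP_{n,n-1}$ (resp.\ $\PL_{n,n-1}$) with the homeomorphism (resp.\ PL-homeomorphism) group of $\R^n$ fixing $\R^{n-1}$ pointwise, as in \eqref{eq:top_nm}, and then to deformation retract this group onto the subgroup of maps that fix $\R^{n-1}$ pointwise and act linearly on the normal line, which is a copy of $\OO_1$. First I would use \eqref{eq:top_nm} to replace $\TOP_{n,n-1}$ by $\Homeo(\R^n \mmod \R^{n-1})$ and $\PL_{n,n-1}$ by $\plHomeo(\R^n \mmod \R^{n-1})$. A homeomorphism $f$ fixing $\R^{n-1}=\R^{n-1}\times 0$ pointwise must preserve each of the two open half-spaces $\R^{n-1}\times (0,\infty)$ and $\R^{n-1}\times(-\infty,0)$ (it cannot swap them, since it fixes their common frontier pointwise and is a homeomorphism of $\R^n$, so the sign of the last coordinate on a point near the hyperplane is determined up to a global choice; a connectedness argument on $\R^n\setminus\R^{n-1}$ forces $f$ to either preserve both half-spaces or swap both). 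This gives a surjection $\pi_0\Homeo(\R^n\mmod\R^{n-1})\to \Z/2$ detecting whether $f$ reverses the normal direction, split by the two linear maps $\mathrm{id}$ and $\mathrm{diag}(1,\dots,1,-1)$, i.e.\ by the subgroup $\OO_1$.

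Next I would show the inclusion $\OO_1 \hookrightarrow \Homeo(\R^n\mmod\R^{n-1})$ is an equivalence by contracting each half onto its linear point. It suffices to show the identity component $\Homeo_+(\R^n\mmod\R^{n-1})$ (maps preserving the half-spaces) is contractible, and likewise in the PL case. For this I would use an Alexander-trick-style contraction along the hyperplane: given $f$ fixing $\R^{n-1}$ pointwise and preserving the upper/lower half-spaces, write $f(x',x_n)=(g(x',x_n), h(x',x_n))$ with $g(x',0)=x'$, $h(x',0)=0$, $h$ preserving the sign of $x_n$; then the family
\[
F_t(x',x_n) = \begin{cases} t\cdot f\!\left(x', \tfrac{x_n}{t}\right) \ \text{(acting on the last coordinate by scaling)}, & t\in(0,1],\\ (x', x_n), & t=0,\end{cases}
\]
suitably interpreted as $\bigl(g(x',x_n/t),\, t\,h(x',x_n/t)\bigr)$, is a homotopy through homeomorphisms fixing $\R^{n-1}$ from $f$ (at $t=1$) to the identity (at $t=0$); the continuity at $t=0$ uses that $f$ is continuous and fixes the hyperplane. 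This is the exact analogue of \eqref{eq:alex_h} but contracting toward the hyperplane rather than toward a point, and it upgrades to a simplicial homotopy on the simplicial groups just as in the proof of Lemma~\ref{l:alex}. In the PL case the same formula is PL whenever $f$ is, so the argument runs verbatim, giving $\plHomeo_+(\R^n\mmod\R^{n-1})\simeq *$.

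The main obstacle I expect is the bookkeeping at $t=0$: one must check that $F_t$ is genuinely a family of homeomorphisms for all $t\in[0,1]$ (injectivity and properness are easy from the structure of $f$, but joint continuity of $(x,t)\mapsto F_t(x)$ at points of the hyperplane with $t\to 0$ needs the observation that $f$ is uniformly close to the identity near $\R^{n-1}$ on compacta, which follows from $f|_{\R^{n-1}}=\mathrm{id}$ and continuity), and that the whole construction is natural enough in $\Delta^k$ to define a simplicial-set homotopy
\[
F\colon \Homeo_+(\R^n\mmod\R^{n-1})\times\Delta^1 \to \Homeo_+(\R^n\mmod\R^{n-1}),
\]
and similarly in the PL category. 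Given Lemma~\ref{l:alex} and Remark~\ref{r:alex}, these are routine but must be done carefully. Combining the $\Z/2$ splitting on $\pi_0$ with contractibility of the identity component then yields $\TOP_{n,n-1}\simeq \OO_1 \simeq \PL_{n,n-1}$, as claimed.
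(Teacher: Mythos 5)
Your reduction to $\Homeo(\R^n\mmod\R^{n-1})$ and $\plHomeo(\R^n\mmod\R^{n-1})$ via \eqref{eq:top_nm}, and the $\pi_0$ analysis split by $\OO_1$, are fine, but the central step --- contracting the side-preserving subgroup by the normal-direction Alexander trick --- fails. Your $F_t$ is exactly conjugation $S_t\circ f\circ S_t^{-1}$ by the squeeze $S_t(x',x_n)=(x',tx_n)$, and this does \emph{not} converge to the identity as $t\to 0$: for the shear $f(x',x_n)=(x'+x_n v,\,x_n)$ with $v\neq 0$ (which fixes $\R^{n-1}$ pointwise and preserves the half-spaces) one gets $F_t(x',x_n)=(x'+(x_n/t)v,\,x_n)$, which blows up. The point is that $F_t$ evaluates $f$ at $(x',x_n/t)$, a point that runs \emph{away} from the hyperplane, so the estimate you invoke --- $f$ uniformly close to the identity near $\R^{n-1}$ on compacta --- concerns the wrong region; the classical trick \eqref{eq:alex_h} works because the outer rescaling damps \emph{all} coordinates and the maps being contracted are supported in the disc, neither of which holds here (elements of $\Homeo(\R^n\mmod\R^{n-1})$ are not compactly supported). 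In the PL case there is a further problem even granting convergence: the family $(x,t)\mapsto F_t(x)$ is not PL jointly in $(x,t)$ (terms such as $x_n/t$ and $t\,v\cdot x'$ appear, and the regions $\{(x,t):(x',x_n/t)\in\sigma\}$ are not polyhedra), whereas the usual cone trick is jointly PL precisely because every coordinate is scaled.

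The paper gets around this differently in the two categories. For TOP it one-point compactifies: $\Homeo(\R^n\mmod\R^{n-1})=\Homeo(S^n\mmod S^{n-1})$, which splits as $\{\pm1\}\times\Homeo_\partial(D^n)^{\times 2}$ by restricting to the two hemispheres, each factor being contractible by Lemma~\ref{l:alex}. For PL it argues at the level of germs: a germ $f\in\mathrm{SPL}_{n,n-1}$ is deformed through the family that is the identity on the slab $|x_n|\le t$ and a translated copy of a representative $\tilde f$ outside it, so for every $t>0$ the germ at the origin is already trivial; this contracts each component. To salvage a direct argument you would need one of these devices (compactify and cone each hemisphere, or push $f$ off a neighborhood of the hyperplane and use that only the germ matters); the conjugation-by-normal-squeeze homotopy cannot be repaired.
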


\begin{proof}
By \eqref{eq:top_nm},
\begin{multline*}
\TOP_{n,n-1}\simeq \Homeo(\R^n\mmod\R^{n-1})=\Homeo(S^n\mmod S^{n-1})=\{\pm 1\}\times\Homeo_\partial(D^n)^{\times 2}\simeq\{\pm 1\},
\end{multline*}
where the last equivalence is a consequence of Lemma~\ref{l:alex}.

To see that $\PL_{n,n-1}$ is a union of two contractible components, we produce a deformation retraction of its subgroup $\mathrm{SPL}_{n,n-1}$ (of  orientation preserving  germs) to a point. Let $f\in \mathrm{SPL}_{n,n-1}$ be a germ of orientation preserving PL homeomorphisms
of $\R^n$ near~$0$ and let $\tilde f\in\plHomeo(\R^n\mmod \R^{n-1})$ be its representative. Define $\tilde H\colon \R^n\times [0,1]\to \R^n\times [0,1]$ as 
follows
\[
\tilde H(x_1,\ldots,x_n;t)=
\begin{cases}
(x_1,\ldots,x_n;t),& \text{if $|x_n|\leq t$,}\\
(\tilde f(x_1,\ldots,x_{n-1},x_n-t)+(0,\ldots,0,t),t),&\text{if $x_n\geq t$,}\\
(\tilde f(x_1,\ldots,x_{n-1},x_n+t)-(0,\ldots,0,t),t),&\text{if $x_n\leq t$.}
\end{cases}
\]
Even though $\tilde H$ depends on the choice of~$\tilde f$, 
 the  germ $H$ of homeomorphisms of $\R^n\times[0,1]$ near $0\times [0,1]$ corresponding to $\tilde H$ does not depend on this choice. One has $H|_{t=0}= f$, while $H|_{t=1}=id$. 
 We then use a parametrized version of this construction to define an explicit deformation retraction to a point $H\colon \mathrm{SPL}_{n,n-1}\times\Delta^1\to \mathrm{SPL}_{n,n-1}$.\footnote{We were pointed out
that a slight modification of this arguments recently appeared in~\cite[Lemma~8.17]{KraKupDisc}. Note that this second proof works in the TOP case as well.}
\end{proof}

$\bullet$ One defines similarly the simplicial sets $\PD_n$ and $\PD_{n,m}$ of germs of PD homeomorphisms of $\R^n$ near 0 (preserving $\R^m$ pointwise
in case of $\PD_{n,m}$).  One has $pdO_n\subset \PD_n\supset \PL_n$, $pdO_{n,m}
\subset \PD_{n,m} \supset\PL_{n,m}$, where the inclusions $\PL_n\subset\PD_n$ and $\PL_{n,m}\subset\PD_{n,m}$ are equivalences.

\begin{remark}\label{r:pd}
The simplicial sets $\PD_n$ and $\PD_{n,m}$ are not groups but $pdO_n$-$\PL_n$- and $pdO_{n,m}$-$\PL_{n,m}$-bimodules, respectively, where $pdO_n$ and $pdO_{n,m}$
act  by postcomposing, while $\PL_n$ and $\PL_{n,m}$ act by precomposing. In general, a composition of PD maps may not be PD. However, a composition $g_k\circ f_k$ of any $k$-simplices $f_k\colon U\times \Delta^k\to\R^n\times \Delta^k$
and $g_k\colon\R^n\times\Delta_k\to\R^n\times\Delta^k$ of $\PD_n$ and $pdO_n$, respectively, is of class PD. Indeed, if $f_k$ is smooth on a simplex $\Xi\subset U\times\Delta^k$ and $g_k$ is smooth on $\R^n\times\Upsilon$, where $\Upsilon$ is a simplex in $\Delta^k$, then $g_k\circ f_k$ is smooth
on $\Xi\cap(\R^n\times\Upsilon)$. This is because $p_2\circ f_k=p_2$ 
 implying $f_k(U\times\Upsilon)\subset \R^n\times\Upsilon$
and hence  $f_k\left(\Xi\cap(\R^n\times\Upsilon)\right)\subset  \R^n\times\Upsilon$.
\end{remark}

$\bullet$ One can show that $tO_n\cap\PD_n=pdO_n$ and $tO_{n,m}\cap\PD_{n,m}=pdO_{n,m}$. 
 By contrast the simplicial group  $tO_n\cap \PL_n$ has as $k$-simplices only constant linear maps $\Delta^k\to O_n$ and as a consequence is not equivalent
to $O_n$. One can define the simplicial set of PL chains on $O_n$, but the composition of such chains (induced by the product in $O_n$) is not PL in general. (Because the product of PL maps into 
 $\R$ may not be PL.) In particular, this means that one does not have any natural map $\OO_n\to \PL_n$ at least for the models of $\OO_n$ and $\PL_n$ that 
we consider above.\footnote{A remedy to it  is to define a map $B\OO_n\to B\PL_n$, see~\cite[Lemma~3.3]{LR} or~\cite[Equation~(6.1)]{HirschMazur}. Let $E\PL_n$ be a contractible space with a free
right $\PL_n$-action. Then the space $(\PD_n \times E\PL_n)/\PL_n$ is contractible (the action of $\PL_n$ on $\PD_n\times E\PL_n$ is diagonal) and has a free
left $\OO_n$-action. Thus, it can be considered as a model for $E\OO_n$, while $\left(\left(_{\OO_n}\!\!{\backslash}\PD_n\right)\times E\PL_n\right)/\PL_n$
is equivalent to $B\OO_n$ and has a natural projection onto $E\PL_n/\PL_n=B\PL_n$.\label{foot10}} The non-existence of a direct map $\OO_n\to\PL_n$
makes the PL version of the delooping more difficult to establish. In particular, this is the reason Theorem~\ref{th:c} and the $\OO_m\times\OO_{n-m}$-equivariant
version of the delooping (Theorems~\ref{th:a_top} and~\ref{th:b_top_refine})  are stated and proved only in the TOP version.

$\bullet$ Define $\VV_{n,m}:=\OO_n/\OO_{n-m}$, $\VV_{n,m}^{t}:=\TOP_n/\TOP_{n,m}$, $\VV_{n,m}^{pl}:=\PL_n/\PL_{n,m}$ the so called
{\it Stiefel manifold}, {\it topological Stiefel manifold}, and {\it PL Stiefel manifold}, respectively. We will also use $V_{n,m}:=O_n/O_{n,m}.$
\begin{itemize}
\item[-] $\VV_{n,m}$ is the space of isometric linear inclusions $\R^m\to\R^n$;
\item[-] $V_{n,m}$ is the space of injective linear maps $\R^m\to\R^n$;
\item[-] $\VV_{n,m}^t$ is the space of germs (near the origin) of topological locally flat embeddings $\R^m\to\R^n$ sending $0\mapsto 0$;
\item[-]  $\VV_{n,m}^{pl}$ is the space of germs (near the origin) of PL locally flat embeddings $\R^m\to\R^n$ sending $0\mapsto 0$.
\end{itemize}
As for $\OO_n$ and $O_n$, we denote by $\VV_{n,m}$ and $V_{n,m}$ both the topological spaces and the singular chains on them. Since 
$\OO_{n-m}\to \OO_n\to\VV_{n,m}$ and $O_{n,m}\to O_n\to V_{n,m}$ are fiber bundles, the simplicial sets of singular chains on $\VV_{n,m}$ and
$V_{n,m}$ are isomorphic to quotients of simplicial groups $t\VV_{n,m}=t\OO_n/t\OO_{n-m}$, $tV_{n,m}=tO_n/tO_{n,m}$. The same holds for the smooth 
singular chains.

 The $k$-simplices of $\TOP_n/\TOP_{n,m}$ and $\PL_n/\PL_{n,m}$
can be described as equivalence classes (germs)  of  locally flat embeddings $f\colon U\times \Delta^k\to\R^n\times\Delta^k$
(where $0\in U$ and $U\subset\R^m$ is open), which commute with the projection on $\Delta^k$ and send $0\in \R^m$ to $0\in \R^n$ (i.e., $(0,t)\mapsto (0,t)$ for all $t\in\Delta^k$). We call these equivalence classes {\it germs} of locally flat embeddings $\R^m\times\Delta^k$ into $\R^n\times\Delta^k$ near $0\times\Delta^k$. Note that we require that the local flatness is of parameterized nature, see Subsection~\ref{ss:spaces}. This is the reason why any $k$-simplex of germs of embeddings lifts to a $k$-simplex in $\TOP_n$ and $\PL_{n,m}$, respectively, and, therefore the quotient group description and the germs of embeddings one produce the same simplicial sets.  An important consequence of the latter description is that these quotient simplicial sets are Kan. This will be useful 
when we define  models for  iterated loop spaces on $\VV_{n,m}^t$ and $\VV_{n,m}^{pl}$, see Subsection~\ref{ss:loops}.

\section{Operads}\label{s:operads}

\subsection{Little discs operad $E_m$, overlapping discs operad $R_{m+1}$, Budney's $E_{m+1}$-action}\label{ss:little_discs}
We denote by $E_m$ the topological operad of little $m$-discs. Its $k$-th component $E_m(k)$ consists of configurations of $k$ discs with disjoint interiors
in the unit disc $D^m$. Each disc can be described by an affine linear map $L_i\colon\R^m\to\R^m$, $i=1\ldots k$, which is a composition of a translation and a positive rescaling and such that $L_i(D^m)$ is the $i$-th disc in the configuration. Abusing notation, we also denote by $E_m$ the simplicial operad of its singular chains
as well as its smooth variant. If we need to emphasize which of the two is used we write $tE_m$ and $dE_m$, respectively.

The smooth one $dE_m$ acts on $\Diff_\partial(D^m)$ and $\Emb_\partial(D^m,D^n)$, while the bigger one $tE_m$ acts on $\Homeo_\partial(D^m)$ 
and $\tEmb_\partial(D^m,D^n)$. The idea of this $E_m$-action is as follows. Given $f_1,\ldots,f_k\in\Diff_\partial(D^m)$ (or $\Homeo_\partial(D^m)$,
$\Emb_\partial(D^m,D^n)$, etc.) and $g\in E_m(k)$, where $g=(L_1,\ldots,L_k)$, we define 
\[
g(f_1,\ldots,f_k)(x):=
\begin{cases}
L_i\circ f_i\circ L_i^{-1}(x),& x\in L_i(D^m),\,\, i=1\ldots k;\\
x,& \text{otherwise.}
\end{cases}
\]

Besides being $E_m$-algebras, $\Diff_\partial(D^m)$ and $\Homeo_\partial(D^m)$ are groups, and therefore, associative monoids. Moreover,
they are $E_m$-algebras in the category of associative monoids (or, equivalently, associative monoids in the category of $E_m$-algebras). The 
two compatible structures determine the operad, which is the Boardman-Vogt tensor product (see \cite{BoardVogt,Dunn}) $R_{m+1}:=E_m\otimes Assoc$ 
 of $E_m$ and the associative operad $Assoc$. This operad implicitly appeared in Ryan Budney's work~\cite{Budney_cubes}.\footnote{The letter ``$R$'' is in honor of Ryan Budney.}
We call it {\it operad of overlapping $m$-discs}. Explicitly, its points are configurations of $k$ possibly overlapping discs $\vec{L}=(L_1,\ldots,L_k)\in E_m(1)^{\times k}$ with additional information:
when two discs overlap (share a point in their interiors), we know which one is above and which is below (similarly to cards lying on a table). 
Let $\Sigma_k$ denote the $k$-th symmetric group. The space $R_{m+1}(k)$ is defined as the quotient space of $E_m(1)^{\times k}\times\Sigma_k$, where the equivalence
relation is generated by $(\vec{L},\sigma)\sim (\vec{L},\sigma')$, where $\sigma'$ is obtained from $\sigma$ by transposition of two neighboring
elements $\sigma_i$ and $\sigma_{i+1}$ under the condition that the discs $L_{\sigma_i}(D^m)$ and $L_{\sigma_{i+1}}(D^m)$ have disjoint interiors.
Here, when we write $\sigma=(\sigma_1,\ldots,\sigma_k)$, we mean $\sigma_i=\sigma^{-1}(i)$.

Given $g=\left[(\vec{L},\sigma)\right]\in R_{m+1}(k)$ and  $f_1,\ldots,f_k\in\Diff_\partial(D^m)$ (or $\Homeo_\partial(D^m)$), the result of the action
$g(f_1,\ldots,f_k)$ is 
\[
g(f_1,\ldots,f_k)=\hat L_{\sigma_1}(f_{\sigma_1})\circ \hat L_{\sigma_2}(f_{\sigma_2})\circ \ldots \circ\hat L_{\sigma_k}(f_{\sigma_k}),
\eqno(\numb)\label{eq:overl_action}
\]
where $\hat L_{j}(f_{j})\colon D^m\to D^m$ is defined as follows
\[
\hat L_j(f)(x)=
\begin{cases}
L_j\circ f\circ L_j^{-1}(x),& x\in L_j(D^m);\\
x,& \text{otherwise.}
\end{cases}
\]
In order to avoid heavy notation, we follow the same conventions as for $E_m$  to denote the simplicial set operad of singular (continous and smooth) chains
on $R_{m+1}$.

One has a natural map of operads $E_{m+1}\to R_{m+1}$  induced by the projection $\R^{m+1}\to\R^m$. A permutation $\sigma$ assigned to
a configuration of $k$ disjoint open $(m+1)$-disks must satisfy the following property: if the projection of the $i$-th disc overlaps with that of the $j$-th one, then 
$\sigma(i)<\sigma(j)$ if and only if the $(m+1)$-st coordinates of the points of the $i$-th disc are less than the $(m+1)$-st coordinates of the points
with the same projection lying in the $j$-th disc. In the latter case we say that the $i$-th disc is below the $j$-th one. By applying~\cite[Main Theorem]{Smale_vietoris} one can  show that this map
is an equivalence of operads.\footnote{One would need to use an intermediate operad $E_m\otimes E_1$  whose components are configuration spaces of cylinders $D^m\times D^1$ of arbitrary radius and height in the unit cylinder. The latter operad can more easily be seen to be equivalent to $E_{m+1}$, compare with~\cite{Dunn}.} That is the reason we denote this operad by $R_{m+1}$ rather than by $R_m$. 

As an important corollary, $\Diff_\partial(D^m)$ and $\Homeo_\partial(D^m)$ are $E_{m+1}$-algebras.

Following R.~Budney \cite{Budney_cubes}, the framed embedding space $\Embfrmn$ is equivalent to the space $\Emb_{\partial D^m\times D^{n-m}}(D^m\times D^{n-m})$. 
The latter space is also an $E_m$-algebra in associative monoids (the associative product being the composition), and therefore, is also an $R_{m+1}$-algebra
and an $E_{m+1}$-algebra by restriction. The $R_{m+1}$-action is given by the same formula~\eqref{eq:overl_action}, where 
$\hat L_j(f)\in \Emb_{\partial D^m\times D^{n-m}}(D^m\times D^{n-m})$ is obtained from $f\in \Emb_{\partial D^m\times D^{n-m}}(D^m\times D^{n-m})$ as follows
\[
\hat L_j(f)(x)=
\begin{cases}
(L_j\times id_{D^{n-m}})\circ f\circ (L_j^{-1}\times id_{D^{n-m}})(x),& x\in L_j(D^m)\times D^{n-m};\\
x,& \text{otherwise.}
\end{cases}
\]
To be  precise, the simplicial set $\Emb_{\partial D^m\times D^{n-m}}(D^m\times D^n)$ is a $dR_{m+1}$-algebra, while its friend
$\tEmb_{\partial D^m\times D^{n-m}}(D^m\times D^n)$ is a $tR_{m+1}$-algebra, that we can also view as  a $dR_{m+1}$-algebra (and $dE_{m+1}$-algebra)
by restriction. We refer to~\cite{Budney_cubes} for beautiful illustrations of this action.

\subsection{No natural action on $\plHomeo_\partial(D^n)$, $\plEmbmn$, $\pdHomeo_\partial(D^m)$.}
 The PL and PD cousins $\plHomeo_\partial(D^n)$, $\plEmbmn$, $\pdHomeo_\partial(D^m)$, etc.
do not seem to be endowed with a natural action of any $E_m$ operad. One of the issues is that PL singular chains on $E_m$ cannot be composed,
and therefore, do not form an operad. The problem is that the product of PL functions is usually not PL. For example, $E_m(1)$ is a submonoid of the group of affine maps $x\mapsto \lambda x+b$, $\lambda\in (0,1]$, $b\in D^m$. But note that if $\lambda_1(t)$ and $\lambda_2(t)$ are (piecewise) linear maps $\Delta^i\to (0,1]$, then their product $\lambda_1(t)\lambda_2(t)$ may not be so. There is, probably, a way to go around it at least for for the spaces of PL  homeomorphisms (and embeddings) of discs if we replace discs by PL ones, for example, by taking a cube $I^m$ instead of $D^m$. Then the operad 
of PL endomorphisms of $I^m$ in the category of PL $m$-manifolds with codimension zero embeddings as morphisms i.e., with components 
$\{\plEmb(\coprod_k I^m,I^m),\, k\geq 0\}$, acts on $\plHomeo_\partial(I^m)$. Nonetheless, the latter operad does not seem to contain a suboperad equivalent to $E_m$. Moreover, this operad does not act on $\pdHomeo_\partial(I^m)$. So, it is not useful. On a positive side, the PD chains on $E_m$ form an operad $pdE_m$ equivalent to $E_m$,\footnote{This is similar to the fact that $pdO_m$ is a group.} but it does not act on any of the spaces in question.

\subsection{Framed operads and their action}\label{ss:fr_operads}
Given a group or a monoid $G$ acting on an operad $P$ by endomorphisms, one can define the {\it $G$-framed} operad $P^G$, whose $k$-th
component is $P^G(k)=P(k)\times G^{\times k}$. The composition map $\circ_i\colon P^G(k)\times P^G(\ell)\to P^G(k+\ell-1)$ is defined as
follows
\[
(p_1;g_1\ldots g_k)\circ_i (p_2;g_1'\ldots g_\ell')=
(p_1\circ_i(g_i\cdot p_2);g_1,\ldots,g_{i-1},g_i g_1',\ldots,g_i g_\ell',g_{i+1},\ldots,g_k).
\]

The most common example is the framed little discs operad $E_m^{\OO_m}$, where $g\in\OO_m$ acts on $(L_1,\ldots,L_k)\in E_m(k)$ as follows
\[
g\cdot (L_1,\ldots,L_k)=(g\circ L_1\circ g^{-1}, \ldots,g\circ L_k\circ g^{-1}).
\]
This action obviously generalizes to the Ryan Budney overlapping discs operad $R_{m+1}$ producing its framed version $R_{m+1}^{\OO_m}$. 
We will also consider the operads $E_{m+1}^{\OO_m}$, $E_m^{\OO_m\times\OO_{n-m}}$, $R_{m+1}^{\OO_m\times \OO_{n-m}}$,
 $E_{m+1}^{\OO_m\times \OO_{n-m}}$. To obtain $E_{m+1}^{\OO_m}$, we just restrict the $\OO_{m+1}$-action on $E_{m+1}$ to the subgroup 
 $\OO_m$. Note that the map $E_{m+1}\to R_{m+1}$ extends to the framed version $E_{m+1}^{\OO_m}\to R_{m+1}^{\OO_m}$. The factor $\OO_{n-m}$ 
 is defined to act trivially on $E_m$, $R_{m+1}$, $E_{m+1}$. For example, an $E_m^{\OO_m\times\OO_{n-m}}$-algebra is the same as an 
 $E_m^{\OO_m}$-algebra in the category of $\OO_{n-m}$-spaces. As before we use the same notation for the topological operads and their (smooth or continuous) singular chains 
 operads in simplicial sets, and we write in front $d$ or $t$ to emphasize that the smooth or continuous version is considered. 
 
 The spaces $\Embmn$, $\Embfrmn$, $\tEmbmn$ are algebras over $E_m^{\OO_m\times\OO_{n-m}}$, while $\Emb_{\partial D^m\times D^{n-m}}(D^m\times D^{n-m})$ and $\tEmb_{\partial D^m\times D^{n-m}}(D^m\times D^{n-m})$ are $R_{m+1}^{\OO_m\times\OO_{n-m}}$-algebras and, therefore
$E_{m+1}^{\OO_m\times\OO_{n-m}}$-algebras by restriction. Indeed, $\OO_m$ acts on $\Embmn$ (and $\tEmbmn$) by conjugation: for $f\in\Embmn$, $g\in\OO_m$,
\[
\left(g\cdot f=g\circ f\circ g^{-1}\right)\colon D^m\xrightarrow{g^{-1}}D^m\xrightarrow{f} D^n\xrightarrow{(g\times id_{\R^{n-m}})|_{D^n}} D^n,
\]
while $\OO_{n-m}$ acts by postcomposition. The group $\OO_m$ acts on $\Embfrmn$ in the same way as on $\Embmn$, while $\OO_{n-m}$ acts 
by postcomposition and simultaneous rotation of the framing in the opposite direction. On $\Emb_{\partial D^m\times D^{n-m}}(D^m\times D^{n-m})$ and $\tEmb_{\partial D^m\times D^{n-m}}(D^m\times D^{n-m})$, the group $\OO_m\times\OO_{n-m}$ acts by conjugation. Note that in all the  considered cases
the $\OO_m\times\OO_{n-m}$-action preserves the basepoint -- the trivial knot $D^m\subset D^n$ and the identity map $id\colon D^m\times D^{n-m}\to
D^m\times D^{n-m}$, respectively. This property is part of the compatibility condition (applied to the arity zero element $*=E_m(0)=R_{m+1}(0)$) between the operad $E_m$- or $R_{m+1}$-action and the
group $\OO_m\times\OO_{n-m}$-action.

\section{Loop spaces and homotopy pullbacks}\label{s:loops}

\subsection{Loop spaces}\label{ss:loops}
For a pointed topological space $(Y,*)$, we define its iterated $m$-loop space $\Omega^m Y$ as the space of continuous maps $D^m\to Y$ sending a neighborhood of the
boundary to the basepoint. Obviously, $\Omega^m Y$ is an algebra over the topological $E_m$ operad, while its singular chains simplicial set $S_*\Omega^m Y$
is an algebra over the simplicial set  operad $E_m$.

Given a pointed simplicial set $X$, there are different combinatorial models whose realizations are equivalent to the iterated loop space $\Omega^m|X|$ of
its realization $|X|$, see for example~\cite{Kan,Berger}. However, such models are not $E_m$-algebras, but rather algebras over a different combinatorial model of $E_m$. We do not explore
this combinatorial path, but instead define wherever possible our iterated loop spaces ad hoc so that (in most of the 
cases) they are acted upon by~$E_m$. 

$\bullet$ For example, we define $\Omega^j\TOP_n$ as a simplicial group whose $k$-simplices are germs of homeomorphisms of $\R^n \times D^j\times \Delta^k$
near $0\times D^j\times\Delta^k$ commuting with the projection on $D^j\times\Delta^k$ and identity near $\R^n\times\partial D^j\times\Delta^k$. It is immediate that $\Omega^j\TOP_n$ is an $R_{j+1}$-algebra (to be precise, a $tR_{j+1}$-one). Moreover, one has a natural evaluation map
\[
S_*\Omega^j|\TOP_n|\xrightarrow{\,\,\simeq\,\,} \Omega^j\TOP_n,
\eqno(\numb)\label{eq:eval_top}
\]
which is an equivalence of $R_{j+1}$-algebras. The reason it is an equivalence, is that both $\TOP_n$ and $\Omega^j\TOP_n$ are Kan complexes. Therefore,
the $i$-th homotopy group $\pi_i|\TOP_n|$ is described as the set of equivalence classes (up to ``homotopy'') of germs of homeomorphisms of $\R^n\times\Delta^i$ near $0\times\Delta^i$,
which are the identity on $\R^n\times\partial\Delta^i$. Similarly, $\pi_i|\Omega^j\TOP_n|$ is described as the set of equivalence classes of
germs of homeomorphisms of $\R^n\times D^j\times\Delta^i$ near $0\times D^j\times\Delta^i$, which are the identity on $\R^m\times\partial(D^j\times \Delta^i)$.
The latter implies $\pi_i|\Omega^j\TOP_n|=\pi_{i+j}|\TOP_n|$. Thus, the left- and right-hand sides have exactly the same homotopy groups. 

$\bullet$ We define $\Omega^j\TOP_{n,m}$ and $\Omega^j\left(\TOP_n/\TOP_{n,m}=\VV_{n,m}^t\right)$ in exactly the same way. For example,
the $k$-simplices of $\Omega^j\VV_{n,m}^t$ are the germs of locally flat embeddings 
\[
\R^m\times D^j\times\Delta^k\hookrightarrow \R^n\times D^j\times\Delta^k
\eqno(\numb)\label{eq:germ_Dj_V}
\]
near $0\times D^j\times\Delta^k$, which commute with the projection onto $D^j\times\Delta^k$ and are the identity inclusion near $\R^m\times \partial D^j\times \Delta^k$. One gets similar evaluation maps
\[
S_*\Omega^j|\TOP_{n,m}|\xrightarrow{\,\,\simeq\,\,} \Omega^j\TOP_{n,m},\quad
S_*\Omega^j|\VV_{n,m}^t|\xrightarrow{\,\,\simeq\,\,} \Omega^j\VV_{n,m}^t,
\]
which are equivalences of $R_{j+1}$- and $E_j$-algebras, respectively.

$\bullet$ We define $\Omega^j\PL_n$, $\Omega^j\PL_{n,m}$, $\Omega^j\left(\PL_n/\PL_{n,m}=\VV_{n,m}^{pl}\right)$, $\Omega^j\PD_n$,
$\Omega^j\PD_{n,m}$ as simplicial sets whose $k$-simplices are germs of homeomorphisms of $\R^n\times D^j\times\Delta^k$ or germs of 
embeddings~\eqref{eq:germ_Dj_V}, all as above, which are additionally PL or PD, whatever applies. The first two are simplicial groups, but none is an $R_{j+1}$-
or $E_j$-algebra (no matter we choose continuous or smooth versions of these operads). However, these simplicial sets are equivalent to the
$j$-th loop spaces of their realizations. For example, for $\PL_n$, consider the simplicial subset $S_*^{pl}\Omega^j_{pl}|\PL_n|\subset S_*\Omega^j|\PL_n|$,
whose $k$-simplices are PL maps $\Delta^k\times D^j\to|\PL_n|$, which send a neighborhood of $\Delta^k\times\partial D^j$ to the basepoint. One gets a zigzag
of equivalences of simplicial groups
\[
S_*\Omega^j|\PL_n|\xleftarrow{\,\,\simeq\,\,}S_*^{pl}\Omega^j_{pl}|\PL_n|\xrightarrow{\,\,\simeq\,\,} \Omega^j\PL_n,
\]
where the first map (inclusion) is an equivalence by the approximation argument, while the second (evaluation) map is an equivalence by the same reason
why \eqref{eq:eval_top} is one. In exactly the same way one gets the zigzags
\[
S_*\Omega^j|\PL_{n,m}|\xleftarrow{\,\,\simeq\,\,}S_*^{pl}\Omega^j_{pl}|\PL_{n,m}|\xrightarrow{\,\,\simeq\,\,} \Omega^j\PL_{n,m},
\quad \quad
S_*\Omega^j|\VV_{n,m}^{pl}|\xleftarrow{\,\,\simeq\,\,}S_*^{pl}\Omega^j_{pl}|\VV_{n,m}^{pl}|\xrightarrow{\,\,\simeq\,\,} \Omega^j\VV_{n,m}^{pl},
\]
\[
S_*\Omega^j|\PD_n|\xleftarrow{\,\,\simeq\,\,}S_*^{pl}\Omega^j_{pl}|\PD_n|\xrightarrow{\,\,\simeq\,\,} \Omega^j\PD_n,
\quad\quad
S_*\Omega^j|\PD_{n,m}|\xleftarrow{\,\,\simeq\,\,}S_*^{pl}\Omega^j_{pl}|\PD_{n,m}|\xrightarrow{\,\,\simeq\,\,} \Omega^j\PD_{n,m}.
\]

$\bullet$ For $\OO_n$, $O_n$, $O_{n,m}$, $\VV_{n,m}=\OO_n/\OO_{n-m}$, $V_{n,m}=O_n/O_{n,m}$, the simplicial iterated loop spaces
 $\Omega^j\OO_n$, $\Omega^j O_n$, $\Omega^j O_{n,m}$, $\Omega^j\VV_{n,m}$, $\Omega^j V_{n,m}$ are defined as the singular chains of the
 corresponding topological loop spaces. Thus, for example, $\Omega^j\OO_n$ denotes both the topological $j$-loop space of the topological group $\OO_n$ and
 its simplicial set of singular chains. When we need to emphasize that the simplicial version is considered, we write $S_*\Omega^j\OO_n$ or $\Omega^j t\OO_n$.
 We will also consider their smooth and piecewise smooth versions. For example, $\Omega^j dO_n$ and $\Omega^j pdO_n$ denote the simplicial subsets of $\Omega^j tO_n$, whose $k$-simplices are smooth and piecewise smooth maps $D^j\times\Delta^k\to O_n$, respectively, which send a neighborhood of 
 $\partial D^j\times \Delta^k$ to the basepoint. Note that $\Omega^j dO_n$ is a $dR_{j+1}$-algebra, while $\Omega^j pdO_n$  is only a simplicial group.
 

$\bullet$ In some cases we cannot produce an ad hoc loop space. For example, if $G$ is a simplicial group and $X$ a simplicial $G$-set, then even if we have
simplicial models $\Omega^j G$ and $\Omega^j X$ for $j$-loop spaces, compatible with the action, we may not be able to get an immediate model for
 $\Omega^j(X/G)$. The problem is that $\Omega^j X/\Omega^j G$ may not be equivalent to $\Omega^j(X/G)$. (They are equivalent if and only if  $\pi_j X\to\pi_j (X/G)$ is surjective or, equivalently, the connecting
 homomorphism $\pi_j(X/G) \to \pi_{j-1} G$ is zero.) To deal with such situation, we pass to the topological category and as a model for this space we consider
 $\Omega^j|X/G|$ or its simplicial set of singular chains $S_*\Omega^j|X/G|$. For example, this will be the case for the iterated loop space of $\TOP_n/\OO_n$ appearing
 in Theorem~\ref{th:a}, or the loop spaces of homotopy quotients by $\OO_n$ appearing in Theorems~\ref{th:b} and~\ref{th:c}.

\subsection{Homotopy fibers and pullbacks}\label{ss:hofib}
For a map of pointed topological spaces $f\colon X\to Y$, the homotopy fiber is the space of pairs 
\[
\hofiber(X\to Y):=\{(x,\alpha)\,|\, x\in X,\, \alpha\colon [0,1]\to Y, \text{ such that } \alpha(0)=f(x),\, \alpha(1)=*\}.
\]
More generally the homotopy pullback of 
$X\xrightarrow{f} Y\xleftarrow{g} Z$ is the space of triples
\[
X\times_Y^h Z:=\{(x,\alpha,z)\,|\, x\in X,\, z\in Z,\,   \alpha\colon [0,1]\to Y, \text{ such that } \alpha(0)=f(x),\, \alpha(1)=g(z)\}.
\]

In our arguments we often deal with homotopy fibers and pullbacks, that are also defined ad hoc. We consider several examples below. Other homotopy fibers and pullbacks are defined similarly.

$\bullet$ For example, by $\hofiber(\OO_n\to \TOP_n)$ we mean a simplicial group, whose $k$-simplices are pairs $(f_k,g_k)$, where $f_k$ is a $k$-simplex of $t\OO_n$,
and $g_k$ is a germ of homeomorphism of $\R^n\times \Delta^k\times [0,1]$ near $0\times \Delta^k\times [0,1]$ commuting with the projection
on $\Delta^k\times [0,1]$ and such that, $g_k|_{t=0}=f_k$ and $g_k|_{t=1}=id_{\R^n\times\Delta^k}$. One has a natural evaluation map
\[
S_*\hofiber\left(|\OO_n|\to |\TOP_n|\right)\xrightarrow{\,\,\simeq\,\,} \hofiber(\OO_n\to\TOP_n),
\]
which is an equivalence of simplicial groups.

$\bullet$ The simplicial set $\hofiber(\VV_{n,m}\to \VV_{n,m}^t)$ that appears in Theorem~\ref{th:b} is defined to have as $k$-simplices pairs $(f_k,g_k)$,
where $f_k$ is a $k$-simplex of $t\VV_{n,m}$, while $g_k$ is a germ of a locally flat embedding
\[
\R^m\times \Delta^k\times [0,1]\to \R^n\times \Delta^k\times [0,1]
\]
near $0\times\Delta^k\times [0,1]$ commuting with the projection on $\Delta^k\times [0,1]$ and such that $g_k|_{t=0}=f_k$ and $g_k|_{t=1}$ is the standard 
inclusion $\R^m\times \Delta^k\subset \R^n\times \Delta^k$. One also has the evaluation equivalence
\[
S_*\hofiber\left(|\VV_{n,m}|\to |\VV_{n,m}^t|\right)\xrightarrow{\,\,\simeq\,\,} \hofiber(\VV_{n,m}\to\VV_{n,m}^t),
\]

$\bullet$ The simplicial set $\hofiber(O_n\to\PD_n)$ is defined similarly as $\hofiber(\OO_n\to \TOP_n)$, except that    $f_k$ is a $k$-simplex of $pdO_n$, while $g_k$ is
a germ of a PD homeomorphism of $\R^n\times [0,1]\times\Delta^k$ near $0\times \Delta^k\times [0,1]$. One has a zigzag of equivalences
\[
S_*\hofiber\left(|O_n|\to |\PD_n|\right)\xleftarrow{\,\,\simeq\,\,} S^{pl}_*\hofiber^{pl}\left(|O_n|\to |\PD_n|\right)\xrightarrow{\,\,\simeq\,\,} \hofiber(O_n\to\PD_n),
\eqno(\numb)\label{eq:hof_On_PDn}
\]
where $S^{pl}_*\hofiber^{pl}\left(|O_n|\to |\PD_n|\right)$ is a simplicial subset of $S_*\hofiber\left(|O_n|\to |\PD_n|\right)$ whose $k$-simplices
are pairs $(f_k,g_k)$, where $f_k\colon \Delta^k\to |pdO_n|$ and $g_k\colon \Delta^k\times[0,1]\to |PD_n|$ are PL maps, such that $g_k|_{t=0}=f_k$
and $g_k|_{t=1}=*$ is the constant basepoint.

$\bullet$ The homotopy pullbacks are defined in the same manner. For example, the $k$-simplices of $O_n\times_{\TOP_n}^h\TOP_{n,m}$ and
$O_n\times_{\PD_n}^h \PD_{n,m}$ are triples $(f_k,\alpha_k,g_k)$, where $f_k$ is a $k$-simplex of $tO_n$ and $pdO_n$, respectively;
$g_k$ is a $k$-simplex of $\TOP_{n,m}$ and $\PD_{n,m}$, respectively; $\alpha_k$ is a germ of homeomorphism of $\R^n\times \Delta^k\times [0,1]$
near $0\times \Delta^k\times [0,1]$, commuting with the projection on $\Delta^k\times [0,1]$ and such that $\alpha_k|_{t=0}=f_k$, $\alpha_k|_{t=1}=g_k$.
Moreover, $g_k$ is supposed to be PD in case of $O_n\times_{\PD_n}^h \PD_{n,m}$. One has equivalences
\[
S_*\left(|O_n|\times^h_{|\TOP_n|}|\TOP_{n,m}|\right)\xrightarrow{\,\,\simeq\,\,} O_n\times_{\TOP_n}^h\TOP_{n,m},
\]
\[
S_*\left(|O_n|\times^h_{|\PD_n|}|\PD_{n,m}|\right)
\xleftarrow{\,\,\simeq\,\,}S_*^{pl}\left(|O_n|\times^{h,pl}_{|\PD_n|}|\PD_{n,m}|\right)\xrightarrow{\,\,\simeq\,\,} O_n\times_{\TOP_n}^h\TOP_{n,m},
\]
where $S_*^{pl}\left(|O_n|\times^{h,pl}_{|\PD_n|}|\PD_{n,m}|\right)$ is a simplicial subset of $S_*\left(|O_n|\times^h_{|\PD_n|}|\PD_{n,m}|\right)$ defined in the same way as $S^{pl}_*\hofiber^{pl}\left(|O_n|\to |\PD_n|\right)$ above.

\subsubsection{Loop spaces of  homotopy fibers and pullbacks}\label{sss:loops}
The loop spaces of homotopy fibers and more generally of homotopy pullbacks are defined by combining the  constructions from Subsections~\ref{ss:loops}
and~\ref{ss:hofib}.

$\bullet$ For example, the simplicial set $\Omega^j\left(O_n\times_{\PD_n}^h \PD_{n,m}\right)$ has as $k$-simplices triples $(f_k,\alpha_k,g_k)$,
where $f_k$ is a $k$-simplex of $\Omega^jpdO_n$, $g_k$ is a $k$-simplex of $\Omega^j\PD_{n,m}$, while $\alpha_k$ is a germ of a PD homeomorphism
of  $\R^n\times D^j\times \Delta^k\times [0,1]$ near $0\times D^j\times \Delta^k\times [0,1]$, commuting with the projection on $ D^j\times \Delta^k\times [0,1]$, identity near $\R^n\times \partial D^j\times \Delta^k\times [0,1]$, and such that $\alpha_k|_{t=0}=f_k$ and $\alpha_k|_{t=1}=g_k$. 
It is
equivalent to the chains on $\Omega^j\left(|O_n|\times_{|PD_n|}^h|PD_{n,m}|\right)$ by means of the following zigzag
\[
S_*\Omega^j\left(|O_n|\times_{|PD_n|}^h|PD_{n,m}|\right)\xleftarrow{\,\,\simeq\,\,}
S^{pl}_*\Omega^j_{pl}\left(|O_n|\times_{|PD_n|}^{h,pl}|PD_{n,m}|\right)\xrightarrow{\,\,\simeq\,\,}
\Omega^j\left(O_n\times_{\PD_n}^h \PD_{n,m}\right)
\]
with the easily guessable term in the middle. 

$\bullet$ Unlike $\Omega^j\left(O_n\times_{\PD_n}^h \PD_{n,m}\right)$, which is just a simplicial set, not even a monoid or an $E_1$-algebra,
$\Omega^j\left(O_n\times_{\TOP_n}^h \TOP_{n,m}\right)$ is an $R_{j+1}$-algebra being an $E_j$-algebra in the category of associative monoids. 
In the next Subsection we describe its natural $\OO_m\times\OO_{n-m}$-action, which turns it into an $R_{m+1}^{\OO_m\times\OO_{n-m}}$-algebra.

\subsection{$\OO_m\times\OO_{n-m}$-action on loop spaces}\label{ss:O_action_loops}
In Subsection~\ref{ss:fr_operads} we explained how $\OO_m\times\OO_{n-m}$ acts on $\Embmn$, $\Embfrmn$, $\tEmbmn$, 
$\Emb_{\partial D^m\times D^{n-m}}(D^m\times D^{n-m})$ and how this action comes as  part of $E_{m}^{\OO_m\times\OO_{n-m}}$- or $R_{m+1}^{\OO_m\times\OO_{n-m}}$-structure. 
We now explain how the corresponding structure appears in the loop spaces that we consider.

The May-Boardman-Vogt theorem says that an $E_m$-space $X$ is $E_m$-equivalent to an $m$-loop space if and only if $\pi_0 X$ is a group~\cite{BoardVogt,May}.
One has a similar result for $E_m^{\OO_m}$-spaces \cite{SalvWahl}. Given a pointed space $Y$ with an $\OO_m$-action preserving the basepoint, the
loop space $\Omega^mY$ is naturally an $E_m^{\OO_m}$-algebra, for which the $\OO_m$-action is defined as follows
\[
g\star f:=g\cdot f\circ g^{-1}: D^m\xrightarrow{g^{-1}} D^m\xrightarrow{f} Y\xrightarrow{g\cdot(-)} Y,
\]
$f\in\Omega^m Y$, $g\in \OO_m$. The first named author and Wahl proved that an $E_m^{\OO_m}$-algebra $X$ is $E_m^{\OO_m}$-equivalent to $\Omega^m Y$ 
for some pointed $\OO_m$-space $Y$ if and only if $\pi_0X$ is a group \cite{SalvWahl}.

In case $Y$ is a pointed $\OO_m\times\OO_{n-m}$-space (the action is required to preserve the basepoint), then $\Omega^m Y$ is an 
$E_{m}^{\OO_m\times\OO_{n-m}}$-algebra. The action of $\OO_{n-m}$ on $\Omega^m Y$ is defined pointwise:
\[
g\star f:=g\cdot f\colon D^m\xrightarrow{f} Y\xrightarrow{g\cdot(-)} Y,
\]
$f\in\Omega^m Y$, $g\in \OO_{n-m}$.

$\bullet$ For example, $\Omega^m\hofiber(\VV_{n,m}\to \VV_{n,m}^t)$ is an $E_{m}^{\OO_m\times\OO_{n-m}}$-algebra, where $\OO_m\times\OO_{n-m}$
acts on $\VV_{n,m}=\OO_n/\OO_{n-m}$ and on $\VV_{n,m}^t=\TOP_n/\TOP_{n,m}$ by conjugation, which is well-defined since 
$\OO_m\times\OO_{n-m}$ lies in the normalizer $N(\TOP_{n,m})\subset \TOP_n$. Since $\OO_{n-m}\subset\TOP_{n,m}$, one can equivalently define the
$\OO_m$-action by conjugation, and the $\OO_{n-m}$-action by multiplication from the left. Note that this action preserves the basepoint -- the constant inclusion 
$\R^m\subset\R^n$.

$\bullet$ In case $Y$ is a group or an associative unital monoid with $\OO_m\times\OO_{n-m}$ acting on it by endomorphisms, the loop space $\Omega^m Y$ becomes an 
 $R_{m+1}^{\OO_m\times\OO_{n-m}}$-algebra. For example, that is the case of $\Omega^m \left( \OO_n\times_{\TOP_n}^h\TOP_{n,m}\right)$,
 where $\OO_m\times\OO_{n-m}$ acts on $\OO_n$, $\TOP_n$, $\TOP_{n,m}$ by conjugation.

\subsection{Embeddings modulo immersions spaces}\label{ss:modulo} 
In this section we explain precisely what simplicial sets are meant by 
\[\Dbarn:=\hofiber\left(\Diffn\xrightarrow{D}\Omega^n O_n\right),
\]
\[
 \Ebarmn:=\hofiber\left(\Embmn\xrightarrow{D}\Omega^mV_{n,m}\right),\]
 \begin{multline*}
 \Ebar_{\partial D^m\times D^{n-m}}(D^m\times D^{n-m}):=\\
 \hofiber\left(\Emb_{\partial D^m\times D^{n-m}}(D^m\times D^{n-m})\xrightarrow{D} \dMap_{\partial D^m\times D^{n-m}}(D^m\times D^{n-m},O_n)\right),
 \end{multline*}
and how they are acted upon by $R_{m+1}^{\OO_m\times\OO_{n-m}}$ and $E_{m}^{\OO_m\times\OO_{n-m}}$. The map $D$ in all the three 
cases is the differential. 

The $k$-simplices of $\Ebarmn$ are pairs $(f_k,h_k)$, where $f_k$ is a $k$-simplex of $\Embmn$, i.e. $f_k$ is a smooth embedding 
\[
f_k\colon D^m\times\Delta^k\hookrightarrow D^n\times\Delta^k,
\]
commuting with the projection on $\Delta^k$ and equal to the identity inclusion near $\partial D^m\times\Delta^k$; and $h_k$ is a smooth map
$h_k\colon D^m\times\Delta^k\times[0,1]\to V_{n,m}$ sending a neighborhood of $\partial D^m\times\Delta^k\times[0,1]$ to the basepoint~$*$ and such that $h_k|_{t=0}=D(f_k)$, $h_k|_{t=1}=*$.

Similarly, the $k$-simplices of $\Ebar_{\partial D^m\times D^{n-m}}(D^m\times D^{n-m})$ are pairs $(f_k,h_k)$, where $f_k$  is a $k$-simplex
of $\Emb_{\partial D^m\times D^{n-m}}(D^m\times D^{n-m})$ and $h_k\colon D^m\times D^{n-m}\times\Delta^k\times [0,1]\to O_n$ is smooth, such that $h_k|_{t=0}=D(f_k)$ and
$h_k\equiv *$ at $t=1$ and near $\partial D^m\times D^{n-m}\times\Delta^k\times[0,1]$.

 The spaces are acted upon by $\OO_m\times\OO_{n-m}$ (or rather by its smooth variant $d\OO_m\times d\OO_{n-m}$). For simplicity we describe this action at the level of vertices (0-simplices). For $(g_1,g_2)\in\OO_m\times\OO_{n-m}$ and $(f,h)\in\Ebarmn$, the result of the action $(g_1,g_2)\star (f,h)$ is a pair in which the first element is the embedding
 \[
 D^m\xrightarrow{g_1^{-1}}D^m\xrightarrow{f}D^n\xrightarrow{(g_1,g_2)} D^n,
 \]
 while the second element is the composition
 \[
 D^m\times [0,1]\xrightarrow{g_1^{-1}\times id_{[0,1]}}D^m\times [0,1]\xrightarrow{h} V_{n,m}\xrightarrow{(g_1,g_2)\cdot(-)}V_{n,m},
 \]
 where $\OO_m\times\OO_{n-m}$ acts on $V_{n,m}=O_n/O_{n,m}$ by conjugation (equivalently, $\OO_m$ acts by conjugation, while $\OO_{n-m}$
 acts by the left multiplication).

 For $g\in\OO_m\times\OO_{n-m}$ and $(f,h)\in \Ebar_{\partial D^m\times D^{n-m}}(D^m\times D^{n-m})$, the result of the action 
 $g\star (f,h)$ is a pair in which the first element is the embedding $g\circ f\circ g^{-1}$, while the second element is the composition
 \[
 D^m\times D^{n-m}\times[0,1]\xrightarrow{g^{-1}\times id_{[0,1]}}D^m\times D^{n-m}\times[0,1]\xrightarrow{h}O_n\xrightarrow{g\cdot(-)\cdot g^{-1}} O_n.
 \]
 
 It is obvious that the spaces are $E_m$-algebras. (Though one should be careful with the $E_m$-action on the second component $h$ since the conjugation by a positive rescaling of the first
  factor $\R^m$ in $\R^m\times\R^{n-m}=\R^{n}$ acts non-trivially on $O_n$.) Moreover, the $E_m$-action is compatible with the  $\OO_m\times\OO_{n-m}$-action
 and so the spaces are $E_m^{\OO_m\times\OO_{n-m}}$-algebras (or rather $dE_m^{d\OO_m\times d\OO_{n-m}}$-algebras). 
 
 This structure extends to an $R_{m+1}^{\OO_m\times\OO_{n-m}}$-action for $\Ebar_{\partial D^m\times D^{n-m}}(D^m\times D^{n-m})$.
 In the particular case $n=m$, we get an $R_{n+1}^{\OO_n}$-action on $\Dbarn$. In other words, we claim that the aforementioned spaces are
 $E_m^{\OO_m\times\OO_{n-m}}$-algebras in the category of associative unital monoids (in fact, groups for $\Dbarn$).  However, one should be a little bit careful
 with this structure as the (differential) map $D$ appearing in the definition of these spaces is not a monoid homomorphism. 
 The associative product on $\Ebar_{\partial D^m\times D^{n-m}}(D^m\times D^{n-m})\ni (f,h),(f',h')$ is defined as
 \[
 (f,h)*(f',h')=(f\circ f',h(f'(x),t)\cdot h'(x,t)),
 \]
 while the inverse in $\Dbarn$ is given by
 \[
(f,h)^{-1}=(f^{-1},h(f^{-1}(x),t)^{-1}).
 \]

\section{Smale-Hirsch type theorems for smooth, topological, and PL immersions}\label{s:imm}
Let $(M,\partial_0 M,\partial_1M)$ be a manifold triad, which means that the boundary of $M^m$ is split into 
two submanifolds 
 $\partial M=\partial_0 M\cup_{\partial_{01}M}\partial_1 M$ by a submanifold $\partial_{01}M=\partial_0 M\cap\partial_1M$.
In the smooth category one can assume that $M$ is a manifold with  corners of codimension $\leq 2$, the codimension two corner part being $\partial_{01}M$. We will be assuming $(M_0,\partial_0M)\subset
(N,\partial N)$, where $M_0$ is an open neighborhood of $\partial_0M$ in $M$, the inclusion being denoted by~$i$.
The Smale-Hirsch theorem says that in case $m<n$, or $m=n$ and $\partial_1 M$ intersecting each path-component of~$M$,  
the space $\Imm_{\partial_0M}(M,N)$ of immersions $M\looparrowright N$ is equivalent to the space of formal immersions \cite{Smale_imm,Hirsch}. The latter
consists of pairs $(f,F)$, where $f\colon M\to N$ is any map, while $F\colon TM\to TN$ is a monomorphism of tangent bundles lifting~$f$.
The results hold in all the three categories: smooth~\cite{Smale_imm,Hirsch}, topological~\cite{Lees,Kurata}, and PL~\cite{HaeflPoen_imm}. It also has a PD variant 
for the case $n=m$ \cite{BL_diff}. In our paper we refer to these formal immersions as $V_{n,m}$-framed, $\VV_{n,m}^t$-framed, $\VV_{n,m}^{pl}$-framed,
and $\PD_n$-framed maps. The spaces of such maps will be denoted by $\dMap_{\partial_0M}^{V_{n,m}}(M,N)\simeq\Map_{\partial_0M}^{V_{n,m}}(M,N)$,
$\Map_{\partial_0M}^{\VV_{n,m}^t}(M,N)$, $\plMap_{\partial_0M}^{\VV_{n,m}^{pl}}(|M|,|N|)$, $\pdMap_{\partial_0M}^{\PD_n}(|M|,N)$.
(For the last space one requires $n=m$.) The $k$-simplices of these spaces are pairs $(f_k,F_k)$, where $f_k$ is a $k$-simplex of the corresponding 
mapping  space  $\dMap_{\partial_0M}(M,N)$,
$\Map_{\partial_0M}(M,N)$, $\plMap_{\partial_0M}(|M|,|N|)$, $\pdMap_{\partial_0M}(|M|,N)$, while $F_k$ is a monomorphism of tangent bundles
lifting~$f_k$:
\begin{equation}\label{eq:tangent_mono}
\xymatrix{
TM\times\Delta^k\ar[d]_{\pi\times id_{\Delta^k}}\ar[r]^{F_k}&TN\times\Delta^k\ar[d]^{\pi\times id_{\Delta^k}}\\
M\times\Delta^k\ar[r]^{f_k}& N\times\Delta^k.
}
\end{equation}
For  $\dMap_{\partial_0M}^{V_{n,m}}(M,N)$ and $\Map_{\partial_0M}^{V_{n,m}}(M,N)$, by $TM$ and $TN$ we mean the usual tangent vector bundle,
while $F_k$ must be a fiberwise linear monomorphism, coinciding with $D(i)\times id_{\Delta^k}$ near $\partial M\times\Delta^k$ (where $i\colon M\subset N$), which is additionally required to be smooth in case of $\dMap_{\partial_0M}^{V_{n,m}}(M,N)$. 
For $\Map_{\partial_0M}^{\VV_{n,m}^t}(M,N)$, $\plMap_{\partial_0M}^{\VV_{n,m}^{pl}}(|M|,|N|)$, $\pdMap_{\partial_0M}^{\PD_n}(|M|,N)$,
by $TM$ and $TN$ we mean the topological or PL tangent microbundles. The latter are defined as  neighborhoods of the diagonals $\Delta_M$ and $\Delta_N$ in
$M\times M$ and  $N\times N$, respectively, with the projection on the first factor. Therefore, $F_k$ is in fact a germ of a {\it parameterized  locally flat fiberwise embedding} $M^{\times 2}\times
\Delta^k\to N^{\times 2}
\times\Delta^k$ near $\Delta_M\times \Delta^k$ that extends
 $\Delta_M\times \Delta^k\xrightarrow{\Delta_{f_k}\times id_{\Delta^k}} \Delta_N\times\Delta^k$, makes the square~\eqref{eq:tangent_mono} commute,
where $\pi\colon M\times M\to M$ is the projection onto the first factor, and such that $F_k=i\times i\times id_{\Delta^k}$ near $\Delta_{\partial M}\times \Delta^k$. The map $f_k$ does not have to be injective.  
To make the required property of {\it parameterized  local flatness of a fiberwise embedding} precise, we replace $F_k$ by 
$$\left(G_k:=(p_1,p_2\circ F_k,p_3)\right)\colon M\times M\times\Delta^k\to
M\times N\times\Delta^k.$$ 
 Then the $k$ simplices are described as germs of parameterized locally flat\footnote{For {\it parameterized local flatness}, see Subsection~\ref{ss:spaces}. 
Here, the parameterization is taken with respect to $M\times\Delta^k$.} embeddings $G_k$ near $\Delta_M\times\Delta^k$ (extending
$\left(id_M\times (f_k\circ p_{23})\right)|_{\Delta_M\times\Delta^k}$) commuting with the projection on $M\times\Delta^k$ and coinciding with $(p_1,i\circ p_2,p_3)$ near $\partial M\times M\times\Delta^k$.\footnote{In other words, we replace monomorphisms of tangent microbundles $F\colon TM\to TN$ lifting $f\colon M\to N$,
by monomorphisms of microbundles $G\colon TM\to f^*TN$ over $M$.\label{footnote_imm}} The map $G_k$ (or $F_k$)  is additionally required to be PL and PD in case of the spaces $\plMap_{\partial_0M}^{\VV_{n,m}^{pl}}(|M|,|N|)$
and  $\pdMap_{\partial_0M}^{\PD_n}(|M|,N)$, respectively. 

One has natural inclusions, which are equivalences (provided the handle dimension of $M$ relative to $\partial_0M$ is less than~$n$) by the corresponding Smale-Hirsch type theory~\cite{Smale_imm,Hirsch,Lees,Kurata,HaeflPoen_imm}:
\[
\Imm_{\partial_0 M}(M,N)\xrightarrow{\,\,\simeq\,\,} \dMap_{\partial_0M}^{V_{n,m}}(M,N)\xrightarrow{\,\,\simeq\,\,} 
\Map_{\partial_0M}^{V_{n,m}}(M,N);
\]
\[
\tImm_{\partial_0 M}(M,N) \xrightarrow{\,\,\simeq\,\,} \Map_{\partial_0M}^{\VV_{n,m}^t}(M,N);
\]
\[
\plImm_{\partial_0 M}(|M|,|N|) \xrightarrow{\,\,\simeq\,\,} \plMap_{\partial_0M}^{\VV_{n,m}^{pl}}(|M|,|N|);
\]
\[
\pdImm_{\partial_0 M}(|M|,N) \xrightarrow{\,\,\simeq\,\,} \pdMap_{\partial_0M}^{\PD_n}(|M|,N).
\]

For a smooth immersion $f\colon M\looparrowright N$, we denote by $D(f)\colon TM\to TN$ the induced
monomorphism of tangent vector bundles. For a topological or PL immersion $f\colon M\looparrowright N$, we denote by $\Delta(f):=(f,f)\colon M\times M\to N\times N$ the induced monomorphism of
 tangent microbundles. 

\subsection{Identifying  tangent vector bundle with  topological tangent microbundle}\label{ss:tangent_id}

In the sequel we  distinguish the two tangent bundles and 
denote by $TM$ only the tangent vector bundle of a smooth manifold $M$. To identify 
the two, we choose a Riemannian metric on $M$ and  define a map $exp\colon TM\to M\times M$. For any $x\in M$, we define $exp_x\colon T_xM\to  M$
by sending $v\in T_xM$ to the endpoint of the geodesic which starts from $x$ with velocity $v$ and goes  with the constant speed for one unit of time. For any $u\in TM$, we assign $exp(u):=(\pi(u),exp_{\pi(u)}(u))$. The obtained map is an isomorphism of the tangent vector 
bundle (viewed as a topological microbundle) with the topological tangent microbundle. Slightly abusing notation, we will be writing $exp\colon TM\to M\times M$
and $exp^{-1}\colon M\times M\to TM$ meaning this isomorphism and its inverse.

In case we have a pair of manifolds $i\colon M\subset N$, we choose compatibly metrics on $M$ and $N$ so that the square
\begin{equation}\label{eq:exp_MN}
\xymatrix{
TM\ar[d]_{exp_M}\ar[r]^{D(i)}&TN\ar[d]^{exp_N}\\
M\times M\ar[r]^{\Delta(i)}& N\times N
}
\end{equation}
commutes. The commutativity of the square is equivalent to the requirement that the inclusion $i$ preserves geodesics. Such metric  can be easily defined in a tubular neighborhood of $M$ in $N$ and then extended globally using a partition of unity.

\begin{remark}\label{r:exp} 
While  the identification of the tangent vector bundle with the topological microbundle is straightforward, it is not easy to be done in the
PL case. The problem is that the fibers of the tangent vector bundle have the induced PL structure being vector spaces. At the same time the 
map $exp\colon TM\to M\times M=|M|\times |M|$ in most of the cases does not respect this PL structure. However, for our purposes it is enough to 
do it when $M$ is a codimension zero submanifold of a Euclidean space, such as $D^m\subset\R^m$ or $D^m\times D^{n-m}\subset\R^n$. In case of $\R^n$ (or any $M^n\subset \R^n$) the map
$exp:T\R^n=\R^n\times\R^n\to \R^n\times\R^n$, $exp\colon (x,y)\mapsto (x,x+y)$, is linear, and therefore, preserves the PL structure of the fibers.
\end{remark}

We end this section with the following.

\begin{proposition}\label{p:top_nm}
For any $n>m\geq 1$, the  scanning maps
\begin{gather}
\tImm_{\partial D^m\times D^{n-m}}(D^m\times D^{n-m}\mmod D^m\times 0)\xrightarrow{\,\,\simeq\,\,} \Omega^m\TOP_{n,m},\label{eq:top_nm2}\\
\plImm_{\partial D^m\times D^{n-m}}(D^m\times D^{n-m}\mmod D^m\times 0)\xrightarrow{\,\,\simeq\,\,} \Omega^m\PL_{n,m},\label{eq:pl_nm}\\
\pdImm_{\partial D^m\times D^{n-m}}(D^m\times D^{n-m}\mmod D^m\times 0)\xrightarrow{\,\,\simeq\,\,} \Omega^m\PD_{n,m}\label{eq:pd_nm}
\end{gather}
are equivalences.
\end{proposition}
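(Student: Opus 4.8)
\medskip

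The plan is to present each of the three scanning maps as a Kan fibration whose fibres are spaces of immersions prescribed near $(D^m\times 0)\cup(\partial D^m\times D^{n-m})$, and to prove those fibres contractible by combining the Smale--Hirsch type $h$-principle of Section~\ref{s:imm} with the contractibility of the disc $D^m\times D^{n-m}$.

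First I would pin down the scanning map. If $f$ is a (topological, PL, or PD) immersion of $D^m\times D^{n-m}$ into itself which is the identity near $\partial D^m\times D^{n-m}$ and on $D^m\times 0$, then at each $x\in D^m$ the germ of $f$ at $(x,0)$ lies in $\TOP_{n,m}$ (resp.\ $\PL_{n,m}$, $\PD_{n,m}$), since $f$ fixes the slice $D^m\times 0$ pointwise; letting $x$ range over $D^m$, and using the germ-collar conventions of Subsection~\ref{ss:spaces} near $\partial D^m$, one obtains a point of $\Omega^m\TOP_{n,m}$ (resp.\ $\Omega^m\PL_{n,m}$, $\Omega^m\PD_{n,m}$) in the simplicial models of Subsection~\ref{ss:loops}. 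This simplicial assignment is the scanning map, and a relative isotopy-extension / $h$-principle argument shows it is a Kan fibration.

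Next I would compute the fibre over a point $g$. An immersion scanning to $g$ agrees with a chosen representative $\tilde g$ of $g$ on some neighborhood $U$ of $D^m\times 0$, so the fibre is the filtered colimit, over such $U$, of the spaces $\mathcal{I}_{g,U}$ of immersions of $D^m\times D^{n-m}$ that equal $\tilde g$ on $U$ and the identity near $\partial D^m\times D^{n-m}$. By the immersion $h$-principle --- Lees--Kurata in the topological case, Haefliger--Po\'enaru in the PL case, Burghelea--Lashof in the PD case (this last applies since source and target have the same dimension $n$, and in codimension zero no local-flatness condition intervenes) --- and since the remaining region of $D^m\times D^{n-m}$ (the complement of a neighborhood of $(D^m\times 0)\cup(\partial D^m\times D^{n-m})$) is a collar, so that the handle-dimension hypothesis of Section~\ref{s:imm} is met with room to spare ($0<n$), the space $\mathcal{I}_{g,U}$ is equivalent to the space of \emph{formal} extensions: pairs $(f,F)$ consisting of a map $f$ and a monomorphism $F$ of the tangent microbundle of $D^m\times D^{n-m}$ over $f$, both equal to the standard data on $U\cup\mathrm{nbhd}(\partial D^m\times D^{n-m})$. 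This formal space is contractible: the admissible maps $f$ form a contractible space because the target $D^m\times D^{n-m}$ is contractible and the inclusion of the prescribed locus is a cofibration; and over any such $f$ the admissible monomorphisms $F$ form a contractible space because $U\cup\mathrm{nbhd}(\partial D^m\times D^{n-m})$ is a deformation retract of $D^m\times D^{n-m}$, so that extending a section of the trivializable $\TOP_n$- (resp.\ $\PL_n$-, $\PD_n$-) bundle of microbundle monomorphisms from this subset to the whole disc is an unobstructed, contractible choice. In particular the fibres are nonempty, so the scanning map is onto, and being contractible they make it a weak equivalence. As the argument is uniform across the three categories, \eqref{eq:top_nm2}, \eqref{eq:pl_nm} and \eqref{eq:pd_nm} follow simultaneously. (In the PL and PD cases one invokes, as in Remark~\ref{r:exp}, that for a submanifold of $\R^n$ the $\exp$-identification of the tangent vector bundle with the tangent microbundle is linear, hence PL.)

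The main obstacle I anticipate is the simplicial-set bookkeeping rather than any conceptual point: checking that the scanning map is genuinely a Kan fibration (the germ-extension step), that prescribing $f$ on $D^m\times 0$ alone may be traded --- after passing to a neighborhood $U$ --- for prescribing it on a regular neighborhood on which it is an honest local (PL/PD) homeomorphism, so that the relative $h$-principle of Section~\ref{s:imm} applies with the stated (zero) handle dimension, and keeping the three parallel immersion theories, with their differing notions of parameterized local flatness, straight throughout.
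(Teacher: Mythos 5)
Your route is genuinely different from the paper's. The paper never analyzes the scanning map fiberwise: for \eqref{eq:top_nm2} it places the source in the restriction-to-the-core sequence $\tImm_{\partial D^m\times D^{n-m}}(D^m\times D^{n-m}\mmod D^m\times 0)\to\tImm_{\partial D^m\times D^{n-m}}(D^m\times D^{n-m})\to\tImm_\partial(D^m\times 0,D^m\times D^{n-m})$, maps this column by scanning to the looped group-quotient sequence $\Omega^m\TOP_{n,m}\to\Omega^m\TOP_n\to\Omega^m\VV^t_{n,m}$, quotes Lees--Kurata for the middle and bottom horizontal arrows, and concludes that the top arrow is an equivalence; \eqref{eq:pl_nm} is done identically with Haefliger--Po\'enaru, and \eqref{eq:pd_nm} is then deduced from \eqref{eq:pl_nm} by a three-out-of-four argument in the comparison square with the PL spaces, precisely so that no PD immersion theory and no direct manipulation of PD formal data is needed beyond $\plImm\simeq\pdImm$ and $\PL_{n,m}\simeq\PD_{n,m}$. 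Your plan --- scanning is a Kan fibration whose fibers are immersion spaces prescribed near $(D^m\times 0)\cup(\partial D^m\times D^{n-m})$, contractible by the relative $h$-principle plus contractibility of the formal data --- buys a direct identification of the fiber, but it uses the relative immersion theorems with a \emph{nonstandard} germ prescribed along an interior frontier, which is more than the statements recorded in Section~\ref{s:imm} (though available in the cited sources).

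Two steps are thinner than the argument can afford. First, the Kan-fibration property of the restriction-to-germ map is the load-bearing step (without it your actual fibers do not compute homotopy fibers), and ``a relative isotopy-extension / $h$-principle argument'' is an assertion, not a proof; note that the paper arranges matters so that the only fibration needed on the loop side is the quotient of simplicial groups $\TOP_{n,m}\to\TOP_n\to\VV^t_{n,m}$, and on the immersion side only the restriction to the core $D^m\times 0$. Second, in the PL case your contractibility argument for the formal data fails as written: the straight-line homotopy $(1-s)f+sg$ between PL maps is bilinear on simplices and hence not PL, and the same caveat applies to the ``unobstructed'' extension of sections of the $\PL_n$-bundle of microbundle monomorphisms; you must instead route through PL approximation ($\plMap\simeq\Map$ relative to the prescribed germ, with the relevant germ simplicial sets Kan), or sidestep the PL formal space as the paper does. (In the PD case your convex-combination argument is fine, since products of piecewise-smooth and smooth functions are piecewise smooth.) With the fibration statement supplied and the PL contractibility repaired, your argument does yield Proposition~\ref{p:top_nm}.
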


The maps \eqref{eq:top_nm2}-\eqref{eq:pl_nm}-\eqref{eq:pd_nm} are defined as obvious restrictions of maps $D^m\times D^{n-m}\times\Delta^k\to D^m\times
 D^{n-m}\times\Delta^k$ on a
germ-neighborhood of $D^m\times 0\times\Delta^k$, see Subsections~\ref{ss:spaces} and~\ref{ss:loops} where these
spaces are defined.
%

\begin{proof}
To prove \eqref{eq:top_nm2}, consider the commutative diagram whose columns are fiber sequences:
\[
\xymatrix{
\tImm_{\partial D^m\times D^{n-m}}(D^m\times D^{n-m}\mmod D^m\times 0)\ar[d]\ar[r]&\Omega^m\TOP_{n,m}\ar[d]\\
\tImm_{\partial D^m\times D^{n-m}}(D^m\times D^{n-m})\ar[r]^-{\simeq}\ar[d]&\Omega^m\TOP_{n}\ar[d]\\
\tImm_\partial(D^m\times 0,D^m\times D^{n-m})\ar[r]^-\simeq&\Omega^m\VV_{n,m}^t.
}
\]
The middle and bottom horizontal maps are equivalences by the topological version of the Smale-Hirsch theorem \cite{Lees,Kurata}, which implies that so is the top
one. The proof of  \eqref{eq:pl_nm} is identical. To prove \eqref{eq:pd_nm}, consider the commutative square
\[
\xymatrix{
\plImm_{\partial D^m\times D^{n-m}}(D^m\times D^{n-m}\mmod D^m\times 0)\ar[d]^-\simeq\ar[r]^-\simeq&\Omega^m\PL_{n,m}\ar[d]^-\simeq\\
\pdImm_{\partial D^m\times D^{n-m}}(D^m\times D^{n-m}\mmod D^m\times 0)\ar[r]&\Omega^m\PD_{n,m}.
}
\]
Since three out of four arrows are equivalences, so is the fourth one.
\end{proof}

\section{Homotopy cartesian squares of smoothing theory}\label{s:squares}
In this section we recollect some smoothing theory results relating homeomorphism, diffeomorphism, embedding  and (formal) immersion spaces and restate them  in terms of homotopy pullback squares, see Theorems~\ref{th:square_diff} and~\ref{th:square_emb}.  Note that the notion of homotopy pullback square (or, equivalently, homotopy cartesian square) appeared about the same time when the smoothing theory was developing, but it 
was not widely used back then. To compare,
Mather's paper~\cite{Mather}, where the term was formally introduced and studied, appeared one year after Burghelea-Lashof's work~\cite{BL_diff}, which is one of the main references for this section. 

Recall that a commutative square of spaces
\begin{equation}\label{eq:square}
\xymatrix{
X\ar[d]^-f\ar[r]^e&Y\ar[d]\\
Z\ar[r]^g&W
}
\end{equation}
is {\it homotopy cartesian} if the induced map $X\to Z\times^h_W Y$ is a weak equivalence. This property holds if and only if for any choice of $z\in Z$, the induced map
\begin{equation}\label{eq:map_of_fibers}
\hofiber_z(X\to Z)\to \hofiber_{g(z)}(Y\to W)
\end{equation}
is a weak equivalence. 

We will need two  technical lemmas.

\begin{lemma}\label{l:cartesian}
A commutative square~\eqref{eq:square} is homotopy cartesian if and only if for every~$z\in Z$, the map~\eqref{eq:map_of_fibers} is bijective on~$\pi_0$, and for every~$x\in X$, the map
$$
\hofiber_{f(x)}(X\to Z)\to \hofiber_{g(f(x))}(Y\to W)
$$
is a weak equivalence when restricted on the component of $x$ in the domain and on the component of $e(x)$ in the codomain.
\end{lemma}
In the above, abusing notation, $x$ denotes the pair $(x,\gamma_{f(x)})$, where $\gamma_{f(x)}(t)\equiv f(x)$ is the constant path in~$Z$, and same for $e(x)$.

\begin{proof}
The direct implication is straightforward. So, we check the converse one. It is easy to see that if~\eqref{eq:map_of_fibers} is surjective (respectively, injective)
on~$\pi_0$ for every~$z$, then  the induced map $\pi_0(X)\to \pi_0\left(Z\times_W^h Y\right)$ is also surjective (respectively, injective).  Thus, it is enough
to check that $\pi_i(X,x)\to \pi_i\left(Z\times_W^h Y, x\right)$ is an isomorphism  for every $x\in X$ and every $i\geq 1$. The latter follows from the following morphism of fiber sequences and 
the five lemma (with some care taken when $i=1$):
\[
\xymatrix{
\hofiber_{f(x)}(X\to Z)\ar[d]\ar[r]&X\ar[d]\ar[r]&Z\ar@{=}[d]\\
\hofiber_{g(f(x))}(Y\to W)\ar[r]&Z\times_W^hY\ar[r]& Z.
}
\]

\end{proof}

\begin{lemma}\label{l:cartesian2}
Given maps of spaces $X\xrightarrow{f}Y\xrightarrow{g}Z$ and $z\in Z$, the square
\[
\xymatrix{
\hofiber_z(X\to Z)\ar[d]\ar[r]&\hofiber_z(Y\to Z)\ar[d]\\
X\ar[r]&Y
}
\]
is homotopy cartesian.
\end{lemma}

\begin{proof}
The right map is a fibration, while the square is a pullback. Hence, it is a homotopy pullback.
\end{proof}

%
%

\subsection{Diffeomorphisms, homeomorphisms and formal immersions}\label{ss:square1}
As it is explained below, the following is an immediate consequence of results from~\cite{KS_essays,BL_diff}.

\begin{theorem}\label{th:square_diff}
(a) Let $N$ be a smooth manifold of dimension $n\neq 4$, then the following square is homotopy cartesian:
\begin{equation}\label{eq:square_top}
\xymatrix{
\Diff_\partial(N)\ar[d]\ar[r]&\dMap_\partial^{O_n}(N)\ar[d]\\
\Homeo_\partial(N)\ar[r]&\Map_\partial^{\TOP_n}(N).
}
\end{equation}
(b) Let $N$ be a smooth compact manifold of any dimension $n$, then the following square is homotopy cartesian:
\begin{equation}\label{eq:square_pd}
\xymatrix{
\Diff_\partial(N)\ar[d]\ar[r]&\dMap_\partial^{O_n}(N)\ar[d]\\
\pdHomeo_\partial(N)\ar[r]&\pdMap_\partial^{\PD_n}(N).
}
\end{equation}

\end{theorem}

\begin{remark}\label{r:sq_non-comm}
Strictly speaking, the way we (following Burghelea-Lashof~\cite{BL_diff}) define $\dMap_\partial^{O_n}(N)$ makes squares~\eqref{eq:square_top} and~\eqref{eq:square_pd}
commute only up to homotopy. A remedy to it would be to consider smooth tangent microbundles instead of tangent vector bundles. Another way to make~\eqref{eq:square_top}
commute on the nose is to replace  $\dMap_\partial^{O_n}(N)$ by an equivalent space $\Map_\partial^{O_n\times^h_{\TOP_n}\TOP_n}(N)$, see Section~\ref{s:fr_homeo}.
\end{remark}

\begin{proof}[Proof of Theorem~\ref{th:square_diff}]
\cite[Theorem~4.2]{BL_diff} states that for any smooth compact manifold $N$, one has natural maps 
\begin{equation}\label{eq:sections}
\Homeo_\partial(N)/\Diff_\partial(N)\to \Gamma_\partial(P^t,N) \,\,\text{ and }\,\, \pdHomeo_\partial(N)/\Diff_\partial(N)\to \Gamma_\partial(P^{pd},N),
\end{equation}
which are injective on $\pi_0$ and equivalences componentwise. Here, $\Gamma_\partial(-,N)$ stands for the space of sections over $N$ relative to the boundary, $P^t\to N$ is a fibration with fiber $\TOP_n/\OO_n$ encoding the space of vector bundle structures on the
topological tangent microbundle, and $P^{pd}\to N$ has fibers $\PD_n/\OO_n$ and it encodes the space of vector bundle structures on the PL tangent microbundles on~$|N|$. Both 
spaces of sections have a basepoint~$s_0$ -- the  vector bundle structure on $TN$. Moreover, $\dMap_\partial^{O_n}(N)$ and $\Map_\partial^{\TOP_n}(N)$ are monoids with the former acting
on $\pdMap_\partial^{\PD_n}(N)$. The quotient spaces $\Map_\partial^{\TOP_n}(N)/\dMap_\partial^{O_n}(N)$ and $\pdMap_\partial^{\PD_n}(N)/\dMap_\partial^{O_n}(N)$
can be identified with a disjoint union of components of $\Gamma_\partial(P^t,N)$ and $ \Gamma_\partial(P^{pd},N)$, respectively, with the maps~\eqref{eq:sections} factoring through them
\cite[Proposition~1.4]{BL_diff}. This implies that $\Diff_\partial(N)$ is equivalent to the homotopy fiber over $s_0$ of the maps
\begin{equation}\label{eq:sections2}
\Homeo_\partial(N)\to \Gamma_\partial(P^t,N) \,\,\text{ and }\,\, \pdHomeo_\partial(N)\to \Gamma_\partial(P^{pd},N),
\end{equation}
 and so is 
$\dMap_\partial^{O_n}(N)$ for the maps
\begin{equation}\label{eq:sections3}
\Map_\partial^{\TOP_n}(N)\to \Gamma_\partial(P^t,N) \,\,\text{ and }\,\, \pdMap_\partial^{\PD_n}(N)\to \Gamma_\partial(P^{pd},N).
\end{equation}
By applying Lemma~\ref{l:cartesian2}, we  conclude that the squares \eqref{eq:square_top} and \eqref{eq:square_pd} are homotopy cartesian. 

To see that \eqref{eq:square_top} is homotopy cartesian for a non-compact $N$, one can recall \cite[Essay~V]{KS_essays} in which  a simplicial space ${\mathrm{Smooth}}_\partial(N)$ of smooth structures
on $N$, coinciding with the given one near $\partial N$, is defined. Its $k$-simplices are smooth structures on $N\times\Delta^k$ for which the projection to $\Delta^k$ is a submersion. 
It is then proved in \cite[Theorem~2.2]{KS_essays} that $\mathrm{Smooth}_\partial(N)$ is equivalent to $\Gamma_\partial(P^t,M)$, while
\[
\Diff_\partial (N)\to \Homeo_\partial(N)\to \mathrm{Smooth}_\partial(N)
\]
is a fiber sequence, which similarly implies (a) for any smooth $N$.
\end{proof}

\begin{remark}\label{r:rourke}
Kirby-Siebenmann~\cite{KS_essays} consider only spaces of smooth and PL structures on topological manifolds. 
In \cite[Essay~5, $\S$0, p.~219]{KS_essays} they refer to upcoming lecture notes by C.~P.~Rourke where similar spaces of smooth structures on PL (not necessarily compact) manifolds are considered and are proved to be equivalent to~$\Gamma_\partial(P^{pd},N)$. However, we were only able to find a preprint by Rourke~\cite{Rourke}, where this result
 \cite[Theorem~1]{Rourke} is stated with  a sketch of  a proof.
\end{remark}

\subsection{Embeddings and immersions}\label{ss:square2}
Let $(M,\partial_0M,\partial_1M)$ and $N$ be as in Section~\ref{s:imm}. We additionally assume that $M$ and $N$ are smooth and $M$ is compact.

\begin{theorem}\label{th:square_emb}
(a) For $m\leq n\neq 4$,  the following square is homotopy cartesian:
\begin{equation}\label{eq:square_kupers}
\xymatrix{
\Emb_{\partial_0 M}(M,N)\ar[r]\ar[d]& \Imm_{\partial_0 M}(M,N)\ar[d]\\
\tEmb_{\partial_0 M}(M,N)\ar[r]& \tImm_{\partial_0 M}(M,N).
}
\end{equation}

(b) For $m=n$,  the following square is  homotopy cartesian:
\begin{equation}\label{eq:square_kupers3}
\xymatrix{
\Emb_{\partial_0 M}(M,N)\ar[r]\ar[d]& \Imm_{\partial_0 M}(M,N)\ar[d]\\
\pdEmb_{\partial_0 M}(|M|,N)\ar[r]& \pdImm_{\partial_0 M}(|M|,N).
}
\end{equation}
\end{theorem}

\begin{proof}
The case $n=m$ of this theorem is due to Burghelea and Lashof~\cite{BL_diff}.  
 To be precise, 
\cite[Theorem~3.1(a)]{BL_diff} states that for any $x\in \Imm_{\partial_0 M}(M,N)$, the induced map
\begin{equation}\label{eq:BL_bijection}
\pi_i (\Imm_{\partial_0 M}(M,N), \Emb_{\partial_0 M}(M,N);x)\to \pi_i (\Imm_{\partial_0 M}(M,N), \Emb_{\partial_0 M}(M,N);x)
\end{equation}
is bijective for any $i\geq 1$, assuming $x\in\Emb_{\partial_0 M}(M,N)$ in case $i\geq 2$.  (\cite[Theorem~3.1(b)]{BL_diff} states the same in the PD situation.) It is not hard to see that this property is
equivalent to the condition of Lemma~\ref{l:cartesian}
implying
homotopy cartesianity of the squares~\eqref{eq:square_kupers} and~\eqref{eq:square_kupers3}.  

The case $n>m$ of (a) was considered by Lashof in~\cite{Lashof}. However, the statement of its \cite[Theorem~A~$(t/d)$]{Lashof} is slightly
weaker. Namely, it claims bijectivity of~\eqref{eq:BL_bijection} only provided $x\in\Emb_{\partial}(M,N)$, which is not enough. However, his argument can be easily
adjusted. The idea is to prove the homotopy cartesianity of the case $m<n$ from the case $m=n$. We must show that for any 
$x\in \Imm_{\partial}(M,N)$, the induced map of homotopy fibers is an equivalence:
\[
\hofiber\bigl( \Emb_{\partial}(M,N)\to  \Imm_{\partial}(M,N)\bigr)_x \xrightarrow{\simeq} \hofiber\bigl( \tEmb_{\partial}(M,N) \to  \tImm_{\partial}(M,N)\bigr)_x.
\]
For the smooth immersion $x\colon M\looparrowright N$, let $T$ denote the normal bundle of $M$ in $N$ immersed as a tubular neighborhood by means of
$\tilde x\colon T\looparrowright N$ (with $\partial_0 T\subset \partial T$ being a tubular neighborhood of $\partial M$ in $\partial N$).
The result follows from the following commutative square of weak equivalences:
\[
\xymatrix{
 \hofiber\bigl( \Emb_{\partial_0 T}(T,N)\to  \Imm_{\partial_0 T}(T,N)\bigr)_{\tilde x} \ar[d]^\simeq_{\text{Theorem~\ref{th:square_diff}(a) for }m=n}\ar[r]^\simeq&
  \hofiber\bigl( \Emb_{\partial}(M,N) \to  \Imm_{\partial}(M,N)\bigr)_x  \ar[d]\\
\hofiber\bigl( \tEmb_{\partial_0 T}(T,N)\to  \tImm_{\partial_0 T}(T,N)\bigr)_{\tilde x} \ar[r]^\simeq&   
 \hofiber\bigl( \tEmb_{\partial}(M,N) \to  \tImm_{\partial}(M,N)\bigr)_x.
}
\]
\end{proof}

%

\section{Spaces of $\underline{O_n}$-framed homeomorphisms}\label{s:fr_homeo} 

In order to prove  Theorem~\ref{th:a}(a), we need to introduce an intermediate space $\Homeo_\partial^{\underline{O_n}}(D^n)$
of $\underline{O_n}$-framed homeomorphisms of $D^n$. We will define such a space  $\Homeo_\partial^{\underline{O_n}}(N)$ for any smooth $n$-manifold $N$
possibly with a boundary. The construction is interesting in its own right. The vertices of this simplicial set are triples $(f,F,H)$, where $f\in\Homeo_\partial(N)$,
$(f,F)\in\Map_\partial^{O_n}(N)$, while $H$ is a homotopy in topological isomorphisms of the tangent bundle $TN\xrightarrow{\pi}N$ viewed as a microbundle
\[
\xymatrix{
TN\times [0,1]\ar[r]^-H\ar[d]_{\pi\times id_{[0,1]}}&TN\times [0,1]\ar[d]^{\pi\times id_{[0,1]}}\\
N\times[0,1]\ar[r]^{f\times id_{[0,1]}}&N\times[0,1],
}
\]
such that $H|_{t=0}=F$ and $$H|_{t=1}=exp^{-1}\circ \Delta(f)\circ exp\colon TN\xrightarrow{exp}N\times N\xrightarrow{(f,f)}N\times N
\xrightarrow{exp^{-1}}TN.$$ More generally, a $k$-simplex of $\Homeo_\partial^{\underline{O_n}}(N)$ is a triple $(f_k,F_k,H_k)$ of  $\underline{O_n}$-framed homeomorphisms of~$N$ parametrized 
by $\Delta^k$: $f_k$ is a $k$-simplex of $\Homeo_\partial(N)$, $(f_k,F_k)$ is a $k$-simplex of 
$\Map_\partial^{O_n}(N)$, and $H_k$ is a germ of homeomorphisms of $TN\times \Delta^k\times [0,1]$ near $s(N)\times \Delta^k\times [0,1]$ (where $s\colon N\to TN$ is the zero section) extending $f_k\times id_{[0,1]}\colon s(N)\times\Delta^k\times [0,1]\to s(N)\times\Delta^k\times [0,1]$ and making the following square commute:
\[
\xymatrix{
TN\times\Delta^k \times[0,1]\ar[r]^-{H_k}\ar[d]_{\pi\times id_{\Delta^k\times [0,1]}}&TN\times\Delta^k\times [0,1]\ar[d]^{\pi\times id_{\Delta^k\times [0,1]}}\\
N\times\Delta^k\times[0,1]\ar[r]^{f_k\times id_{[0,1]}}&N\times\Delta^k\times[0,1].
}
\]
Moreover, $H_k$ is required to be the identity near $\pi^{-1}(\partial N)\times\Delta^k\times[0,1]$, and additionally $H_k|_{t=0}=F_k$, $H_k|_{t=1}=
exp^{-1}\circ \Delta(f_k)\circ exp$.

$\Homeo_\partial^{\underline{O_n}}(N)$ is a simplicial group with the product
\[
(f,F,H)*(f',F',H'):=(f\circ f',F\circ F',H\circ H').
\]
Moreover, one has a natural inclusion-homomorphism of simplicial groups:
\begin{equation}\label{eq:dif_fr_homeo}
\Diff_\partial(N)\to \Homeo_\partial^{\underline{O_n}}(N).
\end{equation}
We describe this inclusion at the level of vertices. Given a diffeomorphism $f\in\Diff_\partial(N)$, the induced map $exp^{-1}\circ \Delta(f)\circ exp
\colon TN\to TN$ of tangent bundles is usually different from the linear map $D(f)\colon TN\to TN$.
 (Note, however, that both are identity near $\pi^{-1}(\partial N)$.) Fortunately, there is a preferred path between $D(f)$ and $exp^{-1}\circ \Delta(f)\circ exp$.
 Denote by $K_x^f\colon T_xN\to T_{f(x)}N$ the map $K_x^f(y)=exp^{-1}_{f(x)}\left(f(exp_x y)\right)$. Then define $H^f\colon TN\times [0,1]\to
 TN\times[0,1]$ as follows:
 \[
 H^f(y,t):=
 \begin{cases}
 \left(\frac 1t K_{\pi(y)}^f(ty),t\right),& t\in(0,1];\\
 \left(D(f)(y),0\right),& t=0.
 \end{cases}
 \]

\begin{lemma}\label{l:dif_fr_homeo}
For any smooth manifold $N$ (with Riemannian metric), the map \eqref{eq:dif_fr_homeo} is a homomorphism of simplicial groups.
\end{lemma}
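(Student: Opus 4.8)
The plan is to prove the lemma in three stages, matching the three assertions hidden in the statement: the assignment $f\mapsto(f,D(f),H^f)$, together with its evident $\Delta^k$-parametrized version $f_k\mapsto(f_k,D(f_k),H^{f_k})$, (a)~takes values in $\Homeo_\partial^{\underline{O_n}}(N)$, (b)~is a map of simplicial sets, and (c)~is multiplicative. All the analytic content sits in stage~(a); stages~(b) and~(c) I expect to be formal.

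For stage~(a) I would first record the auxiliary map: for $f\in\Diff_\partial(N)$ and $x\in N$ put $K^f_x(w)=exp^{-1}_{f(x)}(f(exp_x w))$, a smooth germ near $0\in T_xN$ of diffeomorphisms $T_xN\to T_{f(x)}N$ with $K^f_x(0)=0$, so that by construction $H^f(y,t)=(\tfrac1t K^f_{\pi(y)}(ty),t)$ for $t\in(0,1]$ and $H^f(y,0)=(D(f)(y),0)$, with the obvious parametrized meaning for $H^{f_k}$. Then I would check: that $(f_k,D(f_k))\in\Map_\partial^{O_n}(N)$, because $D(f_k)$ is a fibrewise-linear isomorphism of $TN$ covering $f_k$ and equal to the identity near $\partial N$; that $H^{f_k}$ is continuous across $t=0$ with $H^{f_k}|_{t=0}=D(f_k)$, using that $exp_x$ and $exp_{f(x)}$ have the identity differential at the origin, so that the differential of $K^f_x$ at $0$ is $D_xf$ and hence $\tfrac1t K^f_x(ty)\to D_xf(y)$ as $t\to0^+$, uniformly and $C^\infty$ on compacta and in the $\Delta^k$-direction; that $H^{f_k}$ is a germ of homeomorphisms of $TN\times\Delta^k\times[0,1]$ near $s(N)\times\Delta^k\times[0,1]$, since for each $t\in(0,1]$ its slice is a composite of the rescaling $y\mapsto ty$, the germ of diffeomorphisms $K^{f_k}$, and $z\mapsto\tfrac1t z$, its $t=0$ slice is the linear isomorphism $D(f_k)$, and these maps and their inverses — the inverse of $H^{f_k}$ being $H^{f_k^{-1}}$, which one reads straight off the formulas — vary continuously in $t$; and finally that $H^{f_k}$ covers $f_k\times id_{[0,1]}$ since $\pi\circ K^f_x\equiv f(x)$, is the identity near $\pi^{-1}(\partial N)$ since there $f_k=id$ forces $K^{f_k}=id$ and thus $H^{f_k}(y,t)=(y,t)$, and has $H^{f_k}|_{t=1}=exp^{-1}\circ\Delta(f_k)\circ exp$ because $K^f_x$ is precisely the $x$-fibre of $exp^{-1}\circ(f\times f)\circ exp$.

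Stage~(b) is immediate: $f_k\mapsto D(f_k)$ and $f_k\mapsto K^{f_k}\mapsto H^{f_k}$ are given by formulas natural in the simplicial variable, hence strictly commute with face and degeneracy operators. For stage~(c) it suffices to argue on vertices, the parametrized case being identical. One must show that the image of $f\circ g$ equals $(f,D(f),H^f)*(g,D(g),H^g)=(f\circ g,\,D(f)\circ D(g),\,H^f\circ H^g)$; the first entries agree and $D(f\circ g)=D(f)\circ D(g)$ is the chain rule. For the homotopies the key point is the cocycle identity $K^f_{g(x)}\circ K^g_x=K^{f\circ g}_x$, which falls out of the definitions once $exp_{g(x)}\circ exp^{-1}_{g(x)}$ is cancelled; feeding it into the formula gives $(H^f\circ H^g)(y,t)=(\tfrac1t K^{f\circ g}_{\pi(y)}(ty),t)=H^{f\circ g}(y,t)$ for $t\in(0,1]$, while at $t=0$ both sides equal $(D(f\circ g)(y),0)$. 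Since $K^{id}_x=id$ yields $H^{id}=id$ and $D(id)=id$, the identity maps to the identity and $(f,D(f),H^f)^{-1}=(f^{-1},D(f^{-1}),H^{f^{-1}})$, so \eqref{eq:dif_fr_homeo} is a homomorphism of simplicial groups.

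The main obstacle is the part of stage~(a) asserting that the piecewise formula for $H^{f_k}$ really patches to a continuous germ of homeomorphisms across $t=0$; this hinges on the first-order behaviour of $K^f_x$ at the origin and on the compatible choice of metrics underlying the identification of $TN$ with the topological tangent microbundle in Subsection~\ref{ss:tangent_id}. Everything downstream is bookkeeping, and the composition identity $K^f_{g(x)}\circ K^g_x=K^{f\circ g}_x$ makes multiplicativity automatic.
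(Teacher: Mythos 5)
Your proposal is correct and its core coincides with the paper's proof: the paper checks exactly the multiplicativity at the level of vertices via the identity $K^{f_1}_{f_2(x)}\circ K^{f_2}_x=K^{f_1\circ f_2}_x$, obtained by cancelling $exp_{f_2(x)}\circ exp^{-1}_{f_2(x)}$, and then invokes the $\Delta^k$-parametrized version, just as in your stage~(c). Your stages~(a) and~(b) (continuity of $H^{f_k}$ across $t=0$ via the first-order behaviour of $K^f_x$, and simpliciality) are checks the paper leaves implicit in the construction of $H^f$ preceding the lemma, so they are welcome but not a different route.
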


\begin{proof}
Again, we check it only at the level of vertices, i.e., 0-simplices. For $k$-simplices, $k>0$, the argument is just a $\Delta^k$-parametrized version of 
what we present below.

Let $f_1,f_2\in \Diff_\partial(N)$, we must show that 
\begin{equation}\label{eq:diff_compos}
H^{f_1}\left(H^{f_2}(y,t) \right)=H^{f_1\circ f_2}(y,t).
\end{equation}
Let $x:=\pi(y)$. One has $H^{f_2}(y,t)=\left(\frac 1t K_x^{f_2}(ty),t\right).$ Thus, the left-hand side of~\eqref{eq:diff_compos} is
\begin{multline*}
\left(\frac 1tK^{f_1}_{f_2(x)}\left(t\cdot\frac 1t K_x^{f_2}(ty)\right),t\right)=
\left(\frac 1tK^{f_1}_{f_2(x)}\left( K_x^{f_2}(ty)\right),t\right)=\\
\left(\frac 1t exp^{-1}_{f_1(f_2(x))}\left(f_1\left(exp_{f_2(x)}\left( exp^{-1}_{f_2(x)}\left(f_2(exp_xy)\right)\right)\right)\right),t\right)=\\
\left(\frac 1t exp^{-1}_{f_1(f_2(x))}\left(f_1\left(f_2(exp_xy)\right)\right),t\right)=
\left(\frac 1t K_x^{f_1\circ f_2}(ty),t\right)=
H^{f_1\circ f_2}(y,t).
\end{multline*}
\end{proof}

\begin{theorem}\label{th:diff_compos}
For any smooth manifold $N$ of dimension $n\neq 4$, the map $\Diff_\partial(N)\to \Homeo_\partial^{\underline{O_n}}(N)$  is an equivalence of simplicial groups.
\end{theorem}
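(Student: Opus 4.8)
The plan is to deduce this from the classical smoothing theory of Kirby--Siebenmann and Morlet, in the form that says that for a smooth manifold $N^n$ with $n\neq 4$, the space of smoothings of $N$ (relative to its given one) is contractible, equivalently that the forgetful map $\Diff_\partial(N)\to\Homeo_\partial(N)$ has homotopy fiber equivalent to the space of sections of a bundle with fiber $\TOP_n/\OO_n$ over $N$ (trivialized near $\partial N$), and that this section space is itself expressible via the stable tangent data of $N$. The key observation is that $\Homeo_\partial^{\underline{O_n}}(N)$ is, essentially by construction, a model for exactly this section space sitting over $\Homeo_\partial(N)$: a point $(f,F,H)$ records a homeomorphism $f$, a vector bundle lift $F$ of its topological derivative, and a path $H$ in topological microbundle isomorphisms connecting $F$ to the honest topological derivative $exp^{-1}\circ\Delta(f)\circ exp$. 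Thus the pair $(F,H)$ is precisely the datum of a reduction of the structure group of the topological tangent microbundle of $N$ (pulled back via $f$) from $\TOP_n$ to $\OO_n$, together with a nullhomotopy witnessing that the reduction agrees with the standard smooth one away from the boundary.

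First I would set up the fibration
\[
\Homeo_\partial^{\underline{O_n}}(N)\longrightarrow \Homeo_\partial(N),
\]
whose fiber over $id_N$ is the simplicial set of pairs $(F,H)$ with $f=id$; I would identify this fiber with the space of $\OO_n$-reductions of $TN$ (as a microbundle) relative to the canonical one near $\partial N$, i.e. with the space $\Gamma_\partial$ of sections of the associated bundle with fiber $\TOP_n/\OO_n$ over $N$ that are standard near $\partial N$. By the Kirby--Siebenmann smoothing theorem (Essay V of \cite{KS_essays}), for $n\neq 4$ this section space is contractible: the sheaf of smoothings on a topological manifold is ``flexible'' and its space of sections over a manifold already equipped with a smoothing is contractible. (Here I use that $N$ comes with a smooth structure giving a canonical basepoint section, and relative to the boundary, so no global obstruction survives.) Therefore the fiber of the displayed fibration is contractible, and the map $\Homeo_\partial^{\underline{O_n}}(N)\to\Homeo_\partial(N)$ is an equivalence.

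Next I would compare this with the analogous fibration for $\Diff_\partial(N)$. There is an evident forgetful map $\Diff_\partial(N)\to\Homeo_\partial(N)$, and by Lemma~\ref{l:dif_fr_homeo} the inclusion $\Diff_\partial(N)\to\Homeo_\partial^{\underline{O_n}}(N)$ is a group homomorphism lying over it, sending a diffeomorphism $f$ to $(f,D(f),H^f)$ with the canonical path $H^f$ defined above. It then suffices to compare homotopy fibers over $\Homeo_\partial(N)$: on the $\Diff$ side the fiber is the space of concordances of smooth structures on $N$ rel boundary that are topologically trivial, which is again the smoothing space $\Gamma_\partial$, and the smoothing theorem identifies this with the same contractible section space, compatibly with the identification of the fiber of the $\Homeo^{\underline{O_n}}$ fibration. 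Hence both fibrations have the same (contractible) fiber over $\Homeo_\partial(N)$, the map of total spaces covers the identity on the base, and the induced map on fibers is an equivalence; by the five lemma applied to the long exact sequences of homotopy groups (or by comparing the fibration sequences directly), $\Diff_\partial(N)\to\Homeo_\partial^{\underline{O_n}}(N)$ is an equivalence. Finally, since both sides are simplicial groups and the map is a homomorphism, it is an equivalence of simplicial groups.

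The main obstacle I expect is bookkeeping: making precise the identification of the fiber of $\Homeo_\partial^{\underline{O_n}}(N)\to\Homeo_\partial(N)$ with the Kirby--Siebenmann smoothing/section space, and checking that under this identification the canonical section corresponding to the given smooth structure matches the image of $id_N\in\Diff_\partial(N)$ (this is exactly the role of the preferred path $H^f$ and the commuting square \eqref{eq:exp_MN}). One must also be careful that the microbundle version of ``$\OO_n$-reduction'' used here is equivalent to the vector-bundle-plus-path formulation, which is where the choice of Riemannian metric and the map $exp$ enter; this is standard (it is the passage between $\TOP_n/\OO_n$ as homotopy fiber of $B\OO_n\to B\TOP_n$ and as the relevant structure space), but it needs to be stated cleanly. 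The dimension restriction $n\neq 4$ enters solely through the contractibility of the smoothing space, i.e. through the validity of the Kirby--Siebenmann handle-smoothing and concordance-implies-isotopy results in dimensions $\neq 4$.
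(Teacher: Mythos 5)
Your central step is false, and in a way that would make the theorem itself false. You claim (i) that the fiber of $\Homeo_\partial^{\underline{O_n}}(N)\to\Homeo_\partial(N)$ over $id_N$ is the space of sections, relative to $\partial N$, of a bundle with fiber $\TOP_n/\OO_n$, and (ii) that smoothing theory makes this section space contractible for $n\neq 4$. Neither is correct. A point of that fiber is a vector bundle automorphism $F$ of $TN$ \emph{together with} a path $H$ of microbundle automorphisms from $F$ to the identity, so the fiber is the relative section space of a bundle with fiber $\hofiber(O_n\to\TOP_n)\simeq\Omega(\TOP_n/\OO_n)$, not $\TOP_n/\OO_n$. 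More seriously, the Kirby--Siebenmann classification theorem identifies the space $\Smooth_\partial(N)$ of smoothings rel boundary with the section space with fiber $\TOP_n/\OO_n$; it does \emph{not} say this space (or its loop space) is contractible, and in general it is not. If your contractibility claim held, your argument would give $\Homeo_\partial^{\underline{O_n}}(N)\simeq\Homeo_\partial(N)$ and hence $\Diff_\partial(N)\simeq\Homeo_\partial(N)$; for $N=D^n$ this contradicts the Alexander trick ($\Homeon\simeq *$) together with $\pi_0\Diffn=\Theta_{n+1}\neq 0$ (e.g.\ $n=6$), and it would trivialize the Morlet delooping $\Diffn\simeq\Omega^{n+1}(\TOP_n/\OO_n)$, which is exactly what this theorem is designed to reproduce.

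The comparison-of-fibrations framework you set up can be repaired, but the base should be the smoothing space rather than $\Homeo_\partial(N)$ with a contractible fiber. The group $\Homeo_\partial(N)$ acts on $\Smooth_\partial(N)$, and the fiber of the resulting Kan fibration $\Homeo_\partial(N)\to\Smooth_\partial(N)$ over the given smooth structure is literally $\Diff_\partial(N)$. On the other hand, the fiber of $\Homeo_\partial^{\underline{O_n}}(N)\to\Homeo_\partial(N)$ is the relative section space with fiber $\hofiber(O_n\to\TOP_n)\simeq\Omega(\TOP_n/\OO_n)$, which by the classification theorem (this is where $n\neq 4$ enters) is a model for $\Omega\Smooth_\partial(N)$ based at the given structure; consequently $\Homeo_\partial^{\underline{O_n}}(N)$ is identified with the \emph{homotopy} fiber of $\Homeo_\partial(N)\to\Smooth_\partial(N)$. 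The equivalence $\Diff_\partial(N)\simeq\Homeo_\partial^{\underline{O_n}}(N)$ is then the comparison of the actual fiber of a Kan fibration with its homotopy fiber, compatibly with the map $f\mapsto(f,D(f),H^f)$. In particular both homotopy fibers over $\Homeo_\partial(N)$ are $\Omega\Smooth_\partial(N)$, generically non-contractible, and the content of the proof is precisely this identification, not any vanishing statement.
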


\begin{proof}
We define an auxiliary space $\Map^{O_n\times^h_{\TOP_n}\TOP_n}_\partial(N)$ as a simplicial set whose vertices are pairs $(f,\alpha)$, where $f\in\Map_\partial(N)$ and $\alpha\colon TN\times[0,1]\to TN$ is a
 path of topological fiberwise isomorphisms lifting $f$ and such that $\alpha|_{t=0}$ is  linear,  higher simplices being defined similarly. This space is  equivalent to $\dMap^{O_n}_\partial(N)$. 
One gets the following commutative diagram, where the right vertical map is  obtained by the restriction $\alpha\mapsto\alpha|_{t=1}$.
\begin{equation}\label{eq:diff_xxx}
\xymatrix{
\Diff_\partial(N)\ar[d]\ar[r]&\Homeo_\partial^{\underline{O_n}}(N)\ar@{->>}[d]\ar[r]&\Map^{O_n\times^h_{\TOP_n}\TOP_n}_\partial(N)\ar@{->>}[d]\\
\Homeo_\partial(N)\ar@{=}[r]&\Homeo_\partial(N)\ar[r]&\Map_\partial^{\TOP_n}(N).
}
\end{equation}
 The right square in it is a pullback and, hence, a homotopy pullback since the right vertical map is a Kan fibration. The outer square is a homotopy pullback by Theorem~\ref{th:square_diff},
 see Remark~\ref{r:sq_non-comm}. As a consequence, the left top horizontal arrow is an equivalence.
\end{proof}

\subsection{PL version}\label{ss:pl_fr_homeo}
The PL version of this construction that we present below has a much more limited generality. One of the reasons for
this limited generality is outlined in Remark~\ref{r:exp}. Another  problem is
that one does not have an inclusion of $\OO_n$ in $\PL_n$. Moreover, unlike $\OO_n$, the group $\PL_n$ is not determined by its action on $\R^n$ in the sense 
that a principal $\PL_n$ bundle cannot be derived from some kind of $\R^n$-bundle.

However, for the case when $N\subset\R^n$ is a compact codimension zero submanifold of $\R^n$ with smooth boundary, the construction goes through with little change.  Define the space  $\pdHomeo_\partial^{\underline{O_n}}(N)$ of ${\underline{O_n}}$-framed relative
to the boundary
PD homeomorphisms of $N$ as the simplicial set of triples $(f_k,F_k,H_k)$, where $f_k$ is a $k$-simplex of $\pdHomeo_\partial(N)$,
$(f_k,F_k)$ is a $k$-simplex of $\pdMap_\partial^{O_n}(N)$, while $H_k$ is a germ of PD homeomorphisms 
$$
H_k\colon \left(TN=N\times\R^n\right)\times\Delta^k\times[0,1]
\to N\times \R^n\times\Delta^k\times[0,1]
$$
 near $N\times 0\times\Delta^k\times[0,1]$
 (on which they are identity), that commute with the projection on $N\times\Delta^k\times[0,1]$, are the identity near $N\times\R^n\times \Delta^k\times [0,1]$ and such that $H_k|_{t=0}=F_k$ while $H_k|_{t=1}=exp^{-1}\circ \Delta(f_k)\circ exp$. Since $exp\colon (x,y)\mapsto (x,x+y)$ is both smooth and PL,
 the boundary condition for $H_k|_{t=1}$ is well-defined.

The formulae that define the map \eqref{eq:dif_fr_homeo} work as well to get a map 
\begin{equation}\label{eq:diff_pd_homeo}
\Diff_\partial(N)\to\pdHomeo_{\partial}^{\underline{O_n}}(N).
\end{equation}
For  $f\in\Diff_\partial(N)$,  the path $H\colon N\times\R^n\times[0,1]\to N\times\R^n\times[0,1]$ is as follows:
\[
H(x,y,t)=
\begin{cases}
(f(x),\frac{f(x+ty)-f(x)}{t},t),&t\in(0,1],\\
(f(x),D(f)_x(y),t),&t=0.
\end{cases}
\]
Since $H$ is smooth, it is PD, and the map~\eqref{eq:diff_pd_homeo}  is well-defined.

\begin{proposition}\label{p:dif_homeo_pd}
For any $n\geq 0$ and any codimension zero compact submanifold $N\subset\R^n$ with smooth boundary $\partial N$, the map $\Diff_\partial(N)\to\pdHomeo_{\partial}^{\underline{O_n}}(N)$ is an equivalence of simplicial sets for any $n\geq 1$.
\end{proposition}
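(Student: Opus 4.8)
The plan is to follow the proof of Theorem~\ref{th:diff_compos} almost verbatim, replacing the Kirby--Siebenmann space of smooth structures $\Smooth_\partial$ by the Burghelea--Lashof smoothing map $S$ of~\eqref{eq:pd_smoothing}. The crucial simplification afforded by the hypothesis $N\subset\R^n$ is that the PL tangent microbundle of $|N|$ is the restriction of the (trivial) tangent microbundle of $\R^n$; hence the bundle $E\to N$ in~\eqref{eq:pd_smoothing} is trivial, $E=N\times(\PD_n/\OO_n)$, and by Remark~\ref{r:exp} the section of $E$ determined by the smooth structure of $N$ is the constant one at the basepoint. Thus $\Gamma_\partial(E)=\Map_\partial(N,\PD_n/\OO_n)$ with its natural basepoint, and $\Omega_*\Gamma_\partial(E)=\Map_\partial(N,\Omega(\PD_n/\OO_n))$.

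First I would record two fibrations over $\pdHomeo_\partial(N)$. On one side, $\Diff_\partial(N)$ acts freely on $\pdHomeo_\partial(N)$ by precomposition, so $q\colon\pdHomeo_\partial(N)\to\pdHomeo_\partial(N)/\Diff_\partial(N)$ is a principal $\Diff_\partial(N)$-bundle, in particular a Kan fibration with fiber $\Diff_\partial(N)$ over $[\mathrm{id}]$. Since by~\cite{BL_diff} the map $S$ is injective on $\pi_0$ and a weak equivalence on each component, the homotopy fiber of $S\circ q\colon\pdHomeo_\partial(N)\to\Gamma_\partial(E)$ over the basepoint is still $\Diff_\partial(N)$, via the natural inclusion of $\mathrm{fib}(q)$. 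On the other side, the forgetful map $p\colon\pdHomeo_\partial^{\underline{O_n}}(N)\to\pdHomeo_\partial(N)$, $(f,F,H)\mapsto f$, is a Kan fibration whose fiber over $\mathrm{id}$ consists, by definition, of pairs $(F,H)$ with $F$ a fiberwise linear PD automorphism of $TN=N\times\R^n$ over $\mathrm{id}$ (a map $N\to pdO_n$, identity near $\partial N$) and $H$ a path of PD microbundle automorphisms from $F$ to $\mathrm{id}$ (a map $N\to P_{\mathrm{id}}\PD_n$, identity near $\partial N$); that is, $\Map_\partial(N,\hofiber(pdO_n\to\PD_n))\simeq\Map_\partial(N,\Omega(\PD_n/\OO_n))=\Omega_*\Gamma_\partial(E)$, the last equivalence coming from the fibration $pdO_n\to\PD_n\to\PD_n/\OO_n$ and matching the fiber of $\hofiber(S\circ q)\to\pdHomeo_\partial(N)$.

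Next I would build a map of fibrations over $\pdHomeo_\partial(N)$,
\[
\Psi\colon\pdHomeo_\partial^{\underline{O_n}}(N)\longrightarrow \pdHomeo_\partial(N)\times^h_{\Gamma_\partial(E)}P_*\Gamma_\partial(E)=\hofiber(S\circ q),
\]
sending $(f,F,H)$ to $f$ together with the path in $\Gamma_\partial(E)=\Map_\partial(N,\PD_n/\OO_n)$ obtained by pushing $H$ through the $exp$-identification of the tangent vector bundle with the PD microbundle and then through the quotient by $\OO_n$: the linear endpoint $F$ projects to the constant (standard) section, while $exp^{-1}\circ\Delta(f)\circ exp$ projects to $S(q(f))$. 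On fibers over $\mathrm{id}$, $\Psi$ is $\Map_\partial(N,-)$ applied to the comparison map $\hofiber(pdO_n\to\PD_n)\to\Omega(\PD_n/\OO_n)$, hence an equivalence; by comparison of fibration sequences $\Psi$ is an equivalence. Finally, for $g\in\Diff_\partial(N)$ one has $q(g)=[\mathrm{id}]$, so $S(q(g))$ is the basepoint and the path produced from $H^g$ is a loop, which by the very construction of $H^g$ is canonically null; hence $\Psi$ carries the natural map~\eqref{eq:diff_pd_homeo} to the canonical equivalence $\Diff_\partial(N)\xrightarrow{\ \simeq\ }\hofiber(S\circ q)$ up to canonical homotopy, and composing shows~\eqref{eq:diff_pd_homeo} is an equivalence. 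In contrast to Theorem~\ref{th:diff_compos}, no hypothesis $n\neq4$ is needed, since the Burghelea--Lashof smoothing theory underlying~\eqref{eq:pd_smoothing} holds in all dimensions.

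The step I expect to be the main obstacle is the construction of $\Psi$ together with the verification of its behavior on fibers and on the image of $\Diff_\partial(N)$: this requires unwinding the definition of the Burghelea--Lashof map $S$ precisely enough to see that it sends the class of $f$ to the section $x\mapsto[\,exp^{-1}_{f(x)}\circ f\circ exp_x\,]$, that a linear lift projects to the standard section, and that the germ-collar conventions and the $\OO_n$-quotients fit together coherently; one must also keep track of the fact that $S$ is only a componentwise equivalence, which is harmless here because only the component of $[\mathrm{id}]$ enters the argument.
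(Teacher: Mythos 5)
Your proposal is correct and follows essentially the same route as the paper: the paper likewise forms the homotopy fiber of $\pdHomeo_\partial(N)\to \pdHomeo_\partial(N)/\Diff_\partial(N)\xrightarrow{S}\Gamma_\partial(E)$ over the image of the identity, identifies it with $\Diff_\partial(N)$ using that $S$ is injective on $\pi_0$ and a componentwise equivalence, and identifies it with $\pdHomeo_\partial^{\underline{O_n}}(N)$ by the same section-space comparison as in Theorem~\ref{th:diff_compos}. You merely make explicit (via the triviality of $E$ for $N\subset\R^n$ and the map $\Psi$) the details the paper compresses into ``by the same argument.''
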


\begin{proof}
Same as for Theorem~\ref{th:diff_compos}.
\end{proof}

\section{Proof of Theorem~\ref{th:a}}\label{s:proof_a}
\subsection{TOP version}\label{ss:a_top}
We will prove a refinement of Theorem~\ref{th:a} that takes into account the $\OO_n$-action. 


\begin{theorem}\label{th:a_top}
For $n\neq 4$, one has an equivalence of $E_{n+1}^{\OO_n}$-algebras
\begin{equation}\label{eq:a_top_1}
\Diffn\simeq_{E_{n+1}^{\OO_n}}\Omega^{n+1}\left(\TOP_n/\OO_n\right);
\end{equation}
\begin{equation}\label{eq:a_top_2}
\Dbarn\simeq_{E_{n+1}^{\OO_n}}\Omega^{n+1}\TOP_n.
\end{equation}
\end{theorem}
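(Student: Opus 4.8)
The plan is to route both equivalences through the intermediate simplicial group $\Homeo_\partial^{\underline{O_n}}(D^n)$ of Section~\ref{s:fr_homeo}. First I would equip $D^n\subset\R^n$ with the \emph{Euclidean} metric, so that the exponential map $exp\colon(x,v)\mapsto(x,x+v)$ (a germ near the zero section in the germ-collar sense of Subsection~\ref{ss:spaces}) is affine; this is what makes it commute with linear maps and with the affine rescalings $L\in E_n(1)$, and Theorem~\ref{th:diff_compos} is insensitive to this choice. I would then endow $\Homeo_\partial^{\underline{O_n}}(D^n)$ with a Budney-type $R_{n+1}^{\OO_n}$-action exactly as in Subsection~\ref{ss:little_discs}: a configuration $[(\vec L,\sigma)]\in R_{n+1}(k)$ acts on a tuple $(f_i,F_i,H_i)$ by conjugating each piece of data by $L_j$ on the window $L_j(D^n)$ and leaving it fixed elsewhere. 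Because the differential of an affine $L_j$ is a central scalar, this is well defined on the linear framings $F_i$, and since $L_j$ commutes with $exp$ it is well defined on the microbundle homotopies $H_i$ and preserves the endpoint conditions $H_i|_{t=0}=F_i$, $H_i|_{t=1}=exp^{-1}\circ\Delta(f_i)\circ exp$; global $\OO_n$-conjugation supplies the framing action. The inclusion $\Diffn\to\Homeo_\partial^{\underline{O_n}}(D^n)$, $f\mapsto(f,D(f),H^f)$, is then a map of $R_{n+1}^{\OO_n}$-algebras by the explicit formulas for $D$ and for $H^f$, and it is a weak equivalence by Theorem~\ref{th:diff_compos} (here $n\neq4$); since $E_{n+1}^{\OO_n}\to R_{n+1}^{\OO_n}$ is an equivalence of operads, this already gives $\Diffn\simeq_{E_{n+1}^{\OO_n}}\Homeo_\partial^{\underline{O_n}}(D^n)$.

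Next I would recognize $\Homeo_\partial^{\underline{O_n}}(D^n)$ as a loop space. The Kan fibration $\Homeo_\partial^{\underline{O_n}}(D^n)\to\Homeo_\partial(D^n)$ forgetting $(F,H)$ is a map of $R_{n+1}^{\OO_n}$-algebras for the Budney action on the base, its base is contractible by the Alexander trick (Lemma~\ref{l:alex}), and the fiber over $\mathrm{id}_{D^n}$ is a sub-$R_{n+1}^{\OO_n}$-algebra, so the fiber inclusion is an $E_{n+1}^{\OO_n}$-equivalence. Unwinding the definition, with $TD^n$ identified with $D^n\times\R^n$, that fiber is precisely the ad hoc model $\Omega^n\hofiber(O_n\to\TOP_n)$ of Subsections~\ref{ss:hofib}--\ref{sss:loops}: a point is a family $F\colon D^n\to O_n$ standard near $\partial D^n$ together with a germ of microbundle homeomorphisms $H$ of $TD^n\times[0,1]$ rel boundary from $F$ to $\mathrm{id}$. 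One must then check that the restricted Budney action is the \emph{standard} operad action on this loop-space model, and this is where the Euclidean metric pays off: inserting $(\mathrm{id},F,H)$ supported on $L_j(D^n)$ becomes, on the $O_n$-coordinate, the map $x\mapsto F(L_j^{-1}x)$ on $L_j(D^n)$ and $\mathrm{id}$ elsewhere, which is exactly the loop-space pinch-and-insert, and similarly on $H$ and under $\OO_n$-conjugation. Finally, replacing $O_n$ by $\OO_n$ and using $\hofiber(\OO_n\to\TOP_n)\simeq\Omega(\TOP_n/\OO_n)$ (the fiber sequence $\OO_n\to\TOP_n\to\TOP_n/\OO_n$) together with the comparison between ad hoc and topological loop-space models from Section~\ref{s:loops}, I would identify $\Omega^n\hofiber(O_n\to\TOP_n)$ with $S_*\Omega^{n+1}|\TOP_n/\OO_n|=\Omega^{n+1}(\TOP_n/\OO_n)$, yielding~\eqref{eq:a_top_1}.

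For~\eqref{eq:a_top_2} I would track the derivative map. The map $D\colon\Diffn\to\Omega^nO_n$ defining $\Dbarn$ factors as $\Diffn\to\Homeo_\partial^{\underline{O_n}}(D^n)\to\Map_\partial^{O_n}(D^n)$, $(f,F,H)\mapsto(f,F)$, composed with the equivalence $\Map_\partial^{O_n}(D^n)\simeq\Omega^nO_n$ (the underlying-map coordinate being contractible), all of these being $R_{n+1}^{\OO_n}$-algebra maps. Under the identification of the previous paragraph this becomes the forgetful projection $\Omega^n\hofiber(O_n\to\TOP_n)\to\Omega^nO_n$. Taking homotopy fibers (modelled ad hoc, so the $E_{n+1}^{\OO_n}$-structure is manifestly preserved), using that $\Omega^n$ commutes with $\hofiber$, and that $\hofiber\bigl(\hofiber(O_n\to\TOP_n)\to O_n\bigr)$ is the actual fiber $\Omega\TOP_n$ of the fiber sequence $\Omega\TOP_n\to\hofiber(O_n\to\TOP_n)\to O_n\to\TOP_n$, I obtain
\[
\Dbarn\simeq_{E_{n+1}^{\OO_n}}\hofiber\bigl(\Omega^n\hofiber(O_n\to\TOP_n)\to\Omega^nO_n\bigr)\simeq_{E_{n+1}^{\OO_n}}\Omega^{n+1}\TOP_n.
\]

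The main obstacle I expect is the verification in the second paragraph that the Budney $R_{n+1}^{\OO_n}$-action on $\Homeo_\partial^{\underline{O_n}}(D^n)$, restricted to the fiber of the projection to $\Homeo_\partial(D^n)$, agrees on the nose with the standard operad action on the ad hoc model $\Omega^n\hofiber(O_n\to\TOP_n)$; this is a bookkeeping problem that genuinely needs the Euclidean metric (so that the microbundle-homotopy coordinate $H$ transforms under affine cubes exactly as loop coordinates do) and careful handling of the germ-collar conventions throughout. Everything else is formal manipulation with Kan fibrations, homotopy fibers, and loop spaces inside the framework already set up in Sections~\ref{s:loops} and~\ref{s:fr_homeo}, together with the operad-level input that $E_{n+1}^{\OO_n}\to R_{n+1}^{\OO_n}$ is an equivalence.
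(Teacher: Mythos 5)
Your proof of \eqref{eq:a_top_1} is essentially the paper's: the same zigzag $\Diffn\to\Homeo_\partial^{\underline{O_n}}(D^n)\leftarrow\Omega^n\hofiber(O_n\to\TOP_n)$, with Theorem~\ref{th:diff_compos}, Lemma~\ref{l:dif_fr_homeo} and the Alexander trick (Lemma~\ref{l:alex}) doing the work; your extra care with the Euclidean metric and with matching the restricted Budney action to the ad hoc loop-space structure is harmless bookkeeping that the paper leaves implicit (the ad hoc model's $R_{n+1}$-structure \emph{is} insertion of germs plus composition, so there is nothing to compare). For \eqref{eq:a_top_2} you genuinely deviate: the paper does not take fiberwise homotopy fibers of the first zigzag, but introduces $\overline{\Homeo}_\partial(D^n)=\hofiber\left(\Homeon\to\Omega^n\TOP_n\right)$ and quotes the Burghelea--Lashof comparison \eqref{eq:dbar_hombar} of the homotopy fibers of $\Diff_\partial\to\Map_\partial^{O_n}$ and $\Homeo_\partial\to\Map_\partial^{\TOP_n}$ \cite[Proposition~4.3]{BL_diff}, then takes the fiber over the identity. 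Your route is more self-contained (it reuses only Theorem~\ref{th:diff_compos} and formal fiber arguments), at the price of more structure-chasing on homotopy fibers; the paper outsources exactly that chase to \cite{BL_diff}.

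Two steps need repair. First, the final step of \eqref{eq:a_top_1}: you invoke $\hofiber(\OO_n\to\TOP_n)\simeq\Omega(\TOP_n/\OO_n)$ as if it followed from the fibration sequence together with the model comparisons of Section~\ref{s:loops}. As an equivalence of \emph{spaces} it does; as an equivalence of $E_1^{\OO_n}$-algebras (product plus conjugation action), which is what \eqref{eq:a_top_1} asserts after looping $n$ more times, it does not: the homotopy fiber carries the pointwise group product, the loop space carries concatenation, $\TOP_n/\OO_n$ has no product to mediate between them, and the $\OO_n$-equivariance of any comparison has to be arranged as well. This is precisely what the paper's Appendix, Proposition~\ref{l:E1equiv}, is for (the zigzag through $E\OO_n$, the interpolating operad $\Assoc^{E_1(1)}$, and a Quillen-equivalence argument); without some such input the operadic content of \eqref{eq:a_top_1} --- which is the whole point of the refinement over the classical statement --- is unproved.

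Second, in \eqref{eq:a_top_2} you claim the derivative factorization consists of $R_{n+1}^{\OO_n}$-algebra maps, including the projection $\Map_\partial^{O_n}(D^n)\simeq\Omega^n O_n$. With the pointwise product on $\Omega^n O_n$ this is false: the monoid structure on $\Map_\partial^{O_n}(D^n)$ is composition, whose $O_n$-coordinate is the chain-rule twisted product $(\phi\circ f')\cdot\phi'$, not $\phi\cdot\phi'$; the paper flags exactly this in Subsection~\ref{ss:modulo} when it warns that $D$ is not a monoid homomorphism and equips $\Dbarn$ with the twisted product. The repair is to take homotopy fibers over $\Map_\partial^{O_n}(D^n)$ itself (with the composition product), through which all your maps are strict algebra maps; since those maps are Kan fibrations, the actual fiber over the basepoint is a strict $R_{n+1}^{\OO_n}$-subalgebra and the identification of the fiber of $\Omega^n\hofiber(O_n\to\TOP_n)\to\Map_\partial^{O_n}(D^n)$ with the ad hoc $\Omega^{n+1}\TOP_n$ is then unproblematic. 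But you must still identify the structure so induced on $\hofiber\bigl(\Diffn\to\Map_\partial^{O_n}(D^n)\bigr)$ with the twisted $R_{n+1}^{\OO_n}$-structure on $\Dbarn$ of Subsection~\ref{ss:modulo}: the obvious comparison (filling in the contractible underlying-map coordinate by straight lines) is not strictly multiplicative, so this needs an explicit argument or a further zigzag --- this is the step the paper's formula $(f,H)\mapsto(f,\hat H^f)$ and \cite[Proposition~4.3]{BL_diff} are designed to finesse.
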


\begin{proof}[Proof of~\eqref{eq:a_top_1}]
Consider the following zigzag
\[
\Diffn\xrightarrow[I]{\,\simeq\,}\Homeo_\partial^{\underline{O_n}}(D^n)\xleftarrow[J]{\,\simeq\,}\Omega^n\hofiber(O_n\to\TOP_n).
\]
The first map is an equivalence by Theorem~\ref{th:diff_compos}. One can see that it respects the $E_n$-action. By Lemma~\ref{l:dif_fr_homeo} it  also respects the associative product, implying that $I$ is an equivalence of 
$R_{n+1}$-algebras. It is also not hard to see that $I$ respects the $\OO_{n}$-action, where the $\OO_{n}$-action on $\Diffn$ is described in Subsection~\ref{ss:fr_operads}, while the $\OO_n$-action on 
$\Homeo_\partial^{\underline{O_n}}(D^n)$ is defined in the same manner as for $\Dbarn$,
see Subsection~\ref{ss:modulo}. The map $J$ is the fiber inclusion  for the projection
$\Homeo_\partial^{\underline{O_n}}(D^n)\to \Homeo_\partial(D^n)$, the fiber 
 being taken over the
identity homeomorphism $id\in \Homeo_\partial(D^n)$. Since the base space $\Homeo_\partial(D^n)$ is contractible by Lemma~\ref{l:alex}, the fiber inclusion~$J$ is an equivalence. On the other hand, $\{id\}$
is an $R_{n+1}^{\OO_n}$-subalgebra of $\Homeo_\partial(D^n)$ and the projection is a morphism
of $R_{n+1}^{\OO_n}$-algebras. Thus, $J$ is also an inclusion of $R_{n+1}^{\OO_n}$-algebras.

To get the last step, i.e., equivalence to $\Omega^{n+1}\left(\TOP_n/ \OO_n\right)$, we pass from
simplicial sets to topological spaces. 
 We therefore first replace $\Omega^n\hofiber\left(O_n\to\TOP_n\right)$
by $\Omega^n\hofiber\left(|\OO_n|\to|\TOP_n|\right)$, see Subsection~\ref{sss:loops}. The new space
is an $R_{n+1}^{|\OO_n|}$-algebra ($|\OO_n|$ acts on $R_{n+1}$ by restriction along the group homomorphism $|\OO_n|\to \OO_n$). On the other hand, $\hofiber\left(|\OO_n|\to|\TOP_n|\right)$ is an $\Assoc^{|\OO_n|}$-algebra
$E_1^{|\OO_n|}$-equivalent to 
$\Omega\left(|\TOP_n|/|\OO_n|\right)=\Omega\left|\TOP_n/\OO_n\right|$ by Proposition~\ref{l:E1equiv} (where $|\OO_n|$ acts trivially on $\Assoc$ and $E_1$). We conclude that 
 $\Omega^n\hofiber\left(|\OO_n|\to|\TOP_n|\right)$ is $E_{n+1}^{|\OO_n|}$-equivalent to 
$\Omega^{n+1}\left|\TOP_n/\OO_n\right|$.
\end{proof}

\begin{proof}[Proof of~\eqref{eq:a_top_2}]
It follows from Theorem~\ref{th:square_diff}(a) that for any smooth $n$-manifold $N$, $n\neq 4$, and any $f\in\Diff_\partial(N)$, the homotopy fibers over the image of $f$ are equivalent:
\[
\hofiber_f\left(\Diff_\partial(N)\to\Map_\partial^{O_n}(N)\right)
\simeq
\hofiber_f\left(\Homeo_\partial(N)\to\Map_\partial^{\TOP_n}(N)\right).
\eqno(\numb)\label{eq:dbar_hombar}
\]
Define $\overline{\Homeo}_\partial(D^n):=
\hofiber\left(\Homeon\to\Omega^n\TOP_n\right)$.\footnote{As a simplicial set (and as an $R^{\OO_n}_{n+1}$-algebra)   it is defined similarly to $\Dbarn$, see Section~\ref{ss:modulo}.}
We get a zigzag of $R_{n+1}^{\OO_n}$-equivalences:
\[
\Dbarn\xrightarrow[I]{\simeq}\overline{\Homeo}_\partial(D^n)\xleftarrow[J]{\simeq}\Omega^n (\Omega\TOP_n).
\]
Here $I$ is a particular case of the equivalence \eqref{eq:dbar_hombar}. On the level of vertices it sends $(f,H)\in\Dbarn$ to $(f,\hat H^f)\in
\overline{\Homeo}_\partial(D^n)$,
where 
\[
\hat H^f(t)=\begin{cases}
H^f(1-2t),& t\in[0,\frac 12];\\
H(2t-1),&t\in[\frac 12,1].
\end{cases}
\]
where $H^f$ is defined in Section~\ref{s:fr_homeo}. The map $J$ is the inclusion of the fiber over the identity with respect to the projection $\overline{\Homeo}_\partial(D^n)\to\Homeon$. 
\end{proof}

\subsection{PL version}\label{ss:a_pl}
\begin{proof}[Proof of $\Diffn\simeq\Omega^{n+1}(\PL_n/\OO_n)$]
We show first that we have an equivalence of spaces. 
Recall that $\PL_n \simeq \PD_n$. 
Consider the zigzag
\[
\Diffn\xrightarrow{\,\,\simeq\,\,}\pdHomeo_\partial^{\underline{O_n}}(D^n)\xleftarrow{\,\,\simeq\,\,}\Omega^n\hofiber\left(O_n\to\PD_n\right).
\]
The first map is an equivalence by Proposition~\ref{p:dif_homeo_pd}. The second map is the inclusion of the fiber over $id\in\pdHomeo_\partial(D^n)$
and is an equivalence by Lemma~\ref{l:alex} and the fact that the inclusion $\plHomeo_\partial(D^n) \hookrightarrow\pdHomeo_\partial(D^n)$
is an equivalence~\cite{BLR_aut}. 

We then pass to the topological category and replace $\Omega^n\hofiber\left(\OO_n\to\PD_n\right)$ by $\Omega^n\hofiber\left(|\OO_n|\to|\PD_n|\right)$,
see Subsection~\ref{sss:loops}. The last step is to apply the equivalence
\[
\hofiber\left(|\OO_n|\to|\PD_n|\right)\xrightarrow{\,\,\simeq\,\,} \Omega(|\PD_n|/|\OO_n|)=\Omega|\PD_n/\OO_n|.
\]

Note, however, that most of the spaces in our zigzag are not even $E_1$-algebras, see Subsections~\ref{ss:little_discs} and~\ref{ss:loops}.
To see that $\Diffn\simeq_{E_{n+1}}\Omega^{n+1}(\PD_n/\OO_n)$ for $n\neq 4$, we notice that the natural map $\PD_n\to\TOP_n$
induces an equivalence 
\begin{equation}\label{eq:pd_to_top}
\Omega^{n+1}|\PD_n/\OO_n|\to \Omega^{n+1}|\TOP_n/\OO_n|.
\end{equation}
 Indeed, one has a natural morphism from the 
zigzag of this subsection that proves $\Diffn\simeq\Omega^{n+1}(\PD_n/\OO_n)$ to the zigzag of Subsection~\ref{ss:a_top} which proves
$\Diffn\simeq\Omega^{n+1}(\TOP_n/\OO_n)$. Arguing inductively, we obtain that the natural maps between the terms of the zigzags are all equivalences. In particular, it concerns the last terms, implying that the map~\eqref{eq:pd_to_top} is also an equivalence. On the other hand, $\Diffn\simeq_{E_{n+1}}\Omega^{n+1}(\TOP_n/\OO_n)$,
which implies the same in the PD version.
\end{proof}

\begin{proof}[Proof of $\Dbarn\simeq\Omega^{n+1}\PL_n$]
By Theorem~\ref{th:square_diff}(b),
 for any smooth compact $n$-manifold $N$ and for any $f\in\Diff_\partial(N)$, the homotopy fibers over the images of~$f$ are equivalent
\[
\hofiber_f\left(\Diff_\partial(N)\to\dMap_\partial^{O_n}(N)\right)
\xrightarrow{\,\,\simeq\,\,}
\hofiber_f\left(\pdHomeo_\partial(N)\to\pdMap_\partial^{\PD_n}(N)\right).
\eqno(\numb)\label{eq:dbar_pd_hombar}
\]

Let $\mathrm{pd}\Hombar_\partial(D^n)$ be the homotopy fiber
\[
\mathrm{pd}\Hombar_\partial(D^n):=\hofiber\left(\pdHomeo_\partial(D^n)\to\Omega^n\PD_n\right)
\]
over the constant loop at the basepoint. 
 One gets the following zigzag of equivalences:
\[
\Dbarn\xrightarrow{\,\,\simeq\,\,}\mathrm{pd}\Hombar_\partial(D^n)\xleftarrow{\,\,\simeq\,\,}\Omega^n\Omega\PD_n\xleftarrow{\,\,\simeq\,\,}\Omega^{n+1}\PL_n.
\]
We use here \eqref{eq:dbar_pd_hombar}, the fact that $\dMap_\partial(D^n)\simeq\pdMap_\partial(D^n)\simeq\Map_\partial(D^n)$, by the approximation argument, and that $\Map_\partial(D^n)\simeq *$, by the Alexander trick Lemma~\ref{l:alex}.
Once again we remind that the second and third spaces in this zigzag are just simplicial sets, not even $E_1$-algebras, while the last one is only a simplicial group. To see that $\Dbarn\simeq_{E_{n+1}}
\Omega^{n+1}|\PL_n|$, $n\neq 4$, we  argue in the same way as for the equivalence $\Diffn\simeq_{E_{n+1}}\Omega^{n+1}|\PD_n/\OO_n|$, by comparing the PD zigzag to the analogous topological one.
\end{proof}

\section{$\underline{V_{n,m}}$- and $\underline{O_n}$-framed topological embeddings and immersions}\label{s:fr_embed}
In this section we extend the previous machinery to study embeddings. 

\subsection{$\underline{V_{n,m}}$-framed topological embeddings and immersions}\label{ss:Vnm}
Let $(M,\partial_0M,\partial_1M)$ and $N$ be as in Section~\ref{ss:square2}. 
Define the space $\tEmb_{\partial_0M}^{\underline{V_{n,m}}}(M,N)$ of $\underline{V_{n,m}}$-framed locally flat topological embeddings as the space of triples
$(f,F,H)$, where $f$ is a topological locally flat embedding $f\in \tEmb_{\partial_0M}(M,N)$, $F$ is a monomorphism of tangent vector bundles $F\colon TM\to TN$
lifting $f$ (in other words, $(f,F)\in \Map_{\partial_0M}^{V_{n,m}}(M,N)$), and $H$ is an isotopy of topological monomorphisms of tangent bundles $H\colon TM\times[0,1]\to TN$, such that $H|_{t=0}=F$ and $H|_{t=1}=exp^{-1}_N\circ\Delta(f)\circ exp_M$. Here, $H$ can also be seen as a germ of topological locally flat embeddings 
$H\colon TM\times [0,1]\hookrightarrow TN\times [0,1]$ near $s(M)\times [0,1]$ and extending $f\times id_{[0,1]}$, commuting with the projection on $[0,1]$ 
and coinciding with the inclusion $D(i)\times id_{[0,1]}$ near $\pi^{-1}\partial_0 M$.
A $k$-simplex of $\tEmb_{\partial_0M}^{\underline{V_{n,m}}}(M,N)$ is a triple $(f_k,F_k,H_k)$, where $f_k$ is a $k$-simplex of $\tEmb_{\partial_0M}(M,N)$,
$(f_k,F_k)$ is a $k$-simplex of $\Map_{\partial_0M}^{V_{n,m}}(M,N)$, and $H_k$ is a germ of locally flat embeddings $H_k\colon TM\times\Delta^k\times [0,1]
\to TN\times\Delta^k\times [0,1]$ near $s(M)\times\Delta^k\times[0,1]$, making the square
\[
\xymatrix{
 TM\times\Delta^k\times [0,1]\ar[rr]^-{H_k}\ar[d]_{\pi\times id_{\Delta^k\times[0,1]}} & & TN\times\Delta^k\times [0,1]\ar[d]_{\pi\times id_{\Delta^k\times[0,1]}}\\ 
M\times\Delta^k\times[0,1]\ar[rr]_-{f_k\times id_{[0,1]}}&&N\times\Delta^k\times[0,1].
}
\]
commute and coinciding with $D(i)\times id_{\Delta^k\times [0,1]}$ near $\pi^{-1}(\partial_0M)\times \Delta^k\times [0,1]$.

We similarly define the space $\tImm_{\partial_0M}^{\underline{V_{n,m}}}(M,N)$ of $\underline{V_{n,m}}$-framed locally flat topological immersions as the space of similar triples
$(f,F,H)$ with the difference that $f$ is a topological locally flat immersion $f\in \tImm_{\partial_0M}(M,N)$.

One has natural maps 
\begin{gather}
 \Emb_{\partial_0M}(M,N)\to
 \tEmb_{\partial_0M}^{\underline{V_{n,m}}}(M,N); \label{eq:emb_fr_temb}\\
 \Imm_{\partial_0M}(M,N)\to
 \tImm_{\partial_0M}^{\underline{V_{n,m}}}(M,N) \label{eq:imm_fr_imm}
\end{gather}
defined in the same way as the map \eqref{eq:dif_fr_homeo}.  As an immediate consequence of the Smale-Hirsch theory, see Section~\ref{s:imm}, 
the second map is an equivalence. 
 The same can be said about the first map.

\begin{theorem}\label{th:emb_fr_temb}
For $M$ and $N$ as above, with $m\leq n\neq 4$, the map~\eqref{eq:emb_fr_temb} is an equivalence.
\end{theorem}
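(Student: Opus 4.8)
The plan is to mimic the strategy of Theorem~\ref{th:diff_compos}, replacing the space of smooth structures by the space of concordance classes of smoothings of a topological locally flat embedding relative to the boundary. The key point is that the Burghelea--Lashof / Kirby--Siebenmann smoothing theory applies not only to the manifold $N$ but to the submanifold pair, so that one obtains a fibration sequence computing $\tEmb^{\underline{V_{n,m}}}_{\partial_0M}(M,N)$ as a homotopy fiber. Concretely, I would first observe that $\tEmb^{\underline{V_{n,m}}}_{\partial_0M}(M,N)$ is, by construction, the homotopy fiber over the given embedding $i\colon M\hookrightarrow N$ of the forgetful map $\tEmb_{\partial_0M}(M,N)\to \Map^{\VV^t_{n,m}}_{\partial_0M}(M,N)$ from the topological embedding space to the space of $\VV^t_{n,m}$-framed maps (= formal topological immersions), the homotopy fiber being pulled back along the canonical path from a $V_{n,m}$-framed map to its $\VV^t_{n,m}$-framed shadow via $exp$. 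Indeed, the triple $(f,F,H)$ encodes exactly an embedding $f$ together with a path in formal data from $D(i)$ to the microbundle monomorphism induced by $f$; the space of such paths ending at a fixed formal immersion is the homotopy fiber.

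The second step is the same unraveling for the smooth embedding space: $\Emb_{\partial_0M}(M,N)$ is, tautologically, the actual fiber of $\Emb_{\partial_0M}(M,N)\to \dMap^{V_{n,m}}_{\partial_0M}(M,N)$ over the formal immersion $D(i)$, and by the smooth Smale--Hirsch theorem (Section~\ref{s:imm}) this map, for $n=m$ or when the handle dimension of $M$ relative to $\partial_0M$ is less than $n$, has the homotopy type making the fiber and homotopy fiber agree when $\Imm$ is used; here we need the codimension-zero or $\partial M=\partial_0M$ hypotheses so that immersions and formal immersions are interchangeable. Thus both sides of~\eqref{eq:emb_fr_temb} are identified with homotopy fibers of the natural maps
\[
\Emb_{\partial_0M}(M,N)\to \dMap^{V_{n,m}}_{\partial_0M}(M,N),\qquad
\tEmb_{\partial_0M}(M,N)\to \Map^{\VV^t_{n,m}}_{\partial_0M}(M,N),
\]
and the map~\eqref{eq:emb_fr_temb} is induced by the evident vertical comparison (inclusion of smooth into topological objects, and $V_{n,m}\to\VV^t_{n,m}$ on the formal side).

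The third and decisive step is then to show this square of maps is a homotopy pullback, equivalently that the induced map of horizontal homotopy fibers is an equivalence. This is precisely where smoothing theory enters: the relative concordance-smoothing theorem of Kirby--Siebenmann (Essay~V of~\cite{KS_essays}), applied to the topological manifold $N$ together with its locally flat submanifold $M$ and boundary conditions, identifies the ``smoothing obstruction'' space --- the homotopy fiber of $\tEmb_{\partial_0M}(M,N)\to\Map^{\VV^t_{n,m}}_{\partial_0M}(M,N)$ relative to the smooth locus --- with a section space of a bundle over $M$ with fiber $\hofiber(V_{n,m}\to\VV^t_{n,m})$, and the requirement $n\neq 4$ is exactly the dimension hypothesis needed for the Classification Theorem to hold (note $\dim M\le\dim N=n$, and the obstruction theory lives in dimensions $\le n$, so $n\neq 4$ suffices even when $n-m=2$). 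The fiber-transport argument is the same as in the proof of Theorem~\ref{th:diff_compos}: $\Diff$ versus $\Homeo$ there is replaced by $\Emb$ versus $\tEmb$ here, and $\Smooth_\partial(N)$ by the analogous space of smoothings of the embedding. One then concatenates: $\Emb \to \tEmb \to (\text{smoothings})$, whose total homotopy fiber over a point coming from a smooth embedding is $\Emb$ on one side and $\tEmb^{\underline{V_{n,m}}}$ on the other, by identifying the fiber of the second map with the relevant section space as above. I would finish by noting that when $n=m$ the statement reduces (via Cerf / the immersion $i$) to Theorem~\ref{th:diff_compos} itself, so the new content is entirely in positive codimension.

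The main obstacle I expect is making the relative smoothing-theory input rigorous in the low-codimension cases, particularly $n-m=2$ and $(n,m)=(4,3)$, where one cannot appeal to the topologization results of Chernavskii and must work simplicially throughout; one needs that the relevant space of locally flat embeddings (defined via parametrized local flatness) still admits the Kirby--Siebenmann obstruction theory as a section space, which requires checking that the germ-collar conventions of Subsection~\ref{ss:spaces} are compatible with the handle-by-handle smoothing argument near the submanifold and near $\partial_0M$. A secondary technical point is verifying that the canonical $exp$-path used to define the homotopy fiber matches the path implicit in the smoothing-theory identification, so that the two homotopy-fiber descriptions are compatible and not merely abstractly equivalent; this is the analogue of the explicit $H^f$ verification in Lemma~\ref{l:dif_fr_homeo} and should be a formal, if slightly tedious, check.
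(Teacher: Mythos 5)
Your overall skeleton---reduce the statement to the homotopy cartesianity of the square comparing $\Emb_{\partial_0M}(M,N)\to(\text{formal smooth immersions})$ with $\tEmb_{\partial_0M}(M,N)\to(\text{formal topological immersions})$, then conclude via Smale--Hirsch---is the same as the paper's. The gap is in how you propose to establish that cartesianity. You invoke a ``relative concordance-smoothing theorem'' for the locally flat pair $(N,M)$ from \cite[Essay~V]{KS_essays}, identifying the relevant homotopy fiber with a section space over $M$ with fiber $\hofiber(V_{n,m}\to\VV^t_{n,m})$. No such pair version of the Classification Theorem exists there: Kirby--Siebenmann smoothing theory classifies smooth structures on a topological manifold (a codimension-zero statement), not smoothings of a locally flat submanifold. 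A submanifold version is available off the shelf only in codimension $\geq 3$ (this is essentially Lashof's route in \cite{Lashof}); in codimensions $1$ and $2$ it is exactly the delicate point---for instance, existence and uniqueness of normal structures in codimension $2$ is a separate hard theorem (\cite{KS_codim2}, and \cite[\S 9.3]{FQ} in dimension $4$), and the section-space description you assert in codimension $2$ is essentially equivalent to the statement being proved (indeed to Theorem~\ref{th:b}), so the proposal is circular precisely where it has to be new. The remark that ``$n\neq 4$ suffices since the obstructions live in dimensions $\leq n$'' does not address this; you acknowledge the low-codimension problem in your last paragraph but do not resolve it.

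The paper avoids smoothing theory of pairs altogether: it quotes \cite[Theorem~3.1(a)]{BL_diff}, which says the codimension-zero square \eqref{eq:square_kupers} (smooth versus topological $\Emb/\Imm$) is homotopy cartesian for $n\neq 4$; it observes that the framed square \eqref{eq:square_kupers2} is strictly cartesian along a Kan fibration, hence homotopy cartesian; and it concludes using the Smale--Hirsch equivalence \eqref{eq:imm_fr_imm}. For $n>m$ the homotopy cartesianity of \eqref{eq:square_kupers} is itself reduced to the codimension-zero case by replacing $M$ with the disc bundle $T$ of the smooth normal bundle of the basepoint immersion $f$ and comparing homotopy fibers over $f$ and $\hat f$---a trick that uses only smooth normal data of the basepoint and never a topological normal structure. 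This normal disc-bundle reduction plus the Burghelea--Lashof codimension-zero input is the missing replacement for your relative-smoothing step. Two set-up corrections as well: $\tEmb_{\partial_0M}^{\underline{V_{n,m}}}(M,N)$ is the homotopy pullback of $\tEmb_{\partial_0M}(M,N)\to\Map_{\partial_0M}^{\VV_{n,m}^t}(M,N)\leftarrow\Map_{\partial_0M}^{V_{n,m}}(M,N)$, not the homotopy fiber over the single point $i$; and $\Emb_{\partial_0M}(M,N)$ is not the fiber of $f\mapsto(f,D(f))$ over $D(i)$ (that fiber is just $\{i\}$). Finally, the case $n=m$ with $\partial_0M\subsetneq\partial M$ is not covered by Theorem~\ref{th:diff_compos} and is handled in the paper by the same \cite{BL_diff} square.
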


\begin{proof}
One has a natural morphism from square~\eqref{eq:square_kupers} to a similar square
\begin{equation}\label{eq:square_kupers2}
\xymatrix{
 \tEmb_{\partial_0M}^{\underline{V_{n,m}}}(M,N)\ar[r]\ar@{->>}[d]&  \tImm_{\partial_0M}^{\underline{V_{n,m}}}(M,N)\ar@{->>}[d]\\
\tEmb_{\partial_0 M}(M,N)\ar[r]& \tImm_{\partial_0 M}(M,N).
}
\end{equation}
The former one is homotopy cartesian by Theorem~\ref{th:square_emb}(a), while the latter
is cartesian and, therefore, also homotopy cartesian,  its right vertical map being a Kan fibration. Since~\eqref{eq:imm_fr_imm} is an equivalence, we conclude 
that so is~\eqref{eq:emb_fr_temb}. 
\end{proof}

\subsection{$\underline{O_n}$-framed topological embeddings}\label{ss:On_fr}
From now on assume that $M\subset N$, $m<n$, and $\partial M=\partial_0 M$. The space $\Emb_{\partial }^{fr}(M,N)$ of  framed embeddings relative to the boundary is the space of pairs $(f,F)$, where $f\in\Emb_{\partial }(M,N)$ and $F$ is a (smooth) fiberwise isomorphism of vector bundles $F\colon TN|_{M}\to TN$  lifting $f$:
\begin{equation}\label{eq:fr_emb}
\xymatrix{
TN|_{M}\ar[d]\ar[r]^{F}&TN\ar[d]\\
M\ar@{^{(}->}[r]^{f}& N,
}
\end{equation}
and such that  $F|_{TM}=D(f)$ and $F$ is the identity inclusion near the preimage of $\partial M$. 
The $k$-simplices $(f_k,F_k)$ of $\Emb_{\partial}^{fr}(M,N)$  are defined similarly: $f_k$ must be a $k$-simplex of  
$\Emb_{\partial}(M,N)$, while $F_k\colon TN|_{M}\times\Delta^k\to TN\times\Delta^k$ must be a fiberwise isomorphism of vector bundles
lifting $f_k\colon M\times\Delta^k\hookrightarrow N\times\Delta^k$ and coinciding with $D(f_k)$ on $TM\times \Delta^k$ and with the identity inclusion near the preimage of
 $\partial M\times\Delta^k$.

Define the space $\tEmb_{\partial}^{\underline{O_n}}(M,N)$ of $\underline{O_n}$-framed relative to $\partial M$ topological locally flat embeddings
$M\hookrightarrow N$ as a simplicial set whose vertices are triples $(f,F,H)$, where $f\in\tEmb_{\partial}(M,N)$, $F$ is a continuous fiberwise isomorphism of vector bundles~\eqref{eq:fr_emb} lifting~$f$, and $H$ is an isotopy of locally flat monomorphisms of topological microbundles
\begin{equation}\label{eq:isot_tmono}
\xymatrix{
TM\times [0,1]\ar[d]_{\pi_M\times id_{[0,1]}}\ar[rr]^{H}&&TN\times[0,1]\ar[d]^{\pi_N\times id_{[0,1]}}\\
M\times [0,1]\ar@{^{(}->}[rr]^{f\times id_{[0,1]}}&& N\times [0,1],
}
\end{equation}
equal to $D(i)\times id_{[0,1]}$ near $\pi_M^{-1}(\partial M)\times [0,1]$ and such that $H|_{t=0}=F|_{TM}$ and $H|_{t=1}=
exp_N^{-1}\circ \Delta(f)\circ exp_M$. (In other words, $(f,F|_{TM},H)\in \tEmb^{\underline{V_{n,m}}}_{\partial M}(M,N)$.) Its $k$-simplices are defined similarly.

Note that $\Emb_{\partial}^{fr}(M,N)$ and $\tEmb_{\partial}^{\underline{O_n}}(M,N)$ fit into the pullback squares:
\begin{equation}\label{eq:pb_squares}
\xymatrix{
\Emb_{\partial }^{fr}(M,N)\ar[r] \ar@{->>}[d]& \Map_{\partial }^{O_n}(M,N)\ar@{->>}[d]\\
\Emb_{\partial }(M,N)\ar[r]&  \Map_{\partial }^{V_{n,m}}(M,N);
}
\quad\quad\quad
\xymatrix{
\tEmb_{\partial }^{\underline{O_n}}(M,N)\ar[r] \ar@{->>}[d]& \Map_{\partial }^{O_n}(M,N)\ar@{->>}[d]\\
\tEmb_{\partial }^{\underline{V_{n,m}}}(M,N)\ar[r]&  \Map_{\partial }^{V_{n,m}}(M,N).
}
\end{equation}


\begin{theorem}\label{th:emb_fr_temb2}
For $m<n\neq 4$,  the natural inclusion
\[
\Emb_{\partial }^{fr}(M,N)\xrightarrow{\,\,\simeq\,\,} \tEmb_{\partial }^{\underline{O_{n}}}(M,N)
\]
is an equivalence.
\end{theorem}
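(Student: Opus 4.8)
The proof of Theorem~\ref{th:emb_fr_temb2} will follow by combining the pullback squares \eqref{eq:pb_squares} with the equivalence established in Theorem~\ref{th:emb_fr_temb}. The plan is as follows. First I would observe that the right-hand vertical maps in both pullback squares in \eqref{eq:pb_squares} are the same map, namely the restriction/forgetful Kan fibration $\Map_\partial^{O_n}(M,N)\twoheadrightarrow \Map_\partial^{V_{n,m}}(M,N)$ that remembers only the linear monomorphism $TM\to TN$ underlying a bundle isomorphism $TN|_M\to TN$. (One should check this is indeed a Kan fibration, which is routine since it is a fiberwise map of function complexes; its fiber over a formal immersion $(f,G)$ is the space of extensions of the monomorphism $G$ to a fiberwise isomorphism on $i^*TN$, which is a nonempty space of sections of a bundle with contractible---in fact linear-contractible---fibers, being the space of complements/splittings of the normal bundle inside $f^*TN$.) Thus both squares exhibit their upper-left corners as honest (hence homotopy) pullbacks along one common fibration.

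Next I would invoke Theorem~\ref{th:emb_fr_temb}: the inclusion $\Emb_\partial(M,N)\to \tEmb_\partial^{\underline{V_{n,m}}}(M,N)$ is an equivalence for $n\neq 4$ (here $\partial M=\partial_0M$, $n>m$, so the hypotheses of that theorem apply). I would also note that the bottom horizontal maps $\Emb_\partial(M,N)\to\Map_\partial^{V_{n,m}}(M,N)$ and $\tEmb_\partial^{\underline{V_{n,m}}}(M,N)\to\Map_\partial^{V_{n,m}}(M,N)$ are compatible: the inclusion $\Emb_\partial(M,N)\to\tEmb_\partial^{\underline{V_{n,m}}}(M,N)$ is a map over $\Map_\partial^{V_{n,m}}(M,N)$, by construction of the map \eqref{eq:emb_fr_temb} (it sends $f$ to the triple $(f, D(f), H^f)$, whose underlying formal immersion is $(f,D(f))$, the same as the image of $f$ under the Smale--Hirsch map). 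Therefore the two pullback squares in \eqref{eq:pb_squares} receive a map of squares: the identity on the two right-hand corners, the inclusion \eqref{eq:emb_fr_temb} on the two bottom-left corners, and the natural inclusion $\Emb_\partial^{fr}(M,N)\to\tEmb_\partial^{\underline{O_n}}(M,N)$ on the two top-left corners.

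Now I would apply the standard fact that a map of homotopy-pullback squares which is an equivalence on three of the four corners (here: identity on $\Map_\partial^{O_n}(M,N)$ and on $\Map_\partial^{V_{n,m}}(M,N)$, and an equivalence $\Emb_\partial(M,N)\xrightarrow{\simeq}\tEmb_\partial^{\underline{V_{n,m}}}(M,N)$ on the lower-left corner by Theorem~\ref{th:emb_fr_temb}) induces an equivalence on the fourth corner. This yields $\Emb_\partial^{fr}(M,N)\xrightarrow{\simeq}\tEmb_\partial^{\underline{O_n}}(M,N)$ and completes the proof. Concretely, one compares the fibers over a common basepoint in $\Map_\partial^{V_{n,m}}(M,N)$: the fiber of the left square over $(f,D(f))$ is $\Emb$ versus $\tEmb^{\underline{V_{n,m}}}$ with a contractible correction from the $O_n$ vs.\ $V_{n,m}$ fiber, and since that correction fiber is literally the same on both sides, the comparison reduces to Theorem~\ref{th:emb_fr_temb} fiberwise.

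The main obstacle I anticipate is not the pullback-gluing formalism, which is routine, but rather making fully precise that the two right-hand vertical maps in \eqref{eq:pb_squares} genuinely coincide and are Kan fibrations in the simplicial-set models being used---in particular that the map on the ``topological'' side $\tEmb_\partial^{\underline{O_n}}(M,N)\to\tEmb_\partial^{\underline{V_{n,m}}}(M,N)$ is the pullback of the \emph{same} fibration $\Map_\partial^{O_n}\to\Map_\partial^{V_{n,m}}$, so that the square of squares really commutes on the nose. This requires carefully unwinding the definitions of $\tEmb_\partial^{\underline{O_n}}$ and $\tEmb_\partial^{\underline{V_{n,m}}}$ from Subsections~\ref{ss:Vnm}--\ref{ss:On_fr}: the extra datum in going from $\underline{V_{n,m}}$ to $\underline{O_n}$ is precisely a fiberwise linear isomorphism $TN|_M\to TN$ extending the monomorphism $TM\to TN$, with no further homotopy condition, so indeed both the embedding-decorated and immersion-decorated versions are pulled back along one and the same forgetful map --- but this identification, together with the verification that the resulting square \eqref{eq:pb_squares} on the right is strictly cartesian (not merely homotopy cartesian), is the one place where some care with the germ-of-microbundle conventions is genuinely needed.
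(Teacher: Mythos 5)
Your proposal is essentially the paper's own proof: the paper deduces Theorem~\ref{th:emb_fr_temb2} precisely from Theorem~\ref{th:emb_fr_temb} together with the observation that both squares in \eqref{eq:pb_squares} are (strict, hence homotopy) pullbacks along the same forgetful fibration $\Map_{\partial}^{O_n}(M,N)\to\Map_{\partial}^{V_{n,m}}(M,N)$, and then compares the two squares corner by corner exactly as you do. One harmless slip: the fiber of that fibration is not contractible (the space of extensions of a fixed linear injection $\R^m\to\R^n$ to an isomorphism of $\R^n$ is homotopy equivalent to $GL_{n-m}(\R)$, not to the contractible space of complements), but your argument never actually uses contractibility --- only that the two squares share the identical right-hand column and that the remaining corner map is an equivalence by Theorem~\ref{th:emb_fr_temb}.
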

\begin{proof}
Follows from Theorem~\ref{th:emb_fr_temb} and the fact that the squares~\eqref{eq:pb_squares} are homotopy pullback squares.
\end{proof}

\subsection{Examples}\label{ss:examples}
\subsubsection{$\tEmb_\partial^{\underline{V_{n,m}}}(D^m,D^n)^{\times}$}\label{sss:embmn_tembmn}
Let $1\leq m< n\neq 4$. Consider the subspace $\Emb_\partial(D^m,D^n)^\times\subset \Embmn$ consisting of path-components 
of invertible elements in $\pi_0$. In case $n-m\neq 2$, $\Emb_\partial(D^m,D^n)^{\times}= \Embmn$, see the Introduction.
These path-components consist of smooth knots which are topologically trivial. Let $\tEmb_\partial^{\underline{V_{n,m}}}(D^m,D^n)^{\times} \subset \tEmb_\partial^{\underline{V_{n,m}}}(D^m,D^n)$ 
be the similarly defined  union of components. 
This subspace can be identified as the preimage of the component $\tEmbmn_*$ of the trivial knot.
\footnote{We use here the fact $\tEmb_\partial(D^m,D^n)^\times =\tEmb_\partial(D^m,D^n)_*$:  if a codimension two knot ($n=m+2$) is invertible then its complement is a homotopy $S^1$, see footnote~\ref{foot6}, while a codimension two topological knot 
 is trivial if and only if its complement is a homotopy $S^1$ \cite{Stallings}.}  One has again that for $n-m\neq 2$, $\tEmb_\partial^{\underline{V_{n,m}}}(D^m,D^n)^\times = \tEmb_\partial^{\underline{V_{n,m}}}(D^m,D^n)$. Applying Theorem~\ref{th:emb_fr_temb},  
 we get that the natural map
\begin{equation}\label{eq:embmn_tembmn}
\Emb_\partial(D^m,D^n)^{\times} \xrightarrow{\,\,\simeq\,\,} \tEmb_\partial^{\underline{V_{n,m}}}(D^m,D^n)^{\times},\,\, n\neq 4,
\end{equation}
is an equivalence.

\subsubsection{$\tEmb_{\partial D^m\times D^{n-m}}^{\underline{O_n}}(D^m\times D^{n-m})^{\times}$}\label{sss:emb_dm_dn}
Let again $1\leq m<n\neq 4$.
Consider the subspaces 
$$\Emb_{\partial D^m\times D^{n-m}}(D^m\times D^{n-m})^{\times}\subset \Emb_{\partial D^m\times D^{n-m}}(D^m\times D^{n-m})$$  and  
$$\tEmb_{\partial D^m\times D^{n-m}}^{\underline{O_n}}(D^m\times D^{n-m})^{\times}
\subset \tEmb_{\partial D^m\times D^{n-m}}^{\underline{O_n}}(D^m\times D^{n-m})$$
 of path-components of homotopy invertible elements. For $n-m\neq 2$,
these inclusions are identities. Equivalently these subspaces can be described as preimages of the component
$\tEmb_\partial(D^m\times 0,D^m\times D^{n-m})_*$ of the trivial knot $*: D^m\times 0\subset D^m\times D^{n-m}$, see~\ref{sss:embmn_tembmn}. 
 Theorem~\ref{th:emb_fr_temb} implies that  the natural map
\begin{equation}\label{eq:emb_dm_dn}
\Emb_{\partial D^m\times D^{n-m}}(D^m\times D^{n-m})^{\times}\xrightarrow{\,\,\simeq\,\,} 
\tEmb_{\partial D^m\times D^{n-m}}^{\underline{O_n}}(D^m\times D^{n-m})^{\times},\,\, n\neq 4,
\end{equation}
is an equivalence.

%
%

\subsubsection{$\tEmb^{\underline{O_n}}(S^m,S^n)^\times$}\label{sss:spherical_fr_emb}
For $1\leq m\leq n\neq 4$,
define $\Emb^{fr}(S^m,S^n)^\times$ as the union of those path-components in $\Emb^{fr}(S^m,S^n)$ that lie in the preimage of $\Emb^{fr}_\partial(D^m,D^n)^\times$ under the identification $\Emb_\partial^{fr}(D^m,D^n)\simeq \Emb^{fr}(S^m,S^n)/\OO_{n+1}$. The subspace $\tEmb^{\underline{O_n}}(S^m,S^n)^\times \subset \tEmb^{\underline{O_n}}(S^m,S^n)$ is defined similarly as the union of path-components sent to $\tEmb_\partial^{\underline{O_n}}(D^m,D^n)^\times\subset
\tEmb_\partial^{\underline{O_n}}(D^m,D^n)$ under the identification $\tEmb_\partial^{\underline{O_n}}(D^m,D^n)\simeq \tEmb^{\underline{O_n}}(S^m,S^n)/\OO_{n+1}$.\footnote{The space $\tEmb^{\underline{O_n}}(S^m,S^n)/\OO_{n+1}$
fibers over $\VV_{n,m}^t{\times}^h_{\VV_{n,m}^t}\!\!\!\{*\}$ (which is contractible) with fiber $\tEmb_\partial^{\underline{O_n}}(D^m,D^n)$ implying the aforementioned equivalence.}
 For $n-m\neq 2$, $n\neq 4$, these inclusions are identities. 

The natural inclusion
\begin{equation}\label{eq:spherical_fr_emb}
\Emb^{fr}(S^m,S^n)^\times\xrightarrow{\,\,\simeq\,\,}\tEmb^{\underline{O_n}}(S^m,S^n)^\times,\,\, n\neq 4,
\end{equation}
is an equivalence by Theorems~\ref{th:diff_compos} ($n=m$) and~\ref{th:emb_fr_temb2} ($n>m$).

\subsection{PL version}\label{ss:PL_fr_temb}
The PL version of this construction works much less generally. (Or at least we can not see how it can be done in full generality.) Here are a few reasons for it.

\begin{itemize}
\item To relate smooth and PL embeddings, one has to use the intermediate spaces of PD embeddings, which are used in the literature only 
in the case of codimension zero \cite{BL_diff}.

\item One has to identify the vector tangent bundles of the source and target manifolds $M$ and $N$ with the PL tangent microbundles of $|M|$ and $|N|$, respectively. Moreover, such identification should respect  the PL structure of the fibers induced by their vector space structure, see Remark~\ref{r:exp}.
\end{itemize}

By Theorem~\ref{th:square_emb} the square~\eqref{eq:square_kupers3} is homotopy cartesian when $n=m$. So, the problem is really to define the appropriate  
$\underline{O_n}$-framing for codimension zero PD embeddings.
We do not explore the full generality of this construction, but instead, consider the following only example that we need. 

Let $n>m\geq 1$. (In particular, we allow $n=4$.) Define the space $\pdEmb_{\partial D^m\times D^{n-m}}^{\underline{O_n}}(D^m\times
D^{n-m})$ of $\underline{O_n}$-framed PD self-embeddings of $D^m\times D^{n-m}$ as a simplicial set whose $k$-simplices
are triples $(f_k,F_k,H_k)$, where $f_k$ is a $k$-simplex of $\pdEmb_{\partial D^m\times D^{n-m}}(D^m\times
D^{n-m})$, $F_k$ is such that $(f_k,F_k)$ is a $k$-simplex of $\pdMap_{\partial D^m\times D^{n-m}}^{O_n}(D^m\times
D^{n-m})$ ($F_k$ can also be seen as a PD map $D^m\times D^{n-m}\times \Delta^k\to O_n$, which is equal to the constant identity near $\partial D^m\times D^{n-m}\times \Delta^k$), and $H_k$ is a germ of PD homeomorphisms $H_k\colon D^m\times D^{n-m}\times \Delta^k\times [0,1]\times \R^n
\to D^m\times D^{n-m}\times \Delta^k\times [0,1]\times \R^n$ near $D^m\times D^{n-m}\times \Delta^k\times [0,1]\times 0$ (on which it is  $f_k\times 0$), which is identity near 
$\partial D^m\times D^{n-m}\times \Delta^k\times [0,1]\times \R^n$ and such that $H_k|_{t=0}=F_k$, $H_k|_{t=1}=exp^{-1}\circ \Delta(f_k)\circ exp$ and the following square commutes:
\[
\xymatrix{
D^m\times D^{n-m}\times\Delta^k\times [0,1]\times\R^n\ar[r]^-{H_k}\ar[d]&D^m\times D^{n-m}\times\Delta^k\times [0,1]\times\R^n\ar[d]\\
D^m\times D^{n-m}\times\Delta^k\times [0,1]\ar[r]^{f_k\times id_{[0,1]}}&D^m\times D^{n-m}\times\Delta^k\times [0,1].
}
\]
To recall the exponential map $exp\colon T\R^n=\R^n\times\R^n\to\R^n\times\R^n$ sends $(x,y)\mapsto (x,x+y)$. (We apply it to $T\left(D^m\times D^{n-m}\right)\subset T\R^n$.)

We consider the subspace 
$$
\pdEmb_{\partial D^m\times D^{n-m}}^{\underline{O_n}}(D^m\times
D^{n-m})^\star \subset\pdEmb_{\partial D^m\times D^{n-m}}^{\underline{O_n}}(D^m\times
D^{n-m}),
$$
 which consists of those path-components that arise from the knots
 $
\pdEmb_{\partial D^m\times D^{n-m}}(D^m\times
D^{n-m})\simeq \plEmb_{\partial D^m\times D^{n-m}}(D^m\times
D^{n-m})$, that restrict to the trivial PL knot $D^m\times 0\hookrightarrow D^m\times D^{n-m}$. When $n-m\neq 2$ and $(n,m)\neq (4,3)$,
$$
\pdEmb_{\partial D^m\times D^{n-m}}^{\underline{O_n}}(D^m\times
D^{n-m})^\star =\pdEmb_{\partial D^m\times D^{n-m}}^{\underline{O_n}}(D^m\times
D^{n-m}).
$$
Moreover, $\pdEmb_{\partial D^m\times D^{2}}(D^m\times D^{2})^\star=\pdEmb_{\partial D^m\times D^{2}}(D^m\times D^{2})^\times$, $m\neq 2$. (Recall that
a smooth codimension two knot $D^m\hookrightarrow D^{m+2}$, except possibly in the ambient dimension $m+2=4$,  has a homotopy $S^1$ complement if and only if it is a reparameterization of the trivial one, see Subsection~\ref{ss:cod2}. The same is true in the PL category. I.e., a PL locally flat knot $D^m\hookrightarrow D^{m+2}$ is trivial if and only if its complement is a homotopy $S^1$, $m\neq 2$, \cite{Levine,Papa,Shane,Wall_spheres,Wall_surgery}, \cite[Theorem~1]{Wall_codim2}.)   

We similarly consider the subspace 
\[
\Emb_{\partial D^m\times D^{n-m}}(D^m\times D^{n-m})^\star\subset \Emb_{\partial D^m\times D^{n-m}}(D^m\times D^{n-m})
\]
of path-components of knots that restrict to knots $D^m\times 0\hookrightarrow D^m\times D^{n-m}$, which are PL trivial. One has again, that 
this inclusion is identity, when $n-m\neq 2$ and $(n,m)\neq (4,3)$. Moreover for $n-m=2$ and $n\neq 4$, such path-components correspond to the invertible elements in $\pi_0$. In other words,
\[
\Emb_{\partial D^m\times D^{2}}(D^m\times D^{2})^\star
=
\Emb_{\partial D^m\times D^{2}}(D^m\times D^{2})^\times,\,\, m\neq 2.
\]

Similarly to the map~\eqref{eq:diff_pd_homeo}, one defines a map
\begin{equation}\label{eq:emb_fr_temb3}
\Emb_{\partial D^m\times D^{n-m}}(D^m\times D^{n-m})\xrightarrow{\,\,\simeq\,\,} \pdEmb_{\partial D^m\times D^{n-m}}^{\underline{O_n}}(D^m\times
D^{n-m}),
\end{equation}
which restricts to
\begin{equation}\label{eq:emb_fr_temb4}
\Emb_{\partial D^m\times D^{n-m}}(D^m\times D^{n-m})^\star\xrightarrow{\,\,\simeq\,\,} \pdEmb_{\partial D^m\times D^{n-m}}^{\underline{O_n}}(D^m\times
D^{n-m})^\star.
\end{equation}
By the same argument as in the proof of Theorem~\ref{th:emb_fr_temb}, the homotopy cartesianity of~\eqref{eq:square_kupers3} implies that~\eqref{eq:emb_fr_temb3} is an equivalence. Thus,~\eqref{eq:emb_fr_temb4}  
is also one. 

\section{Proof of Theorem~\ref{th:b}. TOP part}\label{s:proof_B_TOP}
We will prove the following refinement of Theorem~\ref{th:b}.

\begin{theorem}\label{th:b_top_refine}
For $1\leq m\leq n\neq 4$, $n-m\neq 2$, one has the following equivalences of algebras over $E_m^{\OO_m\times\OO_{n-m}}$ and
$E_{m+1}^{\OO_m\times\OO_{n-m}}$:
\begin{equation}\label{eq:b_top_refine1}
\Emb_\partial(D^m,D^n)\simeq _{E_m^{\OO_m\times\OO_{n-m}}}\Omega^m\hofiber\left(\VV_{n,m}\to\VV_{n,m}^t\right);
\end{equation}
\begin{equation}\label{eq:b_top_refine2}
\Emb^{fr}_\partial(D^m,D^n)\simeq _{E_{m+1}^{\OO_m\times\OO_{n-m}}}\Omega^{m+1}\left(\OO_n\bbslash\TOP_n/
\TOP_{n,m}\right);
\end{equation}
\begin{equation}\label{eq:b_top_refine3}
\Ebar_\partial(D^m,D^n)\simeq _{E_{m+1}^{\OO_m\times\OO_{n-m}}}\Omega^{m+1}\left(\TOP_n/
\TOP_{n,m}\right).
\end{equation}
In case $n=m+2\neq 4$, the equivalences (\ref{eq:b_top_refine1}-\ref{eq:b_top_refine2}-\ref{eq:b_top_refine3}) 
 hold if the left-hand sides are replaced by
$\Emb_\partial(D^m,D^{m+2})^\times$, $\Emb^{fr}_\partial(D^m,D^{m+2})^\times$, $\Ebar_\partial(D^m,D^{m+2})^\times$, respectively.
\end{theorem}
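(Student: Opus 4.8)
The plan is to reduce everything to Theorem~\ref{th:emb_fr_temb} and Theorem~\ref{th:emb_fr_temb2}, that is, to the identification of the smooth embedding spaces with their $\underline{V_{n,m}}$- and $\underline{O_n}$-framed topological counterparts, and then to unwind these latter spaces into iterated loop spaces on the (topological) Stiefel-type spaces using the constructions of Section~\ref{s:loops} and the Smale--Hirsch immersion theory of Section~\ref{s:imm}. Throughout I keep careful track of the $E_m^{\OO_m\times\OO_{n-m}}$- and $R_{m+1}^{\OO_m\times\OO_{n-m}}$-structures, since all the intermediate spaces are equipped with compatible actions by construction (Subsections~\ref{ss:fr_operads}, \ref{ss:O_action_loops}, \ref{ss:modulo}).

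First I would prove \eqref{eq:b_top_refine3}, the ``modulo immersions'' case, as it is cleanest. By definition $\Ebarmn = \hofiber(\Embmn \xrightarrow{D} \Omega^m V_{n,m})$, and by the Smale--Hirsch theory (Section~\ref{s:imm}) the target is the space of formal data, so $\Ebarmn$ is equivalent to the fiber of $\Emb_\partial(D^m,D^n) \to \Imm_\partial(D^m,D^n) \simeq \Map_\partial^{V_{n,m}}(D^m,D^n) \simeq \Omega^m V_{n,m}$ over the standard inclusion. Using the Budney identification $\Embmn \simeq \Emb_{\partial D^m\times D^{n-m}}(D^m\times D^{n-m})$ and feeding this into the germ-of-embedding/scanning description, one rewrites $\Ebarmn$ as (a relative version of) $\tImm_{\partial D^m\times D^{n-m}}(D^m\times D^{n-m} \mmod D^m\times 0)$; Proposition~\ref{p:top_nm}, equivalence~\eqref{eq:top_nm2}, then gives $\simeq \Omega^m \TOP_{n,m}$, and a further delooping using contractibility of $\Homeo_\partial(D^n)$ (Lemma~\ref{l:alex}) together with the fibration $\TOP_{n,m}\to\TOP_n\to\VV^t_{n,m}$ and contractibility of $\tEmbmn_*$ produces $\Omega^{m+1}\VV^t_{n,m}$. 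All maps here are $R_{m+1}^{\OO_m\times\OO_{n-m}}$-maps, so this is an equivalence of $E_{m+1}^{\OO_m\times\OO_{n-m}}$-algebras; passing along the natural map $\VV^{pl}_{n,m}\to\VV^t_{n,m}$ gives the PL version, which is an equivalence in the stated range since the methods of Section~\ref{s:fr_embed} apply verbatim.

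Next, for \eqref{eq:b_top_refine2} I would use the pullback square~\eqref{eq:pb_squares} for $M=D^m$, $N=D^n$, together with Theorem~\ref{th:emb_fr_temb2}: $\Embfrmn \simeq \tEmb_\partial^{\underline{O_n}}(D^m,D^n)$, and the latter sits in a homotopy pullback over $\Map_\partial^{O_n}(D^m,D^n)\simeq\Omega^m O_n$, whose other leg is $\tEmb_\partial^{\underline{V_{n,m}}}(D^m,D^n)\simeq\Ebarmn$ by Theorem~\ref{th:emb_fr_temb}. Thus $\Embfrmn \simeq \Ebarmn \times^h_{\Omega^m O_n} \Omega^m O_n \simeq$ the homotopy fiber of a map involving $\Omega^{m+1}\VV^t_{n,m}$ and $\Omega^m O_n$; matching connecting maps identifies this with $\Omega^{m+1}$ of the homotopy quotient $\VV^t_{n,m}/\!\!/\OO_n = \OO_n\bbslash\TOP_n/\TOP_{n,m}$. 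Here I must pass to topological spaces to form the homotopy quotient (Subsection~\ref{ss:loops}), and the $E_{m+1}^{\OO_m\times\OO_{n-m}}$-structure transports along because $\Omega^{m+1}$ of a pointed $(\OO_m\times\OO_{n-m})$-space is canonically such an algebra (Subsection~\ref{ss:O_action_loops}). Finally \eqref{eq:b_top_refine1}: $\Embmn = \hofiber(\Embfrmn \to \Omega^m O_n)$ -- wait, more precisely $\Embmn \simeq \hofiber(\Embfrmn\to\Omega^m(O_n/O_{n,m}))$ -- so applying $\Omega^m$-hofiber to the already-established description of $\Embfrmn$ and simplifying $\hofiber(\VV^t_{n,m}/\!\!/\OO_n \to \VV_{n,m})$ (using $\VV_{n,m}=\OO_n/\OO_{n-m}$) yields $\Omega^m\hofiber(\VV_{n,m}\to\VV^t_{n,m})$.

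The main obstacle I anticipate is \emph{not} any single identification but rather the bookkeeping needed to verify that each map in the zigzags is simultaneously an equivalence of the underlying spaces \emph{and} a strict (or at least coherent) map of $E_m^{\OO_m\times\OO_{n-m}}$- or $R_{m+1}^{\OO_m\times\OO_{n-m}}$-algebras, especially at the two points where one leaves the simplicial world for topological spaces (forming $\Omega^{m+1}$ of a quotient and a homotopy quotient by $\OO_n$). One must check that the Budney identification $\Embmn\simeq\Emb_{\partial D^m\times D^{n-m}}(D^m\times D^{n-m})$, the scanning map of Proposition~\ref{p:top_nm}, and the exponential-map framing of Section~\ref{s:fr_embed} all respect the overlapping-discs and Stiefel rotation actions; this is where the hypotheses $n\neq 4$ (smoothing theory) and $n-m\neq 2$ (so that $\tEmbmn$ is connected and Proposition~\ref{p:disc_emb} applies) are genuinely used, and the codimension-two exceptional case $n=m+2$ requires restricting throughout to the $(-)^\times$ subspaces of homotopy-invertible components, which are exactly the preimages of $\tEmbmn_*$ and hence behave formally like the generic case. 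A secondary subtlety is the low-dimensional degeneracies ($n-m=1$, using Lemma~\ref{l:top_n_n-1}; $n\leq 3$), but these only simplify the statements and follow from the general argument.
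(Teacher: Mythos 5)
Your overall toolkit is the right one (Theorems~\ref{th:emb_fr_temb} and~\ref{th:emb_fr_temb2}, the Budney model $\Emb_{\partial D^m\times D^{n-m}}(D^m\times D^{n-m})$, the scanning maps of Proposition~\ref{p:top_nm}, contractibility of the topological trivial-knot component), but the logical organization you propose — proving \eqref{eq:b_top_refine3} first and then extracting \eqref{eq:b_top_refine2} and \eqref{eq:b_top_refine1} by fibration bookkeeping — breaks at two concrete points. First, you misquote the input: Theorem~\ref{th:emb_fr_temb} identifies $\tEmb_\partial^{\underline{V_{n,m}}}(D^m,D^n)^\times$ with $\Embmn^\times$, \emph{not} with $\Ebarmn$, and the square~\eqref{eq:pb_squares} exhibits $\tEmb_\partial^{\underline{O_n}}$ as a pullback over $\Map_\partial^{V_{n,m}}\simeq\Omega^m V_{n,m}$, not over $\Map_\partial^{O_n}\simeq\Omega^m O_n$. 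Second, the fibration you use to get \eqref{eq:b_top_refine1} from \eqref{eq:b_top_refine2}, namely $\Embmn\simeq\hofiber\bigl(\Embfrmn\to\Omega^m(O_n/O_{n,m})\bigr)$, is false: the correct statements are $\Ebarmn=\hofiber(\Embfrmn\to\Omega^m O_n)=\hofiber(\Embmn\to\Omega^m V_{n,m})$, and your claim already fails for $m=n$, where it would assert $\Diffn\simeq\Dbarn$. In the paper \eqref{eq:b_top_refine1} is not a corollary of the framed case at all; it is proved directly and independently by the two-map zigzag $\Embmn^\times\to\tEmb_\partial^{\underline{V_{n,m}}}(D^m,D^n)^\times\leftarrow\Omega^m\hofiber(V_{n,m}\to\VV^t_{n,m})$, the second map being the fiber inclusion over the trivial knot, an equivalence because $\tEmbmn_*$ is contractible (Proposition~\ref{p:disc_emb} and the Alexander trick).

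The second, subtler gap is your treatment of \eqref{eq:b_top_refine2}: ``matching connecting maps'' between two fibration sequences with equivalent fibers and bases does not produce an equivalence of total spaces, let alone an $E_{m+1}^{\OO_m\times\OO_{n-m}}$-equivalence — you need an actual map or zigzag, and constructing it is the content of the proof. The paper does this by running the zigzag through $\tEmb^{\underline{O_n}}_{\partial D^m\times D^{n-m}}(D^m\times D^{n-m})^\times$, its relative version $\mmod D^m\times 0$, the corresponding immersion space, and the scanning equivalence to $\Omega^m\bigl(O_n\times^h_{\TOP_n}\TOP_{n,m}\bigr)$; the final identification with $\Omega^{m+1}\bigl(\OO_n\bbslash\TOP_n/\TOP_{n,m}\bigr)$, \emph{compatibly with the operad and $\OO_m\times\OO_{n-m}$-actions}, is exactly Proposition~\ref{l:E1equiv} of the Appendix (the equivariant $E_1$-delooping $H\times^h_G K\simeq\Omega(H\bbslash G/K)$), which your proposal never invokes and which cannot be replaced by the assertion that the structure ``transports along''. (Your sketch of \eqref{eq:b_top_refine3} is also slightly off — scanning the relative $\overline{\Imm}$-space gives $\Omega^m\hofiber(\TOP_{n,m}\to\TOP_n)$, not $\Omega^m\TOP_{n,m}$, and the comparison of the smooth and topological $\Ebar$-spaces uses the homotopy-cartesian square of Burghelea--Lashof — but that part is repairable; the two issues above are the genuine gaps.)
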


\begin{proof}[Proof of \eqref{eq:b_top_refine1}]
One has the following zigzag of $E_m^{\OO_m\times\OO_{n-m}}$-algebras:
\begin{equation}\label{eq:zigzag_refine1}
\Embmn^\times\xrightarrow{\,\simeq\,}\tEmb_\partial^{\underline{V_{n,m}}}(D^m,D^n)^\times\xleftarrow{\,\simeq\,}
\Omega^m\hofiber(V_{n,m}\to \VV_{n,m}^t).
\end{equation}
The first map is an equivalence from Subsection~\ref{sss:embmn_tembmn}. The second map is the fiber inclusion over 
$(*\colon D^m\subset D^n)\in \tEmbmn^\times$. It is an equivalence since the base space $\tEmbmn^\times=\tEmbmn_*$
is contractible by Proposition~\ref{p:disc_emb}(b) and Lemma~\ref{l:alex}.
\end{proof}

\begin{proof}[Proof of \eqref{eq:b_top_refine2}]
Following Budney~\cite{Budney_cubes} we replace $\Embfrmn$ by the homotopy equivalent space 
$\Emb_{\partial D^m\times D^{n-m}}(D^m\times D^{n-m})$. One has the following zigzag of $E_{m+1}^{\OO_m\times\OO_{n-m}}$-algebras:
\begin{multline}\label{eq:zigzag_refine2}
\Emb_{\partial D^m\times D^{n-m}}(D^m\times D^{n-m})^\times\xrightarrow{\,\simeq\,}
\tEmb_{\partial D^m\times D^{n-m}}^{\underline{O_n}}(D^m\times D^{n-m})^\times   \xleftarrow{\,\simeq\,}\\
 \xleftarrow{\,\simeq\,}
\tEmb_{\partial D^m\times D^{n-m}}^{\underline{O_n}}(D^m\times D^{n-m}\mmod D^m\times 0)\xrightarrow{\,\simeq\,} \\
\xrightarrow{\,\simeq\,} \tImm_{\partial D^m\times D^{n-m}}^{\underline{O_n}}(D^m\times D^{n-m}\mmod D^m\times 0)
\xrightarrow{\,\simeq\,} \Omega^m\left(O_n\times^h_{\TOP_n}\TOP_{n,m}\right).
\end{multline}
The first map is the equivalence~\eqref{eq:emb_dm_dn} from Subsection~\ref{sss:emb_dm_dn}. The second map is the fiber 
inclusion over $(*\colon D^m\times 0\subset D^m\times D^{n-m})\in
\tEmb_\partial(D^m\times 0,D^m\times D^{n-m})^\times$.
It is an equivalence since the base space $\tEmb_\partial(D^m\times 0,D^m\times D^{n-m})^\times=\tEmb_\partial(D^m\times 0,D^m\times D^{n-m})_*$ is contractible. The third map is an equivalence since $\tImm_{\partial D^m\times D^{n-m}}(D^m\times D^{n-m}\mmod D^m\times 0)$ deformation retracts onto  $\tEmb_{\partial D^m\times D^{n-m}}(D^m\times D^{n-m}\mmod D^m\times 0)$. 
 The last (scanning) map is an equivalence by Proposition~\ref{p:top_nm}, equivalence \eqref{eq:top_nm2}.

To finish the proof, we change the category from simplicial sets to topological spaces, using the fact that the natural evaluation map (see Section~\ref{ss:loops}) 
\[
S_*\Omega^m\left(|O_n|\times^h_{|\TOP_n|}|\TOP_{n,m}|\right)
\xrightarrow{\,\simeq\,}\Omega^m\left(O_n\times^h_{\TOP_n}\TOP_{n,m}\right)
\]
is an equivalence of algebras over the singular chains (operad) of $E_{m+1}^{|\OO_m|\times|\OO_{n-m}|}$ (where
$|\OO_m|\times|\OO_{n-m}|$ acts on $E_{m+1}$ by restriction through the group homomorphism 
$|\OO_m|\times|\OO_{n-m}|\to \OO_m\times\OO_{n-m}$ -- in particular, the action of $|\OO_{n-m}|$ is trivial), 
while the latter operad is equivalent to~$E_{m+1}^{\OO_m\times\OO_{n-m}}$. We then apply Proposition~\ref{l:E1equiv}, which implies
\begin{multline*}
|O_n|\times^h_{|\TOP_n|}|\TOP_{n,m}|\simeq_{E_1^{|\OO_m|\times|\OO_{n-m}|}}\Omega\bigl(|\OO_n|\bbslash|\TOP_n|/|\TOP_{n,m}|\bigr)
\simeq_{E_1^{|\OO_m|\times|\OO_{n-m}|}}\\
\simeq_{E_1^{|\OO_m|\times|\OO_{n-m}|}}
\Omega\bigl|\OO_n\bbslash\TOP_n/\TOP_{n,m}\bigr|
\end{multline*}
(the action of $|\OO_m|\times|\OO_{n-m}|$ on $E_1$ being trivial).
\end{proof}

\begin{proof}[Proof of \eqref{eq:b_top_refine3}]
We replace $\Ebar_\partial(D^m,D^n)$ by the equivalent space $\Ebar_{\partial D^m\times D^{n-m}}(D^m\times D^{n-m})$.
We have the following zigzag of $E_{m+1}^{\OO_m\times\OO_{n-m}}$-algebras
\begin{multline}\label{eq:proof_top_refine3}
\Ebar_{\partial D^m\times D^{n-m}}(D^m\times D^{n-m})^\times\xrightarrow{\,\simeq\,} 
\mathrm{t}\Ebar_{\partial D^m\times D^{n-m}}(D^m\times D^{n-m})^\times\xleftarrow{\,\simeq\,}\\
\mathrm{t}\Ebar_{\partial D^m\times D^{n-m}}(D^m\times D^{n-m}\mmod D^m\times 0)
\xrightarrow{\,\simeq\,} 
\mathrm{t}\overline{\Imm}_{\partial D^m\times D^{n-m}}(D^m\times D^{n-m}\mmod D^m\times 0)\\
\xrightarrow{\,\simeq\,} 
\Omega^m\hofiber\left(\TOP_{n,m}\to\TOP_n\right).
\end{multline}
The first map is an equivalence since the map 
\[
\Ebar_{\partial D^m\times D^{n-m}}(D^m\times D^{n-m})\xrightarrow{\,\simeq\,} 
\mathrm{t}\Ebar_{\partial D^m\times D^{n-m}}(D^m\times D^{n-m})
\]
(involving all path-components) is one by Theorem~\ref{th:square_emb}(a). The last thee maps are equivalences by the same argument 
as in the proof of~\eqref{eq:b_top_refine2}. 

To finish the proof we switch to the topological category by replacing the last space by $\Omega^m\hofiber\bigl(
|\TOP_{n,m}|\to|\TOP_{n}|\bigr)$. The latter space is $E_{m+1}^{|\OO_m|\times|\OO_{n-m}|}$-equivalent to
\[
\Omega^{m+1}\bigl(|\TOP_n|/|\TOP_{n,m}|\bigr)=\Omega^{m+1}\bigl|\TOP_n/\TOP_{n,m}\bigr|
\]
by Proposition~\ref{l:E1equiv}.
\end{proof}

\section{Proof of Theorem~\ref{th:b} (PL part) and Theorem~\ref{th:d4} ($n=4$)}\label{s:proof_B_PL}
\begin{proof}[Proof of~\eqref{eq_th_b_3} and~\eqref{eq_th_d4_3}]
One has a zigzag
\begin{multline}\label{eq:ebar_zigzag2}
\Ebar_{\partial D^m\times D^{n-m}}(D^m\times D^{n-m})\xrightarrow{\,\simeq\,}\mathrm{pd}\Ebar_{\partial D^m\times D^{n-m}}(D^m\times D^{n-m})\\
\xleftarrow{\,\simeq\,} \mathrm{pl}\Ebar_{\partial D^m\times D^{n-m}}(D^m\times D^{n-m}),
\end{multline}
where the first map is an equivalence by Theorem~\ref{th:square_emb}(b).
We restrict this zigzag to the unions of path-components, which lie in the preimage of the component $\plEmb_\partial(D^m\times 0,D^m\times D^{n-m})_*$ of the trivial PL knot
$*\colon D^m\times 0\subset D^m\times D^{n-m}$:
\begin{multline}\label{eq:ebar_zigzag3}
\Ebar_{\partial D^m\times D^{n-m}}(D^m\times D^{n-m})^\star\xrightarrow{\,\simeq\,}\mathrm{pd}\Ebar_{\partial D^m\times D^{n-m}}(D^m\times D^{n-m})^\star\\
\xleftarrow{\,\simeq\,} \mathrm{pl}\Ebar_{\partial D^m\times D^{n-m}}(D^m\times D^{n-m})^\star.
\end{multline}
We recall that for $n\neq 4$, these path-components coincide with the invertible elements in $\pi_0$ (i.e., $\star=\times$). Moreover, for $n-m\neq 2$, $(n,m)\neq (4,3)$, the zigzags~\eqref{eq:ebar_zigzag2} and~\eqref{eq:ebar_zigzag3}
are the same.

We then use the fact that the space   $\plEmb_\partial(D^m\times 0,D^m\times D^{n-m})_*$    is contractible (by the Alexander trick Proposition~\ref{p:disc_emb}(c)) and continue the zigzag:
\begin{multline}\label{eq:proof_th_b_pl}
\mathrm{pl}\Ebar_{\partial D^m\times D^{n-m}}(D^m\times D^{n-m})^\star\xleftarrow{\,\simeq\,}
\mathrm{pl}\Ebar_{\partial D^m\times D^{n-m}}(D^m\times D^{n-m}\mmod D^m\times 0)\\
\xrightarrow{\,\simeq\,}  
\mathrm{pl}\overline{\Imm}_{\partial D^m\times D^{n-m}}(D^m\times D^{n-m}\mmod D^m\times 0)
\xrightarrow{\,\simeq\,} 
\Omega^m\hofiber\left(\PL_{n,m}\to\PL_n\right).
\end{multline}
We then similarly as in the proof of~\eqref{eq:b_top_refine3} (at the end of Section~\ref{s:proof_B_TOP}) pass to the 
category of topological spaces and conclude that the last space in~\eqref{eq:proof_th_b_pl} is equivalent
to $\Omega^{m+1}|\PL_n/\PL_{n,m}|$. 

The difference of the PL argument is that the spaces appearing in the zigzags (and maps between them) are not $E_{m+1}$
(not even $E_1$ in fact), but only $H$-spaces with an associative product at the level of $\pi_0$. Note however, that the 
zigzags~\eqref{eq:ebar_zigzag3} and~\eqref{eq:proof_th_b_pl} are parallel to~\eqref{eq:proof_top_refine3} allowing us to
define a morphism between the two. As a conclusion, when $n\neq 4$, the natural maps $\PL_n\to\TOP_n$ and $\PL_{n,m}\to\TOP_{n,m}$
induce an equivalence $\Omega^{m+1}|\PL_n/\PL_{n,m}|\to \Omega^{m+1}|\TOP_n/\TOP_{n,m}|$, which is an equivalence
of $E_{m+1}$-spaces.
\end{proof}

\begin{proof}[Proof of~\eqref{eq_th_b_2} and~\eqref{eq_th_d4_2}]
We need to show that 
\begin{equation}\label{eq:framed_deloop}
\Embfrmn^\star\simeq\Omega^{m+1}\left(\OO_n\bbslash\PD_n/\PL_{n,m}\right)
\end{equation}
and that this equivalence is $E_{m+1}$ for $n\neq 4$. One has a zigzag of equivalences
\begin{multline}\label{eq:zigzag_fr_pd1}
\Emb_{\partial D^{m}\times D^{n-m}}(D^{m}\times D^{n-m})^\star \xrightarrow{\,\simeq\,}
\pdEmb_{\partial D^{m}\times D^{n-m}}^{\underline{O_n}}(D^{m}\times D^{n-m})^\star
\xleftarrow{\,\simeq\,} \\
\xleftarrow{\,\simeq\,}\pdEmb_{\partial D^{m}\times D^{n-m}}^{\underline{O_n}}(D^{m}\times D^{n-m}\mmod D^{m}\times 0)
\xrightarrow{\,\simeq\,}\\
\xrightarrow{\,\simeq\,}\Omega^{m}\left(O_n\times^h_{\PD_n}\PD_{n,m}\right)
\xleftarrow{\,\simeq\,} \Omega^{m}\left(\OO_n\times^h_{\PD_n}\PL_{n,m}\right) ,
\end{multline}
where the first inclusion is the equivalence~\eqref{eq:emb_fr_temb4}. One also has 
\begin{multline}\label{eq:zigzag_fr_pd2}
|\OO_n\times^h_{\PD_n}\PL_{n,m}| 
\simeq\left|\OO_n\right|\times^h_{|\PD_n|}\left|\PL_{n,m}\right|
\xrightarrow{\,\simeq\,}
\hofiber\left(|\OO_n|\to |\PD_n|/|\PL_{n,m}|\right)
\xleftarrow{\,\simeq\,}     \\
\xleftarrow{\,\simeq\,}  \hofiber\Bigl(|\OO_n|\to \bigl(|E\OO_{n+1}\times |\PD_n|\bigr)/|\PL_{n,m}|\Bigr) 
\xrightarrow{\,\simeq\,}
\Omega\left(|\OO_n|\bbslash|\PD_n|/|\PL_{n,m}|\right).
\end{multline}
This together with \eqref{eq:zigzag_fr_pd1} completes the proof of~\eqref{eq:framed_deloop}. As in the proof of~\eqref{eq_th_b_3}, the
$E_{m+1}$ property for $n\neq 4$ follows from the TOP version of~\eqref{eq_th_b_2}.
\end{proof}

\begin{proof}[Proof of~\eqref{eq_th_b_1}]
Recall that there is no natural map $\OO_n\to\PL_n$ or $\OO_{n-m}\to\PL_{n,m}$. Instead one has  roundabout maps
$B\OO_n\to B\PL_n$ and $B\OO_{n-m}\to B\PL_{n,m}$, see  footnote~\ref{foot10}, 
  which by taking loops can then be used to produce the spaces appearing in Theorem~\ref{th:b}.
 To avoid this technical annoyance, one can restate~\eqref{eq_th_b_1} 
and  prove instead that 
\begin{multline}
\Embmn^\times\simeq_{E_{m}} \Omega^m\hofiber\left(\TOP_{n,m}/\OO_{n-m}\to\TOP_n/\OO_{n}\right)\\
\simeq_{E_{m}}
\Omega^m\hofiber\left(\PD_{n,m}/\OO_{n-m}\to\PD_n/\OO_{n}\right),\,\, n\neq 4.\label{eq_th_b_1'}
\end{multline}
(For the same reason we do not consider the PL version of the $\OO_m\times\OO_{n-m}$-equivariant refinement of Theorem~\ref{th:b}.)
 However, a direct proof of the PL part of the latter equivalence is harder 
since in order to compare a space of smooth embeddings to a space of PL ones, one needs an intermediate space of PD embeddings, while such spaces for a positive codimension have not been considered  in the literature.
 Fortunately, in most of the cases the PL part of Theorem~\ref{th:b}\eqref{eq_th_b_1} immediately follows from its TOP part. Indeed, for $n\geq 5$, $n-m\geq 3$, $\PL_n/\PL_{n,m}
\simeq \TOP_n/\TOP_{n,m}$ \cite[Proposition~$(t/pl)$]{Lashof}, which immediately implies the PL part of~\eqref{eq_th_b_1} 
in this range of $(n,m)$.

Similarly, the codimension one case $n=m+1\neq 4$ follows from Lemma~\ref{l:top_n_n-1} ($\TOP_{n,n-1}\simeq \OO_1\simeq\PL_{n,n-1}$) and the fact that 
$$
\TOP_n/\PL_n
\simeq\begin{cases} K(\Z/2\Z,3),& n\geq 5;\\
*,& n\leq 3;
\end{cases}
$$
 see \cite[Essay~V, \S5, Subsection~5.0]{KS_essays}.

Now consider the case $n=m+2\neq 4$.  
In the range $n=m+2>4$,  \eqref{eq_th_b_2} implies \eqref{eq_th_b_1}. Indeed, for $n>4$, one has $\Emb_\partial(D^{n-2},D^n)\simeq
\Emb_\partial^{fr}(D^{n-2},D^n)$ (which implies $\Emb_\partial(D^{n-2},D^n)^\times\simeq
\Emb_\partial^{fr}(D^{n-2},D^n)^\times$). Similarly, the right-hand sides of \eqref{eq_th_b_1} are equivalent to those of~\eqref{eq_th_b_2} since $\pi_i \OO_2=0$ for $i>1$.

We are left to show the PL part of~\eqref{eq_th_b_1} in the classical case $(n,m)=(3,1)$. It is known that $\TOP_3\simeq
\OO_3\simeq\PL_3$, see the Introduction. From the facts that
\[
\Emb_\partial(D^1,D^3)^\times\simeq *\simeq \Omega\,\hofiber\left(\TOP_{3,1}/\OO_2\to \TOP_3/\OO_3\right)
\simeq\Omega(\TOP_{3,1}/\OO_2)
\]
and $\pi_*(\TOP_{3,1}/\OO_2)=0$ for $*\leq 1$ \cite{KS_codim2}, one gets that $\TOP_{3,1}\simeq\OO_2$. 
To finish the proof we only need to show that $\PL_{3,1}\simeq\OO_2$. 
We know that
\[
\Ebar_\partial(D^1,D^3)^\times\simeq\Omega^2S^2\simeq\Omega^2(\TOP_3/\TOP_{3,1})\simeq\Omega^2(\PL_3/\PL_{3,1}).
\]
Thus, $\pi_i(\PL_3/\PL_{3,1})=\pi_i(\TOP_3/\TOP_{3,1})=\pi_i(\TOP_3/\PL_{3,1})$ for $i>1$. From the fiber sequence
\[
\TOP_{3,1}/\PL_{3,1}\to \TOP_3/\PL_{3,1} \to \TOP_3/\TOP_{3,1},
\]
one gets $\pi_i(\TOP_{3,1}/\PL_{3,1})=0$ for $i>1$. As a consequence, $\pi_i\PL_{3,1}=\pi_i\TOP_{3,1}=
\pi_i\OO_2=0$ for $i>1$, and $\pi_1\PL_{3,1}\to\pi_1\TOP_{3,1}=\pi_1\OO_2=\Z$ is injective. 
Because of the natural maps $\OO_2\to\PL_{3,1}\to\TOP_{3,1}$ (or rather $\OO_2\to\PD_{3,1}\to\TOP_{3,1}$),
whose composition is an equivalence,
the map $\pi_1\PL_{3,1}\to\pi_1\TOP_{3,1}$ is an isomorphism. Thus, $\PL_{3,1}\simeq\OO_2$.
\end{proof}

\begin{proof}[Proof of~\eqref{eq_th_d4_1}]
We show that~\eqref{eq_th_d4_1} follows from~\eqref{eq_th_d4_2} and~\eqref{eq_th_d4_3}.

\smallskip

\noindent{Case $(n,m)=(4,3)$.} 
 In this situation
$\Emb_\partial(D^3,D^4)^\star\simeq \Emb_\partial^{fr}(D^3,D^4)^\star.$ 
From~\eqref{eq_th_d4_2},
\[
 \Emb_\partial^{fr}(D^3,D^4)^\star\simeq \Omega^3\hofiber(\PD_{4,3}\to\PD_4/\OO_4)\simeq\Omega^3\hofiber(\PD_{4,3}/\OO_1\to\PD_4/\OO_4),
\]
which implies~\eqref{eq_th_d4_2}.


\smallskip

\noindent{Case $(n,m)=(4,2)$.} 
Since any smooth knot $D^2\hookrightarrow D^4$ has a Seifert surface~\cite{Levine}, its normal bundle is trivial. Thus,
$\Emb_\partial(D^2,D^4)\simeq \Emb_\partial^{fr}(D^2,D^4)$ implying the same for PL-trivial knots:
\begin{equation}\label{eq:42}
\Emb_\partial(D^2,D^4)^\star\simeq \Emb_\partial^{fr}(D^2,D^4)^\star.
\end{equation}
To prove~\eqref{eq_th_d4_1}, we only must show that
\[
\Omega^2\hofiber(\PD_{4,2}\to\PD_4/\OO_4)\simeq\Omega^2\hofiber(\PD_{4,2}/\OO_2\to\PD_4/\OO_4).
\]
One has a fiber sequence
\[
\OO_2\to\hofiber(\PD_{4,2}\to\PD_4/\OO_4)\to\hofiber(\PD_{4,2}/\OO_2\to\PD_4/\OO_4).
\]
Since $\pi_i\OO_2=0$ for $i\geq 2$, the map
\[
\pi_i\hofiber(\PD_{4,2}\to\PD_4/\OO_4)\to\pi_i\hofiber(\PD_{4,2}/\OO_2\to\PD_4/\OO_4)
\]
is bijective for $i\geq 3$ and injective for $i=2$. Thus, we are left to show bijectivity for $i=2$ or, equivalently,
bijectivity of the map $\pi_2\PD_{4,2}\to\pi_2(\PD_{4,2}/\OO_2)$ (since $\PD_4/\OO_4$ is 4-connected by~\cite[Theorem~8.3C]{FQ}). We claim that $\pi_1\OO_2\to\pi_1\PD_{4,2}$ is 
injective, which implies what we need. This holds because $\OO_2$ is a homotopy retract of $\PD_{4,2}$, the composition
$\OO_2\to\PD_{4,2}\to \mathrm{G}_2$ being a homotopy equivalence. 

\smallskip

\noindent{Case $(n,m)=(4,1)$.} 
The space $\Emb_\partial(D^1,D^4)$ is connected and so is $\Ebar_{\partial D^1\times D^3}(D^1\times D^3)\simeq\Ebar_\partial(D^1,D^4)\simeq \Emb_\partial(D^1,D^4)
\times\Omega^2 S^3$,
see~\cite[Proposition~5.17]{Sinha}. 
To prove ~\eqref{eq_th_d4_1}, 
we argue that
\[
\Emb_\partial(D^1,D^4)\simeq\Omega\,\hofiber\left(\PD_{4,1}/\OO_3\to \PD_4/\OO_4\right).
\]
The space $\Ebar_{\partial D^1\times D^3}(D^1\times D^3)$ viewed as a monoid with respect to composition,
acts on itself and on the equivalent space $\mathrm{pd}\Ebar_{\partial D^1\times D^3}(D^1\times D^3)$ by post-composition. This
action can be restricted to the action of its submonoid 
\[
\Ebar_{\partial D^1\times D^3}(D^1\times D^3\mmod D^1\times 0)\simeq \Omega\,\hofiber(\OO_3\to\OO_4)
\simeq\Omega^2 S^3.
\]
The space $\Ebar_{\partial D^1\times D^3}(D^1\times D^3)$ quotiented out by this action is equivalent to
$\Emb_\partial(D^1,D^4)$. 
 On the other hand, the space $\mathrm{pd}\Ebar_{\partial D^1\times D^3}(D^1\times D^3)$
is equivalent to 
\[
\mathrm{pd}\Ebar_{\partial D^1\times D^3}(D^1\times D^3\mmod D^1\times D^0)
\simeq \Omega\,\hofiber(\PD_{4,1}\to\PD_4)
\]
 as an $\bigl( \Omega\,\hofiber(\OO_3\to\OO_4)\bigr)$-module. We claim that
\begin{multline*}
\frac{ \Omega\,\hofiber(\PD_{4,1}\to\PD_4)}{\Omega\,\hofiber(\OO_3\to\OO_4)}
\simeq
\Omega\left(\frac{\hofiber(\PD_{4,1}\to\PD_4)}{\hofiber(\OO_3\to\OO_4)}\right)
\simeq \Omega\,\hofiber\left(\PD_{4,1}/\OO_3\to\PD_4/\OO_4\right).
\end{multline*}
(This finishes the proof of \eqref{eq_th_d4_1}.)
Let $G$ be a group and $X$ be a cofibrant $G$-space. One has $\Omega X/\Omega G\simeq\Omega(X/G)$
if and only if the connecting homomorphism $\pi_1(X/G)\to\pi_0 G$ is zero. In our case 
\[
\pi_0G=\pi_0\hofiber(\OO_3\to\OO_4)=\pi_0\Omega S^3=\pi_1 S^3=0.
\]
Thus, the first equivalence holds. For the second equivalence, we use the fact that $\Omega(X/G)\simeq\hofiber(G\to X)$ as well as the following diagram in which
every row and every column is a fiber sequence:
$$
\xymatrix{
\hofiber(\OO_3\to \OO_4)\ar[d]\ar[r]&\OO_3\ar[r]\ar[d]&\OO_4\ar[d]\\
\hofiber(\PD_{4,1}\to\PD_4)\ar[r]\ar[d]& \PD_{4,1}\ar[r]\ar[d]&\PD_4\ar[d]\\
\hofiber\left(\PD_{4,1}/\OO_3\to\PD_{4}/\OO_4\right)\ar[r]&\PD_{4,1}/\OO_3\ar[r]&\PD_4/\OO_4.
}
$$
\end{proof}

\section{Proof of Theorem~\ref{th:c}}\label{s:proof_c}
We will prove a slight refinement of Theorem~\ref{th:c}.

\begin{theorem}\label{th:c_refined}
For $1\leq m\leq n\neq 4$, one has the following equivariant equivalences:
\begin{multline}
\Embfrmn^\times\simeq_{\OO_m{\times}\OO_{n-m}}\Embfr(S^m,S^n)^\times/\OO_{n+1}\\
\simeq_{\OO_{m+1}{\times}\OO_{n-m}}\Omega^{m+1}\left(\tEmb(S^m,S^n)/\!\!/\OO_{n+1}\right).
\label{eq_c_refined}
\end{multline}
Moreover, in the same range, $\Embfrmn^\times\simeq_{E_{m+1}^{\OO_m{\times}\OO_{n-m}}} \Omega^{m+1}\left(\tEmb(S^m,S^n)/\!\!/\OO_{n+1}\right)$.
\end{theorem}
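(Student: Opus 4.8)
The plan is to prove Theorem~\ref{th:c_refined} by assembling the equivariant refinements already established. First I would recall that, by the previous section's Theorem~\ref{th:b_top_refine}, equation~\eqref{eq:b_top_refine2}, we have an $E_{m+1}^{\OO_m\times\OO_{n-m}}$-equivalence $\Embfrmn^\times\simeq\Omega^{m+1}(\OO_n\bbslash\TOP_n/\TOP_{n,m})$. The strategy is then to identify the right-hand side with $\Omega^{m+1}(\tEmb(S^m,S^n)/\!\!/\OO_{n+1})$ equivariantly, using Remark~\ref{r:deloop}: there we established that $\tEmb(S^m,S^n)\simeq\Homeo(S^n)/\Homeo(S^n\mmod S^m)$ (and the analogous statement for the component of the trivial knot when $n-m=2$, by Proposition~\ref{p:spher_emb}), together with the identifications $\Homeo(S^n\mmod *)=\Homeo(\R^n)\simeq\TOP_n$ and $\Homeo(S^n\mmod S^m)=\Homeo(\R^n\mmod\R^m)\simeq\TOP_{n,m}$. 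Passing to homotopy quotients by $\OO_{n+1}$ on the spherical side and by $\OO_n$ on the disc side, Remark~\ref{r:deloop} gives the $\OO_m$-equivariant equivalence
\[
\OO_n\bbslash\TOP_n/\TOP_{n,m}\simeq_{\OO_m}\OO_{n+1}\bbslash\Homeo(S^n)/\Homeo(S^n\mmod S^m),
\]
and hence, after applying $\Omega^{m+1}$ (which upgrades an $\OO_m$-action on a pointed space to an $\OO_{m+1}$-action on its loop space, as in Subsection~\ref{ss:O_action_loops}), the desired $\OO_{m+1}$-equivariant (and $E_{m+1}^{\OO_m\times\OO_{n-m}}$-equivariant) identification of the right-hand side of~\eqref{eq:b_top_refine2} with $\Omega^{m+1}(\tEmb(S^m,S^n)/\!\!/\OO_{n+1})$.

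Next I would treat the first equivalence in~\eqref{eq_c_refined}, namely $\Embfrmn^\times\simeq_{\OO_m\times\OO_{n-m}}\Embfr(S^m,S^n)^\times/\OO_{n+1}$. This is essentially the definition: as recalled in the excerpt, $\Embfrmn$ is equivalent to $\Embfr(S^m,S^n)/\OO_{n+1}$ via closing up a disc knot to a spherical knot, and restricting to the path-components of invertible elements gives the version with the $\times$ superscript. The $\OO_m$-action by conjugation and the $\OO_{n-m}$-action by postcomposition with simultaneous opposite rotation of the framing (as described in Subsection~\ref{ss:fr_operads}) are visibly compatible with this identification, since the closing-up construction is natural with respect to both actions. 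I would spell out the equivariance of the homeomorphism $\Embfr_\partial(D^m,D^n)\xrightarrow{\cong}\Embfr(S^m,S^n)/\OO_{n+1}$ at the level of $k$-simplices, which is routine.

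The main work, and the step I expect to be the principal obstacle, is making the equivariant equivalence between the disc-framed and sphere-framed models genuinely $\OO_{m+1}$-equivariant and $E_{m+1}^{\OO_m\times\OO_{n-m}}$-equivariant simultaneously, rather than just $\OO_m$-equivariant on the nose. The difficulty is that the extra circle-worth of rotation in $\OO_{m+1}$ comes from the loop direction, so one must track carefully how the $E_{m+1}^{\OO_m}$-structure on $\Omega^{m+1}(-)$ interacts with the zigzag of equivalences in Remark~\ref{r:deloop} and in the proof of~\eqref{eq:b_top_refine2}; each intermediate space must be checked to carry compatible $\OO_m\times\OO_{n-m}$- and operad-actions, and each map in the zigzag to respect them. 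For this I would invoke the Salvatore--Wahl recognition principle~\cite{SalvWahl} (already used in Subsection~\ref{ss:O_action_loops}), which says an $E_{m+1}^{\OO_{m+1}}$-algebra with group-like $\pi_0$ is equivalent to an $(m+1)$-fold loop space of a pointed $\OO_{m+1}$-space, to reduce the $\OO_{m+1}$-equivariant statement to an $\OO_m$-equivariant statement about the delooping $\tEmb(S^m,S^n)/\!\!/\OO_{n+1}$ as a pointed $\OO_m$-space together with its $E_1$-in-the-loop structure. Finally, the $E_{m+1}$-part of the theorem is then immediate: it is inherited from the $E_{m+1}^{\OO_m\times\OO_{n-m}}$-structure of~\eqref{eq:b_top_refine2} by restriction along the trivial $\OO_m\times\OO_{n-m}$-action on $E_{m+1}$, and the Budney action on the left-hand side is precisely the one appearing there, as was checked in Section~\ref{s:proof_B_TOP}.
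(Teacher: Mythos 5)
Your proposal correctly handles the two easy parts: the $\OO_m\times\OO_{n-m}$-equivariant identification $\Embfrmn^\times\simeq\Embfr(S^m,S^n)^\times/\OO_{n+1}$, and the second statement of the theorem, which the paper likewise dispatches in one line as a reformulation of~\eqref{eq:b_top_refine2} via Remark~\ref{r:deloop}. But the heart of the theorem is the \emph{first} statement, the $\OO_{m+1}\times\OO_{n-m}$-equivariance of the equivalence with $\Omega^{m+1}\left(\tEmb(S^m,S^n)/\!\!/\OO_{n+1}\right)$, and here your argument has a genuine gap. Theorem~\ref{th:b_top_refine} plus Remark~\ref{r:deloop} can only ever yield $\OO_m$-equivariance: the disc-side model $\OO_n\bbslash\TOP_n/\TOP_{n,m}$ carries no action of $\OO_{m+1}$ at all (only $\OO_m\times\OO_{n-m}$ acts, by conjugation), which is precisely why Remark~\ref{r:deloop} is stated only $\OO_m$-equivariantly. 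Moreover, Subsection~\ref{ss:O_action_loops} does not ``upgrade an $\OO_m$-action on $Y$ to an $\OO_{m+1}$-action on $\Omega^{m+1}Y$''; it produces an $E_m^{\OO_m}$-structure on $\Omega^mY$ from an $\OO_m$-action on $Y$. The extra rotations in $\OO_{m+1}\setminus\OO_m$ act on the left-hand side by Hatcher's geometric reparameterization of $S^m$, and nothing in the zigzag proving~\eqref{eq:b_top_refine2} lets you compare this with the rotation of the loop coordinates on the right.

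Your proposed remedy via the Salvatore--Wahl recognition principle does not close this gap and is in fact circular: to apply it you would need to know beforehand that Budney's $E_{m+1}$-action and Hatcher's $\OO_{m+1}$-action on $\Embfrmn^\times$ assemble into an $E_{m+1}^{\OO_{m+1}}$-structure, which is exactly the compatibility that Theorem~\ref{th:c} (and Corollary~\ref{cor:dif_fr_discs}) is designed to establish; and even granted such a structure, the recognition principle only produces \emph{some} $\OO_{m+1}$-delooping, not an identification with the specific $\OO_{m+1}$-space $\tEmb(S^m,S^n)/\!\!/\OO_{n+1}$. The paper instead proves the first statement by an explicit zigzag~\eqref{eq:long_zigzag} of $\OO_{m+1}\times\OO_{n-m}$-equivariant equivalences: smoothing theory gives $\Embfr(S^m,S^n)^\times\simeq\tEmb^{\underline{O_n}}(S^m,S^n)^\times$; the $\underline{O_n}$-framing data is then rewritten as \emph{radial} maps of pairs $(D^{m+1},S^m)\to\bigl(\tEmb(S^m,S^n)^\times,\OO_{n+1}\bigr)$ (so that the $S^m$-direction and the framing-path direction combine into a disc $D^{m+1}$ on which $\OO_{m+1}$ visibly acts); radiality is relaxed, $E\OO_{n+1}$ is inserted, and quotienting by $\Map(D^{m+1},\OO_{n+1})$ produces $\Omega^{m+1}$ of the homotopy quotient, with each map checked equivariant for the commuting $\OO_{n+1}$- and $\OO_{m+1}\times\OO_{n-m}$-actions. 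Some construction of this kind, making the extra $\OO_{m+1}$-direction geometrically visible on both sides, is what your proof is missing.
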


The group $\OO_{m+1}\times\OO_{n-m}$ acts on $\Embfr(S^m,S^n)^\times$ by pre-compositions, while $\OO_{n+1}$ acts by post-compositions. The two actions commute inducing an 
$\OO_{m+1}\times\OO_{n-m}$ action on the quotient $\Embfr(S^m,S^n)^\times/\OO_{n+1}$. It is not hard to see that $\Embfrmn^\times$ can be $\OO_{m}\times\OO_{n-m}$-equivariantly
included in $\Embfr(S^m,S^n)^\times/\OO_{n+1}$. To describe the $\OO_{m+1}\times\OO_{n-m}$ action on $\Omega^{m+1}\left(\tEmb(S^m,S^n)/\!\!/\OO_{n+1}\right)$, we refer to Subsecton~\ref{ss:O_action_loops},
which explains how  such an action on a space induces an action on its loop space. Since we take loops, it is enough to consider the path-component of $\left(\tEmb(S^m,S^n)/\!\!/\OO_{n+1}\right)$
of the basepoint, which is
\begin{multline}
\tEmb(S^m,S^n)^\times/\!\!/\OO_{n+1} ={}_{\OO_{n+1}}\!\!{\text{\large$\backslash$}}\!\!\left(E\OO_{n+1}\times \tEmb(S^m,S^n)^\times\right)=\\
_{\OO_{n+1}}\!\!{\text{\large$\backslash$}}\!\!\left[E\OO_{n+1}\times\bigl(\Homeo(S^n)/\Homeo(S^n\mmod S^m)\bigr)\right].\label{eq:c_refined2}
\end{multline}
(We denote by $\tEmb(S^m,S^n)^\times$ the path-component of the trivial knot if $n>m$, and the entire space $\Homeo(S^n)$ if $n=m$. In case $n-m\neq 2$, $\tEmb(S^m,S^n)^\times=\tEmb(S^m,S^n)$.
By 
Proposition~\ref{p:spher_emb}, $\tEmb(S^m,S^n)^\times=\Homeo(S^n)/\Homeo(S^n\mmod S^m)$.)

We now describe the $\OO_{m+1}{\times}\OO_{n-m}$ action on~\eqref{eq:c_refined2}.
First, we define the topological $E\OO_{n+1}$ as a countable join of topological $\OO_{n+1}$:
\begin{equation}\label{eq:EO}
E\OO_{n+1}:=\OO_{n+1}*\OO_{n+1}*\OO_{n+1}*\ldots.
\end{equation}
pointed in the unit of the first $\OO_{n+1}$. 
For the simplicial set $E\OO_{n+1}$ we take the singular chains on~\eqref{eq:EO}. Note that $E\OO_{n+1}$ has a left and a right actions by $\OO_{n+1}$. The two actions can be combined to the conjugation action. The latter
is \lq\lq{}more natural\rq\rq{} than the right action. By this we mean the following. If we have a functorial construction of $EG$ (with respect to the left $G$-action) for any simplicial (or topological) group $G$, then the  automorphisms of $G$ act on $EG$, which corresponds to the conjugation action for inner automorphisms. In particular, 
the conjugation action can be extended to a larger group of all automorphisms of~$G$. For $\rho\in\OO_{n+1}$, $e\in E\OO_{n+1}$, we denote by $\rho\cdot e$, $e\cdot\rho$, $\rho(e):=\rho\cdot e\cdot\rho^{-1}$, the left, right,
and conjugation actions, respectively. Note that for the basepoint $*\in E\OO_{n+1}$, the left and right actions agree: $\rho\cdot{*}={*}\cdot\rho$ implying that the conjugation action preserves the basepoint $\rho(*)=*$.
Second, we define the $\OO_{n+1}$- and $\OO_{m+1}\times\OO_{n-m}$-actions on $E\OO_{n+1}\times \tEmb(S^m,S^n)^\times$ as follows. For $\rho\in\OO_{n+1}$, $(\lambda,\mu)\in\OO_{m+1}\times\OO_{n-m}$,
$e\in E\OO_{n+1}$, $f\in \tEmb(S^m,S^n)^\times$, we set $\rho\cdot(e,f):=(\rho\cdot e,\rho\circ f)$ and $(\lambda,\mu)\cdot(e,f):=(e\cdot(\lambda^{-1},\mu^{-1}),f\circ\lambda^{-1})$.
In this definition, the two actions commute and produce an $\OO_{m+1}\times\OO_{n-m}$-action on the quotient~\eqref{eq:c_refined2}. Alternatively, one can choose an $\OO_{m+1}\times\OO_{n-m}$-action on $E\OO_{n+1}\times \tEmb(S^m,S^n)^\times$ to be defined as  $(\lambda,\mu)\cdot(e,f):=\bigl((\lambda,\mu)(e),(\lambda,\mu)\circ f\circ\lambda^{-1})$.
In this definition, the two actions do not commute, but one still gets a well-defined (in fact, exactly the same) $\OO_{m+1}\times\OO_{n-m}$-action on the quotient~\eqref{eq:c_refined2}. The reader may prefer
the second definition as categorically more rounded. In fact, in the Appendix we will consider a more general situation and will use the second definition of the action on the quotient. However, until the end of this section we stick to the first one as it is more convenient  for the proof.

\begin{proof}[Proof of Theorem~\ref{th:c_refined}]
The second statement is equivalent to identity~\eqref{eq:b_top_refine2} of Theorem~\ref{th:b_top_refine}, see Remark~\ref{r:deloop}.
We thus focus on the first statement, which is proved by the following zigzag of $\OO_{m+1}{\times}\OO_{n-m}$-equivariant equivalences:
\begin{multline}
\left.\Embfr(S^m,S^n)^\times\right/\OO_{n+1}\xrightarrow[A]{\simeq}
\left.\tEmb^{\underline{O_n}}(S^m,S^n)^\times\right/\OO_{n+1}\xleftarrow[B]{\simeq}\\
\xleftarrow[B]{\simeq}
 \left.\text{\lq\lq{}}\left(\tEmb^{\underline{O_n}}(S^m,S^n)^\times\right)\text{\rq\rq{}}\right/\OO_{n+1}\cong
\left.{\mathrm{RadMap}}\bigl( (D^{m+1},S^m),(\tEmb(S^m,S^n)^\times,\OO_{n+1})\bigr)\right/\OO_{n+1}   \xrightarrow[C]{\simeq}\\
 \xrightarrow[C]{\simeq} \left.\Map\bigl( (D^{m+1},S^m),(\tEmb(S^m,S^n)^\times,\OO_{n+1})\bigr)\right/\OO_{n+1}  \xleftarrow[D]{\simeq}  \\
\xleftarrow[D]{\simeq} \left. \Map\bigl( (D^{m+1},S^m),(E\OO_{n+1}{\times}\tEmb(S^m,S^n)^\times,\OO_{n+1})\bigr)\right/\OO_{n+1} \xrightarrow[E]{\simeq}\\
 \xrightarrow[E]{\simeq}\Omega^{m+1}\left(   \left.\left(E\OO_{n+1}\times \tEmb(S^m,S^n)^\times\right)\right/\OO_{n+1}\right).
\label{eq:long_zigzag}
\end{multline}
We will gradually explain what are the corresponding simplicial sets, how $\OO_{m+1}\times\OO_{n-m}$ acts on them, what are the maps and why they are equivalences. The maps $A$-$B$-$C$-$D$  
are equivariant equivalences
(with respect to the commuting $\OO_{n+1}$ and $\OO_{m+1}\times\OO_{n-m}$-actions) before taking the quotient by $\OO_{n+1}$. We denote by $\tilde A$-$\tilde B$-$\tilde C$-$\tilde D$  
the corresponding maps before taking quotient.
For simplicity of exposition we describe only the vertices of the corresponding simplicial sets. We refer the reader to Subsection~\ref{ss:spaces} and Section~\ref{s:loops} of conventions for mapping spaces, loop spaces and pullbacks
to recover the general $k$-simplices of the simplicial sets involved.

The space $\Embfr(S^m,S^n)^\times$ consists of pairs $(f,F)$, where $f\colon S^m\hookrightarrow S^n$ is a smooth topologically trivial embedding, while $F$ is a fiberwise isomorphism of vector bundles lifting~$f$:
\begin{equation}\label{eq:embfr_sq}
\xymatrix{
TS^m\times\R^{n-m}\,\ar@{^{(}->}[rr]^-F \ar[d]_{\pi\circ p_1}&& TS^n\ar[d]^\pi\\
S^m\,\,\ar@{^{(}->}[rr]^f&&S^n.
}
\end{equation}
Moreover, it is assumed that $F|_{TS^m\times 0}=D(f)$. Note that $TS^m\times\R^{n-m}=TS^n|_{S^m}$. The simplicial set $\Embfr(S^m,S^n)^\times$ has commuting actions of smooth chains  $d\OO_{n+1}$ and 
$d\OO_{m+1}\times d\OO_{n-m}$. Any $\rho\in \OO_{n+1}$ acts by post-composition with $(\rho,D(\rho))$, while $(\lambda,\mu)\in\OO_{m+1}\times\OO_{n-m}$ acts by pre-composition with $(\lambda^{-1},D(\lambda)^{-1}\times \mu^{-1})$:
\[ 
\xymatrix{
TS^n\,\ar[r]^{D(\rho)} \ar[d]_{\pi}& TS^n\ar[d]^\pi\\
S^n\,\,\ar[r]^{\rho}&S^n,
}
\qquad\qquad
\xymatrix{
TS^m\times\R^{n-m}\,\ar[rr]^-{D(\lambda)^{-1}\times\mu^{-1}} \ar[d]_{\pi\circ p_1}&& TS^m\times\R^{n-m}\ar[d]^{\pi\circ p_1}\\
S^m\,\,\ar[rr]^{\lambda^{-1}}&&S^m.
}
\] 

The space $\tEmb^{\underline{O_n}}(S^m,S^n)^\times$ (defined in Section~\ref{s:fr_embed} and its Subsection~\ref{sss:spherical_fr_emb}) of ${\underline O_n}$-framed topological embeddings consists
of triples $(f,F,H)$, where $f\colon S^m\hookrightarrow S^n$ is a locally flat topological embedding (required to be isotopic to the trivial one when $n-m=2$); $F$ is its lift to a fiberwise isomorphism of vector bundles~\eqref{eq:embfr_sq}, and
$H$ is an isotopy of locally flat monomorphisms of topological microbundles
\begin{equation}\label{eq:embfr_sq2}
\xymatrix{
TS^m\times[0,1]\,\ar@{^{(}->}[rr]^-H \ar[d]_{\pi\times id_{[0,1]}}&& TS^n\times[0,1]\ar[d]^{\pi\times id_{[0,1]}}\\
S^m\times[0,1]\,\,\ar@{^{(}->}[rr]^{f\times id_{[0,1]}}&&S^n\times [0,1]
}
\end{equation}
between $H|_{t=0}=F|_{TS^m\times 0}$ and $H|_{t=1}=exp_{S^n}^{-1}\circ\Delta(f)\circ exp_{S^m}$. The action of $\rho\in\OO_{n+1}$ is the post-composition with $\bigl(\rho,D(\rho),D(\rho)\times id_{[0,1]}\bigr)$, while
the action of $(\lambda,\mu)\in \OO_{m+1}\times\OO_{n-m}$ is the pre-composition with $\bigl(\lambda^{-1}, D(\lambda)^{-1}\times\mu^{-1}, D(\lambda)^{-1}\times id_{[0,1]}\bigr)$.

The map $\tilde A\colon \Embfr(S^m,S^n)^\times\to \tEmb^{\underline{O_n}}(S^m,S^n)^\times$ is the equivalence~\eqref{eq:spherical_fr_emb}. It is $\OO_{m+1}\times\OO_{n-m}$- and $\OO_{n+1}$-equivariant since
$exp_{S^k}\colon TS^k\to S^k\times S^k$ is $\OO_{k+1}$-equivariant, where the right-hand side is endowed with the diagonal action. 

We note that starting from $\left.\tEmb^{\underline{O_n}}(S^m,S^n)^\times\right/\OO_{n+1}$, all spaces in the zigzag~\eqref{eq:long_zigzag} are acted upon by continuous chains $t\OO_{m+1}\times t\OO_{n-m}$ and the quotient is also 
taken by continuous chains~$t\OO_{n+1}$.

The space $\text{\lq\lq{}}\left(\tEmb^{\underline{O_n}}(S^m,S^n)^\times\right)\text{\rq\rq{}}$ is a \lq\lq{}version\rq\rq{} of $\tEmb^{\underline{O_n}}(S^m,S^n)^\times$. It is obtained by replacing each space
of germs of locally flat topological embeddings $T_xS^m\hookrightarrow T_{f(x)}S^n$ by the equivalent space $\tEmb(S^m,S^n\mmod x)^\times$ of (trivial) locally flat topological embeddings $S^m\hookrightarrow S^n$ sending
$x\in S^m$ to $f(x)\in S^n$. Indeed, the first space is the topological Stiefel manifold $\VV_{n,m}^t=\TOP_n/\TOP_{n,m}$, while the second one is equivalent to $\Homeo(S^n\mmod *)/\Homeo(S^n\mmod S^m)$.
One has $\Homeo(S^n\mmod S^m)\simeq\TOP_{n,m}$ by the equivalence~\eqref{eq:top_nm} and the natural surjection $\Homeo(S^n\mmod *)\to\TOP_n$ is an equivalence since its kernel $\Homeo_*(S^n)$ -- 
the group of homeomorphisms of $S^n$, which are identity near $*\in S^n$, is isomorphic to the group of 
compactly supported homeomorphisms of~$\R^n$, the latter being contractible by Remark~\ref{r:alex}. The vertices of $\text{\lq\lq{}}\left(\tEmb^{\underline{O_n}}(S^m,S^n)^\times\right)\text{\rq\rq{}}$ are triples
$(f,F,H)$, where $f\in \tEmb(S^m,S^n)^\times$, $F$ is a map lifting $f$:
\begin{equation}\label{eq:embfr_sq3}
\xymatrix{
S^m\times S^n\,\ar@{^{(}->}[r]^-F \ar[d]_{p_1}& S^n\times S^n\ar[d]^{p_1}\\
S^m\,\,\ar@{^{(}->}[r]^f&S^n,
}
\end{equation}
such that for all $x\in S^m$,
 $F(x,x)=(f(x),f(x))$ 
  and $\bigl(F_2(x,-)\colon S^n\to S^n\bigr)\in\OO_{n+1}$; 
  $H$ is
a topological locally flat embedding lifting $f\times id_{[0,1]}$ (with a stronger condition that $id_{S^m}\times H_2\times id_{[0,1]}\colon S^m\times S^m\times [0,1]\hookrightarrow S^m\times S^n\times [0,1]$ is a topological locally flat embedding\footnote{Compare with footnote~\ref{footnote_imm}.}):
\begin{equation}\label{eq:embfr_sq4}
\xymatrix{
S^m\times S^m\times [0,1]\,\ar@{^{(}->}[r]^-H \ar[d]_{p_1\times p_3}& S^n\times S^n\times [0,1]\ar[d]^{p_1\times p_3}\\
S^m\times [0,1]\,\,\ar@{^{(}->}[r]^{f \times [0,1]}&S^n\times [0,1], 
}
\end{equation}
such that $H(x,x,t)=(f(x),f(x),t)$ 
 for any $x\in S^m$ and $t\in[0,1]$, and $H|_{t=0}=F|_{S^m\times S^m}$, $H|_{t=1}=\Delta(f)$ (this implies that each embedding $H_2(x,-,t)\colon S^m\hookrightarrow S^n$ 
is isotopic to the trivial one). The action of $\rho\in \OO_{n+1}$ and that of  $(\lambda\times\mu)\in\OO_{m+1}\times\OO_{n-m}$ are as before by post- and pre-composition with 
$(\rho,\rho\times\rho,\rho\times\rho\times id_{[0,1]})$ and $\bigl(\lambda^{-1},\lambda^{-1}\times (\lambda^{-1},\mu^{-1}),\lambda^{-1}\times\lambda^{-1}\times id_{[0,1]}\bigr)$, respectively.

The map $\tilde B$ sends
\[
\tilde B\colon (f,F,H)\mapsto \bigl(f, exp^{-1}_{S^n}\circ F\circ\left. exp_{S^n}\right|_{\left(TS^m\times \R^{n-m}=TS^n|_{S^m}\right)}, (exp_{S^n}^{-1}\times id_{[0,1]})\circ H\circ (exp_{S^m}\times id_{[0,1]})\bigr).
\]
and is an equivalence. 
The space $\text{\lq\lq{}}\left(\tEmb^{\underline{O_n}}(S^m,S^n)^\times\right)\text{\rq\rq{}}$ is homeomorphic (i.e., bijectively isomorphic) to the space ${\mathrm{RadMap}}\bigl( (D^{m+1},S^m),(\tEmb(S^m,S^n)^\times,\OO_{n+1})\bigr)$ of {\it radial} maps of pairs 
$$(D^{m+1},S^m)\to (\tEmb(S^m,S^n)^\times,\OO_{n+1}).$$ The latter is a simplicial subset of the simplicial set $\Map\bigl( (D^{m+1},S^m),(\tEmb(S^m,S^n)^\times,\OO_{n+1})\bigr)$
of all maps of pairs defined as  the pullback 
\begin{equation}\label{eq:embfr_sq5}
\xymatrix{
\Map\bigl( (D^{m+1},S^m),(\tEmb(S^m,S^n)^\times,\OO_{n+1})\bigr)\,\ar[r]\ar[d]&\Map(S^m,\OO_{n+1})\ar[d]\\
\Map\bigl(D^{m+1}, \tEmb(S^m,S^n)^\times\bigr)\,\,\ar[r]&\Map\bigl(S^m, \tEmb(S^m,S^n)^\times\bigr).
}
\end{equation}
The $k$-simplices of $\Map(S^m,\OO_{n+1})$ are continuous maps $S^m\times\Delta^k\to\OO_{n+1}$. The $k$-simplices of $\Map\bigl(Y, \tEmb(S^m,S^n)^\times\bigr)$, where $Y=D^{m+1}$ or $S^m$,
are continuous maps $F_k\colon S^m\times Y\times\Delta^k\to S^n$, such that $(F_k,p_2,p_3)\colon S^m\times Y\times \Delta^k\hookrightarrow S^n\times Y\times \Delta^k$ is a topological locally flat embedding
(with additional triviality condition of $F_k(-,y_0,t_0):S^m\hookrightarrow S^n$ for some $(y_0,t_0)\in Y\times \Delta_k$ if $n-m=2$). The right arrow in~\eqref{eq:embfr_sq5} is induced by the composition
\[
\OO_{n+1}\to\Homeo(S^n)\to \Homeo(S^n)/\Homeo(S^n\mmod S^m)=\tEmb(S^m,S^n)^\times.
\]

The vertices of $\Map\bigl( (D^{m+1},S^m),(\tEmb(S^m,S^n)^\times,\OO_{n+1})\bigr)$ are pairs $(\Phi,\Psi)$, where $\Phi\colon D^{m+1}\times S^m\to S^n$ is a continuous map, such that 
$(p_1,\Phi)\colon D^{m+1}\times S^m\hookrightarrow D^{m+1}\times S^n$ is a topological locally flat embedding; and $\Psi\colon S^m\times S^n\to S^n$ is a continuous map such that 
$\bigl(\Psi(x,-)\colon S^n\to S^n\bigr)\in\OO_{n+1}$ for all $x\in S^m$ and $\Phi|_{\partial D^{m+1}\times S^m}=\Psi|_{S^m\times S^m}$.

The {\it radial} condition, which determines the subset $\mathrm{RadMap}(\ldots)$ is $\Phi(tx,x)=\Phi(0,x)$ for all $x\in S^m$, $t\in[0,1]$. The bijection with  
$\text{\lq\lq{}}\left(\tEmb^{\underline{O_n}}(S^m,S^n)^\times\right)\text{\rq\rq{}}$  sends 
$$(\Phi,\Psi)\in{\mathrm{RadMap}}\bigl( (D^{m+1},S^m),(\tEmb(S^m,S^n)^\times,\OO_{n+1})\bigr)$$         
to $(f,F,H)\in \text{\lq\lq{}}\left(\tEmb^{\underline{O_n}}(S^m,S^n)^\times\right)\text{\rq\rq{}}$, where $f(x)=\Phi(0,x)$, $F(x,z)=(\Phi(0,x),\Psi(x,z))$, $H(x,y,t)=\bigl(\Phi(0,x),\Phi((1-t)x,y),t\bigr)$ for 
all $x,y\in S^m$, $z\in S^n$, $t\in[0,1]$.

Elements $\rho\in \OO_{n+1}$ and $(\lambda,\mu)\in\OO_{m+1}\times\OO_{n-m}$ act on 
$$(\Phi,\Psi)\in\Map\bigl( (D^{m+1},S^m),(\tEmb(S^m,S^n)^\times,\OO_{n+1})\bigr)$$ 
as follows:
$(\Phi,\Psi)\mapsto (\rho\circ\Phi,\rho\circ\Psi)$, $(\Phi,\Psi)\mapsto \bigl(\Phi\circ \left(\lambda^{-1}\times\lambda^{-1}\right),\left(\Psi\circ (\lambda^{-1}\times (\lambda^{-1},\mu^{-1})\right)\bigr)$.

The inclusion $\tilde C$ is an equivalence due to the pullback square:
\begin{equation}\label{eq:embfr_sq6}
\xymatrix{
\mathrm{RadMap}\bigl( (D^{m+1},S^m),(\tEmb(S^m,S^n)^\times,\OO_{n+1})\bigr)\,\,\ar@{->>}[r]\ar@{^{(}->}[d]^{\tilde C}&\tEmb(S^m,S^n)^\times\ar@{^{(}->}[d]^\simeq\\
\Map\bigl( (D^{m+1},S^m),(\tEmb(S^m,S^n)^\times,\OO_{n+1})\bigr)\,\,\ar@{->>}[r]&\tEmb(S^m,S^n)^\times\times^h_{\Map(S^m,S^n)}\Map(S^m,S^n),
}
\end{equation}
where the upper arrow sends $(\Phi,\Psi)\mapsto\Phi(0,-)$, the right arrow sends an embedding $f\in \tEmb(S^m,S^n)^\times$ to the constant path~$f$, and the bottom arrow sends $(\Phi,\Psi)$ to the
map $S^m\times [0,1]\to S^n$, $(x,t)\mapsto \Phi(tx,x)$, which is an element in $\tEmb(S^m,S^n)^\times$ when restricted on $S^m\times 0$. Since the right arrow is an equivalence and the bottom one is a Kan fibration,
the left arrow $\tilde C$ is also an equivalence.

To describe the simplicial set 
\begin{equation}\label{eq:embfr7}
\Map\bigl( (D^{m+1},S^m),(E\OO_{n+1}{\times}\tEmb(S^m,S^n)^\times,\OO_{n+1})\bigr),
\end{equation}
 we define $\Map(Y,E\OO_{n+1})$, $Y=D^{m+1}$ or $S^m$, as the simplicial set 
of singular chains on the topological space of continuous maps $Y\to E\OO_{n+1}$, where $E\OO_{n+1}$ is the countable join~\eqref{eq:EO}. Note that $E\OO_{n+1}$ is endowed with both left and right $\OO_{n+1}$-actions.
The map $\OO_{n+1}\xrightarrow{\iota}E\OO_{n+1}$, used in the definition of the pair $(E\OO_{n+1}{\times}\tEmb(S^m,S^n)^\times,\OO_{n+1})$, sends the basepoint to the basepoint and, therefore, it preserves both 
$\OO_{n+1}$-actions. Vertices of~\eqref{eq:embfr7} are triples $(\Phi_1,\Phi,\Psi)$, where $\Phi$ and $\Psi$ are the same as before, while $\Phi_1$ is a continuous map $D^{m+1}\to E\OO_{m+1}$
satisfying $\iota\circ\Psi=\Phi_1|_{\partial D^{m+1}}$.

Elements $\rho\in \OO_{n+1}$ and $(\lambda,\mu)\in\OO_{m+1}\times\OO_{n-m}$ act on the components $\Phi$ and $\Psi$ of $(\Phi_1,\Phi,\Psi)$ in the same way as before, while they change $\Phi_1$ to the compositions:
$D^{m+1}\xrightarrow{\Phi_1} E\OO_{n+1}\xrightarrow{\rho\cdot(-)}E\OO_{n+1}$ and $D^{m+1}\xrightarrow{\lambda^{-1}}D^{m+1}\xrightarrow{\Phi_1} E\OO_{n+1}\xrightarrow{(-)\cdot (\lambda^{-1},\mu^{-1})}E\OO_{n+1}$, respectively.

By construction, the $\OO_{n+1}$-action on~\eqref{eq:embfr7} extends to a free  $\Map(D^{m+1},\OO_{n+1})$-action (where $\OO_{n+1}$ is included in $\Map(D^{m+1},\OO_{n+1})$ as constant maps).
The induced quotient map
\begin{multline*}
\left. \Map\bigl( (D^{m+1},S^m),(E\OO_{n+1}{\times}\tEmb(S^m,S^n)^\times,\OO_{n+1})\bigr)\right/\OO_{n+1}\to\\
\left. \Map\bigl( (D^{m+1},S^m),(E\OO_{n+1}{\times}\tEmb(S^m,S^n)^\times,\OO_{n+1})\bigr)\right/\Map(D^{m+1},\OO_{n+1})
\end{multline*}
is the equivalence $E$, since the target of this map is nothing but $\Omega^{m+1} \left(   \left.\left(E\OO_{n+1}\times \tEmb(S^m,S^n)^\times\right)\right/\OO_{n+1}\right)$. It is an equivalence since the groups $\OO_{n+1}$ and
$\Map(D^{m+1},\OO_{n+1})$ are weakly equivalent, while the action by the latter is cofibrant.
\end{proof}

\appendix

\section{Loop spaces of group quotients}\label{s:appendix}
The last step in the proofs of~\eqref{eq:a_top_1} in Subsection~\ref{ss:a_top}, \eqref{eq:b_top_refine2} and~\eqref{eq:b_top_refine3} in Section~\ref{s:proof_B_TOP}, requires changing the ground category
of simplicial sets to topological spaces. We will assume the latter is the category of compactly generated spaces. The products, subspaces and mapping spaces in this category are defined by taking kelleyfication of the usual topology 
in the corresponding spaces. Note that kelleyfication does not change the weak homotopy type as it induces  a bijection of simplicial sets of singular chains.  The main advantages of this category are that it is cartesian closed and finite products of any CW-complexes in it are CW-complexes~\cite[Appendix]{Lewis}, \cite{Vogt}. In particular the realization of any simplicial group (or an operad in simplicial sets) is a group (or an operad, respectively).
The model structure on this category that we adapt here is considered in~\cite[Section~2.4]{Hovey}.

\begin{proposition}\label{l:E1equiv}
Let $H,K\subset G$ be subgroups of a topological group~$G$, and let $N\subset N(H)\cap N(K)$ be a subgroup of the intersection of their normalizers. We additionally assume that all these (sub)groups are cofibrant as spaces
and all the underlying group inclusions are cofibrations of spaces.\footnote{For example, $G$ is a Lie group, while $H,K,N$ are its closed subgroups. As another example relevant to us, $G,H,K,N$ can be realizations of simplicial (sub)groups.} One then has an equivalence
of $E_1$-algebras in $N$-spaces:
$H\times_G^h K\simeq_{E_1^N}\Omega\left(H\bbslash G/K\right)$.
\end{proposition}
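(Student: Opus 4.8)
The plan is to prove the statement in two stages: first a ``one-sided'' version quotienting only by $K$, and then combine with the $H\bbslash(-)$ quotient. Throughout, $H\times_G^h K$ denotes the explicit homotopy pullback $\{(h,\alpha,k)\mid \alpha\colon[0,1]\to G,\ \alpha(0)=h,\ \alpha(1)=k\}$, and $H\bbslash G/K$ denotes the double homotopy quotient $(EH\times G\times EK)/(H\times K)$ (equivalently $EH\times_H(G/K)$ once we know $G\to G/K$ is a principal $K$-bundle). The $E_1$-structure on $H\times_G^h K$ comes from concatenating middle paths and multiplying in $G$; the $N$-action is by conjugation on all three coordinates. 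On the target, $H\bbslash G/K$ inherits a strictly associative product from the group $G$ (since $H,K,N$ all normalize appropriately), so it is an honest topological monoid, hence an $E_1$-algebra, and $N$ acts by conjugation; $\Omega$ of it is then an $E_1$-algebra in $N$-spaces in the standard way (Subsection~\ref{ss:O_action_loops}).

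First I would establish the pointset fact that, under the cofibrancy/cofibration hypotheses, $G\to G/K$ is a principal $K$-bundle (locally trivial, or at least a Hurewicz fibration with the right fiber), so that $G/K$ has the homotopy type we expect and $EH\times_H(G/K)$ is a model for $H\bbslash G/K$. Next, the heart of the argument is the natural zigzag
\[
H\times_G^h K \;\xrightarrow{\ \simeq\ }\; \hofiber\bigl(H\to G/K\bigr)\;\xleftarrow{\ \simeq\ }\; \hofiber\bigl(H\to (EK\times G)/K\bigr)\;\xrightarrow{\ \simeq\ }\;\Omega\bigl(H\bbslash G/K\bigr),
\]
where the first map remembers $(h,\alpha)$ and forgets $k$ (an equivalence because $G\to G/K$ is a fibration with contractible-to-a-point total-space fiber once we thicken, i.e. the homotopy fiber of $K\to G$ is computed by $\alpha$); the second map replaces $G/K$ by the equivalent $(EK\times G)/K$; and the third uses $\hofiber(H\to X)\simeq \Omega(EH\times_H X)$ applied to $X=(EK\times G)/K$ together with $EH\times_H\bigl((EK\times G)/K\bigr)=(EH\times G\times EK)/(H\times K)=H\bbslash G/K$. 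Each map in the zigzag is manifestly $N$-equivariant (conjugation acts compatibly on $EH$, $G$, $EK$, since $N$ normalizes $H$ and $K$) and is a map of $E_1$-algebras: the $E_1$-structures on all four terms are induced by the group multiplication of $G$ (on $\hofiber(H\to G/K)$ it is $(h,\alpha)\cdot(h',\alpha')=(hh', \alpha\cdot(h\alpha'))$ in the usual loop-composition sense, matching the monoid structure on $H\bbslash G/K$ after looping).

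The remaining point is to verify that the zigzag consists of $E_1^N$-\emph{equivalences}, i.e. weak equivalences respecting the $E_1$-algebra-in-$N$-spaces structure; here I would invoke the fact that an $N$-equivariant map of $E_1^N$-algebras which is a nonequivariant weak equivalence and an equivariant one (which it is, since all maps are equivalences before and after taking $N$-fixed points or, better, because the $N$-action is free enough / the objects are built from cofibrant pieces) is an $E_1^N$-equivalence in the model structure of~\cite{Hovey}. Concretely, since $N$, $H$, $K$, $G$ are cofibrant and the inclusions are cofibrations, all the bar-type constructions $EH\times_H(-)$, $(EK\times G)/K$ etc. are homotopy-invariant and the equivariance is strict, so no derived correction is needed.

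The main obstacle I expect is bookkeeping rather than conceptual: making the $E_1$-structures on the four terms of the zigzag \emph{literally} agree so that the maps are strict $E_1$-maps (not merely $E_1$ up to homotopy), and tracking the $N$-conjugation action coherently through the identification $EH\times_H\bigl((EK\times G)/K\bigr)\cong (EH\times G\times EK)/(H\times K)$ — one has to be careful that the left/right vs.\ conjugation actions of $N$ on the various $E(-)$ factors are used consistently, exactly the subtlety flagged in Section~\ref{s:proof_c} about the two definitions of the $N$-action on a quotient. Once a functorial model of $E(-)$ is fixed (e.g. the infinite join, as in~\eqref{eq:EO}), the conjugation action is forced and the diagrams commute on the nose, so this is a matter of care, not of a genuine difficulty. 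I would also remark that the hypothesis $N\subset N(H)\cap N(K)$ is exactly what is needed for $G/K$, $H\bbslash G$, and $H\bbslash G/K$ to carry the required $N$-actions and for the multiplication on $H\bbslash G/K$ to be well defined.
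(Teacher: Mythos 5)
Your overall skeleton (pass to $\hofiber(H\to G/K)$, thicken by a contractible free resolution, identify with a loop space of the double quotient, track the conjugation $N$-action) is the same as the paper's, and the concatenation-type $E_1$-structure you write on $\hofiber(H\to G/K)$ is essentially the one the paper uses on its middle terms. But there is a genuine gap exactly at the point you dismiss as bookkeeping: the first map of your zigzag is \emph{not} a strict map of $E_1$-algebras, and no choice of models makes it one. The $E_1$-structure on $H\times_G^hK$ required by the statement is the strict group structure given by \emph{pointwise} multiplication of the paths, $(h_1,\alpha_1(t),k_1)\cdot(h_2,\alpha_2(t),k_2)=(h_1h_2,\alpha_1(t)\alpha_2(t),k_1k_2)$, whereas the structure you put on $\hofiber(H\to G/K)$ (and which matches loop concatenation on $\Omega(H\bbslash G/K)$) is concatenation of a path with a translated path. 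Under the forgetful map $(h,\alpha,k)\mapsto(h,\bar\alpha)$ the image of a pointwise product is the pointwise product of images, which is homotopic to, but not equal to, the concatenation product; so the diagram does not commute on the nose. Closing this gap is the actual content of the paper's appendix: it observes that the strict monoid structure on $K\times_G^hH$ commutes with a reparameterization action of $E_1(1)$, hence assembles into an action of the larger operad $\tilde E_1=\Assoc^{E_1(1)}$, and it then compares the two induced $E_1$-structures (restriction along $p$ versus along $j$) by Lemma~\ref{l:equiv1}, i.e.\ by a zigzag through a cofibrant replacement of the algebra and the Quillen equivalences of algebra categories furnished by Berger--Moerdijk for weak equivalences of $\Sigma$-cofibrant operads. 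Some homotopy-invariance/rectification input of this kind is unavoidable; ``care with models'' alone will not produce strict $E_1$-maps here.

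A secondary but real error: $H\bbslash G/K$ does not inherit an associative product from $G$. For distinct subgroups $H,K$ the double cosets have no well-defined multiplication (the hypothesis is only that $N$ normalizes $H$ and $K$, not that $H$ or $K$ is normal, nor that $H$ normalizes $K$), so your description of the target $E_1$-structure as ``looping a topological monoid'' is unfounded; the correct structure is just loop concatenation on $\Omega\left(H\bbslash G/K\right)$, which is what the paper uses. Relatedly, the cofibrancy hypotheses by themselves do not make $G\to G/K$ a principal $K$-bundle; the equivalences of the quotient maps in the zigzag have to be justified from the situations covered by the footnote (Lie groups, realizations of simplicial groups) or an equivalent fibration/quasifibration argument, and your appeal to ``fixed points'' or ``free enough actions'' for the $E_1^N$-equivalence is a red herring: an equivalence of $E_1^N$-algebras here just means an $N$-equivariant $E_1$-map that is a weak equivalence of underlying spaces, with the genuine difficulty being the operadic comparison described above.
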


In the above, the framed operad $E_1^N$ is produced from the trivial action of $N$ on $E_1$, see Subsection~\ref{ss:fr_operads}.
The action of $N$ on $H\times_G^h K$ is by pointwise conjugation $n\cdot (h,\alpha(t),k)=(nhn^{-1},n\alpha(t)n^{-1},nkn^{-1})$, $n\in N$, $(h,\alpha,k)\in H\times_G^h K$.
The $E_1$-action on $H\times_G^h K$  is via its associative monoidal structure of a group, the product being $(h_1,\alpha_1(t),k_1)\cdot (h_2,\alpha_2(t),k_2)=(h_1h_2,\alpha_1(t)\alpha_2(t),k_1k_2)$.
For the right-hand side, the $E_1$-structure is the standard one of a loop space, while the $N$-action on $H\backslash(EH\times G)/K$ is defined as $n\cdot[ (e,g)]:=[(n(e),ngn^{-1})]$, where $e\in EH$, $g\in G$, $n\in N$.
Here, $EH$ is defined as the countable join $EH:=H*H*H*\ldots$ pointed in the unit of the first $H$. The action of $N$ is by automorphisms of $H$. (The group $Aut(H)$ of automorphisms of $H$ acts on $EH$ diagonally, while the conjugation 
by $N\subset N(H)$ induces  a map $N\to Aut(H)$.) The action is well-defined. Indeed, if we take a different representative $(h\cdot e,h\cdot g\cdot k)$ of $[(e,g)]\in H\backslash(EH\times G)/K$, then 
\[
\left(n(h\cdot e),n(hgk)n^{-1}\right)=\left(nhn^{-1}\cdot n(e), (nhn^{-1})\cdot (ngn^{-1})\cdot (nkn^{-1})\right) \sim (n(e),ngn^{-1}).
\]
The $N$-action in question preserves the basepoint, and therefore, produces a well-defined pointwise action on the loop space $\Omega\left(H\bbslash G/K\right)$.

\begin{proof}
For the proof it is convenient to use $[0,1]$ for $D^1$  in the definition of the operad $E_1$ and that of loop spaces $\Omega X$. 
The proof is contained in the following zigzag of $E_1^N$-equivalences:
\begin{multline}\label{eq:zigzag_app}
H\times^h_G K\underset{(0)}{\cong} K\times^h_G H\underset{(1)}{\simeq}\left(K\times^h_G H\right)'\xrightarrow[(2)]{\simeq}
\hofiber\left(G/K\leftarrow H\right)\xleftarrow[(3)]{\simeq}\\
\xleftarrow[(3)]{\simeq}\hofiber\left(EH\times G/K\leftarrow H\right)\xrightarrow[(4)]{\simeq}\Omega\bigl(H\backslash (EH\times G)/K\bigr).
\end{multline}
The map (0) here is the group homeomorphism $(h,\alpha(t),k)\mapsto (k,\alpha(1-t),h)$. 
The third space is $K\times^h_G H$ with the same $N$-action, but with a different $E_1$-structure. The action and the zigzag
proving~(1)
are explained in Subsection~\ref{ss:a1}. For a map of spaces $f\colon X\to Y$, where $Y$ is pointed in $*\in Y$, we denote by
$\hofiber(Y\xleftarrow{f} X)$ the space homeomorphic to $\hofiber(X\stackrel{f}{\to} Y)$, but formally defined as the space of pairs $(\alpha,x)$,
where $x\in X$, $\alpha\colon [0,1]\to Y$, such that $\alpha(0)=*$, $\alpha(1)=f(x)$.
The fourth and fifth spaces are defined by means of this construction. The map $H\to EH\times G/K$ is induced by the inclusion
of $H$ in $EH$ as the first summand in the join, and by the inclusion $H\subset G$. Note that both maps are equivariant with respect to the left $H$-action and also with respect to the action of $N$ on $H, G/K, EH$ by conjugation. The $E_1$-action on the fourth and fifth spaces is explained in Subsection~\ref{ss:a2}. The maps (2)-(3)-(4) 
 are obvious quotient maps, which can easily be checked to be equivalences.
\end{proof}

\subsection{Equivalence $(1)$}\label{ss:a1}
There is a slightly larger operad $\tilde E_1^N$ acting on $K\times^h_G H$, where the operad $\tilde E_1:=\Assoc^{E_1(1)}$, 
while the monoid $E_1(1)$ (of strictly increasing linear maps $[0,1]\to[0,1]$) and the group $N$ are acting trivially on $\Assoc$ and on $\tilde E_1$, respectively. For $(L\colon [0,1]\to [0,1])\in E_1(1)$, its left action on $(k,\alpha,h)\in
K\times^h_G H$
is defined as $L\cdot (k,\alpha,h)=(k,L\cdot\alpha,h)$, where 
$$
(L\cdot\alpha)(t)=
\begin{cases}
k,&t\leq L(0);\\
\alpha(L^{-1}(t)),& t\in L([0,1]);\\
h,&t\geq L(1).
\end{cases}
$$
It is easy to see that this $E_1(1)$-action commutes with the $\Assoc^N$-action.

The operads $\Assoc$, $E_1$, and $\tilde E_1$ are equivalent to each other by means of the following operad maps:
\[
\xymatrix{
\Assoc\ar@/^/[rr]^{i_0}&&\tilde E_1\ar@/^/[ll]^{q_0}\\
& E_1\ar[ul]^{p_0}\ar[ur]_{j_0}&.
}
\]
The maps $p_0$, $q_0$,  and $i_0$ are the obvious projections and inclusion, respectively. The inclusion $j_0$
sends $\vec L=(L_1,\ldots,L_k)\in E_1(k)$ to $\left(p_0(\vec L);L_1,\ldots,L_k\right)\in \Assoc(k)\times E_1(1)^{\times k}=
\tilde E_1(k)$.

We denote by $p$, $q$, $i$, $j$ the maps between $\Assoc^N$, $E_1^N$, $\tilde E_1^N$ induced by $p_0$, $q_0$, $i_0$,
and $j_0$, respectively.

For a morphism of operads $\xi\colon P_1\to P_2$, we denote by $\xi_*$ and $\xi^*$ the induced  induction and restriction
functors for the corresponding categories of algebras:
\begin{equation}\label{eq:xi}
\xi_*\colon Alg^{P_1}\rightleftarrows Alg^{P_2}\colon \xi^*.
\end{equation}

We now define $\left(K\times^h_G H\right)'$ from zigzag~\eqref{eq:zigzag_app} as $j^*\left(K\times^h_G H\right)$.
Equivalence~(1) from the zigzag immediately follows from the following lemma applied for $Z=K\times^h_G H$.

\begin{lemma}\label{l:equiv1}
For any $\tilde E_1^N$-algebra $Z$, one has an equivalence of $E_1^N$-algebras: $p^*i^*(Z)\simeq j^*(Z).$
\end{lemma}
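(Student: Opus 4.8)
The plan is to use the standard two-step comparison between algebras over the operads $\Assoc$, $E_1$, and $\tilde E_1=\Assoc^{E_1(1)}$, exploiting the fact that $\tilde E_1$ contains $E_1$ (via $j_0$) and projects onto $\Assoc$ (via $q_0$), while $\Assoc$ includes into $\tilde E_1$ (via $i_0$). Since $E_1(1)$ acts trivially on $\Assoc$ and $N$ acts trivially on $E_1$ and on $\tilde E_1$, the whole picture is $N$-equivariant in a trivial way, so it suffices to prove the statement of the lemma for $N$ the trivial group and then observe that every map constructed is automatically a map of $N$-spaces (the $N$-action passes through untouched). First I would record the elementary identities $q_0\circ i_0=\mathrm{id}_{\Assoc}$ and $q_0\circ j_0=p_0$ at the level of operads, which give $i^*q^*=\mathrm{id}$ and $j^*q^*=p^*$ on algebra categories; hence for any $\tilde E_1^N$-algebra $Z$ there is a canonical isomorphism $p^*\bigl(q^*q_*? \bigr)$-type relation to manipulate.

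The key step is to show that for any $\tilde E_1^N$-algebra $Z$, the counit-type comparison map relating $Z$ and $q^*q_!Z$ (where $q_!\dashv q^*$ is the induction along $q$, i.e. $q_0\colon\tilde E_1\to\Assoc$) is an equivalence. This follows because $q_0\colon\tilde E_1\to\Assoc$ is an equivalence of operads with the extra feature that $E_1(1)$ is contractible (the space of strictly increasing affine maps $[0,1]\to[0,1]$ is convex, hence contractible); concretely, $\tilde E_1(k)=\Assoc(k)\times E_1(1)^{\times k}\simeq\Assoc(k)$ and the projection is a homotopy equivalence compatible with composition. Thus an $\tilde E_1^N$-algebra structure on $Z$ is, up to equivalence, the same as an associative monoid structure on $Z$ together with a contractible space of ``reparametrization'' data, and restricting along $j_0$ recovers exactly the standard $E_1$-operad acting by the same reparametrizations. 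More precisely, I would argue: the inclusion $j_0\colon E_1\hookrightarrow\tilde E_1$ and the inclusion $i_0\circ p_0\colon E_1\to\Assoc\hookrightarrow\tilde E_1$ are homotopic as maps of operads (both are sections up to homotopy of the equivalence $q_0$, and the space of such sections is connected because $E_1(1)$ is contractible), and homotopic operad maps induce equivalent restriction functors on algebras over a cofibrant-enough operad. Applying this homotopy of operad maps to the $\tilde E_1^N$-algebra $Z$ yields $j^*(Z)\simeq (i\circ p)^*(Z)=p^*i^*(Z)$ as $E_1^N$-algebras, which is precisely the claim.

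For the $N$-equivariance, I would note that all of $j_0,i_0,p_0,q_0$ are the identity on the $N$-decoration (they only touch the $E_1$/$\Assoc$ part), so the homotopy of operad maps $E_1^N\to\tilde E_1^N$ can be taken to be constant in the $N$-direction; hence the resulting equivalence $j^*(Z)\simeq p^*i^*(Z)$ is an equivalence of $E_1^N$-algebras in the category of $N$-spaces, using that $Z$ has its fixed $N$-action throughout and that the category of $N$-spaces with the compactly generated model structure has well-behaved products and mapping spaces as recalled at the start of the appendix. The verification that the comparison maps are genuine weak equivalences of spaces (not just formal) reduces to the statement that $E_1(1)^{\times k}$ is contractible and that the relevant Reedy/$\Sigma$-cofibrancy hypotheses hold for $E_1$ and $\tilde E_1$, which is classical.

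The main obstacle I anticipate is the rectification bookkeeping: ``homotopic maps of operads induce equivalent restriction functors'' needs the operads involved to be cofibrant (or $\Sigma$-cofibrant) and needs a genuine model-categorical argument rather than a hand-wave, so one must either invoke a rectification theorem for algebras over topological operads (Boardman--Vogt, Berger--Moerdijk, or the $E_1$-specific statements already used implicitly in the body of the paper) or construct the intermediate equivalence by hand using an explicit cylinder on the operad map. In the $E_1$-case this is well within reach because $E_1$ and $\tilde E_1$ are aspherical and the reparametrization monoid is convex, so an explicit deformation can be written down; but making the $N$-equivariant version fully rigorous, and checking that it is compatible with the ambient compactly generated model structure, is where the care is needed. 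Everything downstream in the zigzag~\eqref{eq:zigzag_app} is then routine, since maps (2)--(4) are quotient maps whose fibers are recognizably contractible (using contractibility of $EH$ and the cofibrancy hypotheses on $H,K,N\subset G$).
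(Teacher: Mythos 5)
Your route is genuinely different from the paper's: the paper never tries to compare $j$ and $i\circ p$ as operad maps; instead it uses that $q$ is a weak equivalence of $\Sigma$-cofibrant operads, so $(q_*,q^*)$ is a Quillen equivalence by Berger--Moerdijk, and then runs the zigzag $j^*Z\leftarrow j^*Z^c\to j^*q^*q_*Z^c=p^*i^*q^*q_*Z^c\leftarrow p^*i^*Z^c\to p^*i^*Z$, where the only geometric input is the equality $q\circ i\circc p=q\circ j$ (note: the paper takes a cofibrant replacement $Z^c$ precisely because the unit $Z\to q^*q_*Z$ is only an equivalence on cofibrant objects -- your second paragraph glosses this point when you assert the comparison with $q^*q_!Z$ ``for any $Z$''). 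Your plan -- show $j_0\simeq i_0\circ p_0$ through operad maps and deduce $j^*Z\simeq p^*i^*Z$ -- could work, but as written both pivotal steps are gaps. First, the justification ``both are sections up to homotopy of $q_0$ and the space of such sections is connected because $E_1(1)$ is contractible'' does not stand: connectedness of the space of operad sections is a lifting statement against the acyclic fibration $q_0$, which requires the source $E_1$ to be cofibrant as an operad, not merely $\Sigma$-cofibrant, and contractibility of $E_1(1)$ as a space does not by itself control the multiplicativity constraints (indeed the naive linear interpolation $L\mapsto (1-t)L+t\,\mathrm{id}$ fails to respect composition). The claim is nonetheless true, but one must exhibit a contraction of $E_1(1)$ \emph{through monoid endomorphisms}, e.g.\ first $ax+b\mapsto ax+(1-t)b$ and then $ax\mapsto a^{1-s}x$; this induces a path of operad endomorphisms of $\tilde E_1=\Assoc^{E_1(1)}$ from the identity to $i_0\circ q_0$, and composing with $j_0$ gives the desired homotopy $j_0\simeq i_0\circ p_0$ (constant in the $N$-direction, since $N$ only decorates).

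Second, you explicitly leave open the step ``homotopic operad maps induce equivalent restriction functors,'' proposing to invoke rectification or a cylinder on the operad; as it stands this is the missing half of the proof, not an anticipated technicality. It can in fact be closed without any cofibrancy of operads: a homotopy through operad maps is a map $H\colon E_1^N\to (\tilde E_1^N)^{[0,1]}$ into the path operad, the path space $Z^{[0,1]}$ is naturally a $(\tilde E_1^N)^{[0,1]}$-algebra, and the two evaluations give a zigzag of $E_1^N$-algebra maps $j^*Z\xleftarrow{\ \simeq\ }H^*\bigl(Z^{[0,1]}\bigr)\xrightarrow{\ \simeq\ }(i\circ p)^*Z=p^*i^*Z$, both weak equivalences since $Z^{[0,1]}\to Z$ is one and all spaces are fibrant. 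With these two supplements your argument is complete and arguably more elementary than the paper's (no cofibrant replacement of $Z$, no Quillen-equivalence input); without them, the proposal records the correct identities $i^*q^*=\mathrm{id}$ and $j^*q^*=p^*$ but does not yet prove the lemma.

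(One typographical note: $\circc$ above should read $\circ$; the intended identity is $q\circ i\circ p=q\circ j$.)
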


\begin{proof}
Since the group $N$ is cofibrant as a space, all operad morphisms $i$, $j$, $p$, $q$ are equivalences of $\Sigma$-cofibrant operads
and by \cite[Theorem~4.4]{BM} they induce Quillen equivalences~\eqref{eq:xi} of the corresponding model categories of algebras.

The statement of the lemma is proved by the following zigzag of equivalences of $E_1^N$-algebras:
\begin{equation}\label{eq:z_a1}
j^*Z\xleftarrow{\simeq} j^*Z^c\xrightarrow{\simeq} 
 j^*q^*q_* Z^c
= p^*i^*q^*q_* Z^c\xleftarrow{\simeq} p^*i^*Z^c\xrightarrow{\simeq}p^*i^*Z.
\end{equation}
All restriction functors, such as $j^*$, $p^*$, $i^*$, preserve weak equivalences.
The first and the last maps are induced by a cofibrant replacement $Z^c\xrightarrow{\simeq} Z$.  The second and the next to the last 
equivalences are induced by the unit of the adjunction~\eqref{eq:xi} for $\xi=q$: $Z^c\xrightarrow{\simeq}q^*q_* Z^c$,
which is an equivalence since the adjunction is a Quillen equivalence. The middle identity is due to $q\circ i\circ p=q\circ j$.
\end{proof}

\subsection{Equivalences (2)-(3)-(4)}\label{ss:a2}
The maps (2)-(3)-(4) 
 in  zigzag~\eqref{eq:zigzag_app} are  $N$-equivariant equivalences. We still have to define $E_1$-action on 
$\hofiber\left( G/K\leftarrow H\right)$ and $\hofiber\left(EH\times G/K\leftarrow H\right)$.

Given a space $X$ with a left $H$-action, a basepoint $*\in X$ induces an $H$-equivariant map $H\to X$, which sends
the group unit to the basepoint. We now define a natural $E_1$-action on the homotopy fiber $\hofiber(X\leftarrow H)$
over~$*$. Let $\vec L=(L_1,L_2,\ldots,L_k)\in E_1(k)$ and $\sigma=p_0(\vec L)\in \Assoc(k)=\Sigma_k$ (it is a permutation that
tells in which order the segments $L_i([0,1])$, $i=1\ldots k$, appear in~$[0,1]$). 
Then for $(\alpha_1,h_1),\ldots,(\alpha_k,h_k)\in \hofiber(X\leftarrow H)$, one defines $\vec L\bigl((\alpha_1,h_1),\ldots
(\alpha_k,h_k)\bigr)$ as $(\beta,h_{\sigma_1}h_{\sigma_2}\cdots h_{\sigma_k})$, where 
\[
\beta(t)=
\begin{cases}
*,& t\leq L_{\sigma_1}(0);\\
h_{\sigma_1}h_{\sigma_2}\cdots h_{\sigma_{i-1}},& L_{\sigma_{i-1}}(1)\leq t\leq L_{\sigma_i}(0);\\
h_{\sigma_1}h_{\sigma_2}\cdots h_{\sigma_k},& t\geq L_{\sigma_k}(1);\\
h_{\sigma_1}h_{\sigma_2}\cdots h_{\sigma_{i-1}}\cdot\alpha_{\sigma_i}(L_{\sigma_i}^{-1}(t)),& t\in L_{\sigma_i}([0,1]).
\end{cases}
\]

With this definition of $E_1$-action on $\hofiber(X\leftarrow H)$ for $X=G/K$ and~$EH\times G/K$, one can easily check that the maps
(2)-(3)-(4) in~\eqref{eq:zigzag_app} are morphisms of $E_1$-algebras.

\end{document}